\documentclass[11pt]{amsart}

\usepackage{amsmath,amssymb,amsthm}
\usepackage{upgreek}
\usepackage{amsaddr}
\usepackage{mathtools}
\usepackage{enumitem}
\usepackage{geometry}
\usepackage{tikz}
\usepackage{longtable}
\usepackage{booktabs}
\usepackage{array}
\usepackage{MnSymbol}
\usetikzlibrary{arrows.meta,calc,patterns,decorations.pathmorphing}
\usepackage{graphicx}
\usepackage{subcaption}
\usepackage{hyperref}

\newtheorem{theorem}{Theorem}[section]
\newtheorem{lemma}[theorem]{Lemma}
\newtheorem{proposition}[theorem]{Proposition}
\newtheorem{corollary}[theorem]{Corollary}
\newtheorem{conjecture}[theorem]{Conjecture}

\theoremstyle{definition}
\newtheorem{definition}[theorem]{Definition}
\newtheorem{example}[theorem]{Example}

\theoremstyle{remark}
\newtheorem{remark}[theorem]{Remark}

\newcommand{\RP}{\mathbb{RP}}
\newcommand{\R}{\mathbb{R}}
\newcommand{\T}{T}
\newcommand{\That}{\widehat T}
\newcommand{\Area}{A}
\newcommand{\sArea}{A^*}
\newcommand{\dist}{\operatorname{dist}}
\newcommand{\diam}{\operatorname{diam}}
\newcommand{\conv}{\operatorname{conv}}
\newcommand{\Span}{\operatorname{span}}
\newcommand{\Spec}{\operatorname{Spec}}
\newcommand{\tr}{\operatorname{tr}}
\newcommand{\constpi}{\uppi}
\newcommand{\conste}{\mathrm{e}}
\newcommand{\consti}{\mathrm{i}}
\newcommand{\doi}[1]{\href{https://doi.org/#1}{\nolinkurl{doi:#1}}}
\newcommand{\GL}{\operatorname{GL}}
\newcommand{\sn}{\operatorname{sn}}
\newcommand{\cn}{\operatorname{cn}}
\newcommand{\dn}{\operatorname{dn}}
\newcommand{\cd}{\operatorname{cd}}
\renewcommand{\d}{\mathrm{d}}

\numberwithin{equation}{section}

\title[Area-Normalized Pentagram Map]
{Area-Normalized Pentagram Map Dynamics:\\
Spectral Flattening and Elliptic Asymptotics}

\author{Micha\l{} Zwierzy\'nski$^{\pentagram}$}

\email{$^{\pentagram}$Michal.Zwierzynski@pw.edu.pl}
\email{ORCID: 0000-0002-9627-1563}

\address{Warsaw University of Technology\\
Faculty of Mathematics and Information Science\\
ul. Koszykowa 75\\
00-662 Warsaw, Poland}

\subjclass[2020]{Primary 37E15;
Secondary 37J70, 51A05, 52B11}

\keywords{pentagram map, area normalization, spectral dynamics,
projective geometry, Poncelet polygons}

\begin{document}

\begin{abstract}
The~pentagram map sends a~polygon to the~intersections of consecutive short diagonals. We study its shape dynamics after translating and positively rescaling each iterate to restore unsigned area and barycenter. For polygons whose dynamics is generated, after passing to a~finite iterate and cyclic relabelling, by one projectivity, we prove a~spectral dichotomy. A~dominant real projective line yields flattening and unbounded diameter, whereas a~dominant real eigenvalue with a~subdominant non-real pair produces asymptotic motion on concentric homothetic ellipses.

We apply this framework to pentagons and hexagons via Glick's operator and to Poncelet polygons via the~Darboux--Schwartz projectivity. We obtain spectral diagrams and a~Jacobi-function formula for the~Poncelet return spectrum. Consequently, we recover Schwartz's long-and-thin result for strictly convex non-projectively-regular pentagons, prove a~line-or-ellipse dichotomy for convex hexagons under explicit nondegeneracy assumptions, and establish flattening for strictly convex non-projectively-regular Poncelet polygons.
\end{abstract}

\maketitle

\section*{Declaration of interests}

\noindent The author declares that he has no known competing financial interests or personal relationships that could have appeared to influence the work reported in this paper.

\section{Introduction}

\noindent The~pentagram map was introduced by Schwartz as a~transformation of
polygons in the~projective plane \cite{SchwartzPentagram}.  It sends a~
polygon to the~polygon formed by intersections of consecutive short
diagonals.  In affine coordinates, for a~labeled polygon
$P=(p_1,\ldots,p_n)$, it is given by
$$
\T(P)_i
=
(p_ip_{i+2})\cap(p_{i+1}p_{i+3}),
$$
with indices taken modulo $n$ (see
Figure~\ref{fig:pentagramIterationsIntroA}).  Since its introduction,
the~pentagram
map has become a~basic object in discrete projective geometry.  It is
one of the~central examples of a~discrete integrable system.  Ovsienko, Schwartz, and Tabachnikov described its Poisson geometry and
monodromy invariants on spaces of twisted polygons \cite{OST} and
proved Liouville--Arnold integrability on the~moduli space of closed
polygons \cite{OSTClosed}.  Soloviev subsequently gave a~Lax
representation and an~algebraic-geometric description through a~
spectral curve and its Jacobian \cite{Soloviev}; see also
\cite{Weinreich} for an~algebraic treatment over general algebraically
closed fields.
Complementary algebraic and combinatorial descriptions identify the~
pentagram map with cluster $Y$-dynamics and with transformations of
directed networks
\cite{GekhtmanShapiroTabachnikovVainshtein, GlickYPatterns}.
A~refactorization approach in Poisson--Lie groups of
pseudo-difference operators provides a~unified framework for the~
classical map and its known integrable higher-dimensional
generalizations \cite{IzosimovRefactorization}.  Foundational
higher-dimensional extensions and their integrability were developed in
\cite{KhesinSolovievHigher}; long-diagonal maps later unified the~known
integrable cases \cite{IzosimovKhesinLongDiagonal}.  Further
generalizations can be found in
\cite{FelipeMariBeffa,KhesinSolovievDented,MariBeffaAGD,Ovenhouse,WangQNets}.
More recently, these and further variants were incorporated into a~
common framework of pentagram maps over rings
\cite{HandIzosimov}.
The~map is also closely connected with Poncelet polygons and
projective invariants
\cite{IzosimovPoncelet,SchwartzPoncelet}; related rigidity for
deep-diagonal dynamics was proved for centrally symmetric octagons in
\cite{SchwartzRigidityOctagons}.

\begin{figure}[htbp]
    \centering

    \begin{subfigure}[t]{0.44\textwidth}
        \centering
        \includegraphics[width=\textwidth]{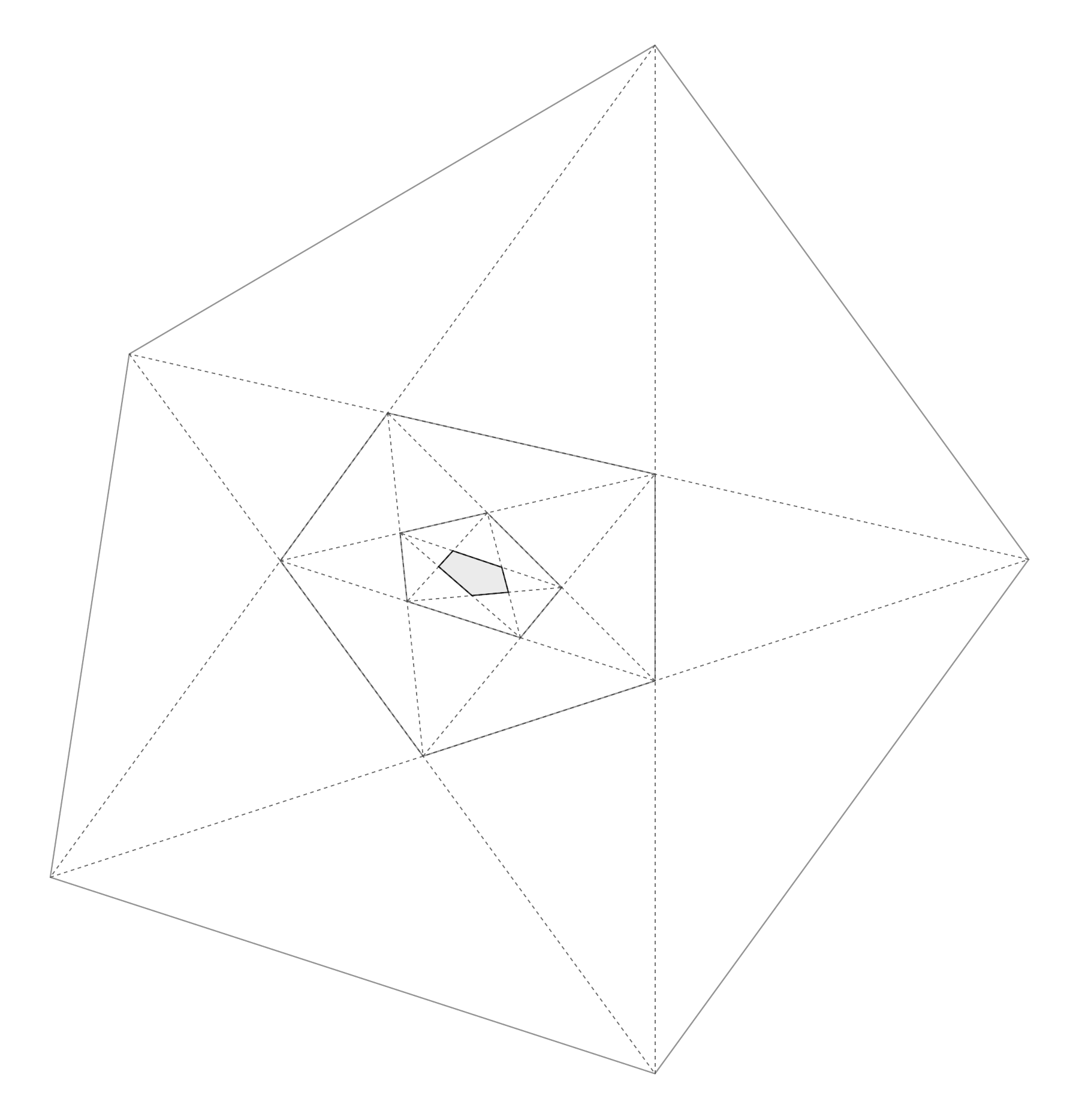}
        \caption{$3$ iterations of the~classical pentagram map}
        \label{fig:pentagramIterationsIntroA}
    \end{subfigure}
    \hfill
    \begin{subfigure}[t]{0.44\textwidth}
        \centering
        \includegraphics[width=\textwidth]{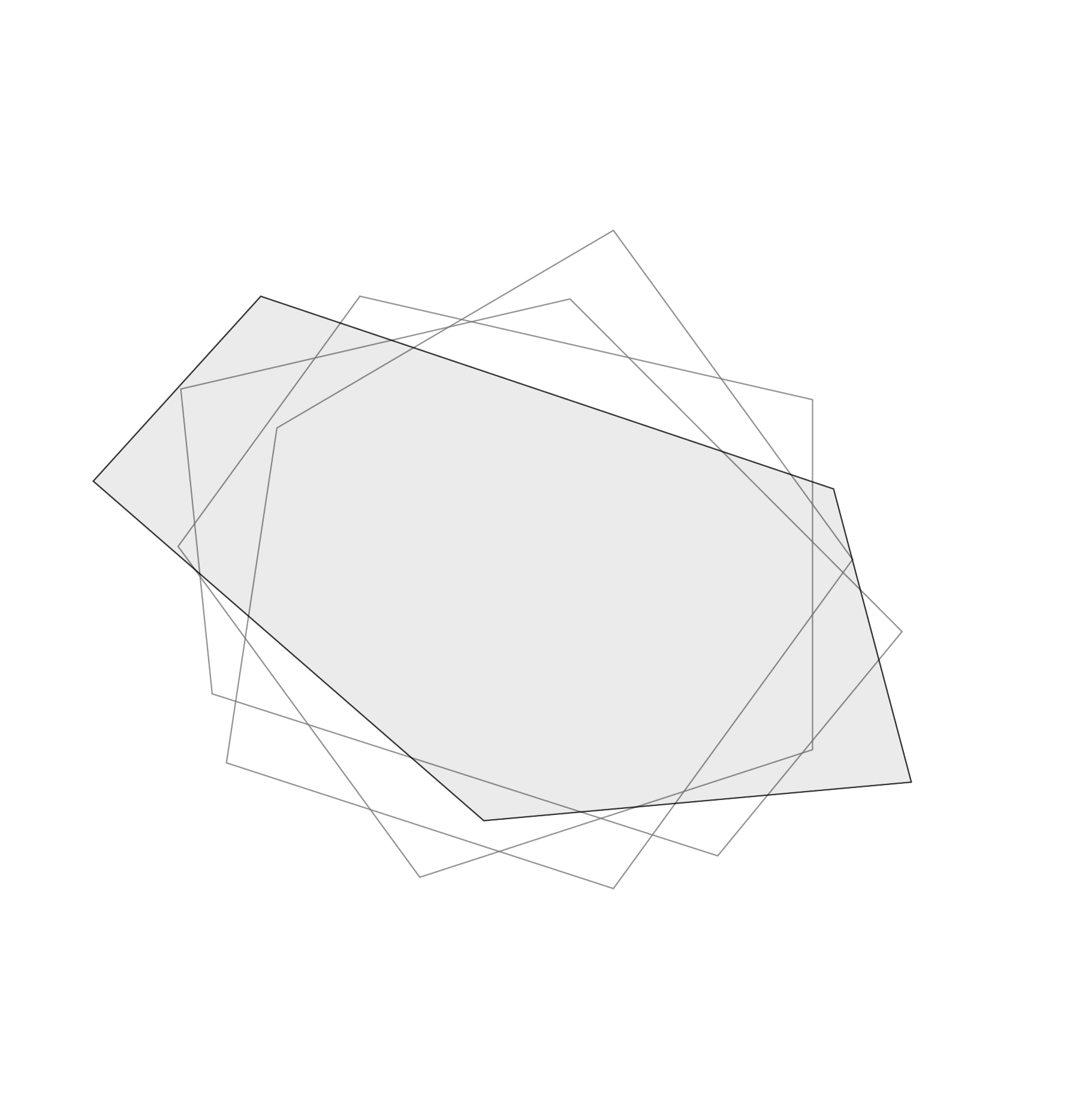}
        \caption{$3$ iterations of the~area-normalized pentagram map}
        \label{fig:pentagramIterationsIntroB}
    \end{subfigure}

    \vspace{0.5cm}

    \begin{subfigure}[t]{0.48\textwidth}
        \centering
        \includegraphics[width=\textwidth]{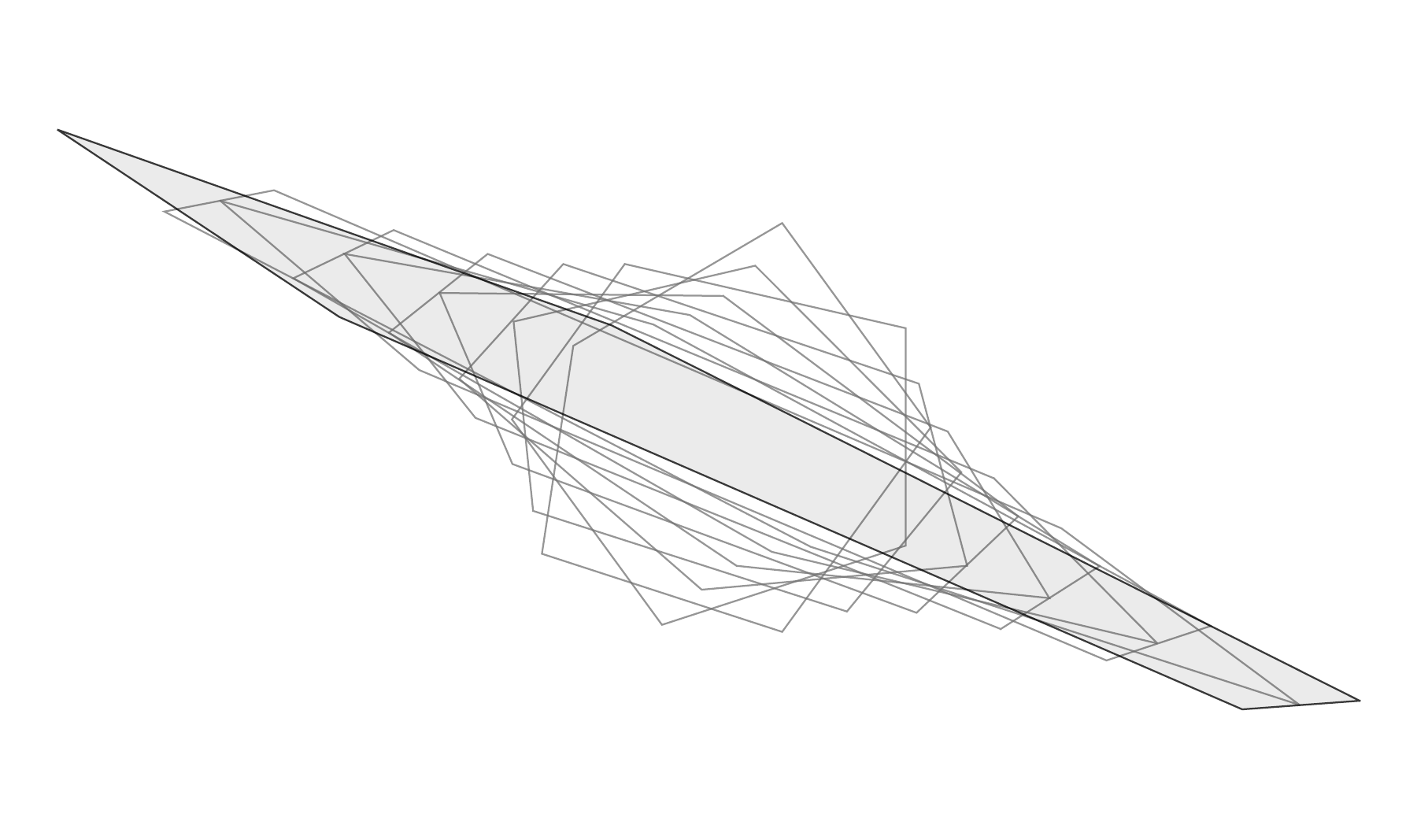}
        \caption{$10$ iterations of the~area-normalized pentagram map}
        \label{fig:pentagramIterationsIntroC}
    \end{subfigure}
    \hfill
    \begin{subfigure}[t]{0.48\textwidth}
        \centering
        \includegraphics[width=\textwidth]{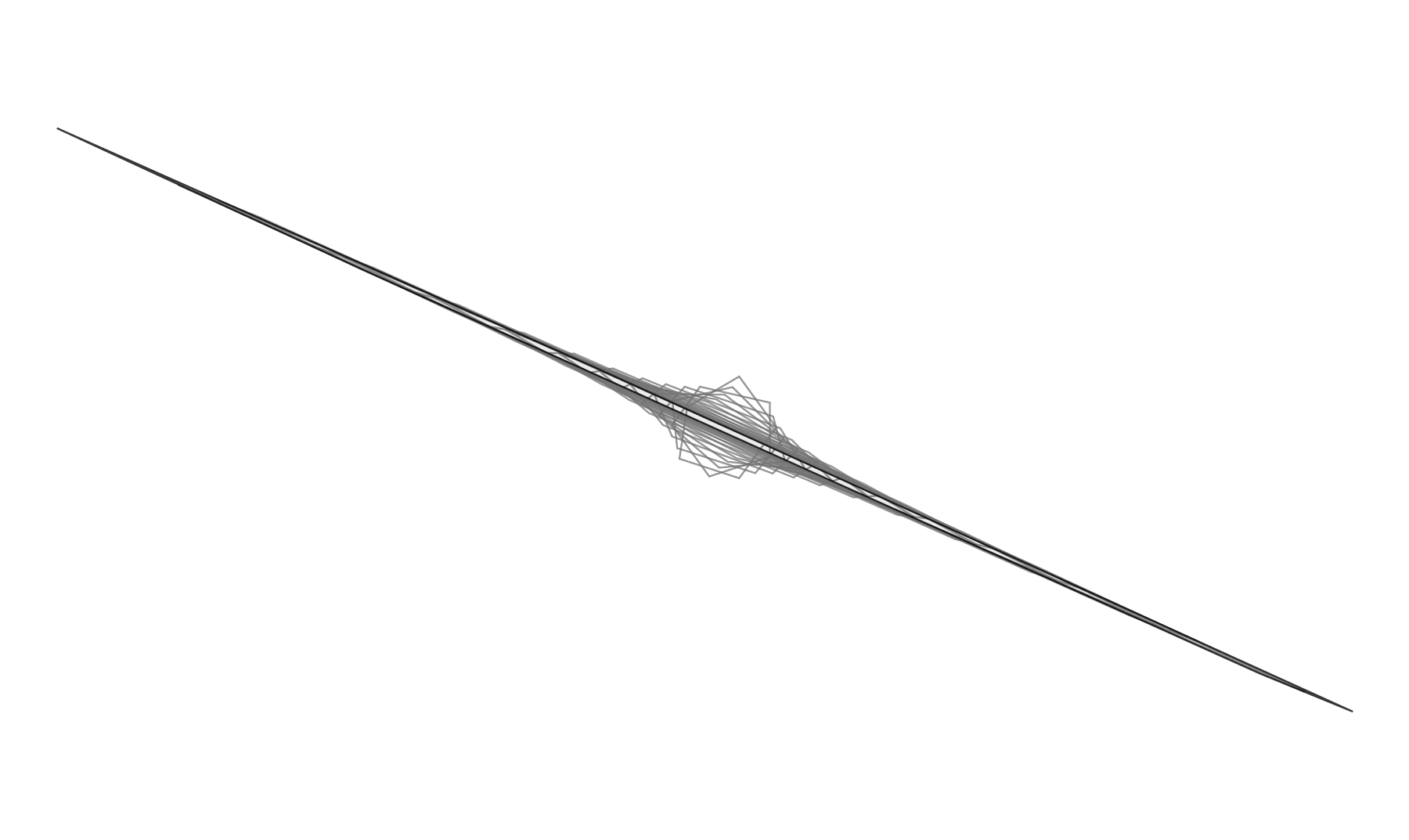}
        \caption{$20$ iterations of the~area-normalized pentagram map}
        \label{fig:pentagramIterationsIntroD}
    \end{subfigure}

    \caption{Iterations of the~classical and area-normalized maps for a~fixed pentagon}
    \label{fig:pentagramIterationsIntro}
\end{figure}

Schwartz also proved recurrence on the~moduli space of convex
polygons \cite{SchwartzRecurrent} and exhibited special finite-time
linear degenerations for axis-aligned polygons
\cite{SchwartzDiscreteMonodromy}.  In a~related setting, Schwartz
studied collapse toward a~limit point for infinite polygonal paths
called pentagram spirals and discussed self-similar asymptotic shapes
in special cases \cite{SchwartzSpirals}.  These phenomena are distinct from the~
asymptotic Euclidean shapes studied here.  We keep the~same projective
orbit, but choose a~Euclidean representative after each step and analyze
its behavior under area normalization.

A~classical theorem of Schwartz says that, for a~convex polygon, the~
ordinary pentagram iterates shrink exponentially to a~point
\cite{SchwartzPentagram}.  Importantly for the~present paper, Schwartz
also studied the~normalized asymptotic shape in the~pentagonal case.  He
showed that the~differential at the~collapse point of the~projectivity
carrying a~convex pentagon to its pentagram image is real-diagonalizable,
and concluded that, outside the~projectively regular case (that is,
outside the~projective class of a~regular pentagon),
constant-area rescalings become exponentially long and thin
\cite[Theorem~2.2]{SchwartzPentagram}.  This result is the~direct
motivation for our work: we ask how far the~pentagonal phenomenon
extends, which spectral data select the~limiting direction, and what
replaces flattening when the~first centered spectral term is genuinely
two-dimensional.

The~purpose of this paper is to place Schwartz's normalized
pentagonal result in a~spectral framework that also covers hexagons,
Poncelet polygons, and, conditionally, any orbit generated by an~iterate
of one projectivity.  Unlike Schwartz's normalized-shape theorem, which
concerns convex pentagons, the~flattening criterion proved in the~following sections is
formulated in terms of local projective spectral dynamics and does not
itself require convexity.  It therefore also applies to nonconvex
labeled pentagons and hexagons whenever their
forward iterates remain defined, have nonzero unsigned area, and satisfy the~stated
spectral and centered two-scale nondegeneracy assumptions.

After each pentagram step we apply a~homothety and a~translation so that
the~unsigned area and the~barycenter are restored to fixed values.  This does not
change the~underlying projective orbit, because a~homothety and a~
translation are projective transformations.  It only chooses a~Euclidean
gauge in which the~collapsing sequence can be seen under a~changing
microscope.  We then ask whether the~resulting shapes have asymptotic
geometric structure: do they become flat, do they oscillate, or do they
approach some other limiting family?

Schwartz's long-and-thin behavior for pentagons persists in a~broader
spectral setting: the~unnormalized iterates may collapse
projectively, while the~area-normalized polygons converge transversely
to a~line (see Figure~\ref{fig:pentagramIterationsIntro}, especially
panels~\subref{fig:pentagramIterationsIntroB}--\subref{fig:pentagramIterationsIntroD}).
Our numerical experiments also
reveal a~complementary possibility.  When a~real eigenvalue is dominant
and the~next spectral block is a~complex-conjugate pair, one observes
bounded elliptic oscillations rather than flattening.

To make this precise, we define an~area-normalized pentagram map
$\That$.  Let
$$
C(P)=\frac1n\sum_{i=1}^n p_i
$$
be the~vertex barycenter of a~polygon $P$ with vertices
$p_1,p_2,\ldots,p_n$, and let
$$
\sArea(P)
=
\frac12\sum_{i=1}^n \det(p_i,p_{i+1})
$$
be the~oriented area of $P$.  We write
$
\Area(P)=|\sArea(P)|
$
for the~absolute value of the~oriented area of $P$.
Whenever $\Area(P)>0$ and $\Area(\T(P))>0$, set
\begin{align}
\label{eq:area-normalized-map}
\That(P)_i
&=
C(P)
+
\sqrt{\frac{\Area(P)}{\Area(\T(P))}}
\bigl(\T(P)_i-C(\T(P))\bigr).
\end{align}
Thus $\That(P)$ is obtained from $\T(P)$ by a~positive homothety and a~
translation chosen so that
$\Area(\That(P))=\Area(P)$,
$C(\That(P))=C(P)$.

The~first observation is that iterating $\That$ does not introduce a~new
projective dynamics.  By affine naturality, a~direct induction shows
that, whenever the~relevant iterates have nonzero unsigned area, $\That^k(P)$ is
obtained from the~classical iterate $\T^k(P)$ by a~single homothety and
a~translation restoring the~initial unsigned area and barycenter.  This makes
the~asymptotic shape of $\That^k(P)$ accessible once one understands
the~anisotropic collapse of $\T^k(P)$.

The~second observation is that Glick's operator $L_P$ is the~natural
spectral object for this problem.  We recall its definition and the~
properties needed here in Section~\ref{sec:glick-spectral-line}; at this
point it is enough to know that it is a~projectively natural linear
operator associated with the~polygon $P$ \cite{GlickLimit}.  For
pentagons and hexagons, the~projectivity governing the~relevant
pentagram step is not arbitrary: it is
\begin{align}
\label{eq:glick-projective-generator}
G_P&=L_P-3I.
\end{align}
With the~labeling convention used here, where
$\Sigma_s(P)_i=p_{i+s}$, one has
$\T(P)=\Sigma_{-1}(G_P(P))$
for pentagons and
$\T^2(P)=G_P(P)$
for hexagons.  Cyclic relabelling does not affect any of the~Euclidean
setwise conclusions considered below.  Thus pentagons and hexagons fit
the~same spectral framework.

For Poncelet polygons, Schwartz's Darboux-type theorem
\cite{SchwartzPoncelet} implies that
$\T^2(P)$ is projectively equivalent to $P$.  The~corresponding
Darboux--Schwartz projectivity commutes with Glick's operator.  Hence,
when $L_P$ has simple spectrum, the~same eigenpoints appear again.
In Subsection~\ref{subsec:poncelet-spectrum} we sharpen this input by
putting a~Poncelet family into Jacobi normal form and computing the~
return projectivity explicitly.  For every strictly convex Poncelet
polygon that is not projectively regular, this return projectivity has
three positive, real, pairwise distinct eigenvalues.  Thus the~general
flattening criterion becomes unconditional in this class.

For pentagons and hexagons we obtain explicit spectral diagrams.
Theorem~\ref{thm:intrinsic-pentagonal-spectrum} identifies a~single
intrinsic invariant $\kappa(P)=-\det(G_P)$ that determines the~entire
pentagonal characteristic polynomial and separates all real and complex
spectral regimes.  Its balanced bracket formula makes projective
invariance explicit, while Lemma~\ref{lem:pentagonal-normal-form} gives
the~corresponding rational expression in a~projective chart and proves
the~strictly convex bound.  The~remaining generic spectral types occur
only on the~nonconvex locus; see
Figure~\ref{fig:pentagon-spectral-regions}.
Lemma~\ref{lem:centered-spectral-invariants} gives intrinsic
trace--determinant and bracket formulas for the~centered spectral
coefficients of $L_P$.  Proposition~\ref{prop:hexagon-spectral-diagram}
specializes them to two projective coefficients $\mathcal S$ and
$\mathcal R$ for every hexagon and describes the~complete cubic
spectral diagram.  Corollary~\ref{cor:convex-hexagon-spectral-restriction}
shows that strict convexity forces $\mathcal S,\mathcal R>0$; in this
positive chamber a~non-real conjugate pair is always subdominant.
The~Poncelet return spectrum gives a~third, one-parameter diagram:
Corollary~\ref{cor:convex-poncelet-spectrum} places its strictly convex
locus entirely in the~positive real simple-spectrum chamber.
These results combine with the~general flattening and elliptic criteria
to recover Schwartz's pentagonal long-and-thin conclusion spectrally
for every strictly convex pentagon outside the~projectively regular
class (Corollary~\ref{cor:convex-pentagon-flattening}) and
to prove the~nondegenerate line-or-ellipse dichotomy of
Theorem~\ref{thm:convex-hexagon-dichotomy}.

The~general asymptotic analysis also yields finer information.  In the~
real two-scale regime, Corollary~\ref{cor:hyperbolic-vertex-asymptotics}
identifies the~leading vertex asymptotics: whenever both leading
coefficients are nonzero, the~corresponding compensated subsequence is
asymptotic to a~hyperbola whose asymptotes are the~longitudinal and
transverse spectral lines.  If the~projective return occurs only after
$\T^m$, the~orbit splits into $m$ residue classes, each governed by the~
same return projectivity.  Exact nonconvex heptagonal and octagonal
examples have minimal return times $m=3$ and $m=4$; their computed
normalized orbits illustrate, respectively, the~elliptic and flattening
regimes.  Theorem~
\ref{thm:principal-lines-convergence} also shows that, in the~
flattening regime, the~principal lines of the~normalized polygons
converge in direction to the~selected spectral line.

For general $n\geq7$, the~conserved operator $L_P$ no longer generates
the~orbit.  The~spectral calculations and examples nevertheless lead
to two conjectural line-selection regimes: distinct real spectra select
one of the~two spectral tangent lines through the~projective collapse
point, while a~dominant non-real pair selects its canonical real
invariant line.  These
observations are formulated in
Conjecture~\ref{conj:tangent-line-selection}.  In contrast, when the~
real eigenvalue is dominant and the~remaining pair is non-real, no
comparable universal pattern has emerged outside the~projectively
periodic setting.

Section~\ref{sec:normalized-map} defines the~normalization and its
forward domain, and Section~\ref{sec:glick-spectral-line} develops the~
spectral invariants.  Section~\ref{sec:two-scale-criterion} proves the~
flattening results and computes the~Poncelet, pentagonal, and hexagonal
spectra.  Section~\ref{sec:elliptic-oscillatory} treats the~
complementary elliptic regime, Section~\ref{sec:examples-evidence}
separates exact returns from numerical evidence, and
Section~\ref{sec:conjectures} states the~remaining conjectures.

Symbolic and numerical computations, including the~spectral
calculations and the~generation of the~figures, were carried out using
Wolfram Mathematica~\cite{Mathematica}.

\section{The~area-normalized pentagram map}
\label{sec:normalized-map}

\noindent Let $P=(p_1,\ldots,p_n)$ be a~labeled polygon in $\R^2$.  We always
assume that all indices are read modulo $n$.

The~pentagram map is defined whenever the~lines
$p_ip_{i+2}$ and $p_{i+1}p_{i+3}$ are well defined and not parallel.
Then
$$
\T(P)_i
=
(p_ip_{i+2})\cap(p_{i+1}p_{i+3}).
$$

The~map is projectively natural: if $F$ is a~projective transformation
for which both sides are defined, then $\T(F(P))=F(\T(P))$.
In particular, it is affinely natural.

The~area-normalized map $\That$ is the~map defined in
\eqref{eq:area-normalized-map}; throughout, its domain is restricted to
polygons for which $\T(P)$ is defined and both the~current and next
unsigned areas are nonzero.

Let
$$
Q_k=\T^k(P)
\qquad\text{and}\qquad
P_k=\That^k(P).
$$
Whenever all the~iterates involved are defined and have nonzero
unsigned area,
affine naturality and induction give
\begin{align}
\label{eq:normalization-classical-iterates}
P_k
&=
C(P)
+
\sqrt{\frac{\Area(P)}{\Area(Q_k)}}
\bigl(Q_k-C(Q_k)\bigr).
\end{align}
Indeed, at each step the preceding affine normalization can be
commuted through the~pentagram map, while the~successive area-scaling
factors telescope.  Thus the~normalized dynamics is simply the~
classical dynamics viewed under a~changing Euclidean microscope.

We also record here the~elementary genericity statement concerning the~
domain of the~normalized map.  Placing it at this point separates the~
question of whether the~orbit is defined from the~later spectral
assumptions.

\begin{proposition}[Generic forward domain]
\label{prop:generic-forward-domain}
Fix $n\geq5$ and put $\mathcal P_n=(\R^2)^n$.  For $N\geq0$, let
$\mathcal U^{(N)}\subset\mathcal P_n$ be the~set of labeled polygons
for which $\T^k(P)$ is defined and has nonzero oriented area for every
$0\leq k\leq N$.  Then $\mathcal U^{(N)}$ is open and dense.  In every
affine coordinate chart it is the~common nonvanishing set of finitely
many nonzero polynomials, and hence its complement is a~finite union of
proper real-algebraic subsets.

Consequently, the set
$$
\mathcal U^\infty
=
\bigcap_{N=0}^{\infty}\mathcal U^{(N)}
$$
is residual and dense in $\mathcal P_n$.
\end{proposition}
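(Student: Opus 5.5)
The plan is to write the iterates of $\T$ as rational maps of the vertex coordinates and reduce everything to a Zariski-open condition on the irreducible affine space $\mathcal P_n\cong\R^{2n}$. For a single step, $\T(P)_i$ is the intersection of the lines $p_ip_{i+2}$ and $p_{i+1}p_{i+3}$. By Cramer's rule,
$$
\T(P)_i=\frac{N_i(P)}{D_i(P)},\qquad D_i(P)=\det\bigl(p_{i+2}-p_i,\;p_{i+3}-p_{i+1}\bigr),
$$
where $N_i$ is a vector of polynomials. Vanishing of $D_i$ covers both the case of parallel lines and the case of a degenerate line through coincident points. So $\T$ is defined exactly where $\prod_i D_i\neq0$.

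We then argue by induction on $k$. Write $\T^k(P)_i=N^{(k)}_i/D^{(k)}$ with a common polynomial denominator $D^{(k)}$. We let $\Delta_k$ be the polynomial obtained by clearing denominators in the step-$k$ conditions, namely the products of the $D_i$ evaluated at $\T^{k-1}(P)$. Then $\T^k$ is defined on $\{\Delta_1\cdots\Delta_k\neq0\}$. On this set, $\sArea(\T^k(P))$ equals a polynomial $\alpha_k$ divided by a power of $D^{(k)}$, and $\alpha_k$ is obtained by substituting the rational coordinates into the quadratic form $\sArea$. Therefore
$$
\mathcal U^{(N)}=\Bigl\{\textstyle\prod_{k\le N}\Delta_k\,\alpha_k\neq0\Bigr\}.
$$
This is open, and it is the common nonvanishing set of finitely many polynomials. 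Any affine chart change is a polynomial isomorphism, so the same description holds in every chart.

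The main obstacle is showing that each of these polynomials is \emph{nonzero}, since the nonvanishing set of a nonzero polynomial on $\R^{2n}$ is dense. It suffices to exhibit a single polygon $P_0$ such that $\T^k(P_0)$ is defined with nonzero area for all $k\ge 0$. A natural choice is the regular $n$-gon for $n\ge5$. Its short diagonals are never parallel, because $n\ge5$ makes each pair of consecutive short diagonals genuinely cross. The image $\T(P_0)$ is again a regular $n$-gon, rotated and scaled by a positive factor, and hence has nonzero area. By induction $P_0\in\mathcal U^\infty$. Consequently none of the $\Delta_k$, $\alpha_k$ vanishes identically. A small bookkeeping point remains: $\Delta_k$ must be defined so that it is nonzero at $P_0$, which holds because it is a product of expressions each nonzero at $P_0$. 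One could alternatively invoke Schwartz's convex-collapse result, but the regular polygon suffices.

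Finally, the complement of $\mathcal U^{(N)}$ is a finite union of zero sets of nonzero polynomials, that is, proper real-algebraic subsets. Each such set is closed and nowhere dense, since a polynomial vanishing on an open set vanishes identically. Hence $\mathcal U^{(N)}$ is open and dense. The set $\mathcal U^\infty$ is a countable intersection of open dense sets, so it is residual by the Baire category theorem in $\R^{2n}$, and therefore dense. In fact its complement is a countable union of measure-zero algebraic sets.
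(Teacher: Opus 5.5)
Your proof is correct and is essentially the paper's argument: write the iterates as rational maps, clear the accumulated denominators to get finitely many polynomial nonvanishing conditions cutting out $\mathcal U^{(N)}$, use the regular $n$-gon to show that none of these polynomials vanishes identically, and finish with the Baire category theorem. Your version makes the bookkeeping more explicit (Cramer's rule, the determinants $D_i$, the exact equality of sets). One small correction to your justification: $n\geq5$ is not what prevents the short diagonals from being parallel, since they are non-parallel for every $n\geq3$; it is what makes the scale factor $\cos(2\constpi/n)/\cos(\constpi/n)$ positive, which you do correctly assert.
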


\begin{proof}
The~oriented area is a~nonzero polynomial in the~vertex coordinates,
and the~condition that the~two short diagonals defining a~vertex of
$\T(P)$ are not parallel is the~nonvanishing of a~determinant.  Since
$\T$ is rational on its one-step domain, the~conditions that the~next
iterate be defined and have nonzero oriented area are again
nonvanishing conditions for rational functions.  Repeating this up to
time $N$ and clearing all previously occurring denominators produces a~
finite list of polynomial nonvanishing conditions.

None of these polynomials is identically zero: a~regular convex
$n$-gon has all forward pentagram iterates defined and of nonzero area.
Thus $\mathcal U^{(N)}$ is open and dense.  Finally,
$\mathcal P_n$ is a~complete metric space, so the~Baire category theorem
implies that the~countable intersection $\mathcal U^\infty$ is residual
and dense.
\end{proof}

No openness is asserted for $\mathcal U^\infty$.  Although each
finite-time domain is open, exceptional algebraic sets arising at later
and later iterates may accumulate.  Finite-time degenerations of special
axis-aligned polygons illustrate the~kind of boundary behavior that
can occur \cite{SchwartzDiscreteMonodromy}.

\section{Glick's operator, centered invariants, and the~spectral line}
\label{sec:glick-spectral-line}

\noindent We now recall Glick's operator \cite{GlickLimit}.  Let
$P=(p_1,\ldots,p_n)$ be a~labeled projective polygon such that every
consecutive triple $p_{i-1},p_i,p_{i+1}$ is noncollinear, and choose
nonzero homogeneous lifts $\widetilde p_i\in\R^3$.
Define
\begin{align}
\label{eq:glick-operator}
L_P(v)
&=
nv-
\sum_{j=1}^n
\frac{\det\left(\widetilde p_{j-1},v,\widetilde p_{j+1}\right)}
{\det\left(\widetilde p_{j-1},\widetilde p_j,\widetilde p_{j+1}\right)}
\widetilde p_j.
\end{align}
This definition is projective: it is independent of the~chosen lifts.
Aboud and Izosimov later interpreted $L_P$, up to addition of a~scalar
operator, as the~infinitesimal monodromy associated with the~scaling
deformation of a~closed polygon \cite{AboudIzosimov}.

The~following results of Glick are the~basic input for our work.  We use
only the~parts of Glick's theory which are needed for the~spectral
arguments below.

\begin{proposition}[Glick \cite{GlickLimit}]
\label{prop:glick}
Let $P$ lie in the~domain just specified.  Each assertion involving
pentagram images or iterates is made on the~additional common domain on
which those images or iterates are defined.  Then $L_P$ has the~
following properties.
\begin{enumerate}[label=\textup{(\roman*)}]
\item If $\psi\in \GL(3,\R)$ represents a~projective transformation,
then $L_{\psi(P)}=\psi L_P\psi^{-1}.$
In particular, the~characteristic polynomial of $L_P$ is a~projective
invariant.
\item The~operator is conserved by the~pentagram map: $L_{\T(P)}=L_P.$
\item If $P$ is convex and $\T^k(P)\to X,$
then the~homogeneous lift of $X$ is an~eigenvector of $L_P$.
\item With the~convention $\Sigma_s(P)_i=p_{i+s}$ used here,
$L_P-3I$ sends a~pentagon $P$ to $\Sigma_1(\T(P))$ and sends a~hexagon
$P$ to $\T^2(P)$.  If $L_P-3I$ is invertible, these linear identities
realize the~corresponding projective equivalences.
\item The~trace of $L_P$ is $2n$.
\end{enumerate}
\end{proposition}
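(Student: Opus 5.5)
Most of these assertions are cited results of Glick, so I would verify each item directly from the definition \eqref{eq:glick-operator}, relying on Glick's theory only for the two genuinely dynamical items, (ii) and (iii). First, independence of lifts holds because replacing $\widetilde p_j$ by $\lambda_j\widetilde p_j$ scales the numerator and denominator of the $j$th term by $\lambda_{j-1}\lambda_{j+1}$ and multiplies the vector $\widetilde p_j$ by $\lambda_j$, so each summand is unchanged.

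For (i), take $\psi\in\GL(3,\R)$ and use $\det(\psi a,\psi b,\psi c)=\det\psi\,\det(a,b,c)$. The factor $\det\psi$ cancels in each ratio. Substituting $v=\psi w$ then gives
$$
L_{\psi(P)}(\psi w)=n\psi w-\sum_j \frac{\det(\widetilde p_{j-1},w,\widetilde p_{j+1})}{\det(\widetilde p_{j-1},\widetilde p_j,\widetilde p_{j+1})}\psi\widetilde p_j=\psi L_P w,
$$
and invariance of the characteristic polynomial follows. For (v), the trace of the rank-one map $v\mapsto \det(a,v,c)\,b$ equals $\det(a,b,c)$. Each summand therefore contributes trace $1$, and $\tr L_P=3n-n=2n$.

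For (iv) I would compute directly. By (i) and the projective naturality of $\T$, I may normalize the pentagon (or four points of the hexagon) to a standard frame, say $p_1,p_2,p_3,p_4$ equal to $e_1,e_2,e_3,e_1+e_2+e_3$ with one or two free vertices. I would then evaluate $(L_P-3I)\widetilde p_i$ and check that it is proportional to the lift of $\T(P)_{i+1}$ in the pentagon case, and of $\T^2(P)_i$ in the hexagon case. For instance, in the pentagon case I would verify that the vector lies on both diagonals $p_{i+1}p_{i+3}$ and $p_{i+2}p_{i+4}$ by checking that two determinants vanish. By cyclic symmetry of the formula, it suffices to check a single index. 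When $L_P-3I$ is invertible, it is a projectivity mapping the vertices in order, which gives the claimed projective equivalence. I expect this symbolic verification to be the main obstacle, especially the hexagon identity $\T^2(P)=G_P(P)$. There I would rely on a computer-algebra check in the normalized chart, as the paper does elsewhere with Mathematica.

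Items (ii) and (iii) I would take directly from Glick \cite{GlickLimit}. For (ii), his argument expresses $L_P$ through the corner invariants, or equivalently the $y$-parameters, and shows that they transform so that $L$ is unchanged. For (iii), conservation gives $L_{\T^k(P)}=L_P$. Convexity ensures that the iterates shrink to $X$, and the normalized terms of $L_{\T^k(P)}$ then converge, so the limit relation forces $L_P\widetilde X$ to be proportional to $\widetilde X$. Since the paper only quotes these items, the plan is to cite them and not reprove them.
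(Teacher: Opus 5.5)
The paper gives no argument beyond citation: every item is attributed to Glick. The trace identity is his Proposition~2.2, items (i)--(iii) are Proposition~2.3, Theorem~3.1 and Proposition~1.2, and (iv) is taken from his introduction. Your route is genuinely different, because you verify the purely algebraic items from \eqref{eq:glick-operator} and cite only the two dynamical ones. Your arguments for (i) and (v) are correct. Item (v) is exactly the rank-one trace computation ($c_{ii}=1$) that the paper itself uses in Lemma~\ref{lem:centered-spectral-invariants}.

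What your approach buys is independence from the source for everything except conservation and the limit-point statement. In (iv) it also gives an actual check of the index conventions (a shift $\Sigma_1$ for pentagons, none for hexagons), which are easy to get wrong when transcribing from a paper with different labeling. For pentagons you need neither a frame nor a computer. Since $c_{ii}=1$ and $c_{i\pm1,i}=0$,
$$(L_P-3I)\widetilde p_i=\widetilde p_i-c_{i+2,i}\widetilde p_{i+2}-c_{i+3,i}\widetilde p_{i+3}.$$
Its determinants with $\widetilde p_{i+1},\widetilde p_{i+3}$ and with $\widetilde p_{i+2},\widetilde p_{i+4}$ vanish by a one-line bracket cancellation, for every $i$ at once. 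Only the hexagon needs symbolic computation. If you do it in a frame chart, note that the domain of $L_P$ only forces consecutive triples to be noncollinear. The chart therefore covers only a dense open set, and you must finish with the polynomial-identity/density argument used for $\kappa$ in Theorem~\ref{thm:intrinsic-pentagonal-spectrum}.

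Two small corrections. First, in the lift check the denominator of the $j$th term scales by $\lambda_{j-1}\lambda_j\lambda_{j+1}$, not $\lambda_{j-1}\lambda_{j+1}$. The resulting factor $\lambda_j^{-1}$ is what cancels the rescaling of $\widetilde p_j$; as you wrote it, a factor $\lambda_j$ would survive.

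Second, your heuristic for (iii) is not the real mechanism. For generic $v$ the coefficients of $L_{\T^k(P)}$ blow up as the polygon shrinks: the numerator is of the order of the diameter and the denominator of the order of a triangle area. So the individual terms do not converge. A short proof from (ii) evaluates instead at $v=\widetilde X$ with affine lifts. The point $X$ lies in the interior of the convex polygon $\T^{k+1}(P)$, whose sides lie on the short diagonals of $Q_k=\T^k(P)$. Hence $X$ is strictly on the opposite side of each short diagonal $q_{j-1}q_{j+1}$ from $q_j$, and every coefficient $\varphi_j(\widetilde X)$ is negative. Thus the fixed vector $n\widetilde X-L_P\widetilde X$ is a negative multiple of a convex combination of the $\widetilde q_{k,j}$. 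Comparing third coordinates and letting $k\to\infty$ shows it is a multiple of $\widetilde X$.
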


The~trace identity is Proposition~2.2 of \cite{GlickLimit}.
Items~\textup{(i)}--\textup{(iii)} are, respectively,
Proposition~2.3, Theorem~3.1, and Proposition~1.2 of that paper; the~
pentagon and hexagon identities in \textup{(iv)} are recorded in its
introduction.

\begin{remark}
The~last statement means that, for pentagons and hexagons, the~
projectivity controlling the~relevant pentagram iterate is canonical.
It is not obtained by choosing four point correspondences; it is
provided directly by $L_P$.
\end{remark}

We next isolate two projective invariants that encode the~characteristic
polynomial of $L_P$ for every number of vertices.  They will later
specialize to the~one-parameter pentagonal diagram and the~
two-parameter hexagonal diagram.

\begin{lemma}[Centered spectral invariants and bracket formulas]
\label{lem:centered-spectral-invariants}
Let $P=(p_1,\ldots,p_n)$ be a~labeled polygon in the~domain of
Glick's operator, and choose arbitrary nonzero homogeneous lifts
$\widetilde p_i\in\R^3$.  Put
$$
[ijk]
=
\det(\widetilde p_i,\widetilde p_j,\widetilde p_k)
$$
and, with indices understood modulo $n$,
$$
c_{ij}
=
\frac{[i-1,j,i+1]}{[i-1,i,i+1]}.
$$
Define the~centered operator and its two spectral coefficients by
$$
\widehat L_P
=
L_P-\frac{2n}{3}I,
\qquad
\mathcal S_n(P)
=
\frac12\tr(\widehat L_P^2),
\qquad
\mathcal R_n(P)
=
\det(\widehat L_P).
$$
Then $\mathcal S_n$ and $\mathcal R_n$ are projective invariants and
\begin{align}
\label{eq:centered-characteristic-polynomial}
\chi_{L_P}(t)
&=
\left(t-\frac{2n}{3}\right)^3
-
\mathcal S_n(P)
\left(t-\frac{2n}{3}\right)
-
\mathcal R_n(P).
\end{align}
Moreover,
$$
\mathcal S_n(P)
=
\frac12
\left(
\sum_{i,j=1}^n c_{ij}c_{ji}
-
\frac{n^2}{3}
\right),
\quad\text{and}\quad
\mathcal R_n(P)
=
-\frac{2n^3}{27}
+
\frac n3
\sum_{i,j=1}^n c_{ij}c_{ji}
-
\frac13
\sum_{i,j,k=1}^n
c_{ij}c_{jk}c_{ki}.
$$
In particular, the~spectrum of $L_P$ is completely determined by the~
pair
$$
\bigl(\mathcal S_n(P),\mathcal R_n(P)\bigr).
$$
\end{lemma}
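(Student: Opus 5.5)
The plan is to reduce everything to elementary linear algebra of a $3\times3$ operator with known trace, and then to compute traces of powers of $L_P$ through a factorization.

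\textbf{Projective invariance and the centered polynomial.} By Proposition~\ref{prop:glick}\textup{(i)}, $\widehat L_P$ transforms by conjugation under projectivities, since $\frac{2n}{3}I$ is central. Hence $\tr(\widehat L_P^2)$ and $\det(\widehat L_P)$ are projective invariants. By Proposition~\ref{prop:glick}\textup{(v)}, $\tr\widehat L_P=2n-2n=0$. For any traceless $3\times3$ matrix $M$, Cayley--Hamilton (or Newton's identities) gives
$$
\chi_M(s)=s^3-\tfrac12\tr(M^2)\,s-\det M .
$$
Substituting $M=\widehat L_P$ and $s=t-\frac{2n}{3}$ yields \eqref{eq:centered-characteristic-polynomial}. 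The final sentence of the lemma then follows at once.

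\textbf{Bracket formulas.} Write $L_P=nI-K$, where $K(v)=\sum_j c_j(v)\widetilde p_j$ with the linear functional $c_j(v)=[j-1,v,j+1]/[j-1,j,j+1]$. Then $K=AB$, where $A\colon\R^n\to\R^3$ sends $e_j\mapsto\widetilde p_j$ and $B\colon\R^3\to\R^n$ sends $v\mapsto(c_j(v))_j$. By cyclicity of the trace, $\tr(K^m)=\tr((BA)^m)$, and the $n\times n$ matrix $BA$ has entries $(BA)_{ij}=c_i(\widetilde p_j)=c_{ij}$. Consequently
$$
\tr K^2=\sum_{i,j}c_{ij}c_{ji},
\qquad
\tr K^3=\sum_{i,j,k}c_{ij}c_{jk}c_{ki}.
$$
Also $\tr K=\sum_i c_{ii}=n$, because $c_{ii}=1$; this is consistent with item~\textup{(v)}. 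The choice of lifts does not matter: rescaling $\widetilde p_j$ conjugates $BA$ by a diagonal matrix, so these cyclic products are unchanged.

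Now $\widehat L_P=\frac n3 I-K$. Hence
$$
\tr\widehat L_P^2=\tfrac{n^2}{3}-\tfrac{2n}{3}\tr K+\tr K^2=\tr K^2-\tfrac{n^2}{3},
$$
which gives the formula for $\mathcal S_n$.

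For the determinant, set $M=\widehat L_P$ and use $\det M=\frac13\tr(M^3)$, valid for traceless $M$. Expanding with $\tr K=n$,
$$
\tr\bigl(\tfrac n3 I-K\bigr)^3
=3\tfrac{n^3}{27}-3\tfrac{n^2}{9}\,n+3\tfrac n3\tr K^2-\tr K^3
=-\tfrac{2n^3}{9}+n\tr K^2-\tr K^3 .
$$
Dividing by $3$ gives the stated expression for $\mathcal R_n$.

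\textbf{Main obstacle.} The only delicate point is the factorization $K=AB$ together with the identification $(BA)_{ij}=c_{ij}$. Here I must check the index convention of $c_{ij}$ against the sign and ordering in \eqref{eq:glick-operator}, namely that $\det(\widetilde p_{j-1},v,\widetilde p_{j+1})$ matches $[i-1,j,i+1]$ after renaming. The rest is bookkeeping of the trace identities.
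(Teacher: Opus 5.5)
Your proposal is correct and follows the paper's proof essentially step by step. The factorization $K=AB$ with $\tr(K^m)=\tr((BA)^m)$ is just a compact form of the paper's trace identities for the rank-one sum $\mathcal B_P$, and the index check you flag works out as you expect, since $c_i(\widetilde p_j)=[i-1,j,i+1]/[i-1,i,i+1]=c_{ij}$. The remaining steps match the paper: centering, the identity $\det M=\frac13\tr M^3$ for traceless $M$, the depressed-cubic form of $\chi$, and invariance by conjugation.
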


\begin{proof}
For each $i$, let
$$
\varphi_i(v)
=
\frac{
\det(\widetilde p_{i-1},v,\widetilde p_{i+1})
}{
[i-1,i,i+1]
}.
$$
If $\mathcal B_P$ denotes the~rank-one sum
$$
\mathcal B_P(v)
=
\sum_{i=1}^n
\varphi_i(v)\widetilde p_i,
$$
then Glick's operator \eqref{eq:glick-operator} becomes $L_P=nI-\mathcal B_P.$
Since $\varphi_i(\widetilde p_j)=c_{ij}$, the~trace identities for
products of rank-one operators give
$$
\tr(\mathcal B_P^2)
=
\sum_{i,j=1}^n c_{ij}c_{ji}
\quad\text{and}\quad
\tr(\mathcal B_P^3)
=
\sum_{i,j,k=1}^n c_{ij}c_{jk}c_{ki}.
$$
Also, $c_{ii}=1$, so $\tr \mathcal B_P=n$, consistently with
$\tr L_P=2n$.

Now
$\widehat L_P=\frac n3 I-\mathcal B_P$.
Expanding the~trace of its square yields $
\tr(\widehat L_P^2)
=
\tr(\mathcal B_P^2)-\frac{n^2}{3},$
which proves the~formula for $\mathcal S_n$.  Since
$\tr\widehat L_P=0$, Newton's identities for a~$3\times3$ matrix give $\det(\widehat L_P)
=
\frac13\tr(\widehat L_P^3).$
Expanding the~cube and using $\tr \mathcal B_P=n$ gives
$$
\det(\widehat L_P)
=
-\frac{2n^3}{27}
+
\frac n3\tr(\mathcal B_P^2)
-
\frac13\tr(\mathcal B_P^3),
$$
which is the~stated formula for $\mathcal R_n$.

For every traceless $3\times3$ matrix $A$,
$$
\det(zI-A)
=
z^3-\frac12\tr(A^2)z-\det A.
$$
Applying this to $A=\widehat L_P$ proves the~characteristic-polynomial
formula.  Finally, projective covariance of $L_P$ implies covariance of
$\widehat L_P$ by conjugation, so both
$\tr(\widehat L_P^2)$ and $\det(\widehat L_P)$ are projectively
invariant.
\end{proof}

\begin{remark}[Area ratios, balanced brackets, and cross-ratios]
\label{rem:bracket-interpretation}
For the~standard affine lifts
$\widetilde p_i=(x_i,y_i,1)^{\top}$, one has
$$
[ijk]
=
\det(p_j-p_i,p_k-p_i)
=
2\sArea(p_ip_jp_k).
$$
Thus a~bracket is twice the~oriented area of the~corresponding
triangle.  A~single bracket is not projectively invariant, but a~
balanced quotient of brackets can be.  Indeed, under a~change of lifts
$\widetilde p_i\mapsto s_i\widetilde p_i$,
$$
c_{ij}\mapsto\frac{s_j}{s_i}c_{ij}.
$$
Consequently the~closed-cycle products
$m_{ij}=c_{ij}c_{ji}$,
$\Gamma_{ijk}=c_{ij}c_{jk}c_{ki}$
are independent of all choices of lifts and are projective invariants.
Explicitly,
$$
m_{ij}
=
\frac{
[i-1,j,i+1]\,[j-1,i,j+1]
}{
[i-1,i,i+1]\,[j-1,j,j+1]
}
\quad\text{and}\quad
\Gamma_{ijk}
=
\frac{
[i-1,j,i+1]\,
[j-1,k,j+1]\,
[k-1,i,k+1]
}{
[i-1,i,i+1]\,
[j-1,j,j+1]\,
[k-1,k,k+1]
}.
$$
These are balanced bracket quotients; ordinary cross-ratios of
collinear points admit the~same kind of bracket representation.  In the~
standard corner coordinates for the~pentagram map, they become rational
functions of the~usual cross-ratio invariants \cite{OST}.
\end{remark}

For the~rest of the~paper, whenever $n=5$ or $n=6$, we use the~notation
from \eqref{eq:glick-projective-generator} and write
$$
G_P=L_P-3I.
$$
The~eigenpoints of $G_P$ and $L_P$ are the~same, although their
eigenvalues are shifted.  If $L_P v_i=\lambda_i v_i,$ then $G_P v_i=(\lambda_i-3)v_i.$
Thus the~relevant ordering is by $|\mu_i|=|\lambda_i-3|,$
not necessarily by $|\lambda_i|$.

\begin{definition}[Dominant real spectral line]
\label{def:dominant-real-spectral-line}
Let $G$ be a~real projectivity, represented by a~real invertible
linear map on $\R^3$.  A~projective line $\ell_G\subset\RP^2$ is
called a~\emph{dominant real spectral line} of $G$ if it is the~
projectivization $\ell_G=\mathbb P(E)$
of a~real two-dimensional $G$-invariant subspace
$E\subset\R^3$ which dominates the~complementary spectral direction.
In this paper we use the~following two cases.

\begin{enumerate}[label=\textup{(\roman*)}]
\item $G$ has three real eigenpoints $X_1,X_2,X_3$ with eigenvalues $|\mu_1|>|\mu_2|>|\mu_3|.$
Then
$$
E=\Span\{v_1,v_2\},
\qquad
\ell_G=X_1X_2,
$$
where $X_i=[v_i]$.

\item $G$ has a~spectrally dominant complex-conjugate pair
$\mu,\overline{\mu}$ and one real eigenvalue $\nu$, with $|\mu|>|\nu|.$
If $v\in\mathbb C^3$ is an~eigenvector for $\mu$, then
$$
E=\Span_{\R}\{\operatorname{Re}v,\operatorname{Im}v\},
\qquad
\ell_G=\mathbb P(E).
$$
This is a~real projective line, although it is not determined by two
real eigenpoints.
\end{enumerate}

If $\ell_G$ is not the~line at infinity, let $u$ denote the~direction
of its affine part.  Then, for a~polygon $P$, we
write
$$
\mathcal L(P,G)=C(P)+\R u.
$$
For pentagons and hexagons, where $G=L_P-3I$, this line will also be
called the~eigenline.
\end{definition}

In the~real-eigenvalue case, the~line is the~line through the~two
leading real eigenpoints.  In the~complex case, the~word ``dominant''
is essential: the~complex pair must have larger modulus than the~
remaining real eigenvalue.  Only then does the~real invariant plane
spanned by the~real and imaginary parts of a~complex eigenvector define
the~leading projective line.  Theorem~\ref{thm:spectral-flattening} says that
area-normalization magnifies such a~dominant real projective line
whenever the~centered two-scale asymptotic is nondegenerate.

\section{Projective integrability versus Euclidean normalization}

\noindent The~ordinary pentagram map is usually studied on projective
moduli, where its integrability has geometric, algebraic-geometric,
cluster, network, and refactorization descriptions
\cite{GekhtmanShapiroTabachnikovVainshtein,GlickYPatterns,
IzosimovRefactorization,OST,OSTClosed,Soloviev,Weinreich}.  These formulations concern the~projective
dynamics; the~normalization studied here asks a~different, Euclidean
question.

Our area-normalized map does not change the~projective class of the~
ordinary pentagram iterate.  Indeed, the~passage from $\T(P)$ to
$\That(P)$ is an~affine homothety composed with a~translation, hence a~
projective transformation.  By projective naturality of $\T$, it follows
inductively that
$[\That^k(P)]=[\T^k(P)]$
for every $k$ for which the~iterates are defined, where $[Q]$ denotes
the~projective-equivalence class of a~labeled polygon $Q$.
Consequently, the~two
sequences
$$
P,\T(P),\T^2(P),\ldots
\quad\text{and}\quad
P,\That(P),\That^2(P),\ldots
$$
define the~same trajectory in projective moduli space.  The~flattening
phenomenon studied in this paper is therefore not a~projective
convergence statement.  It is a~statement about a~particular Euclidean
choice of representative along a~projectively natural orbit.

This distinction is especially relevant for $n\geq7$.  In this range,
one should not expect a~generic orbit to be generated by powers of a~
single fixed projectivity; projectively periodic behavior, such as the~
cases considered later in this paper, occurs only on special families.
This is consistent with the~integrable-system picture, where a~suitable
power of the~pentagram map acts by translation on invariant tori or,
algebraically, on Abelian varieties.  In some cases the~map itself
permutes several connected components of a~common level set, producing
a~staircase-type dynamics between invariant tori
\cite{KhesinSolovievNonIntegrability,OST,OSTClosed,Soloviev,Weinreich}.
Thus the~experimental conjectures for $n\geq7$ below should be viewed
as statements about the~Euclidean gauge of a~projectively integrable
orbit, not as statements of projective convergence generated by one
fixed projectivity.

The~corner-coordinate formulas of
Ovsienko--Schwartz--Tabachnikov make the~exceptional sets in
Proposition~\ref{prop:generic-forward-domain} explicit: the~one-step
denominators are $1-x_i y_i$, and, after clearing denominators, every
fixed-time failure locus is algebraic \cite{OST}.  Glick's cluster
$Y$-pattern description and the~directed-network realization provide
complementary algebraic descriptions of the~same rational dynamics
\cite{GekhtmanShapiroTabachnikovVainshtein,GlickYPatterns}.

This also separates our phenomenon from Schwartz's finite-time collapse
for special axis-aligned polygons \cite{SchwartzDiscreteMonodromy}.  In
that result the~ordinary pentagram dynamics reaches a~configuration
whose two parity classes lie on two lines after finitely many steps.  In
contrast, our flat limits are asymptotic, use all vertices of the~
area-normalized polygon, and depend on a~Euclidean normalization which
is not visible on projective moduli space.

Finally, conics enter the~integrable picture in a~natural way.
Schwartz and Tabachnikov proved that the~monodromy invariants satisfy
$E_k=O_k$ for polygons inscribed in a~conic
\cite{SchwartzTabachnikovInscribed}.  Related projective-geometric
results include Tabachnikov's extension of Kasner's commutation theorem
to Poncelet polygons \cite{TabachnikovKasner}.  For the~inverse
pentagram map, Izosimov proved that the~convex polygons whose whole
inverse orbit remains convex form a~codimension-two algebraic subset;
equivalently, this persistence occurs precisely when Glick's operator is
affine as a~projective transformation \cite{IzosimovSides}.

These results provide useful context.  The~general Poncelet input used
below is the~projective-periodicity theorem recalled in
Lemma~\ref{lem:projective-periodicity-input}; for the~strictly convex
subclass we additionally use the~standard Jacobi normal form of a~
Poncelet pencil to compute the~return spectrum explicitly in
Subsection~\ref{subsec:poncelet-spectrum}.  The~last result also
reinforces a~point important here: convexity is not intrinsic to the~
area-normalized mechanism.  Our general flattening theorem can apply to
nonconvex labeled pentagons and hexagons whenever its explicit
forward-orbit, spectral, and two-scale hypotheses hold.

\section{Centered two-scale flattening and projectively periodic polygons}
\label{sec:two-scale-criterion}

\subsection{The~Euclidean two-scale criterion}
\label{subsec:euclidean-two-scale}

\noindent We first isolate the~Euclidean mechanism responsible for
area-normalized flattening.  It is independent of the~special features
of pentagons, hexagons, or Poncelet polygons and does \emph{not} require
the~unnormalized polygons to converge to a~finite point.  The~only
essential input is an~anisotropic two-scale expansion after subtracting
the~barycenter, together with a~nonvanishing leading mixed-area term.

The~model to keep in mind is a~long thin parallelogram.  Its length in
one direction is of order $|\alpha_k|$, its width in a~transverse
direction is of order $|\beta_k|$, and
$$
\frac{|\beta_k|}{|\alpha_k|}\to0.
$$
Its unsigned area is then of order $|\alpha_k\beta_k|$.  Rescaling it
to fixed unsigned area multiplies the~longitudinal and transverse
scales, respectively,
by
$$
\frac{|\alpha_k|}{\sqrt{|\alpha_k\beta_k|}}
=
\sqrt{\frac{|\alpha_k|}{|\beta_k|}}
\to\infty,
\qquad
\frac{|\beta_k|}{\sqrt{|\alpha_k\beta_k|}}
=
\sqrt{\frac{|\beta_k|}{|\alpha_k|}}
\to0.
$$
The~following definition and proposition formalize this mechanism once
and for all.

\begin{definition}[Centered two-scale asymptotic]\label{def:two-scale}
Let $(Q_k)$ be a~sequence of labeled polygons with nonzero area, and
fix an~affine direction $u$.  We say that $(Q_k)$ has a~\emph{centered
two-scale asymptotic in the~direction $u$} if there exist a~vector $v$
transverse to $u$, nonzero real sequences $\alpha_k,\beta_k$, and real
numbers $a_i,b_i$ such that, on writing
\begin{align}
\label{eq:QkiminusCqk}
q_{k,i}-C(Q_k)&=x_{k,i}u+y_{k,i}v,
\end{align}
one has
$$
\frac{x_{k,i}}{\alpha_k}\to a_i,
\qquad
\frac{y_{k,i}}{\beta_k}\to b_i
\quad\text{for all }i,
\qquad
\frac{|\beta_k|}{|\alpha_k|}\to0.
$$
Since the~number of vertices is finite, the~coordinate convergences are
uniform in $i$.  Centering gives
$\sum_i a_i=\sum_i b_i=0$.

The~asymptotic is called \emph{area-nondegenerate} if its leading
mixed-area coefficient
\begin{align}
\label{eq:leading-mixed-area}
B&=
\frac12\sum_i
\det(a_i u+b_i v,\,a_{i+1}u+b_{i+1}v)
\end{align}
is nonzero.  Equivalently, $B$ is the~first nonzero coefficient of the~
oriented area at the~scale $\alpha_k\beta_k$.  In particular,
area-nondegeneracy implies that the~centered longitudinal coefficients
$a_i$ are not all equal.  When $Q_k=G^k(Q)$ for a~projectivity $G$, we
also say that $Q$ has the~corresponding centered two-scale asymptotic
under $G$.
\end{definition}

\begin{proposition}[Centered two-scale flattening criterion]\label{prop:centered-two-scale}
Assume that $(Q_k)$ has an~area-nondegenerate centered two-scale
asymptotic in the~direction $u$ in the~sense of
Definition~\ref{def:two-scale}.  Let $P_k$ be obtained from $Q_k$ by
translating the~barycenter to a~fixed point $C_0$ and rescaling to a~
fixed unsigned area $A_0>0$.  Then
$$
\max_i \dist(p_{k,i},\,C_0+\R u)
=
O\!\left(\sqrt{\frac{|\beta_k|}{|\alpha_k|}}\right)
\to0.
$$
Moreover, if
$D_u=\max_{p,q}|a_p-a_q|\,\|u\|>0$,
then
$$
\diam(P_k)
\sim
D_u\sqrt{\frac{A_0}{|B|}}
\sqrt{\frac{|\alpha_k|}{|\beta_k|}}
\to\infty.
$$
\end{proposition}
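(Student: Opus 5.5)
The plan is to write everything in the centered coordinates of Definition~\ref{def:two-scale} and compute the area asymptotically first, since the rescaling factor is determined by it. By \eqref{eq:QkiminusCqk}, $q_{k,i}-C(Q_k)=x_{k,i}u+y_{k,i}v$. The oriented area is translation invariant and bilinear in consecutive vertices, so
$$
\sArea(Q_k)=\frac12\sum_i\det(x_{k,i}u+y_{k,i}v,\,x_{k,i+1}u+y_{k,i+1}v)
=\frac{\det(u,v)}{2}\sum_i\bigl(x_{k,i}y_{k,i+1}-y_{k,i}x_{k,i+1}\bigr).
$$
Dividing by $\alpha_k\beta_k$ and using the coordinate convergences gives $\sArea(Q_k)/(\alpha_k\beta_k)\to B$. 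Note that no $\alpha_k^2$ or $\beta_k^2$ terms appear, because $\det(u,u)=\det(v,v)=0$; this is what makes the single scale $\alpha_k\beta_k$ correct. Since $B\neq0$, we get $\Area(Q_k)\sim|B|\,|\alpha_k\beta_k|$, and in particular $\Area(Q_k)>0$ for large $k$.

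Next, the normalized polygon is
$$
p_{k,i}=C_0+s_k\bigl(x_{k,i}u+y_{k,i}v\bigr),\qquad s_k=\sqrt{A_0/\Area(Q_k)}\sim\sqrt{A_0/|B|}\,|\alpha_k\beta_k|^{-1/2}.
$$
The distance from $p_{k,i}$ to $C_0+\R u$ is $s_k|y_{k,i}|\,\dist(v,\R u)$. Because $y_{k,i}/\beta_k\to b_i$ for finitely many $i$, we have $|y_{k,i}|=O(|\beta_k|)$ uniformly in $i$. Hence the distance is $O\bigl(|\beta_k|\,|\alpha_k\beta_k|^{-1/2}\bigr)=O\bigl(\sqrt{|\beta_k|/|\alpha_k|}\bigr)$, which is the first claim.

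For the diameter, I would write
$$
p_{k,p}-p_{k,q}=s_k\alpha_k\Bigl(\tfrac{x_{k,p}-x_{k,q}}{\alpha_k}u+\tfrac{\beta_k}{\alpha_k}\cdot\tfrac{y_{k,p}-y_{k,q}}{\beta_k}v\Bigr).
$$
The bracket converges to $(a_p-a_q)u$, because $\beta_k/\alpha_k\to0$ and the $y$-quotients stay bounded. So $\|p_{k,p}-p_{k,q}\|/(s_k|\alpha_k|)\to|a_p-a_q|\,\|u\|$, uniformly over the finitely many pairs $(p,q)$. Taking the maximum over pairs commutes with this finite uniform limit, so $\diam(P_k)/(s_k|\alpha_k|)\to D_u$. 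Substituting $s_k|\alpha_k|\sim\sqrt{A_0/|B|}\sqrt{|\alpha_k|/|\beta_k|}$ gives the stated asymptotic.

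The only delicate point is the area step. It must use $B\neq0$ to justify both that $\Area(Q_k)$ is eventually positive and that its order is exactly $|\alpha_k\beta_k|$; everything after that is bookkeeping. I would also remark that $D_u>0$ follows automatically from $B\neq0$: if all $a_i$ were equal, they would all be zero by centering, and then $B=0$. So the hypothesis $D_u>0$ is in fact redundant.
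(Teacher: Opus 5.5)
Your proposal is correct and follows the paper's proof essentially step by step: the same mixed-term area computation giving $\sArea(Q_k)=\alpha_k\beta_k B+o(|\alpha_k\beta_k|)$, the same asymptotic for the rescaling factor, the same transverse-distance bound $\rho_k|y_{k,i}|\,\dist(v,\R u)$, and the same diameter limit via $(p_{k,p}-p_{k,q})/(\rho_k\alpha_k)\to(a_p-a_q)u$. Your remark that $D_u>0$ is automatic from $B\neq0$ and centering is also exactly the observation the paper uses.
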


\begin{proof}
Let $A_0>0$ be the~prescribed unsigned area after normalization.  Then
$$
p_{k,i}
=
C_0+\rho_k\bigl(q_{k,i}-C(Q_k)\bigr),
\qquad
\rho_k=
\sqrt{\frac{A_0}{\Area(Q_k)}}.
$$
We first compute the~scale of $\Area(Q_k)$.  Oriented area is unchanged
by translation, so
$$
\sArea(Q_k)
=
\frac12\sum_i
\det\bigl(q_{k,i}-C(Q_k),\,q_{k,i+1}-C(Q_k)\bigr).
$$
Using \eqref{eq:QkiminusCqk} and the~identities
$\det(u,u)=\det(v,v)=0$,
$\det(v,u)=-\det(u,v)$,
we get
$$
\det(x_{k,i}u+y_{k,i}v,\,x_{k,i+1}u+y_{k,i+1}v)
=
(x_{k,i}y_{k,i+1}-y_{k,i}x_{k,i+1})\det(u,v).
$$
Equivalently, only the~mixed longitudinal--transverse terms contribute
to the~area.  Since
$x_{k,i}=\alpha_k(a_i+o(1))$,
$y_{k,i}=\beta_k(b_i+o(1))$
uniformly in $i$, it follows that $
\sArea(Q_k)
=
\alpha_k\beta_k B+o(|\alpha_k\beta_k|).$
Because the~coefficient $B$ in \eqref{eq:leading-mixed-area} is nonzero,
this gives
$$
\Area(Q_k)
=
|\alpha_k\beta_k|\,|B|+o(|\alpha_k\beta_k|)
\quad\text{and hence}\quad
\rho_k
\sim
\sqrt{\frac{A_0}{|B|}}\,
\frac1{\sqrt{|\alpha_k\beta_k|}}.
$$

We now estimate the~distance to the~line $C_0+\R u$.  The~component
$\rho_k x_{k,i}u$ lies on this line, while the~transverse component is
$\rho_k y_{k,i}v$.  Since $u$ and $v$ are linearly independent,
$\dist(v,\R u)$ is a~fixed positive constant.  Therefore $\dist(p_{k,i},C_0+\R u)
\leq
\rho_k |y_{k,i}|\,\dist(v,\R u).$
Using $y_{k,i}=O(\beta_k)$ uniformly in $i$, we obtain
$$
\max_i\dist(p_{k,i},C_0+\R u)
=
O\!\left(
\rho_k|\beta_k|
\right)
=
O\!\left(
\sqrt{\frac{|\beta_k|}{|\alpha_k|}}
\right)
\to0.
$$
This proves transverse convergence to $C_0+\R u$.

For the~diameter, uniformly over the~finitely many pairs of vertices,
$$
\frac{p_{k,p}-p_{k,q}}{\rho_k\alpha_k}
=
(a_p-a_q+o(1))u
+
\frac{\beta_k}{\alpha_k}(b_p-b_q+o(1))v
\rightarrow
(a_p-a_q)u.
$$
Area-nondegeneracy implies that the~numbers $a_i$ are not all equal,
so $D_u=\max_{p,q}|a_p-a_q|\,\|u\|$ is positive.  Taking the~maximum
over $p,q$ therefore gives
$$
\frac{\diam(P_k)}{|\rho_k\alpha_k|}\to D_u.
$$
Together with
$$
|\rho_k\alpha_k|
\sim
\sqrt{\frac{A_0}{|B|}}
\sqrt{\frac{|\alpha_k|}{|\beta_k|}},
$$
this proves the~stated diameter asymptotic and, in particular,
$\diam(P_k)\to\infty$.
\end{proof}

Thus flattening is not essentially a~statement about convergence of the~
ordinary pentagram iterates to a~finite point.  It is a~statement about
anisotropy after centering.  The~ordinary iterates may converge to a~
point, escape in the~affine chart, or oscillate in the~longitudinal
coordinate; the~area-normalized iterates still flatten whenever the~
centered two-scale condition holds.

\subsection{Projective periodicity and spectral flattening}
\label{subsec:projective-periodic-flattening}

\noindent We now combine Proposition~\ref{prop:centered-two-scale} with the~
case in which a~positive iterate of the~pentagram map is realized by a~
single projectivity.  The~three classes most important here are
pentagons, hexagons, and Poncelet polygons.  For these standard classes
the~required exponent is $m=1$ or $m=2$, but the~conditional statement
is naturally formulated for an~arbitrary integer $m\geq1$.
Examples~\ref{ex:period-three-heptagon} and
\ref{ex:period-four-octagon} provide exact heptagonal and octagonal
examples with minimal return times $m=3$ and $m=4$, respectively.  For a~
recent general study of collapse in projectively periodic polygonal
dynamics and of the~corresponding infinitesimal-monodromy formalism, see
\cite{StieglerCollapse}.  That
work concerns collapse itself, whereas our focus is the~Euclidean shape
remaining after area normalization.

For a~labeled polygon $P$, its \emph{minimal projective return time}
is the~least integer $m\geq1$ for which there exist a~projectivity $G$
and a~cyclic shift $\Sigma_s$ such that
$\T^m(P)=\Sigma_s(G(P))$.  The~term is used only when all objects in
this identity are defined and such an~integer exists.

\begin{definition}[Poncelet polygon]\label{def:poncelet-polygon}
A~labeled polygon $P=(p_1,\ldots,p_n)$ in the~projective plane is called
a~Poncelet polygon if there exist two nondegenerate conics
$\mathcal C_{\mathrm{out}}$ and $\mathcal C_{\mathrm{in}}$ such that all
vertices $p_i$ lie on $\mathcal C_{\mathrm{out}}$ and every side
$p_ip_{i+1}$ is tangent to $\mathcal C_{\mathrm{in}}$.  Equivalently,
$P$ is a~closed orbit of the~Poncelet construction for the~pair
$(\mathcal C_{\mathrm{out}},\mathcal C_{\mathrm{in}})$.
For a~closed orbit, we write $q/n\in(0,1/2)$ for its primitive rotation
number after choosing an~orientation, where $\gcd(q,n)=1$.
\end{definition}

\begin{definition}[Projectively regular Poncelet polygon]
\label{def:projectively-regular}
A~Poncelet $n$-gon of primitive rotation number $q/n$ is called
\emph{projectively regular} if, with its cyclic labeling, it is
projectively equivalent to the~regular star polygon
$$
\left(
\left(
\cos\frac{2\constpi qj}{n},
\sin\frac{2\constpi qj}{n}
\right)
\right)_{j=0}^{n-1}.
$$
For a~strictly convex Poncelet polygon one has $q=1$ after choosing the~
positive cyclic orientation, so this means projective equivalence to
the~ordinary regular $n$-gon.
\end{definition}

\begin{lemma}[Projective-periodicity input]\label{lem:projective-periodicity-input}
Let $P$ be a~labeled polygon, and assume in each part that every
displayed pentagram iterate and Glick operator is defined.  In
parts~\textup{(i)}
and~\textup{(ii)}, assume additionally that $L_P-3I$ is invertible.
\begin{enumerate}[label=\textup{(\roman*)}]
\item If $P$ is a~pentagon, then
$\T(P)=\Sigma_{-1}(G(P))$, where
$G=L_P-3I$.
\item If $P$ is a~hexagon, then
$\T^2(P)=G(P)$, where
$G=L_P-3I$.
\item If $P$ is a~Poncelet $n$-gon in the~sense of
Definition~\ref{def:poncelet-polygon}, with $n\geq5$, then there is a~
forward Darboux--Schwartz projectivity $G$ and a~cyclic relabelling
$\Sigma_s$ such that $\T^2(P)=\Sigma_s(G(P)).$
Here $s$ depends on the~convention used to label the~vertices.  Moreover,
this projectivity commutes with Glick's operator: $GL_P=L_PG.$
Consequently, if $L_P$ has simple spectrum, then $G$ and $L_P$ have
common eigenpoints.
\end{enumerate}
\end{lemma}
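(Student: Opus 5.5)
Parts~(i) and~(ii) follow directly from Proposition~\ref{prop:glick}(iv), so the plan is mostly bookkeeping about labels and lifts. For a pentagon, item~(iv) says $L_P-3I$ sends $P$ to $\Sigma_1(\T(P))$. Precisely: for suitable lifts, $(L_P-3I)\widetilde p_i$ is a nonzero multiple of a lift of $\T(P)_{i+1}$. When $G=L_P-3I$ is invertible, it induces a projectivity with $G(P)=\Sigma_1(\T(P))$. Applying $\Sigma_{-1}$ to both sides gives $\T(P)=\Sigma_{-1}(G(P))$.

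Invertibility matters because it guarantees that no $\widetilde p_i$ is sent to $0$, so each image point is a genuine point of $\RP^2$. I would check this once: a singular $G$ could collapse a vertex, and then item~(iv) would be only a linear identity, not a projective equivalence. The hexagonal case is identical, with no relabelling, giving $\T^2(P)=G(P)$.

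For part~(iii) I would invoke Schwartz's Darboux-type theorem \cite{SchwartzPoncelet}. For a Poncelet pair $(\mathcal C_{\mathrm{out}},\mathcal C_{\mathrm{in}})$ with $n\ge5$, the second pentagram iterate $\T^2(P)$ is again a Poncelet polygon for a pair in the same pencil. It is projectively equivalent to $P$ by a projectivity $G$ that maps $\mathcal C_{\mathrm{out}}$ onto the outer conic of $\T^2(P)$. Matching vertices up to an index shift $s$, which depends on the labelling convention for $\T$, gives $\T^2(P)=\Sigma_s(G(P))$.

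Commutation then follows from the earlier structural facts.
\begin{itemize}
\item By Proposition~\ref{prop:glick}(ii) applied twice, $L_{\T^2(P)}=L_P$.
\item By Proposition~\ref{prop:glick}(i), $L_{G(P)}=GL_PG^{-1}$.
\item $L$ is invariant under cyclic relabelling $\Sigma_s$, since the defining sum \eqref{eq:glick-operator} is over all $j$ and uses only consecutive triples. Hence $L_{\Sigma_s(G(P))}=L_{G(P)}$.
\end{itemize}
Combining these gives $L_P=GL_PG^{-1}$, that is, $GL_P=L_PG$. One caveat: $G$ and $L_P$ are defined only up to scalars when we pass to a specific linear representative. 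Conjugation is insensitive to scaling $G$, and $L_P$ is an honest linear operator, so the identity holds for any representative.

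For the final consequence, suppose $L_P$ has simple spectrum, say with eigenvalues $\lambda_1,\lambda_2,\lambda_3$ over $\mathbb C$. If $L_Pv=\lambda v$, then $L_P(Gv)=\lambda Gv$, so $Gv$ lies in the one-dimensional $\lambda$-eigenspace and $v$ is an eigenvector of $G$. Real eigenpoints of $L_P$ are therefore fixed by $G$, and a complex-conjugate pair is preserved together with its real invariant plane.

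I expect the main obstacle to be part~(iii): extracting exactly the needed form of the Darboux--Schwartz statement and handling the relabelling shift for general $n$ and rotation number. If the cited theorem is stated only up to projective equivalence of unlabeled polygons, I would fix the labelling by the combinatorial fact that $\T^2(P)_i$ lies near the vertex $p_{i+c}$ for some constant $c$. The resulting shift $s$ is then constant in $i$, because $G$ preserves the cyclic Poncelet order on $\mathcal C_{\mathrm{out}}$.
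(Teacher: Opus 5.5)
Your proposal is correct and follows the paper's proof essentially step for step. Parts (i)--(ii) are read off from Proposition~\ref{prop:glick}(iv), part (iii) is cited from Schwartz's Darboux-type theorem, commutation comes from the chain $L_P=L_{\T^2(P)}=L_{\Sigma_s(G(P))}=L_{G(P)}=GL_PG^{-1}$, and simple spectrum forces common eigenpoints.

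One side remark is inaccurate but harmless. $\T^2(P)$ is indeed again a Poncelet polygon, but generally not for a pair in the \emph{same} pencil. In the Jacobi model its conics are the images of the original ones under $S^2=\operatorname{diag}(a,b,1)$, and for $\varepsilon>0$ this map moves the base points of the pencil. Since you only use the projective equivalence, nothing in your argument breaks.
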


\begin{proof}
The~pentagonal projectivity already appears in Schwartz's original
analysis \cite[Section~2]{SchwartzPentagram}.  Glick identifies
$L_P-3I$ as the~canonical projectivity realizing the~relevant relation
for both pentagons and hexagons \cite{GlickLimit}.  The~third
statement is Schwartz's Darboux--Schwartz theorem for Poncelet polygons
\cite{SchwartzPoncelet}; Schwartz states a~projectivity carrying
$\T^2(P)$ back to $P$, and here $G$ denotes its inverse.  The~cyclic
shift in the~Poncelet statement records the~chosen labeling; in the~
positive Jacobi convention of
Subsection~\ref{subsec:poncelet-spectrum}, it is $\Sigma_3$.

It remains only to prove the~commutation statement in the~Poncelet
case.  Using conservation, the~equality $L_{\Sigma_s(R)}=L_R$ which follows
immediately from the~cyclic sum in \eqref{eq:glick-operator}, and
projective covariance of Glick's operator, we obtain
$$
L_P
=
L_{\T^2(P)}
=
L_{\Sigma_s(G(P))}
=
L_{G(P)}
=
GL_PG^{-1}.
$$
Thus $GL_P=L_PG.$
If $L_P$ has simple spectrum, any projectivity commuting with it
preserves its one-dimensional eigenspaces, so the~eigenpoints coincide.
\end{proof}

The~next elementary lemma uses only the~definition of the~pentagram map:
cyclic relabelling commutes with taking consecutive short diagonals.

\begin{lemma}[Cyclic relabelling]\label{lem:cyclic-relabel}
Let $\Sigma_s$ denote the~cyclic relabelling
$(\Sigma_s(P))_i=p_{i+s}$, with indices taken modulo the~number of
vertices.  Suppose that, for some integer $m\geq1$ and some projectivity
$G$, one has $\T^m(P)=\Sigma_s(G(P)).$
Then, for every residue class $r=0,\ldots,m-1$ and every $q\geq0$,
$$
\T^{qm+r}(P)=\Sigma_{qs}\bigl(G^q(\T^r(P))\bigr).
$$
In particular, every conclusion of the~spectral flattening and elliptic
oscillation results that is invariant under cyclic relabelling of the~
vertices remains valid with this shifted relation.
\end{lemma}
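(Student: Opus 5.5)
The plan is to prove two commutation facts and then induct on $q$. First, the pentagram map commutes with cyclic relabelling: $\T(\Sigma_s(P))=\Sigma_s(\T(P))$. This is immediate from the defining formula. Indeed,
$$
\T(\Sigma_s(P))_i=(p_{i+s}p_{i+s+2})\cap(p_{i+s+1}p_{i+s+3})=\T(P)_{i+s},
$$
with both sides defined simultaneously. Second, projective naturality gives $\T(G(R))=G(\T(R))$ whenever both sides are defined. A projectivity also commutes with relabelling, $G(\Sigma_s(R))=\Sigma_s(G(R))$, since $G$ acts vertexwise. Combining these, $\T^j\circ\Sigma_s\circ G=\Sigma_s\circ G\circ\T^j$ for every $j\geq0$, on the common domain where the iterates are defined.

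Next I fix $r$ and induct on $q$. The case $q=0$ is trivial, since $\Sigma_0$ and $G^0$ are identities. For the step, I write
$$
\T^{(q+1)m+r}(P)=\T^{qm+r}\bigl(\T^m(P)\bigr)=\T^{qm+r}\bigl(\Sigma_s(G(P))\bigr)=\Sigma_s\bigl(G(\T^{qm+r}(P))\bigr),
$$
using the two commutation facts. By the inductive hypothesis,
$$
\T^{qm+r}(P)=\Sigma_{qs}\bigl(G^q(\T^r(P))\bigr),
$$
so the right-hand side equals $\Sigma_s G\Sigma_{qs}G^q\T^r(P)$. Commuting $G$ past $\Sigma_{qs}$ and using $\Sigma_s\Sigma_{qs}=\Sigma_{(q+1)s}$ gives the claim. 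A cleaner alternative avoids moving $\T^{qm+r}$ through $\Sigma_sG$. One first proves $\T^{qm}(P)=\Sigma_{qs}G^q(P)$ by induction, applying $\T^m$ to both sides and commuting it past $\Sigma_{qs}G^q$. One then applies $\T^r$, together with the fact that $\T^r$ commutes with $\Sigma_{qs}G^q$.

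The final assertion follows from this identity. Along the residue class $r$, the orbit is $G^q$ applied to the fixed polygon $\T^r(P)$, up to the relabelling $\Sigma_{qs}$. Barycenter, unsigned area, diameter, the vertex set, and distances of vertices to a line are all invariant under cyclic relabelling: the area is a cyclic sum, so it is unchanged. Hence any such setwise conclusion, proved for $G^q(\T^r(P))$, transfers verbatim.

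The only real obstacle is bookkeeping of domains. Projective naturality of $\T$ holds only where both sides are defined, and $G$ may send vertices to infinity in the affine chart. I would handle this by working in $\RP^2$ throughout, where $G$ is a bijection and lines meet generically. The statement is understood under the standing hypothesis that all displayed iterates are defined, in which case each equality in the chain is an identity of defined polygons.
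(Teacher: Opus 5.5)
Your proof is correct and takes essentially the paper's route. The paper uses the same two commutation facts, with $\Sigma_s$ and with $G$, to show that the relation $\T^m(R)=\Sigma_s(G(R))$ persists for $R=\T^r(P)$, and then iterates it; your inductive step $\T^{qm+r}(\Sigma_s(G(P)))=\Sigma_s(G(\T^{qm+r}(P)))$ does exactly this, and your treatment of the final assertion via invariance of barycenter, area, diameter, and vertex set under cyclic relabelling matches the paper as well.
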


\begin{proof}
The~pentagram map commutes with cyclic relabelling and is projectively
natural.  Hence, if the~shifted relation holds for $P$, then for every
$r$ it also holds for $\T^r(P)$:
$$
\T^m(\T^r(P))
=
\T^r(\T^m(P))
=
\T^r(\Sigma_s(G(P)))
=
\Sigma_s(G(\T^r(P))).
$$
Iterating this identity gives the~displayed formula.  Since $\Sigma_s$
only changes the~starting label of the~same polygonal cycle, it preserves
the~barycenter, signed area, diameter, and the~Euclidean set traced by the~
vertices.  Therefore it does not affect the~normalized limiting shapes.
\end{proof}

The~next theorem is conditional.  It separates the~projective input from
the~Euclidean normalization mechanism already proved in
Proposition~\ref{prop:centered-two-scale}.  It is not a~statement about
arbitrary $n$-gons: one must first have a~relation
$\T^m(P)=\Sigma_s(G(P))$, with $s=0$ allowed, and then verify the~
area-nondegenerate centered two-scale hypothesis of
Definition~\ref{def:two-scale} in every residue class.  The~verified
sources of the~projective relation used in this paper are pentagons,
hexagons, and Poncelet polygons.  For general $n\geq7$, predictions based
only on $L_P$ are treated later as experimental or conjectural.

The~spectral assumption below singles out the~geometrically distinguished
direction $u$.  The~actual flattening mechanism is supplied by the~
centered two-scale hypothesis and Proposition~\ref{prop:centered-two-scale}.

\begin{theorem}[Spectral flattening theorem]\label{thm:spectral-flattening}
Let $P$ be a~labeled polygon with $\Area(P)>0$.  Suppose that the~
following assumptions hold.
\begin{enumerate}[label=\textup{(\roman*)}]
\item\label{ass:flattening-forward-domain}
All forward pentagram iterates of $P$ are defined and have nonzero
area:
$\Area(\T^k(P))\ne0$
for all $k\geq0$.
\item\label{ass:flattening-projective-periodicity}
There exist an~integer $m\geq1$, a~projectivity $G$, and a~cyclic
relabelling $\Sigma_s$ (with $s=0$ allowed) such that $\T^m(P)=\Sigma_s(G(P)).$
\item\label{ass:flattening-spectrum}
The~projectivity $G$ has a~dominant real spectral line
$\ell_G$ in the~sense of
Definition~\ref{def:dominant-real-spectral-line}.  Assume that
$\ell_G$ has a~finite affine direction $u$.
\item\label{ass:flattening-two-scale}
For every residue class $r=0,\ldots,m-1$, the~polygon $Q^{(r)}=\T^r(P)$
has an~area-nondegenerate centered two-scale asymptotic under $G$ in
the~direction $u$.
\end{enumerate}
Then the~area-normalized iterates
$
P_k=\That^k(P)
$
flatten to the~affine line
$
\mathcal L(P,G)=C(P)+\R u.
$
More precisely,
$$
\max_i\dist(p_{k,i},\mathcal L(P,G))\to0
\quad\text{and}\quad
\diam(P_k)\to\infty.
$$
\end{theorem}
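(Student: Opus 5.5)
The plan is to reduce everything to Proposition~\ref{prop:centered-two-scale}, applied separately in each residue class modulo $m$, and then reassemble the finitely many subsequences.

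\emph{Step 1: express each subsequence through $G$.} Fix $r\in\{0,\ldots,m-1\}$. Lemma~\ref{lem:cyclic-relabel} and assumption~\ref{ass:flattening-projective-periodicity} give
$$
\T^{qm+r}(P)=\Sigma_{qs}\bigl(G^q(Q^{(r)})\bigr)
\qquad(q\geq0).
$$
By assumption~\ref{ass:flattening-forward-domain}, every $\T^k(P)$ is defined and has nonzero area. Formula~\eqref{eq:normalization-classical-iterates} is therefore valid for all $k$: $P_k$ is obtained from $Q_k=\T^k(P)$ by translating its barycenter to $C_0=C(P)$ and rescaling to unsigned area $A_0=\Area(P)>0$. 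The relabelling $\Sigma_{qs}$ preserves the barycenter, the signed area, the vertex set, and hence both distances to a line and the diameter. So along $k=qm+r$ we may replace $Q_k$ by $G^q(Q^{(r)})$ without changing $\max_i\dist(p_{k,i},\mathcal L(P,G))$ or $\diam(P_k)$.

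\emph{Step 2: apply the two-scale criterion.} Assumption~\ref{ass:flattening-two-scale} says that $(G^q(Q^{(r)}))_{q\geq0}$ has an area-nondegenerate centered two-scale asymptotic in the direction $u$. Proposition~\ref{prop:centered-two-scale} then applies to the normalized sequence $(P_{qm+r})_q$ and yields
$$
\max_i\dist(p_{qm+r,i},\,C(P)+\R u)\to0 .
$$
Since $B\neq0$, the longitudinal coefficients $a_i$ are not all equal, so $D_u>0$ and the same proposition gives $\diam(P_{qm+r})\to\infty$. The line $C(P)+\R u$ is exactly $\mathcal L(P,G)$, and it is the same for every $r$: the direction $u$ comes from $\ell_G$ alone, and the base point is the fixed barycenter $C(P)$.

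\emph{Step 3: combine the residue classes.} The integers $k\geq0$ split into the $m$ classes $k\equiv r\pmod m$. A sequence that tends to $0$ (respectively $\infty$) along each of finitely many complementary subsequences tends to $0$ (respectively $\infty$) along the whole sequence. This gives both conclusions.

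The main obstacle is bookkeeping rather than analysis, since the hard content is packaged in hypothesis~\ref{ass:flattening-two-scale}. The one point to check carefully is that the relabelling $\Sigma_{qs}$, which grows with $q$, does not interfere with Definition~\ref{def:two-scale}. That definition tracks vertex $i$ with a fixed label, and the hypothesis is stated for $G^q(Q^{(r)})$ without the shift, so the proposition must be applied to the unshifted sequence. The result then transfers to $Q_k$ only because every quantity in the conclusion is a setwise invariant of the labeled cycle. The spectral assumption~\ref{ass:flattening-spectrum} serves only to fix $u$ as the affine direction of $\ell_G$, and finiteness of that direction is what makes $\mathcal L(P,G)$ well defined.
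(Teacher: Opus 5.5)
Your proposal is correct and follows the paper's proof essentially step for step. The paper also writes $\T^{jm+r}(P)=\Sigma_{js}\bigl(G^j(Q^{(r)})\bigr)$ via Lemma~\ref{lem:cyclic-relabel} and uses \eqref{eq:normalization-classical-iterates} together with relabelling-invariance of the setwise quantities. It then applies Proposition~\ref{prop:centered-two-scale} with $C_0=C(P)$ and $A_0=\Area(P)$ to the unshifted sequence in each residue class, and concludes because there are only finitely many classes sharing the common direction $u$.
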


\begin{proof}
For each residue class $r=0,\ldots,m-1$, put
$Q^{(r)}=\T^r(P)$ and 
$R_j^{(r)}=G^j(Q^{(r)})$.
By Lemma~\ref{lem:cyclic-relabel},
$$
\T^{jm+r}(P)
=
\Sigma_{js}\bigl(R_j^{(r)}\bigr).
$$
Cyclic relabelling preserves the~vertex set, barycenter, area, diameter,
and distances from affine lines.  Equation~
\eqref{eq:normalization-classical-iterates} therefore shows that the~
area-normalized subsequence $(P_{jm+r})_{j\geq0}$ is obtained from
$(R_j^{(r)})_{j\geq0}$ by translating its barycenter to $C(P)$ and
rescaling its area to the~fixed value $\Area(P)$.

By condition~\ref{ass:flattening-two-scale}, the~sequence
$(R_j^{(r)})_{j\geq0}$ has an~area-nondegenerate centered two-scale
asymptotic in the~direction $u$.  Proposition~
\ref{prop:centered-two-scale}, applied with $C_0=C(P)$ and
$A_0=\Area(P)$, gives
$$
\max_i\dist(p_{jm+r,i},\,C(P)+\R u)\to0
\quad\text{and}\quad
\diam(P_{jm+r})\to\infty.
$$
The~direction $u$ is common to all residue classes by conditions
\ref{ass:flattening-spectrum} and~\ref{ass:flattening-two-scale}.
Since there are only finitely many residue classes, the~same two
conclusions hold for the~full sequence $(P_k)$.  This proves the~claim.
\end{proof}

\begin{corollary}[Hyperbolic vertex asymptotics]
\label{cor:hyperbolic-vertex-asymptotics}
Assume the~hypotheses of Theorem~\ref{thm:spectral-flattening}.  Fix a~
residue class $r\in\{0,\ldots,m-1\}$ and undo the~cyclic relabelling by
putting
$\widetilde P_j^{(r)}
=
\Sigma_{-js}\bigl(\That^{jm+r}(P)\bigr)$.
Use the~coordinates $(X,Y)$ in the~basis $(u,v_r)$ supplied by the~
centered two-scale asymptotic of $Q^{(r)}=\T^r(P)$, and write
$$
\widetilde p_{j,i}^{(r)}-C(P)
=
X_{j,i}^{(r)}u+Y_{j,i}^{(r)}v_r.
$$
Along every subsequence on which
$\sigma_j^{(r)}=\operatorname{sgn}
\left(\alpha_j^{(r)}\beta_j^{(r)}\right)$ has a~constant value
$\sigma\in\{-1,1\}$, one has
$$
X_{j,i}^{(r)}Y_{j,i}^{(r)}
\rightarrow
\sigma
\frac{\Area(P)}{|B_r|}
a_i^{(r)}b_i^{(r)}.
$$
Consequently, whenever $a_i^{(r)}b_i^{(r)}\ne0$, that vertex
subsequence is asymptotic to a~hyperbola whose asymptotes are
$$
C(P)+\R u
\qquad\text{and}\qquad
C(P)+\R v_r.
$$
If one of the~two leading coefficients vanishes, the~limiting product
is zero; the~first-order conic then degenerates to the~union of the~two
coordinate asymptotes, and no nondegenerate hyperbola is asserted for
that vertex.

In particular, when the~two scales come from a~real simple spectrum,
the~sign sequence is periodic with period at most two.  Thus each
residue class splits into at most two asymptotic hyperbolic families.
In an~affine eigenchart for the~return projectivity, after undoing the~
cyclic relabelling, the~nondegenerate trajectories lie on the~
corresponding hyperbolas exactly.
\end{corollary}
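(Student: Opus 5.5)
We reduce the product asymptotic to the normalization formula used in the proof of Proposition~\ref{prop:centered-two-scale}. Fix $r$. By Lemma~\ref{lem:cyclic-relabel} and \eqref{eq:normalization-classical-iterates}, the polygon $\widetilde P_j^{(r)}$ is obtained from $R_j^{(r)}=G^j(Q^{(r)})$ by translating its barycenter to $C(P)$ and scaling by
$$
\rho_j=\sqrt{\Area(P)/\Area(R_j^{(r)})}.
$$
Undoing $\Sigma_{js}$ matters here: without it the labels $i$ would not match the coefficients $a_i^{(r)},b_i^{(r)}$. Writing $R_j^{(r)}$ in the centered form \eqref{eq:QkiminusCqk} with $x_{j,i}=\alpha_j(a_i+o(1))$ and $y_{j,i}=\beta_j(b_i+o(1))$, we get
$$
X_{j,i}^{(r)}=\rho_j x_{j,i}
\qquad\text{and}\qquad
Y_{j,i}^{(r)}=\rho_j y_{j,i}.
$$
Hence
$$
X_{j,i}^{(r)}Y_{j,i}^{(r)}=\rho_j^2\,\alpha_j\beta_j\,(a_ib_i+o(1)).
$$

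The proof of Proposition~\ref{prop:centered-two-scale} gives $\Area(R_j^{(r)})=|\alpha_j\beta_j|\,|B_r|\,(1+o(1))$. Therefore
$$
\rho_j^2\alpha_j\beta_j=\sigma_j\,\frac{\Area(P)}{|B_r|}\,(1+o(1)).
$$
Along a subsequence where $\sigma_j$ is constant, this yields the stated limit. If $a_ib_i\neq0$, the point $(X,Y)$ approaches the level set $XY=c$ with $c\neq0$. Indeed, $|X|\to\infty$ since $|\rho_j\alpha_j|\to\infty$, and $|Y|\to0$. So the vertex moves out along the branch of the hyperbola $XY=c$ whose asymptotes are $C(P)+\R u$ and $C(P)+\R v_r$. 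If $a_ib_i=0$ the limit is $0$, which is the degenerate case.

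For the spectral addendum, assume $G$ has real simple spectrum with eigenvalues $\mu_1,\mu_2,\mu_3$. Work in an affine chart where the third eigenpoint $X_3$ is the collapse point and lies at the origin, and the line at infinity is $X_1X_2$. I expect this to be the main obstacle. One must check that the two-scale data come from the eigen-coordinates, so that $u$ and $v_r$ are eigen-directions and
$$
\alpha_j=c\,(\mu_1/\mu_3)^j\qquad\text{and}\qquad\beta_j=c'\,(\mu_2/\mu_3)^j,
$$
up to the normalization constants. One must also check that centering by the barycenter only adds a translation, and that this translation is itself linear in the chart, so the form is preserved.

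Granting this, $\operatorname{sgn}(\alpha_j\beta_j)=\pm\operatorname{sgn}(\mu_1\mu_2\mu_3^{-2})^j$, which has period at most two. Moreover, $G$ acts linearly in this chart by $(X,Y)\mapsto(\mu_1X/\mu_3,\;\mu_2Y/\mu_3)$ and multiplies the product $XY$ by a fixed constant. Because $\sArea$ is then multiplied by the same constant, the normalized product $XY$ is exactly invariant within each sign class. This is the exact-hyperbola statement.

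I would present the chart computation only for a dominant eigenvalue ordering compatible with Definition~\ref{def:dominant-real-spectral-line}\textup{(i)}. I would not treat the complex case, where the sign sequence need not be periodic.
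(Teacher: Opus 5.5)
Your proposal is correct and is essentially the paper's proof. You multiply $X=\rho_jx_{j,i}$ and $Y=\rho_jy_{j,i}$ and use $\rho_j^2\sim\Area(P)/(|B_r|\,|\alpha_j\beta_j|)$ from the proof of Proposition~\ref{prop:centered-two-scale}; for the addendum you use that a real simple spectrum gives signed geometric scales, and that the diagonal action in an eigenchart makes the area scaling, hence the hyperbola, exact. One labelling slip: with the ordering $|\mu_1|>|\mu_2|>|\mu_3|$ of Definition~\ref{def:dominant-real-spectral-line}(i) the collapse point is $X_1$, so the eigenchart should put $X_1$ at the origin and $X_2X_3$ at infinity, giving $(x,y)\mapsto(\mu_2x/\mu_1,\mu_3y/\mu_1)$ and scales $(\mu_2/\mu_1)^j$, $(\mu_3/\mu_1)^j$ as in Lemma~\ref{lem:convex-projective-orbit-nondegeneracy}(ii); your chart instead sends $\ell_G=X_1X_2$ to infinity and is consistent only after relabelling so that $\mu_3$ is dominant.
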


\begin{proof}
By Definition~\ref{def:two-scale}, the~centered coordinates of
$R_j^{(r)}=G^j(Q^{(r)})$ have the~two-scale form
$$
x_{j,i}^{(r)}
=
\alpha_j^{(r)}a_i^{(r)}+o(\alpha_j^{(r)}),
\qquad
y_{j,i}^{(r)}
=
\beta_j^{(r)}b_i^{(r)}+o(\beta_j^{(r)}).
$$
The~normalization formula and the~area computation in the~proof of
Proposition~\ref{prop:centered-two-scale} therefore give
$$
X_{j,i}^{(r)}=\rho_j^{(r)}x_{j,i}^{(r)},
\qquad
Y_{j,i}^{(r)}=\rho_j^{(r)}y_{j,i}^{(r)},
\qquad
\bigl(\rho_j^{(r)}\bigr)^2
\sim
\frac{\Area(P)}{|B_r|\,|\alpha_j^{(r)}\beta_j^{(r)}|}.
$$
Multiplication gives the~stated limit.  The~description of the~
asymptotes is immediate from the~coordinate axes.  For a~real simple
spectrum the~two scales are signed geometric sequences, so the~sign of
their product is constant or alternating.  In an~affine eigenchart the~
return projectivity is diagonal, the~area scaling is exact, and hence so is the~
hyperbola equation.
\end{proof}

\begin{remark}
Condition~\ref{ass:flattening-two-scale} of Theorem~\ref{thm:spectral-flattening} is precisely
this definition applied to every residue class of a~projectively
periodic orbit.  It is a~nondegeneracy and transversality assumption,
not an~additional projective-periodicity statement.  Once a~relation
$\T^m(P)=\Sigma_s(G(P))$ is known, condition~\ref{ass:flattening-two-scale} asks that the~centered polygons
$G^j(\T^r(P))$ really have one leading scale along the~predicted line
and one smaller transverse scale, and that these two leading components
produce nonzero area to first order.

In the~real-eigenvalue case, when the~dominant eigenpoint of $G$ is
finite in the~chosen affine chart, such a~centered two-scale asymptotic
is the~usual local spectral expansion near this attracting point,
provided the~leading area coefficient is nonzero.  Thus for a~fixed
projectivity with a~simple spectral gap this condition is expected to
hold outside an~algebraic exceptional set: it can fail only if the~
polygon has no leading component in one of the~relevant eigendirections,
or if the~first mixed area coefficient $B_Q$ vanishes.  The~definition
above is more flexible in two ways.  It allows the~dominant real
eigenpoint to lie on the~line at infinity, and it also allows a~
spectrally dominant complex-conjugate pair.  In the~latter case there is
no real attracting eigenpoint; instead, the~ordinary iterates may rotate
inside the~real invariant plane $E$, while the~component transverse to
$\mathbb P(E)$ is dominated.

Whenever a~dominant non-real pair nevertheless satisfies the~fixed
centered two-scale hypothesis, Corollary~\ref{cor:hyperbolic-vertex-asymptotics}
applies without change.  A~genuinely rotating leading block usually
does not have fixed coefficients $a_i,b_i$.  Passing to subsequences on
which the~rotation phase converges then gives phase-dependent
asymptotic hyperbolas, provided the~corresponding phase-dependent
leading mixed area coefficient is nonzero.  A~rational rotation angle
produces only finitely many such families, whereas an~irrational angle
may produce a~one-parameter family; no single hyperbola is expected
without additional phase locking.

It is important not to confuse this case with the~opposite spectral
configuration in which one real eigenvalue has largest modulus and the~
remaining two eigenvalues form a~non-real conjugate pair.  Then the~
ordinary iterates may converge projectively to the~real eigenpoint, but
the~centered first-order term lives in a~two-dimensional rotating
subspace of equal modulus.  Generically this does not select a~fixed
real line after area normalization.
\end{remark}

\begin{definition}[Principal line]
Let $P=(p_1,\ldots,p_n)$
be a~labeled polygon in $\R^2$, and let
$C(P)=\frac1n\sum_{i=1}^n p_i$
be its barycenter.  We define the~covariance matrix of $P$ by
$$
    \mathsf{Cov}(P)
    =
    \frac1n\sum_{i=1}^n
    (p_i-C(P))(p_i-C(P))^{\top}.
$$
Assume that the~largest eigenvalue of $\mathsf{Cov}(P)$ is simple, and
let $u_P$ be a~corresponding unit eigenvector.  The~\emph{principal
line} of $P$ is the~affine line
$$
    \mathfrak p(P)=C(P)+\R u_P.
$$
This definition is independent of the~choice of the~sign of $u_P$.
Equivalently, $\mathfrak p(P)$ is the~unique line through the~barycenter
which minimizes the~mean squared distance from the~vertices of $P$.
\end{definition}

For an~affine line $\ell$, we denote its direction in $\RP^1$ by
$\operatorname{dir}\ell$.

\begin{theorem}[Convergence of principal lines]
\label{thm:principal-lines-convergence}
Assume that the~hypotheses of Theorem~\ref{thm:spectral-flattening} hold.  Let $P_k$ denote the~
area-normalized pentagram iterates of $P$.

Then, for all sufficiently large $k$, the~principal line
$\mathfrak p(P_k)$ is well defined, and
$\operatorname{dir}\mathfrak p(P_k)\to\R u$
in $\RP^1$.
Equivalently, the~current principal lines converge in direction to
the~predicted spectral line.
\end{theorem}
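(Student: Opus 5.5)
We reduce to the residue-class structure already used in the proof of Theorem~\ref{thm:spectral-flattening}. For each $r\in\{0,\ldots,m-1\}$ the subsequence $(P_{jm+r})_{j\geq0}$ is, up to cyclic relabelling (which preserves the barycenter and the covariance matrix, since $\mathsf{Cov}$ is a symmetric sum over the vertex cycle), obtained from $R_j^{(r)}=G^j(\T^r(P))$ by translating to $C(P)$ and rescaling by $\rho_j^{(r)}$. Since there are finitely many residue classes, it suffices to prove the claim along each such subsequence.

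Fix $r$ and write, as in the proof of Proposition~\ref{prop:centered-two-scale}, $p_{k,i}-C(P)=\rho_k x_{k,i}u+\rho_k y_{k,i}v$ with $x_{k,i}=\alpha_k(a_i+o(1))$ and $y_{k,i}=\beta_k(b_i+o(1))$. The vertex barycenter of $P_k$ is exactly $C(P)$, so
$$
\mathsf{Cov}(P_k)=\frac1n\sum_i w_{k,i}w_{k,i}^{\top},\qquad w_{k,i}=\rho_k x_{k,i}u+\rho_k y_{k,i}v .
$$
We divide by the scalar $(\rho_k\alpha_k)^2$; this changes neither the eigenvectors nor the simplicity of eigenvalues. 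Because $\beta_k/\alpha_k\to0$, one has $w_{k,i}/(\rho_k\alpha_k)\to a_i u$ uniformly in $i$. Hence
$$
\frac{\mathsf{Cov}(P_k)}{(\rho_k\alpha_k)^2}\;\longrightarrow\;M=\Bigl(\frac1n\sum_i a_i^2\Bigr)uu^{\top}.
$$
Area-nondegeneracy gives that the $a_i$ are not all equal, and since $\sum_i a_i=0$ they are not all zero. Therefore $\frac1n\sum_i a_i^2>0$. The limit $M$ is then a rank-one positive semidefinite matrix with simple top eigenvalue $\frac1n\sum_ia_i^2\,\|u\|^2$, eigenvector $u$, and second eigenvalue $0$.

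Two facts from finite-dimensional perturbation theory for symmetric $2\times2$ matrices finish the argument. First, eigenvalues depend continuously on the matrix, so for large $k$ the normalized covariance has two distinct eigenvalues. Its top eigenvalue is therefore simple, and $\mathfrak p(P_k)$ is well defined. Second, the top eigenprojection depends continuously on the matrix near a matrix with a spectral gap; one can see this from the explicit angle formula for $2\times2$ symmetric matrices, or from the Riesz projection. Hence the unit top eigenvector $u_{P_k}$ converges to $\pm u/\|u\|$ in $\RP^1$. Combining the finitely many residue classes, each with the same limit direction $\R u$, gives convergence along the full sequence.

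The only point requiring care is positivity of $\sum_i a_i^2$, which is what guarantees a genuine spectral gap in the limit. It follows from $B\neq0$: if all $a_i=0$ then $B=0$. No separate control of the transverse $o(1)$ terms is needed beyond uniformity in $i$, because those terms enter the normalized covariance only at relative order $o(1)$ or $O(\beta_k/\alpha_k)$.
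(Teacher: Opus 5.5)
Your proof is correct and follows essentially the same route as the paper: reduce to residue classes (cyclic relabelling leaves the covariance unchanged), rescale the covariance by the square of the longitudinal scale, identify the rank-one limit $\bigl(\frac1n\sum_i a_i^2\bigr)uu^{\top}$, whose coefficient is positive by area-nondegeneracy, and invoke continuity of simple eigenspaces of symmetric matrices. The only cosmetic difference is that you divide the covariance of the normalized polygon by $(\rho_k\alpha_k)^2$. The paper instead first notes that translations and homotheties do not change principal eigenspaces, and then divides the covariance of the unnormalized polygon $G^j(Q^{(r)})$ by $\alpha_j^2$.
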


\begin{proof}
We first note that the~principal direction is not affected by the~
area normalization.  Indeed, translating a~polygon does not change
its covariance matrix, while applying a~homothety with ratio $\rho$
multiplies the~covariance matrix by $\rho^2$.  Therefore the~
principal eigenspaces of the~covariance matrix are the~same before
and after area normalization.

Fix a~residue class $r\in\{0,\ldots,m-1\}$ and put
$Q^{(r)}=\T^r(P)$.  By
condition~\ref{ass:flattening-projective-periodicity} of
Theorem~\ref{thm:spectral-flattening} and
Lemma~\ref{lem:cyclic-relabel}, we have
$$
    \T^{jm+r}(P)
    =
    \Sigma_{js}\bigl(G^j(Q^{(r)})\bigr).
$$
The~cyclic relabelling does not change the~covariance matrix.  Hence it
is enough to study the~principal direction of the~ordinary projective
sequence $G^j(Q^{(r)})$.

For simplicity of notation, we suppress the~superscript $r$.  By
condition~\ref{ass:flattening-two-scale}, the~centered vertices satisfy
$$
    z_{j,i}:=G^j(Q)_i-C(G^j(Q))
    =
    \alpha_j a_i u
    +
    \beta_j b_i v
    +
    \varepsilon_{j,i},
$$
where $v$ is transverse to $u$,
$$
    \sum_i a_i=\sum_i b_i=0,
    \qquad
    \frac{|\beta_j|}{|\alpha_j|}
    \to  0,
    \qquad
    \max_i\|\varepsilon_{j,i}\|
    =
    o(|\alpha_j|).
$$
Let $\Sigma_j$ be the~covariance matrix of $G^j(Q)$.  Then
$$
    \Sigma_j
    =
    \frac1n
    \sum_{i=1}^n
    z_{j,i}z_{j,i}^{\top}.
$$
Using the~above expansion, we obtain
\begin{align}
\label{eq:SigmaiFormulaalfaj}
    \Sigma_j
    & =
    \alpha_j^2 A\, uu^{\top}
    +
    \alpha_j\beta_j B\,(uv^{\top}+vu^{\top})
    +
    \beta_j^2 D\, vv^{\top}
    +
    o(\alpha_j^2),
\end{align}
where
$$
    A
    =
    \frac1n
    \sum_{i=1}^n a_i^2,
    \qquad
    B
    =
    \frac1n
    \sum_{i=1}^n
    a_i b_i,
    \quad\text{and}\quad
    D
    =
    \frac1n
    \sum_{i=1}^n b_i^2.
$$
Here $A>0$ follows from area-nondegeneracy: if all centered
longitudinal coefficients vanished, then the~leading mixed-area
coefficient would vanish as well.  Since $|\beta_j/\alpha_j|\to0$,
dividing equation \eqref{eq:SigmaiFormulaalfaj} by $\alpha_j^2$ gives
$$
    \frac{1}{\alpha_j^2}\Sigma_j
    \to
    A\,uu^{\top}.
$$
The~limiting matrix $A\,uu^{\top}$ is symmetric and has a~simple largest
eigenspace equal to $\R u$.  By continuity of simple
eigenspaces for symmetric matrices, the~principal eigenspace of
$\Sigma_j$ converges to $\R u$.

Thus, in each residue class,
$$
\operatorname{dir}\mathfrak p(P_{jm+r})
\to
\R u.
$$
Since the~same dominant direction $u$ occurs in all residue classes
in Theorem~\ref{thm:spectral-flattening}, the~whole sequence
$\operatorname{dir}\mathfrak p(P_k)$ converges to $\R u$.
This proves the~claim.
\end{proof}

\newcommand{\figpanel}[2]{%
    \raisebox{-0.5\height}{%
        \includegraphics[width=#1\textwidth]{#2}%
    }%
}
\begin{figure}[htbp]
    \centering

    \figpanel{0.24}{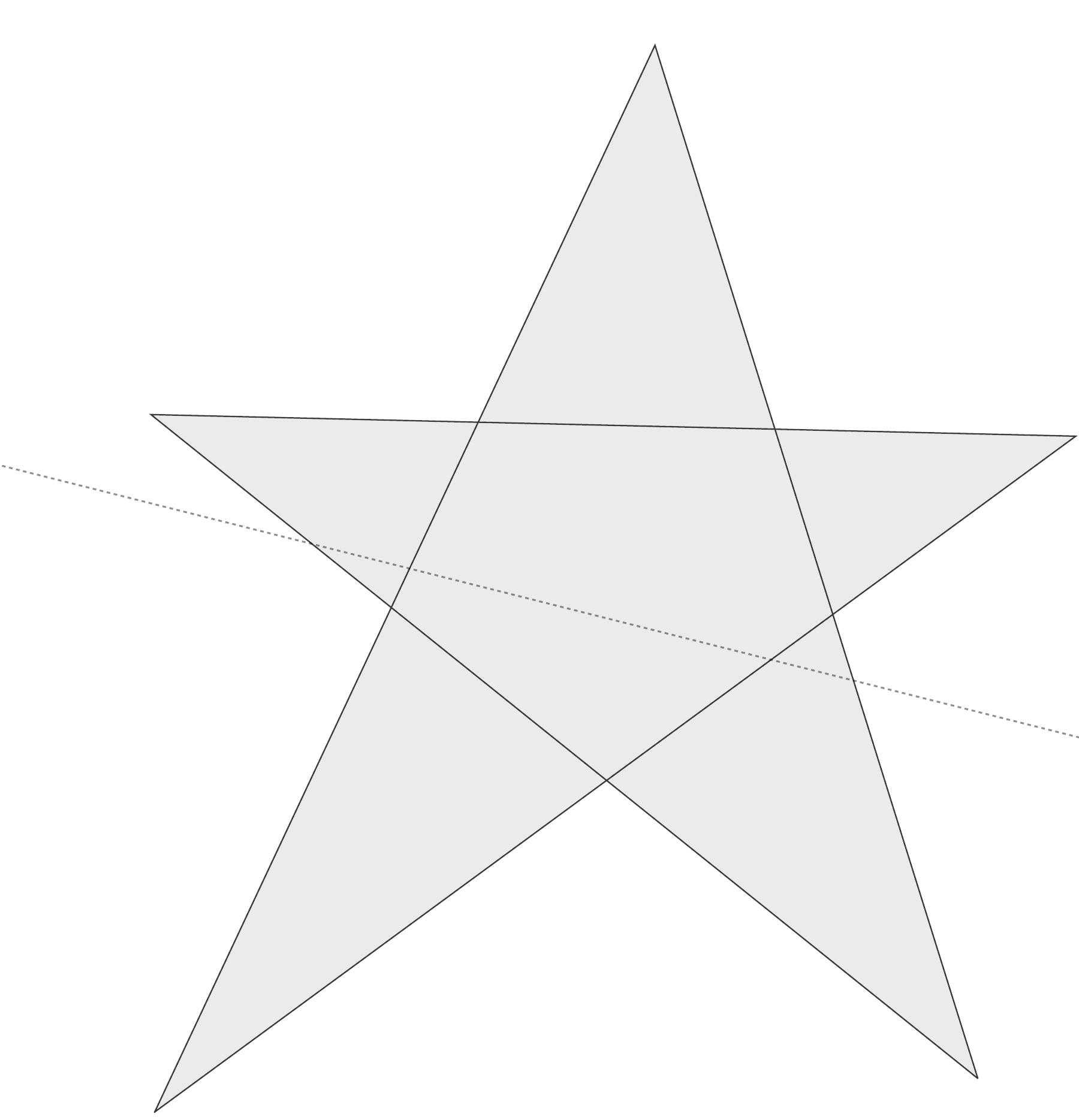}
    \hfill
    \figpanel{0.24}{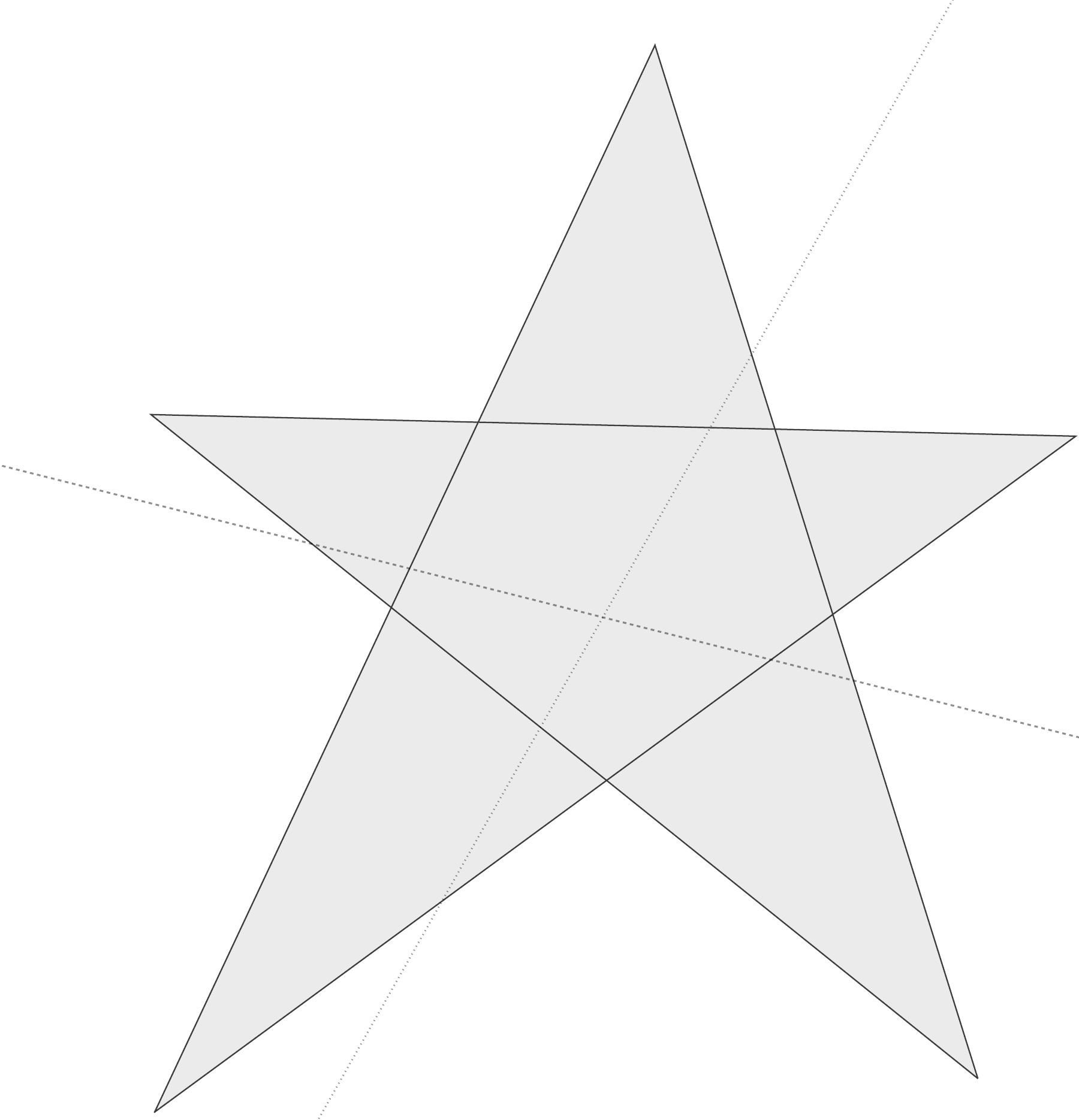}
    \hfill
    \figpanel{0.24}{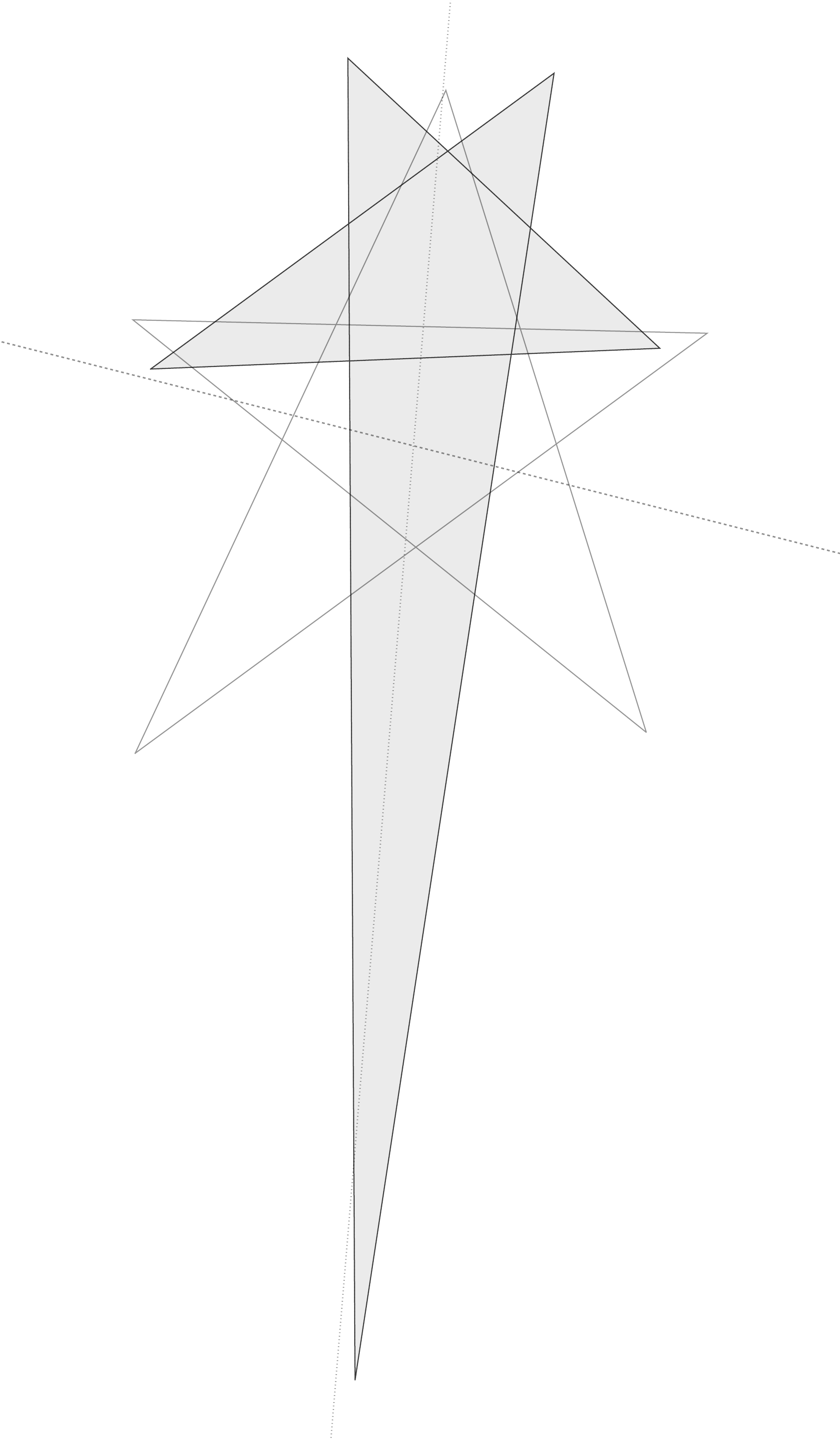}
    \hfill
    \figpanel{0.24}{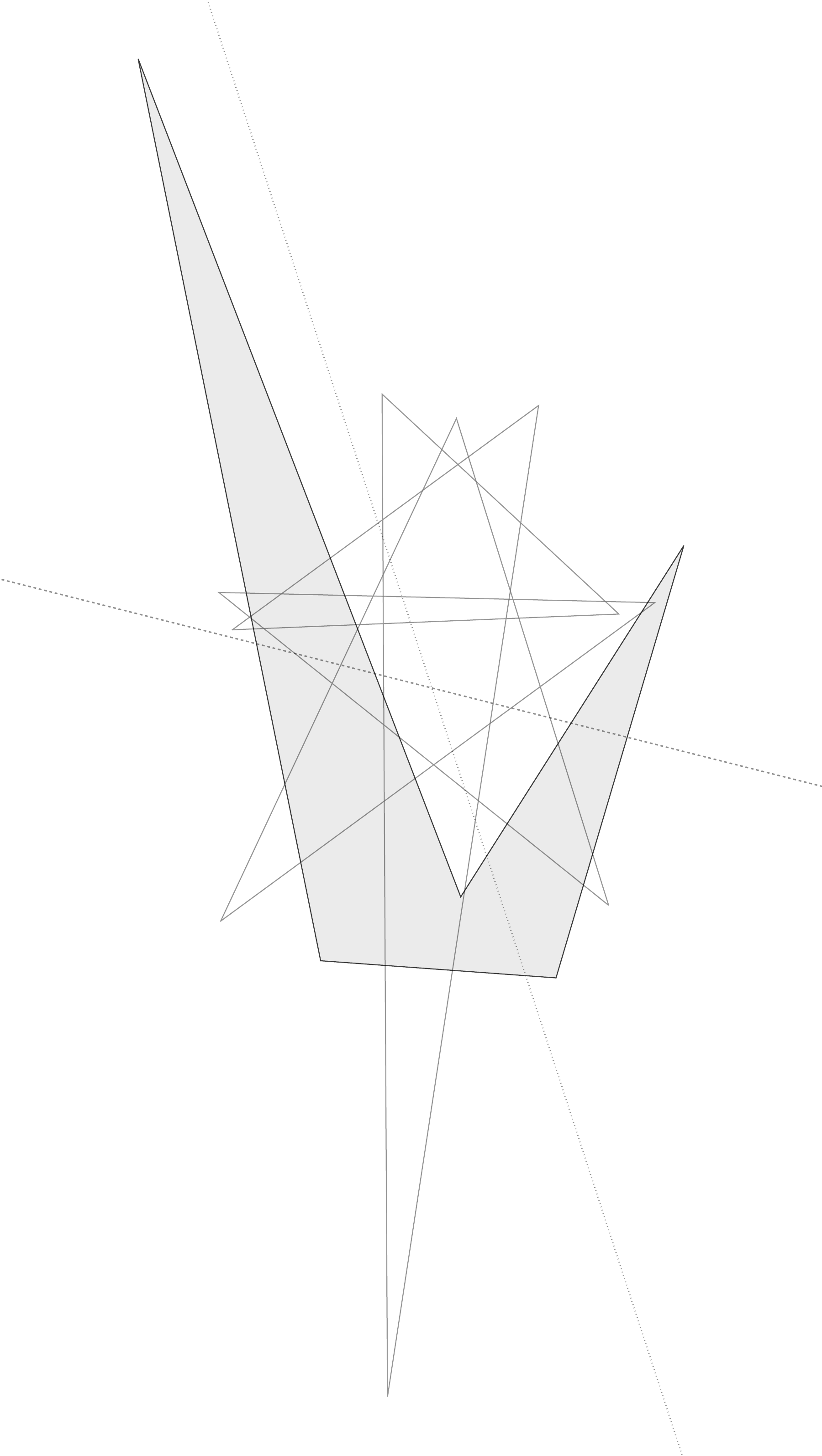}

    \vspace{0.25cm}

    \figpanel{0.19}{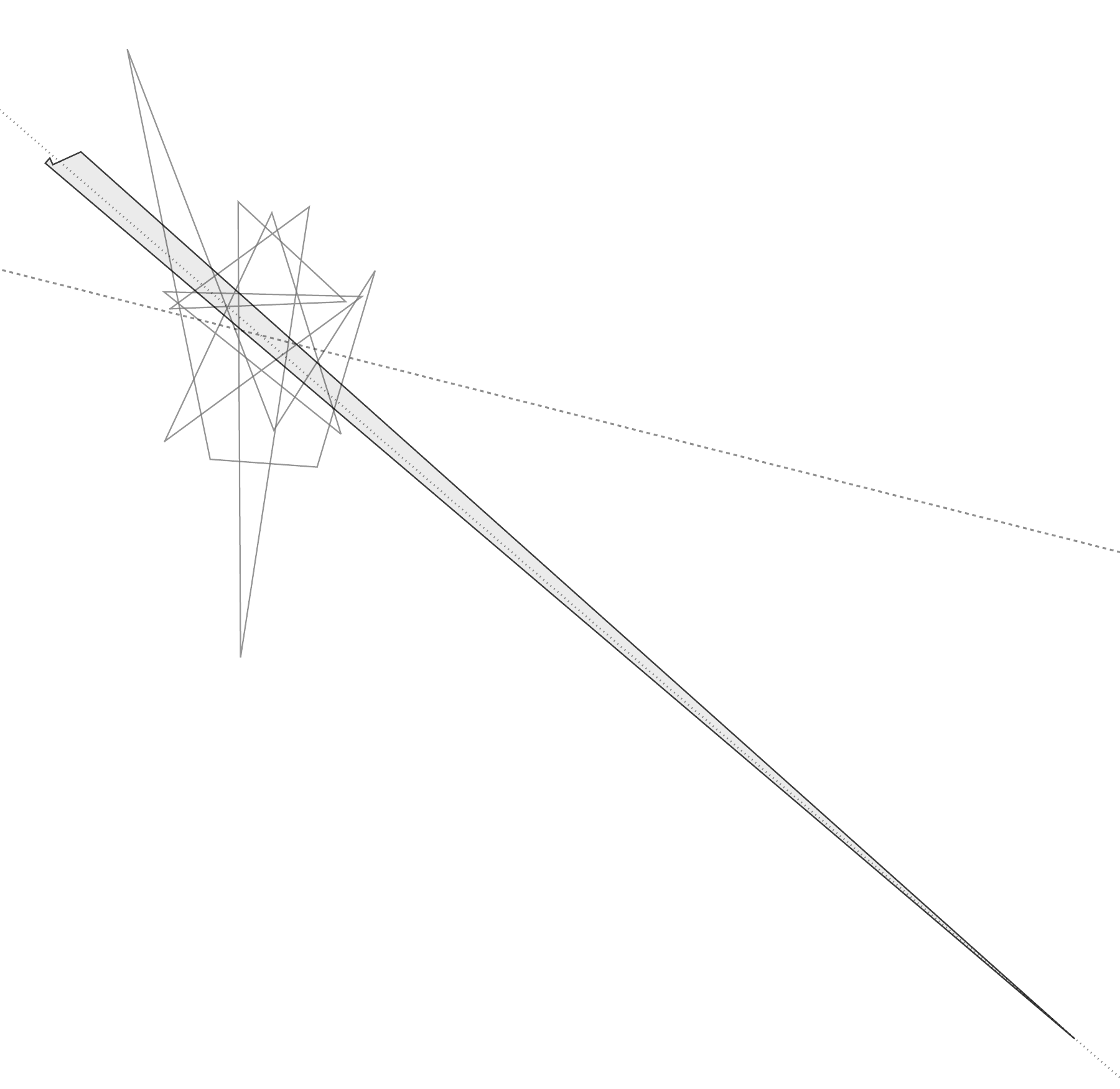}
    \hfill
    \figpanel{0.19}{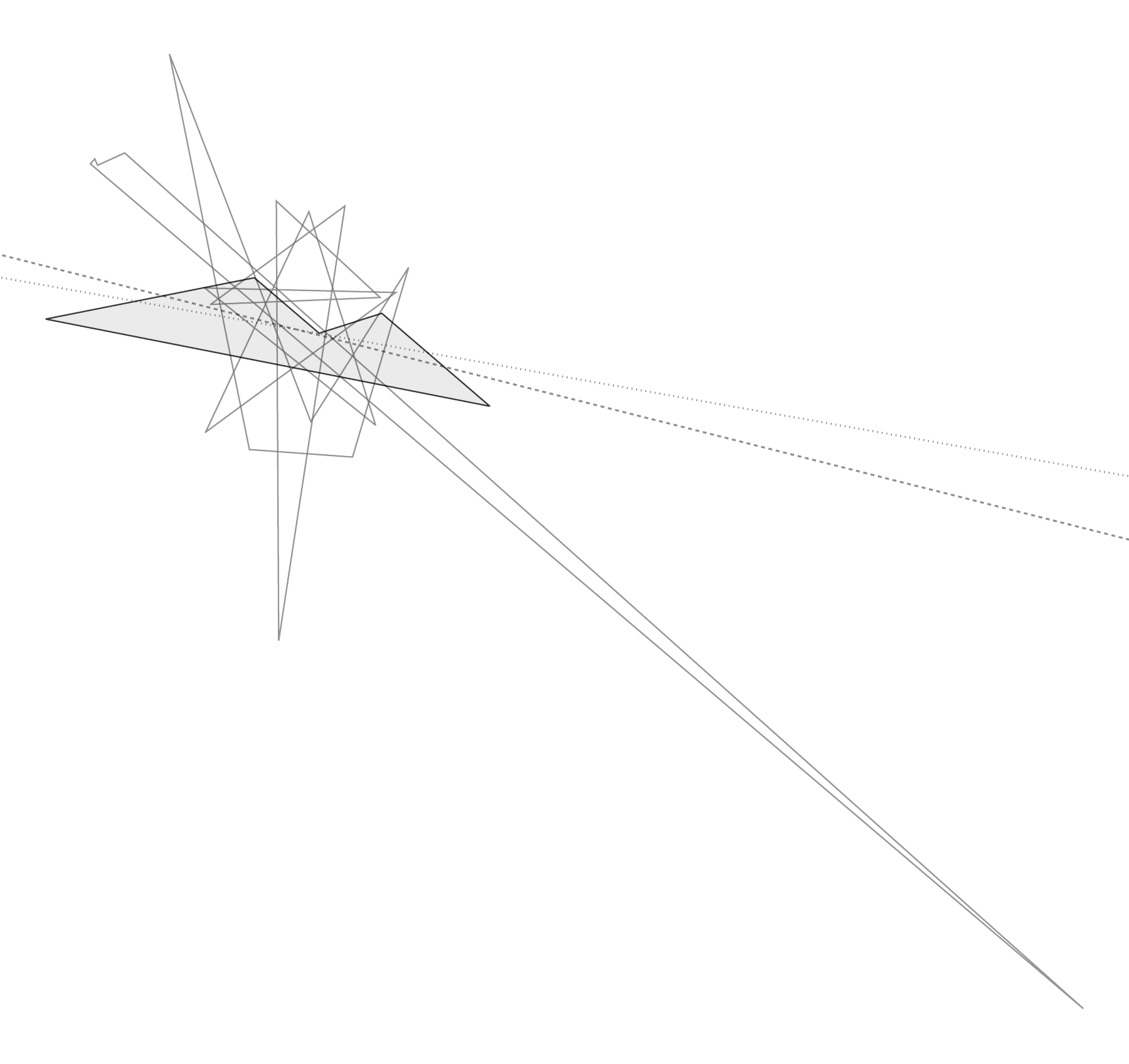}
    \hfill
    \figpanel{0.19}{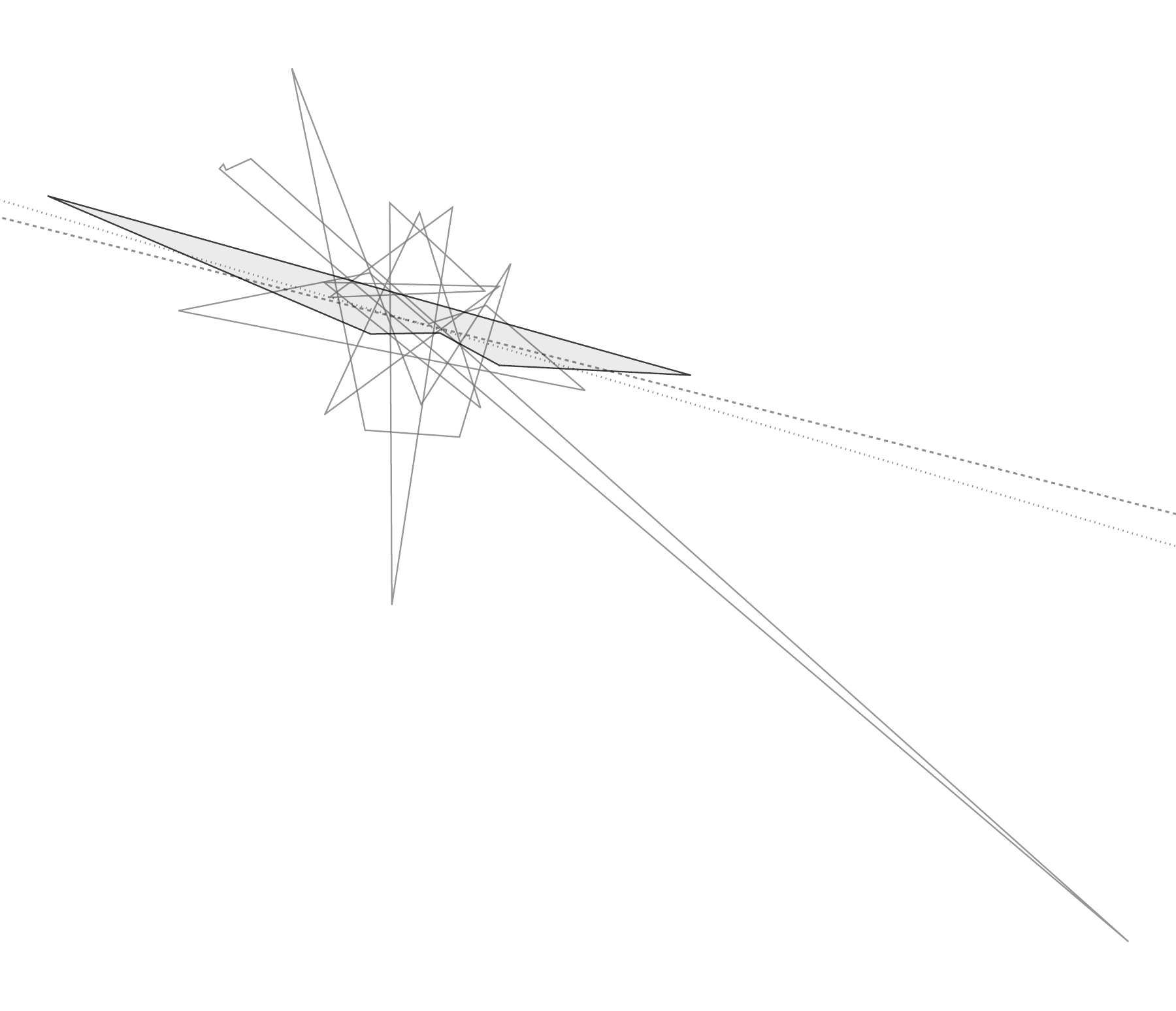}
    \hfill
    \figpanel{0.19}{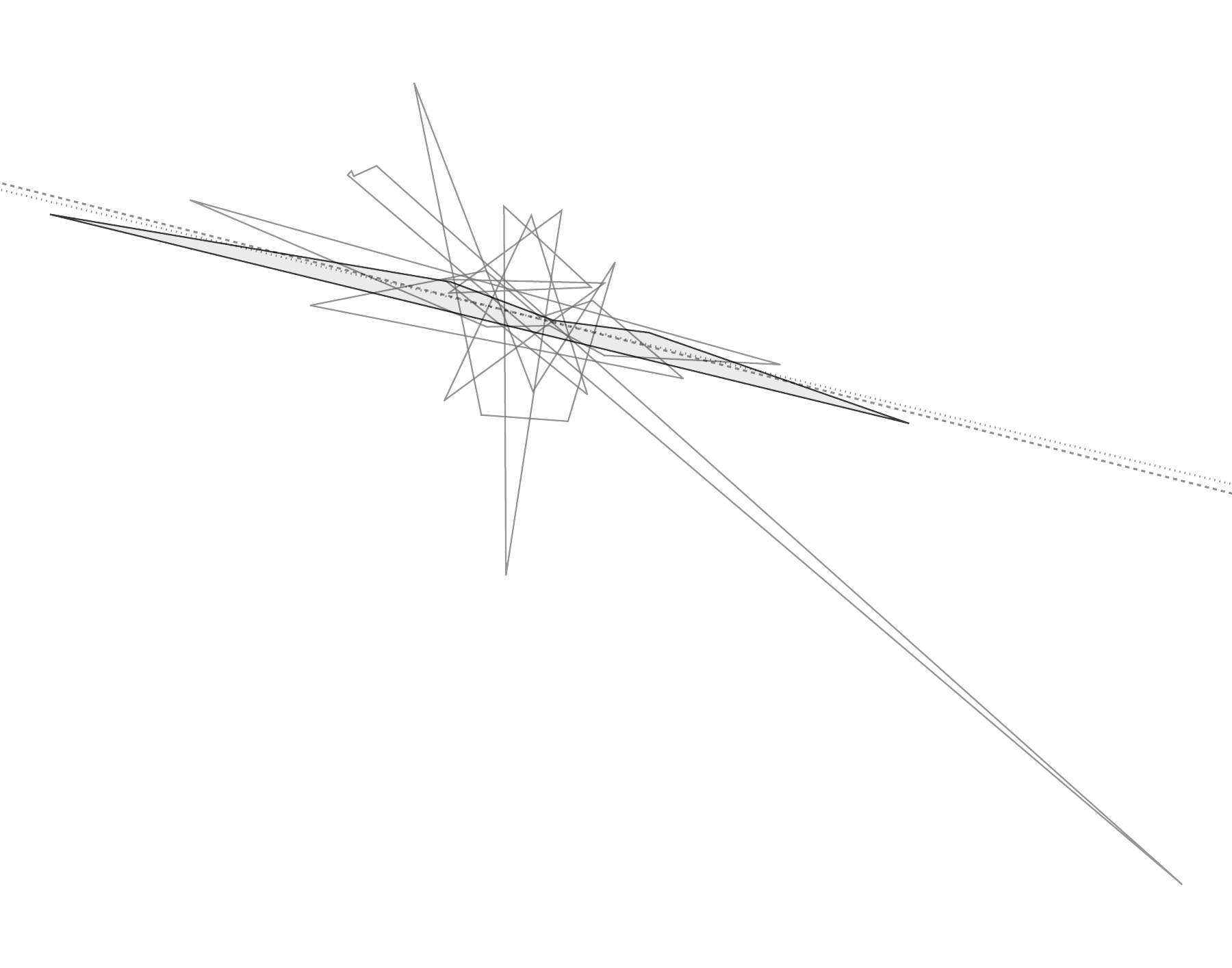}
    \hfill
    \figpanel{0.19}{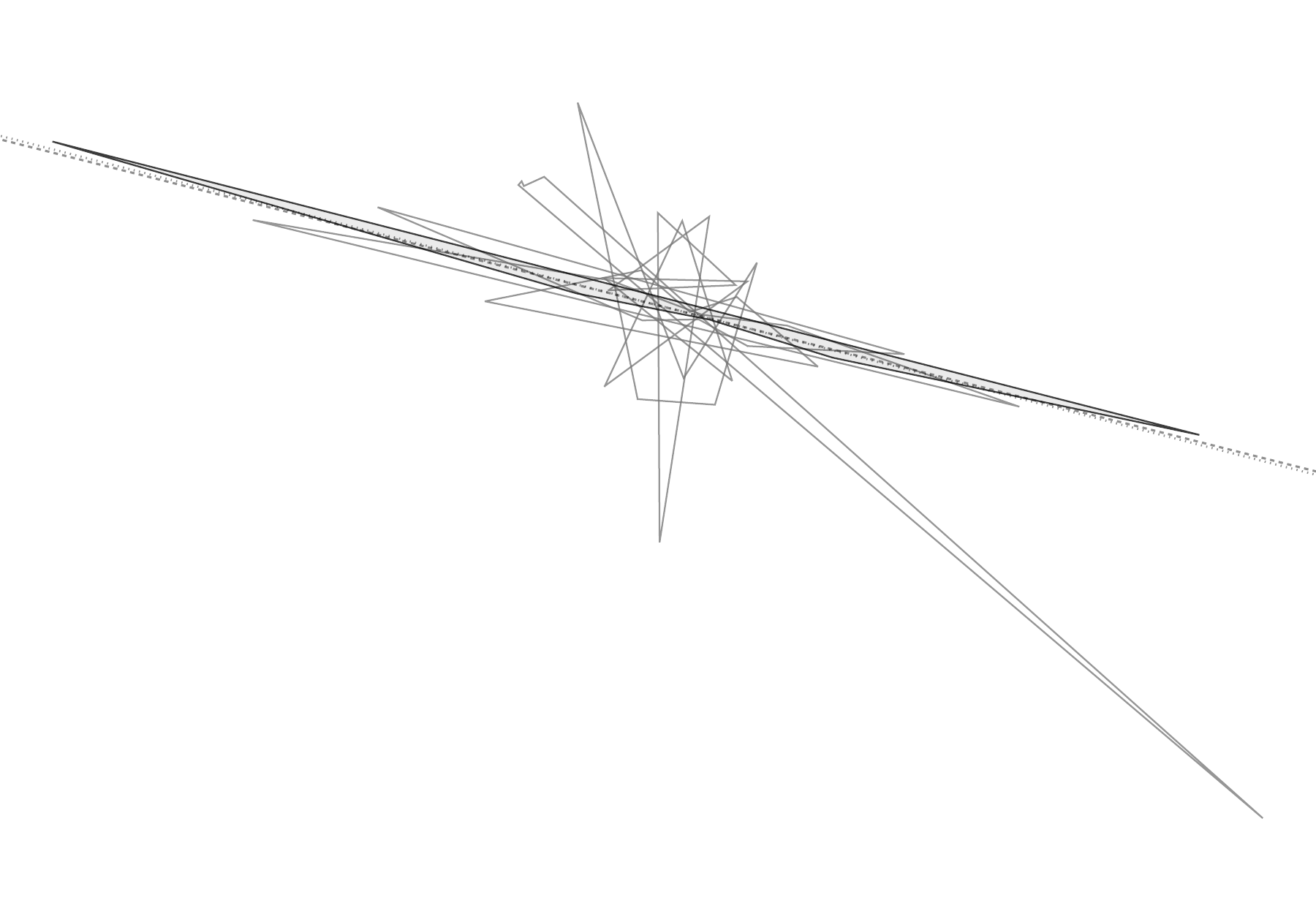}

    \vspace{0.25cm}

    \figpanel{0.48}{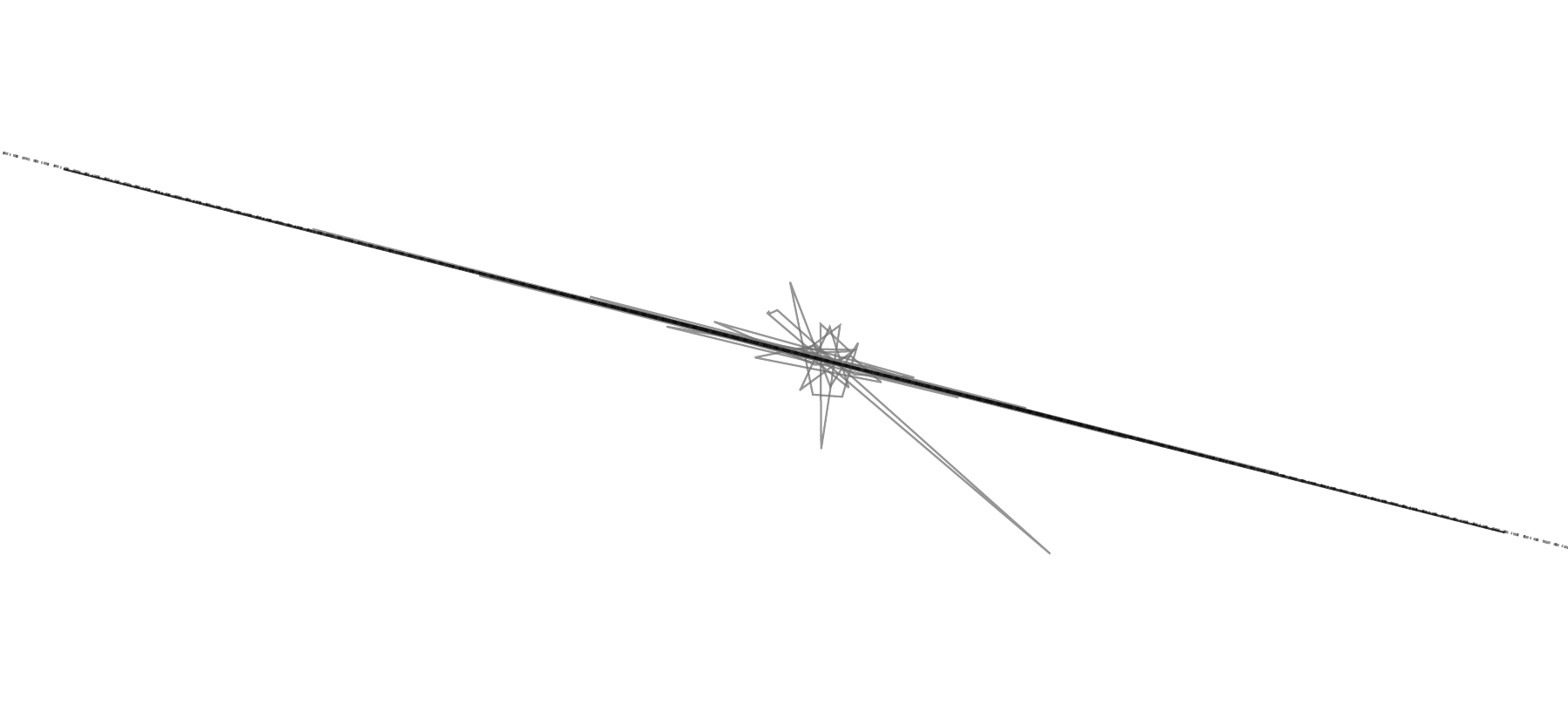}
    \hfill
    \figpanel{0.48}{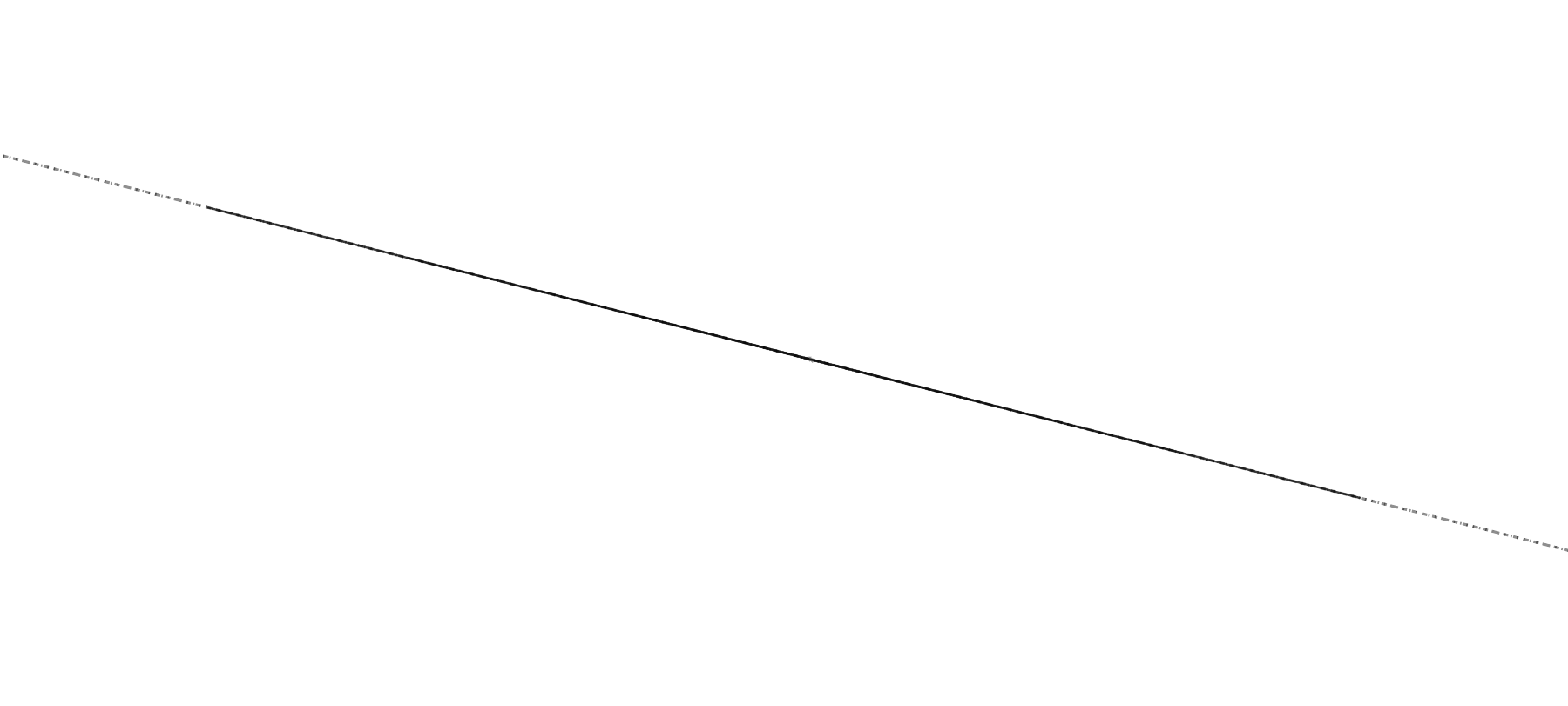}

    \caption{Spectral flattening of a~pentagram-shaped pentagon}
    \label{fig:pentagramPrincipalLines}
\end{figure}

We now illustrate this result on a~concrete example, rather fittingly
given by a~polygon in the~shape of a~pentagram.
Figure~\ref{fig:pentagramPrincipalLines} begins with the~initial polygon
and its predicted spectral line.  The~remaining panels show steps
$0,1,2,\ldots,7$, $10$, and $20$ of the~area-normalized map, together
with the~current principal line.  The~initial stages are visually
irregular, but a~clear asymptotic pattern emerges: the~principal lines
approach the~spectral line, while the~polygons become progressively flatter
and increasingly concentrated near the~same line.

\subsection{The~Poncelet return spectrum}
\label{subsec:poncelet-spectrum}

Schwartz's Darboux-type theorem asserts, in its labeling convention,
that one projectivity carries $\T^2(P)$ back to $P$ for every polygon
in a~fixed Poncelet family \cite[Theorem~1.1]{SchwartzPoncelet}.  With
the~convention adopted here, its inverse gives the~forward relation in
Lemma~\ref{lem:projective-periodicity-input}; in the~Jacobi labeling
used below, the~corresponding cyclic shift is $\Sigma_3$.  We now
identify this forward projectivity explicitly and compute its spectrum.

To obtain a~computable normal form, we use the~results of Lomel\'i and
Meiss.  Their Lemma~19 and Remark~14 show that a~pencil of nested
ellipses is projectively equivalent to a~standard pencil whose outer
conic is the~unit circle
\cite[Lemma~19 and Remark~14]{LomeliMeissPoncelet}.  Their standard
covering map and conjugacy identify every Poncelet map in this pencil
with a~translation in a~Jacobi parameter
\cite[Definition~4, Lemma~20, and Theorem~2]
{LomeliMeissPoncelet}.  The~Jacobi modulus is the~projective
eccentricity of the~pencil.

We use the~standard Jacobi elliptic functions
$\sn(u,k)$, $\cn(u,k)$, and $\dn(u,k)$ with modulus $k\in[0,1)$,
and write
$$
K(k)
=
\int_0^{\constpi/2}
\frac{\d\theta}{\sqrt{1-k^2\sin^2\theta}}
$$
for the~complete elliptic integral of the~first kind.  For a~
noncircular pencil, let $\varepsilon\in(0,1)$ be its projective
eccentricity.  We also include the~circular limiting case
$\varepsilon=0$.

Lomel\'i and Meiss construct a~covering coordinate
$\theta\in\R$ on the~outer conic in which the~Poncelet map
becomes a~rigid translation.  More precisely, let
$\mathcal C_{\mathrm{out}}$ be the~outer ellipse and let
$\mathcal C_{\mathrm{in}}$ be an~inner member of the~associated
pencil.  Since this pair of conics is fixed throughout each
calculation, we suppress the~dependence of the~Poncelet map on the~
chosen member of the~pencil and write
$$
\mathsf P\colon
\mathcal C_{\mathrm{out}}
\rightarrow
\mathcal C_{\mathrm{out}}.
$$
Starting from a~point of $\mathcal C_{\mathrm{out}}$, one draws the~
appropriately oriented tangent to $\mathcal C_{\mathrm{in}}$, and
$\mathsf P$ assigns to the~initial point the~second intersection of
this tangent with $\mathcal C_{\mathrm{out}}$.

They introduce a~covering map
$\Pi_M\colon\R\rightarrow\mathcal C_{\mathrm{out}}$,
where $M$ describes the~projective change of coordinates reducing
the~pencil to its standard diagonal form.  In this covering
coordinate, the~Poncelet map admits a~lift
$\widetilde{\mathsf P}\colon\R\rightarrow\R$
satisfying
$\mathsf P\circ\Pi_M=
\Pi_M\circ\widetilde{\mathsf P}$.
Equivalently, for every $\theta\in\R$,
$\mathsf P\bigl(\Pi_M(\theta)\bigr)=
\Pi_M\bigl(\widetilde{\mathsf P}(\theta)\bigr)$.
The~lift is the~rigid translation
$\widetilde{\mathsf P}(\theta)
=\theta+\omega$,
where $\omega$ is the~rotation number of $\mathsf P$.  Consequently,
$$
\mathsf P\bigl(\Pi_M(\theta)\bigr)
=
\Pi_M(\theta+\omega).
$$
Here and below, $\omega$ is the~value of the~rotation-number function
denoted by $\rho$ in \cite{LomeliMeissPoncelet}; it should not be
confused with the~auxiliary angle denoted there by $\omega$.
Thus the~geometrically defined Poncelet construction becomes
addition of a~constant in the~covering coordinate
\cite[Section~1.4, Definition~4, Lemma~20, and Remark~17]
{LomeliMeissPoncelet}.

For the~standard pencil, the~projective change of coordinates may be
taken to be the~identity, and the~covering map can be written as
$$
\theta
\mapsto
[
\cn(4K(\varepsilon)\theta,\varepsilon):
\sn(4K(\varepsilon)\theta,\varepsilon):
1
].
$$
Its deck transformations are generated by
$\theta\mapsto\theta+1$
\cite[Remark~16]{LomeliMeissPoncelet}.

After interchanging the~two affine coordinates and choosing the~
orientation of the~parameter so that increasing parameter agrees
with the~cyclic labeling, let
$u=4K(\varepsilon)\theta$.
The~covering map then becomes
$$
p(u)
=
[\sn(u,\varepsilon):\cn(u,\varepsilon):1].
$$
In the~rescaled Jacobi parameter, the~Poncelet map therefore lifts
to
$u\mapsto u+h$, where
$h=4K(\varepsilon)\omega$,
or, equivalently,
$\mathsf P\bigl(p(u)\bigr)
=p(u+h)$.
Thus one step of the~Poncelet construction is represented by
translation through the~constant increment $h$.

Accordingly, for an~integer $n\geq5$ and a~primitive rational
rotation number
$$
\omega=\frac qn,
\qquad
0<q<\frac n2,
\qquad
\gcd(q,n)=1,
$$
define
\begin{align}
\label{eq:poncelet-jacobi-parametrization}
p_i(u_0)
&=
p(u_0+ih),
\qquad i\in\mathbb Z,
\\[-1mm]
P(u_0)
&=
\bigl(p_0(u_0),\ldots,p_{n-1}(u_0)\bigr).
\nonumber
\end{align}
After $n$ steps the~parameter has increased by
$nh=4K(\varepsilon)q$.
Since $\sn(\,\cdot\,,\varepsilon)$ and
$\cn(\,\cdot\,,\varepsilon)$ have period $4K(\varepsilon)$, this
gives
$p_{i+n}(u_0)=p_i(u_0)$.
Moreover, the~smallest positive integer $m$ for which
$m\omega\in\mathbb Z$ is $m=n$, because $q/n$ is written in lowest
terms.  Hence the~orbit has primitive period $n$.

For completeness, Lomel\'i and Meiss give an~explicit description of
the~member of the~standard pencil having a~prescribed rotation number.
In their notation, Remark~15 expresses the~corresponding inner conic
directly in terms of the~rotation number, while Theorem~4 solves the~
inverse parameter problem by identifying the~associated member of an~
arbitrary pencil
\cite[Remark~15 and Theorem~4]{LomeliMeissPoncelet}.
After interchanging the~two affine coordinates and using
$h/2=2K(\varepsilon)\omega$, their formula becomes
\begin{align}
\label{eq:poncelet-standard-caustic}
\dn^2\left(\frac h2,\varepsilon\right)x^2+y^2
&=
\cn^2\left(\frac h2,\varepsilon\right).
\end{align}
Consequently, the~segments joining consecutive points in
\eqref{eq:poncelet-jacobi-parametrization} are tangent to this fixed
inner conic.

In the~computations below, we suppress the~modulus and write
$$
\sn(u)=\sn(u,\varepsilon),
\qquad
\cn(u)=\cn(u,\varepsilon),
\qquad
\dn(u)=\dn(u,\varepsilon),
\qquad
\cd(u)=\frac{\cn(u)}{\dn(u)}.
$$
The~cited results provide the~projective normal form and the~translation
model.  The~chord equation, the~diagonal return projectivity, and its
spectrum are computed directly below.

\begin{lemma}[Jacobi chord equation]
\label{lem:jacobi-chord}
Let $u,v\in\R$ and assume that $p(u-v)\ne p(u+v)$.  Then the~line
joining these two points has equation
$$
\dn(v)\sn(u)x+\cn(u)y-\cn(v)z=0.
$$
\end{lemma}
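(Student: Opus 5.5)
The plan is to verify directly that both points $p(u\pm v)=[\sn(u\pm v):\cn(u\pm v):1]$ satisfy the displayed linear equation. The equation is clearly nontrivial: the coefficient of $z$ and the coefficients of $x,y$ cannot all vanish, because $\cn(v)$, $\sn(u)$ and $\cn(u)$ are not all zero. Once both points satisfy it, the hypothesis $p(u-v)\ne p(u+v)$ means the two points determine a unique line, which must therefore be the one displayed.

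I will use the Jacobi addition formulas with common denominator $\Delta=1-\varepsilon^2\sn^2(u)\sn^2(v)$:
\begin{align*}
\sn(u\pm v)&=\frac{\sn(u)\cn(v)\dn(v)\pm\cn(u)\sn(v)\dn(u)}{\Delta},\\
\cn(u\pm v)&=\frac{\cn(u)\cn(v)\mp\sn(u)\sn(v)\dn(u)\dn(v)}{\Delta}.
\end{align*}
Substitute these into $\dn(v)\sn(u)x+\cn(u)y-\cn(v)$ and multiply by $\Delta>0$. The mixed terms $\pm\sn(u)\cn(u)\sn(v)\dn(u)\dn(v)$ cancel between the $x$ and $y$ contributions. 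What remains is
$$
\cn(v)\bigl(\sn^2(u)\dn^2(v)+\cn^2(u)\bigr)-\cn(v)\Delta .
$$

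The key step is the identity $\sn^2(u)\dn^2(v)+\cn^2(u)=\Delta$. To check it, write $\dn^2(v)=1-\varepsilon^2\sn^2(v)$; then the left side becomes $\sn^2(u)+\cn^2(u)-\varepsilon^2\sn^2(u)\sn^2(v)=\Delta$. Hence the expression vanishes for both signs, and both points lie on the line.

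The main obstacle, though it is minor, is keeping the sign conventions of the addition theorem straight after the coordinate interchange $p(u)=[\sn:\cn:1]$. Everything else is routine algebra; a symbolic check in Mathematica could confirm it. In the circular case $\varepsilon=0$ the formula reduces to the classical chord equation $\sin u\,x+\cos u\,y=\cos v$, which serves as a sanity check.
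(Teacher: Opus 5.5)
Your proof is correct and follows the paper's approach. The paper likewise obtains $\dn(v)\sn(u)\sn(u\pm v)+\cn(u)\cn(u\pm v)=\cn(v)$ from the Jacobi addition formulas and then uses distinctness of the two points. You write this expansion out explicitly, including the identity $\sn^2(u)\dn^2(v)+\cn^2(u)=1-\varepsilon^2\sn^2(u)\sn^2(v)$, and you add the check that the coefficient vector is nonzero, which the paper leaves implicit. That check really rests on $\dn(v)>0$ together with $\sn^2(u)+\cn^2(u)=1$; the mention of $\cn(v)$ does no work.
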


\begin{proof}
The~Jacobi addition formulas give
$$
\dn(v)\sn(u)\sn(u\pm v)
+
\cn(u)\cn(u\pm v)
=
\cn(v).
$$
Hence both $p(u-v)$ and $p(u+v)$ satisfy the~displayed linear equation.
Since the~two points are distinct, this equation defines their unique
projective line.
\end{proof}

For the~values $v=h$ and $v=h/2$ used below, the~distinctness
hypothesis in Lemma~\ref{lem:jacobi-chord} is automatic.  Indeed, the~
deck transformations of $p$ are the~translations by
$4K(\varepsilon)\mathbb Z$, whereas neither $h$ nor $2h$ is a~multiple
of $4K(\varepsilon)$ when $0<\omega<1/2$.

Schwartz's theorem supplies the~return projectivity abstractly.  We now
use Lemma~\ref{lem:jacobi-chord} to identify its forward lift and to
determine its complete spectrum.

\begin{proposition}[Explicit Poncelet return spectrum]
\label{prop:poncelet-return-spectrum}
With the~notation above, put
$$
s_x
=
\frac{\cn(h)\dn(h/2)}{\dn(h)\cn(h/2)}
=
\frac{\cd(h)}{\cd(h/2)},
\qquad
s_y
=
\frac{\cn(h)}{\cn(h/2)},
$$
and let $S=\operatorname{diag}(s_x,s_y,1)$.  Then
$$
\T(P(u_0))_i
=
S\,p\!\left(u_0+\left(i+\frac32\right)h\right),
$$
and consequently
$$
\T^2(P(u_0))_i
=
S^2P(u_0)_{i+3}.
$$
Thus, in this labeling convention, the~forward
Darboux--Schwartz projectivity has a~lift
\begin{align}
\label{eq:poncelet-return-projectivity}
G
&=
S^2
=
\operatorname{diag}(a,b,1),
\end{align}
where
\begin{align}
\label{eq:poncelet-return-eigenvalues}
a
=
\left(\frac{\cd(h)}{\cd(h/2)}\right)^2,
\qquad
b
=
\left(\frac{\cn(h)}{\cn(h/2)}\right)^2.
\end{align}
In particular, $\Spec(G)=\{1,a,b\}.$

For $0<\varepsilon<1$ and the~primitive rotation numbers under
consideration, the~ordering depends only on $\omega$:
$$
\begin{array}{rcl}
0<\omega<1/3
&\Rightarrow&
1>a>b>0,
\\[1mm]
\omega=1/3
&\Rightarrow&
a=b=1,
\\[1mm]
1/3<\omega<1/2
&\Rightarrow&
b>a>1.
\end{array}
$$
The~middle case has reduced denominator $3$ and therefore does not
occur under the~assumptions $n\geq5$ and $\gcd(q,n)=1$.  Likewise, the~
degenerate value $\omega=1/4$, for which $a=b=0$, has reduced
denominator $4$ and is excluded.  Thus the~return spectrum is positive
and simple for $0<\varepsilon<1$.  At the~projectively regular boundary
$\varepsilon=0$, the~two non-unit eigenvalues coincide.  They are
subdominant when $0<\omega<1/3$ and dominant when
$1/3<\omega<1/2$.
\end{proposition}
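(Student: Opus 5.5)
The plan is to compute $\T(P(u_0))$ directly from Lemma~\ref{lem:jacobi-chord}, then iterate once and read off the spectrum.

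\textbf{First step.} The vertex $\T(P)_i$ is the intersection of the diagonals $p_ip_{i+2}$ and $p_{i+1}p_{i+3}$.

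The diagonal $p_ip_{i+2}$ joins $p(u\mp h)$ with midpoint parameter $u=u_0+(i+1)h$. By the lemma it is $\dn(h)\sn(u)x+\cn(u)y-\cn(h)z=0$. Similarly, $p_{i+1}p_{i+3}$ is the same equation with $u'=u_0+(i+2)h$.

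Put $w=u_0+(i+\tfrac32)h$, so that $u=w-h/2$ and $u'=w+h/2$. We must show that $S\,p(w)$ lies on both lines, that is,
$$
\dn(h)\sn(w\mp h/2)s_x\sn(w)+\cn(w\mp h/2)s_y\cn(w)=\cn(h).
$$
Substituting $s_x$ and $s_y$ and multiplying by $\cn(h/2)/\cn(h)$, this becomes
$$
\dn(h/2)\sn(w\mp h/2)\sn(w)+\cn(w\mp h/2)\cn(w)=\cn(h/2).
$$
This is exactly the addition identity from the proof of Lemma~\ref{lem:jacobi-chord}, with $u\to w\mp h/2$ and $v\to\pm h/2$ (using evenness of $\cn,\dn$ and oddness of $\sn$). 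Since the two diagonals are distinct, their intersection is $S\,p(w)$.

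\textbf{Second step.} Apply $\T$ again and use projective naturality with the diagonal (hence projective) map $S$:
$$
\T^2(P)_i=S\,\T(\widetilde P)_i,\qquad \widetilde P_j=p(u_0+(j+\tfrac32)h).
$$
The polygon $\widetilde P$ is again of the form \eqref{eq:poncelet-jacobi-parametrization}, with base parameter $u_0+\tfrac32h$. By the first step, its image vertex is $S\,p(u_0+\tfrac32h+(i+\tfrac32)h)=S\,p(u_0+(i+3)h)$. Hence $\T^2(P)_i=S^2P(u_0)_{i+3}$, which identifies $G=S^2$. The spectrum $\{1,a,b\}$ is then immediate, and $a,b\ge0$ because they are squares.

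\textbf{Ordering (the main obstacle).} Here the key quantity is $\tau=h/2=2K\omega\in(0,K)$. Write $r=\cn(2\tau)/\cn(\tau)$.

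\emph{Comparing $a$ and $b$.} We have $a/b=(\dn(\tau)/\dn(2\tau))^2$. On $(0,K)$ the map $2\tau\mapsto\dn$ is decreasing, so $\dn(2\tau)<\dn(\tau)$ for $\varepsilon>0$, giving $a/b>1$. The eigenvalue $b$ vanishes exactly when $\cn(2\tau)=0$, i.e.\ $2\tau=K$, i.e.\ $\omega=1/4$; the same condition kills $a$, since $\cd(h)=0$ there.

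\emph{Comparing with $1$.} We need $b<1$ if and only if $|\cn(2\tau)|<\cn(\tau)$. Use the duplication formula $\cn(2\tau)=(\cn^2-\sn^2\dn^2)/(1-\varepsilon^2\sn^4)$ evaluated at $\tau$, and compare with $\pm\cn\tau$. The cleaner route I would try first is that $b=1$ exactly when $\cn(2\tau)=\pm\cn(\tau)$, with $\tau\in(0,K)$. This forces $2\tau=4K-\tau$ (impossible in range) or $\cn(2\tau)=-\cn(\tau)=\cn(2K-\tau)$, i.e.\ $3\tau=2K$, i.e.\ $\omega=1/3$.

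By continuity in $\omega$ the signs of $b-1$ are then constant on $(0,1/3)$ and on $(1/3,1/2)$. Checking the sign at a point near $0$ gives $b<1$ there, and at $\omega\to1/2$ we have $\cn(\tau)\to0$, so $b\to\infty$.

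The same argument applies to $a$ with $\cd$ in place of $\cn$. We use the quarter-period shift $\cd(u)=\sn(u+K)$ and the oddness relation $\cd(2K-u)=-\cd(u)$. Then $a=1$ forces $\cd(2\tau)=\pm\cd(\tau)$, which again gives $\omega=1/3$.

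\emph{Interleaving and the regular case.} The relation $a>b$ together with the sign pattern then yields $1>a>b>0$ for $\omega<1/3$ and $b>a>1$ for $\omega>1/3$. At $\varepsilon=0$ we have $\dn\equiv1$, so $a=b$, and the dominant/subdominant statement follows from the same sign analysis with $\cn=\cos$.
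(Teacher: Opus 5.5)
Your first two steps (checking that $S\,p(w)$ lies on both short diagonals via the addition identity, then applying $\T$ again using naturality under the diagonal map $S$) are correct and match the paper's argument. Your comparison of $a$ and $b$ with $1$ is also sound: solving $\cn(2\tau)=\pm\cn(\tau)$ and $\cd(2\tau)=\pm\cd(\tau)$ gives $\omega=1/3$, and a connectedness argument finishes it. This is a valid alternative to the paper's direct monotonicity. The gap is in the comparison of $a$ with $b$.

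You claim $\dn(2\tau)<\dn(\tau)$ because $\dn$ is decreasing on $(0,K)$. But $2\tau=4K\omega$ ranges over $(0,2K)$, and for $\omega>1/4$ it lies in $(K,2K)$, where $\dn$ is increasing, since $\dn(2K-t)=\dn(t)$. For $2K/3<\tau<K$ one has $2K-2\tau<\tau$, so $\dn(2\tau)=\dn(2K-2\tau)>\dn(\tau)$ and hence $a<b$. For instance, as $\omega\to1/2$ you get $a/b\to\dn(K)^2/\dn(2K)^2=1-\varepsilon^2<1$. So your universal relation $a>b$ is false on exactly the range $1/3<\omega<1/2$ where the statement asserts $b>a>1$. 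Your last sentence, which derives $b>a$ there ``from the relation $a>b$'', is self-contradictory. For $1/4<\omega<1/3$ the conclusion $a>b$ happens to be true, but your monotonicity argument does not reach it either.

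The missing idea is the reflection the paper uses. Put $w=\min\{2\tau,\,2K-2\tau\}\in(0,K]$. The symmetries of $\cn,\dn,\cd$ about $K$ give $\dn(2\tau)=\dn(w)$, $|\cn(2\tau)|=\cn(w)$ and $|\cd(2\tau)|=\cd(w)$. Hence $a/b=(\dn(\tau)/\dn(w))^2$, $b=(\cn(w)/\cn(\tau))^2$ and $a=(\cd(w)/\cd(\tau))^2$. Now strict decrease of $\cn,\dn,\cd$ on $(0,K)$ for $0<\varepsilon<1$ settles everything at once. If $w>\tau$, which means $\omega<1/3$, and $\omega\ne1/4$, you get $1>a>b>0$. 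If $w<\tau$, which means $\omega>1/3$, you get $b>a>1$. With this correction your continuity argument for the comparison with $1$ is no longer needed, though it remains correct.
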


\begin{proof}
Write $u_i=u_0+ih$.  The~short diagonal $p_ip_{i+2}$ has parameter
center $u_i+h$ and half-span $h$, while $p_{i+1}p_{i+3}$ has center
$u_i+2h$ and the~same half-span.  Put $v=u_i+3h/2$.  By
Lemma~\ref{lem:jacobi-chord}, these two diagonals have equations
$$
\dn(h)\sn(v-h/2)x+\cn(v-h/2)y=\cn(h),
$$
$$
\dn(h)\sn(v+h/2)x+\cn(v+h/2)y=\cn(h).
$$
Substituting $[s_x\sn(v):s_y\cn(v):1]$
into either equation reduces it to the~identity in the~proof of
Lemma~\ref{lem:jacobi-chord}, now with half-span $h/2$.  This proves the~
one-step formula.  Projective naturality and induction give
$$
\T^m(P(u_0))_i
=
S^m p\!\left(u_0+\left(i+\frac{3m}{2}\right)h\right).
$$
The~case $m=2$ gives the~claimed return relation.  Since
$nh=4K(\varepsilon)q$, the~indices in this relation are read modulo
$n$.

It remains to determine the~ordering.  Put
$$
u=\frac h2=2K\omega\in(0,K)
\quad\text{and}\quad
w=\min\{2u,2K-2u\}\in(0,K].
$$
The~symmetries
$\cn(2K-t)=-\cn(t)$,
$\dn(2K-t)=\dn(t)$, and
$\cd(2K-t)=-\cd(t)$
imply
$$
|s_y|=\frac{\cn(w)}{\cn(u)},
\qquad
|s_x|=\frac{\cd(w)}{\cd(u)},
\qquad
\frac{|s_x|}{|s_y|}=\frac{\dn(u)}{\dn(w)}.
$$
For $0<\varepsilon<1$, the~functions $\cn$, $\dn$, and $\cd$ are
positive and strictly decreasing on $(0,K)$.  Indeed,
$$
\cn'(t)=-\sn(t)\dn(t),
\qquad
\dn'(t)=-\varepsilon^2\sn(t)\cn(t),
\qquad
\cd'(t)=-\frac{(1-\varepsilon^2)\sn(t)}{\dn(t)^2}.
$$
If $0<u<2K/3$ and $u\ne K/2$, then $w>u$, and hence $0<|s_y|<|s_x|<1.$
At $u=K/2$ one has $s_x=s_y=0$; this is precisely the~excluded value
$\omega=1/4$.  At $u=2K/3$, the~symmetry identities give
$s_x=s_y=-1$.  Finally, if $2K/3<u<K$, then $w<u$, and therefore $|s_y|>|s_x|>1.$
Squaring proves the~three asserted cases.

When $\varepsilon=0$, one has $\dn\equiv1$ and $\cd=\cn$, so $a=b$;
the~sampled vertices form the~corresponding regular star polygon.
Conversely, for $0<\varepsilon<1$ the~return spectrum is simple.  The~
cyclically aligned return projectivity is unique, since it is determined
by the~images of any four vertices in general position.  Projective
equivalence therefore conjugates the~return projectivity.  Since the~
return projectivity of a~regular star polygon has a~double eigenvalue,
the~noncircular case cannot be projectively regular.
\end{proof}

The~star case $n=7$, $q=2$ has rotation number
$
\omega=\frac27<\frac13.
$
Hence Proposition~\ref{prop:poncelet-return-spectrum} gives
$
1>a>b>0.
$
Figure~\ref{fig:poncelet-heptagon-hyperbolas} illustrates the~
corresponding area-normalized flattening and the~hyperbolic organization
of the~compensated vertex subsequences.  The~left panel shows the~initial
star polygon, the~outer ellipse, the~inner caustic, and the~dominant
spectral line.  The~middle panel adds the~second spectral asymptote and
the~vertex hyperbolas, while the~right panel overlays the~first $50$
area-normalized iterates and the~marked vertex traces.  The~two spectral
lines form the~asymptotes of the~hyperbolas; in the~affine Jacobi model
the~displayed hyperbolas are exact.

\begin{figure}[htbp]
    \centering

    \includegraphics[width=0.32\textwidth]
        {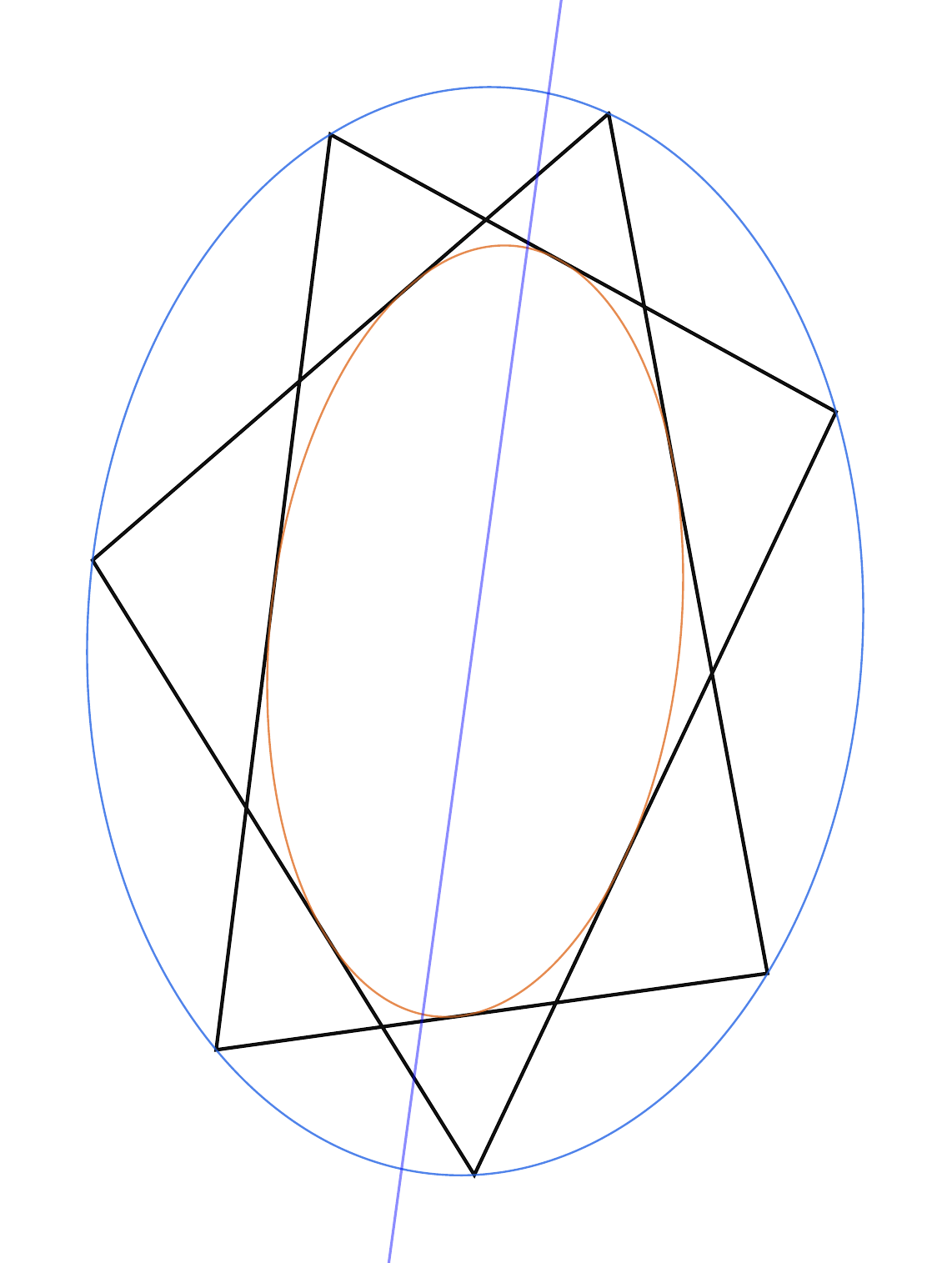}
    \hfill
    \includegraphics[width=0.32\textwidth]
        {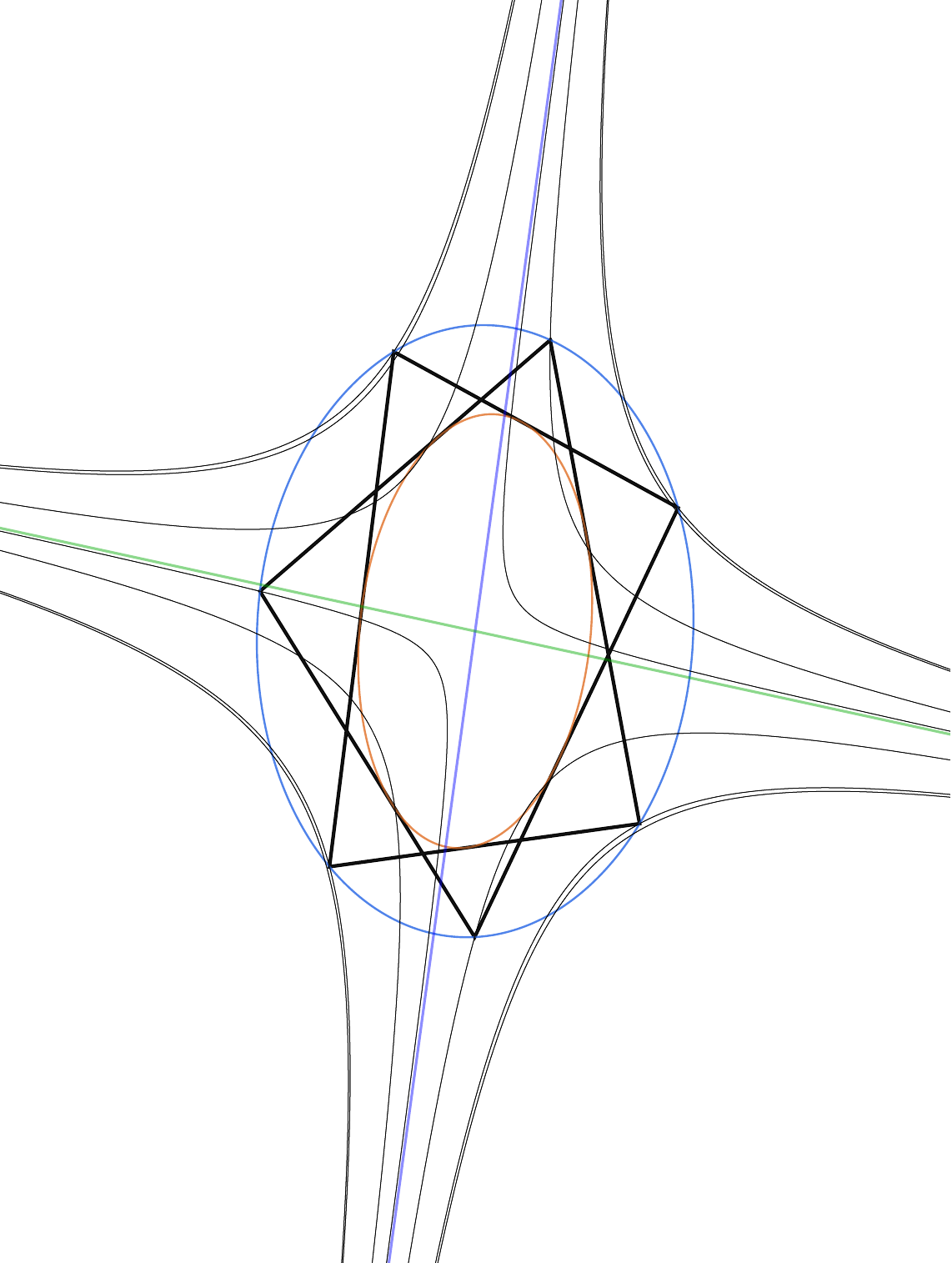}
    \hfill
    \includegraphics[width=0.32\textwidth]
        {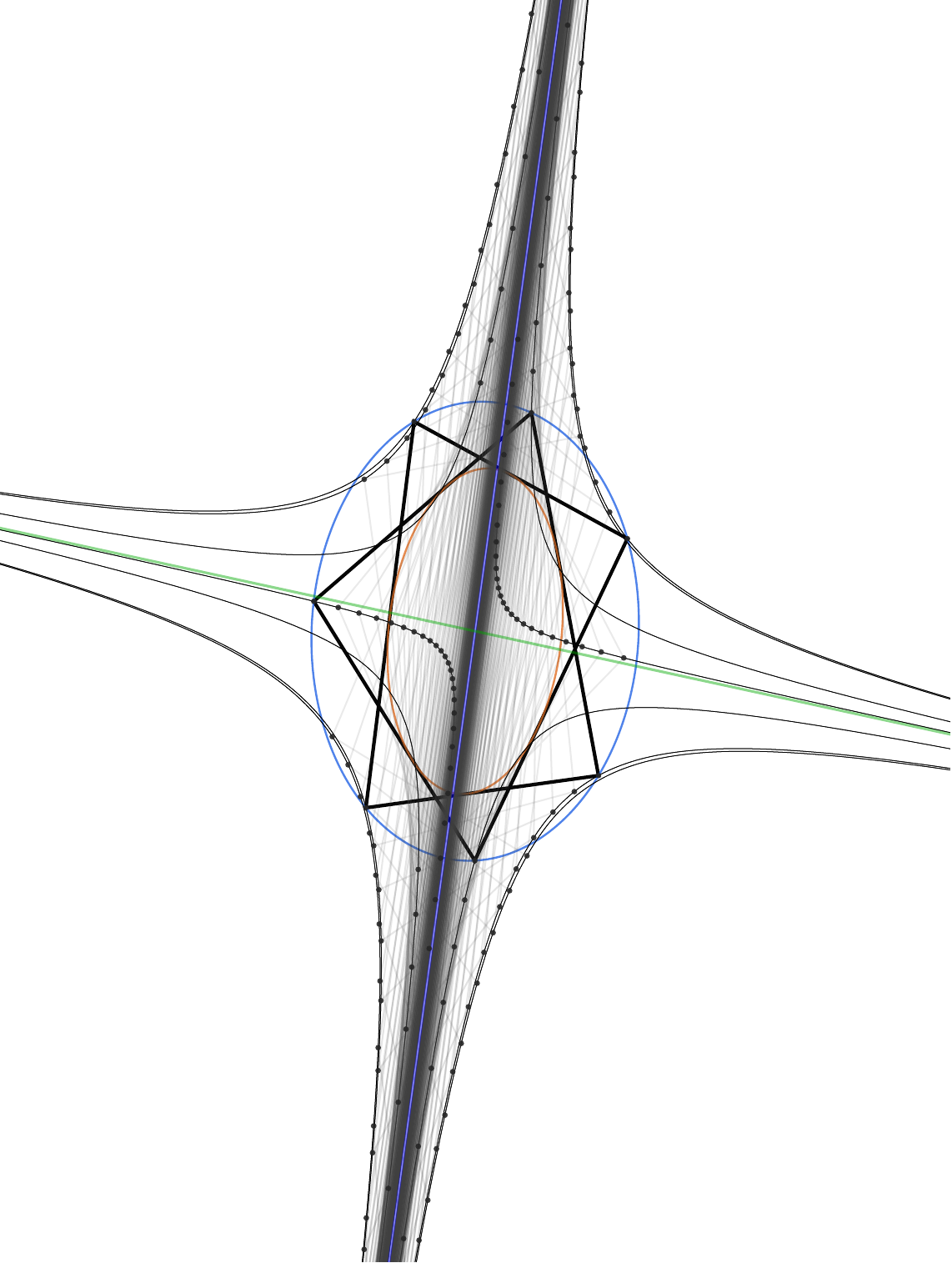}

    \caption{Hyperbolic vertex trajectories for a~Poncelet heptagon
    with rotation number $\omega=2/7$}
    \label{fig:poncelet-heptagon-hyperbolas}
\end{figure}

\begin{corollary}[Strictly convex Poncelet spectrum and flattening]
\label{cor:convex-poncelet-spectrum}
Let $P$ be a~strictly convex Poncelet $n$-gon, $n\geq5$.  If $P$ is not
projectively regular in the~sense of
Definition~\ref{def:projectively-regular}, then the~cyclically aligned
Darboux--Schwartz projectivity has three positive, pairwise distinct
eigenvalues which, after a~common normalization, satisfy
$1>a>b>0$.  Define $\mathfrak s=a+b$ and $\mathfrak d=ab$.  Then
$$
\chi_G(t)
=
(t-1)(t^2-\mathfrak s t+\mathfrak d)
\quad\text{and}\quad
\operatorname{disc}(\chi_G)
=
(\mathfrak s^2-4\mathfrak d)
(1-\mathfrak s+\mathfrak d)^2.
$$
The~strictly convex Poncelet locus therefore satisfies
$\mathfrak d>0$,
$\mathfrak s^2>4\mathfrak d$, and
$1-\mathfrak s+\mathfrak d>0$,
so it lies entirely in the~positive real simple-spectrum chamber below
$\mathfrak d=\mathfrak s^2/4$.
Moreover, the~area-normalized iterates flatten to the~spectral line and
their diameter tends to infinity.  The~compensated vertex subsequences
have the~hyperbolic asymptotics of
Corollary~\ref{cor:hyperbolic-vertex-asymptotics}; in the~Jacobi normal
form the~corresponding hyperbolas are exact.
\end{corollary}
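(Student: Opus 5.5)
Our plan is to reduce everything to the Jacobi normal form and then invoke Proposition~\ref{prop:poncelet-return-spectrum} and Theorem~\ref{thm:spectral-flattening}.

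\emph{Normalization.} Let $P$ be strictly convex and Poncelet. Then its outer and inner conics form a nested pair of ellipses, after a projective change if necessary. By the Lomel\'i--Meiss normal form, we may apply a projectivity and assume that $P=P(u_0)$ lies in the standard pencil. Strict convexity forces the primitive rotation number to be $\omega=1/n$, that is $q=1$. Since $n\ge5$, we get $\omega\le 1/5<1/3$.

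Non-regularity excludes $\varepsilon=0$. Indeed, for $\varepsilon=0$ the sampled polygon is the regular $n$-gon, by the last paragraph of the proof of Proposition~\ref{prop:poncelet-return-spectrum}. So $0<\varepsilon<1$, and that proposition gives the lift $G=\operatorname{diag}(a,b,1)$ with $1>a>b>0$.

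The eigenvalues of a projectivity are defined only up to a common scalar, and conjugation preserves the spectrum. Hence for the original $P$ the cyclically aligned return projectivity, which is unique by the same four-point argument, has the same normalized spectrum.

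\emph{Characteristic polynomial.} The factorization $\chi_G(t)=(t-1)(t^2-\mathfrak s t+\mathfrak d)$ is immediate. For the discriminant, we use $\operatorname{disc}(fg)=\operatorname{disc}(f)\operatorname{disc}(g)\operatorname{Res}(f,g)^2$, where $\operatorname{Res}(t-1,g)=g(1)=1-\mathfrak s+\mathfrak d$. The inequalities then follow from $a,b$ being distinct and positive with $(1-a)(1-b)>0$.

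\emph{Flattening.} We verify the hypotheses of Theorem~\ref{thm:spectral-flattening} in the affine Jacobi chart. It suffices to work there and transport back, provided the spectral line has a finite direction in the original chart; this point needs attention below.

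\begin{enumerate}[label=\textup{(\roman*)}]
\item Forward iterates of a strictly convex polygon remain strictly convex, hence are defined and have nonzero area.
\item This is Lemma~\ref{lem:projective-periodicity-input}\textup{(iii)} with $m=2$.
\item With eigenvalues $1>a>b$, the dominant eigenpoints are $[0:0:1]$ and $[1:0:0]$, so $\ell_G$ is the $x$-axis, with direction $u=e_1$.
\item For each residue class $r\in\{0,1\}$, the explicit formula gives
\[
G^j(\T^r P)_i=(a^j x_i,\,b^j y_i).
\]
The barycenter scales in the same way, so the centered coordinates are exactly $\alpha_j a_i$ and $\beta_j b_i$, with $\alpha_j=a^j$, $\beta_j=b^j$, and coefficients given by the centered coordinates of $\T^r P$. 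We have $\beta_j/\alpha_j=(b/a)^j\to0$. Finally, $B$ equals $\sArea(\T^rP)$ divided by $\det(u,v)$-type constants, so $B\neq0$ by strict convexity. Area-nondegeneracy is therefore automatic.
\end{enumerate}

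Theorem~\ref{thm:spectral-flattening} then gives flattening and $\diam\to\infty$, and Corollary~\ref{cor:hyperbolic-vertex-asymptotics} gives the hyperbolas, which are exact in the eigenchart.

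\emph{Main obstacle.} The delicate step is transporting back from the Jacobi chart to the original Euclidean chart. The normalizing projectivity need not be affine, so centering and area are not preserved, and the two-scale hypothesis must be re-verified in the original chart.

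To handle this, we would use that $P$ converges to the finite eigenpoint $X_1$ (by convexity, via Glick). Near $X_1$ the projectivity is a local diffeomorphism, and its differential maps the exact product expansion to a two-scale expansion. The leading coefficients are linear images of $(a_i,b_i)$, and $B$ is multiplied by the nonzero Jacobian.

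Centering introduces only $o$-errors, because the second-order terms of the chart change are $O(\alpha_j^2)=o(\beta_j)$. This requires $a^2<b$. If that inequality fails, we would instead use a higher-order argument, or center in the affine eigenchart adapted at $X_1$, where the projectivity is affine to first order. We expect this is where care is needed.
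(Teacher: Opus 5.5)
Your reduction to the Jacobi normal form, the spectral conclusions ($\omega=1/n<1/3$, $\varepsilon>0$, $1>a>b>0$, the factorization, the discriminant via the resultant) and the exact two-scale expansion $(a^jx_i,b^jy_i)$ in the Jacobi chart all agree with the paper's proof. In that chart the mixed area coefficient is $\sArea(R_r)\neq0$. The gap is the step you flag yourself and leave open: transporting the two-scale hypothesis to the given Euclidean chart.

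Your bound on the transverse error is $O(a^{2j})$, which is $o(b^j)$ only when $a^2<b$. That inequality fails on part of the strictly convex Poncelet locus. For $\omega=1/n$ and $\varepsilon\to1$ one has $\cd(h)=\sn(K-h)\to1$ and $\cd(h/2)\to1$, so $a\to1$, while $b\to0$. Your fallback (``a higher-order argument, or center in an adapted eigenchart'') is not carried out. As written, the flattening conclusion and the hyperbolic asymptotics in the original chart are unproved for highly eccentric pencils.

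The missing observation is that the transverse coordinate contains no pure $x^2$ term. The conjugating projectivity $\Psi$ from the Jacobi chart carries the invariant line $\{y=0\}$ onto $X_1X_2$. Take affine coordinates centered at the finite point $X_1$, with basis $(u,v)$ and $u\parallel X_1X_2$. In these coordinates $\Psi$ has the form
\[
\Psi(x,y)=\left(\frac{\alpha x+\beta y}{1+cx+dy},\ \frac{\gamma y}{1+cx+dy}\right),\qquad \alpha\gamma\neq0 .
\]
Substituting $(a^jx_i,b^jy_i)$ gives:
\begin{itemize}
\item transverse coordinate $\gamma b^jy_i+O(a^jb^j)$;
\item longitudinal coordinate $\alpha a^jx_i+O(b^j+a^{2j})$.
\end{itemize}
Averaging over $i$ and subtracting yields a centered two-scale expansion with scales $a^j$ and $b^j$. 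The coefficients are $\alpha(x_i-\bar x)$ and $\gamma(y_i-\bar y)$, and the mixed area coefficient is $\alpha\gamma\det(u,v)\,\sArea(R_r)\neq0$. No relation between $a^2$ and $b$ is needed.

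This is exactly how the paper closes the argument: the absence of a pure $x$-term in the second component expresses invariance of the dominant line. The same mechanism is packaged in Lemma~\ref{lem:convex-projective-orbit-nondegeneracy}(ii). You could also invoke that lemma directly, since the residue orbits $G^q(\T^r P)$ are, up to cyclic relabelling, strictly convex pentagram iterates converging to a finite point.
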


\begin{proof}
After reversing the~cyclic labeling if necessary, the~vertices of a~
strictly convex Poncelet polygon occur in their positive cyclic order on
the~outer conic.  The~Poncelet map therefore advances by one vertex, so
its primitive rotation number is
$\omega=\frac1n<\frac13$.
Since $P$ is not projectively regular, the~Jacobi modulus is nonzero.
Equations~\eqref{eq:poncelet-return-projectivity}
and~\eqref{eq:poncelet-return-eigenvalues}, together with
Proposition~\ref{prop:poncelet-return-spectrum}, give $1>a>b>0$.
The~characteristic polynomial and discriminant are immediate, since
$\mathfrak s^2-4\mathfrak d=(a-b)^2$ and
$1-\mathfrak s+\mathfrak d=(1-a)(1-b)$.

The~pentagram map sends a~strictly convex polygon to a~strictly convex
polygon contained in its interior \cite{SchwartzPentagram}.  Hence all
forward iterates are defined and have nonzero oriented area.  In the~
Jacobi chart, let
$R_0=P(u_0)$ and $R_1=\T(P(u_0))$.  For each $r\in\{0,1\}$, the~
subsequence $\T^{2k+r}(P)$ is, up to cyclic relabelling, $G^k(R_r)$.
Thus, if the~vertices of $R_r$ have affine coordinates
$(x_{r,j},y_{r,j})$, the~corresponding return iterates have coordinates
$(a^k x_{r,j},b^k y_{r,j})$.
After centering, this is an~exact two-scale expansion with scales
$a^k$ and $b^k$, and its leading mixed area coefficient is the~nonzero
oriented area of $R_r$.

It remains to check that this nondegeneracy survives the~projective
conjugacy to the~given pair of nested ellipses.  In affine coordinates
adapted to the~attracting eigenpoint and the~dominant invariant line,
the~conjugacy has the~local form
$$
\Psi(x,y)
=
\left(
\frac{\alpha x+\beta y}{1+cx+dy},
\frac{\gamma y}{1+cx+dy}
\right),
\qquad
\alpha\gamma\ne0.
$$
The~absence of a~pure $x$-term in the~second component expresses the~
invariance of the~dominant line.  Since
$$
\mathrm{D}\Psi(0)
=
\begin{pmatrix}
\alpha&\beta\\
0&\gamma
\end{pmatrix},
$$
Taylor expansion at the~attracting eigenpoint shows that the~two scales
remain $a^k$ and $b^k$, while the~leading mixed area coefficient is
multiplied by $\det\mathrm{D}\Psi(0)=\alpha\gamma$.  It therefore remains
nonzero.  All hypotheses of Theorem~\ref{thm:spectral-flattening} now
hold.  The~last assertion follows from
Corollary~\ref{cor:hyperbolic-vertex-asymptotics}.
\end{proof}

\begin{figure}[htbp]
    \centering
    \includegraphics[width=0.72\textwidth]{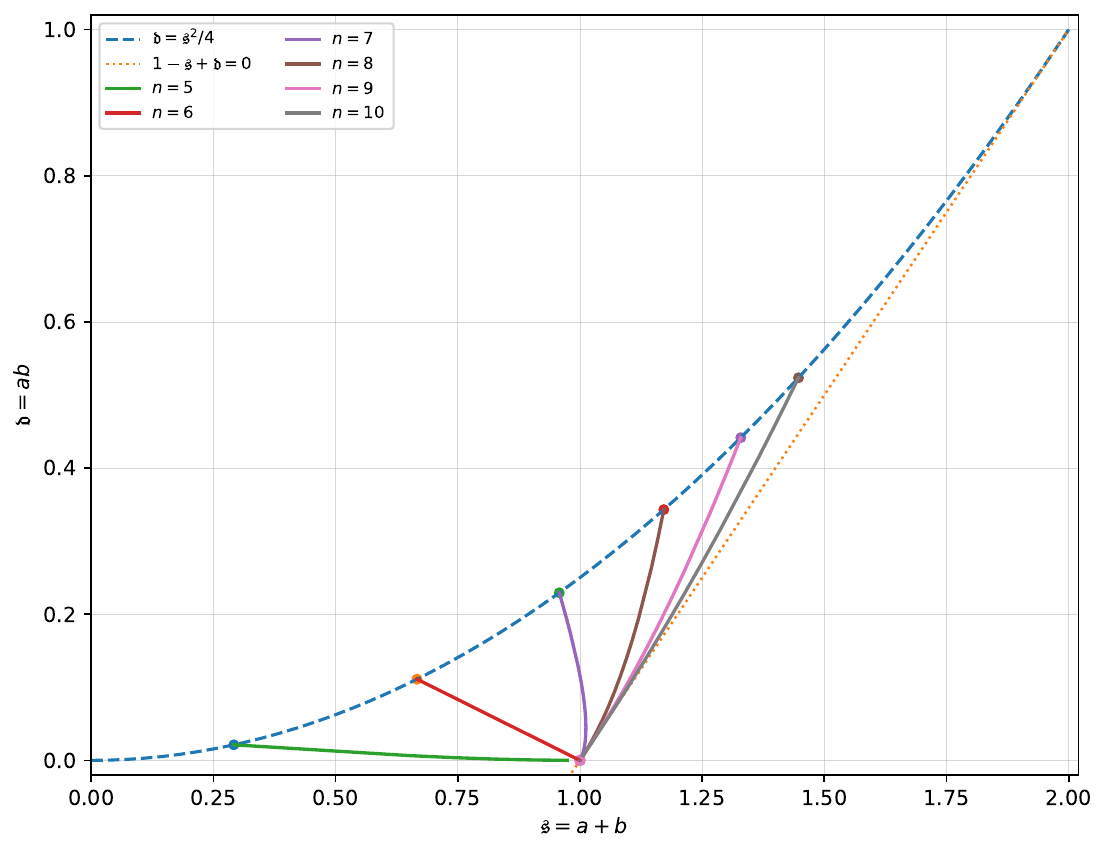}
    \caption{Strictly convex Poncelet spectral curves for
    $n=5,\ldots,10$}
    \label{fig:poncelet-spectral-curves}
\end{figure}

Figure~\ref{fig:poncelet-spectral-curves} plots the~explicit strictly
convex Poncelet spectral curves for $n=5,\ldots,10$ in the~
$(\mathfrak s,\mathfrak d)$-plane.  They lie in the~positive real
simple-spectrum chamber below the~repeated-root parabola
$\mathfrak d=\mathfrak s^2/4$, and their marked endpoints on that
parabola correspond to the~projectively regular cases
$\varepsilon=0$.

At the~circular endpoint $\varepsilon=0$ one has
$$
a=b
=
\left(
\frac{\cos(2\constpi/n)}{\cos(\constpi/n)}
\right)^2
$$
for the~strictly convex rotation number $1/n$.  This is the~
projectively regular boundary of the~diagram.  It does not flatten;
its common asymptotic ellipse is described later in
Proposition~\ref{prop:projectively-regular-poncelet}.

\subsection{Spectral diagrams for pentagons and hexagons}

We conclude this section with explicit algebraic descriptions of the~
spectral regions for pentagons and hexagons.  The~pentagonal diagram is
one-dimensional after projective normalization: one scalar $\kappa$
determines the~entire characteristic polynomial.  For hexagons, the~two
centered coefficients from Lemma~\ref{lem:centered-spectral-invariants}
are generally independent, and the~transition between real and non-real
spectra is the~standard cusp discriminant of a~depressed cubic.  Strict
convexity confines the~invariants to the~positive chamber of this
two-parameter diagram.

\begin{lemma}[Pentagonal normal form]
\label{lem:pentagonal-normal-form}
Let
$P(u,v)=
\bigl((0,0),(1,0),(1,1),(0,1),(u,v)\bigr)$
be a~labeled pentagon with $uv(1-v)\ne0$, so that Glick's operator is
defined.  Put
$$
\kappa(P)
=
\frac{(1-u)(u-v)(u+v-1)}{uv(1-v)}
\quad\text{and}\quad
\kappa_- = \frac{-11-5\sqrt{5}}{2}.
$$
Then
$\chi_{G_P}(t)=t^3-t^2+\kappa(P)t+\kappa(P)$, where
$G_P=L_P-3I$.

If $P(u,v)$ is strictly convex and labeled in cyclic order, then
$u<0$,
$0<v<1$,
and
$\kappa(P)\leq\kappa_-$.
Equality occurs only at
$(u,v)
=
\left(\frac{1-\sqrt5}{4},\frac12\right)$.
\end{lemma}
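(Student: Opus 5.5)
The plan is to compute the characteristic polynomial of $G_P$ in the given chart using Lemma~\ref{lem:centered-spectral-invariants}, and then to prove the convexity bound by an elementary two-variable optimization.

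\textbf{Coefficients of $\chi_{G_P}$.} Write $\chi_{G_P}(t)=t^3-\tr(G_P)t^2+\sigma_2(G_P)t-\det G_P$. By Proposition~\ref{prop:glick}(v), $\tr L_P=10$, so $\tr G_P=10-9=1$, which accounts for the coefficient $-t^2$. It then remains to show that $\sigma_2(G_P)=\kappa(P)$ and $\det G_P=-\kappa(P)$.

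I would use the affine lifts $\widetilde p_i=(x_i,y_i,1)^\top$ and compute the twenty-five numbers $c_{ij}=[i-1,j,i+1]/[i-1,i,i+1]$ from Remark~\ref{rem:bracket-interpretation}. Each bracket is twice a signed triangle area. The denominators are $[5,1,2]=-v$, $[1,2,3]=1$, $[2,3,4]=1$, $[3,4,5]=1-u$ and $[4,5,1]=u$, up to sign conventions. The nonvanishing hypothesis $uv(1-v)\ne0$ is exactly what keeps the denominators defined; the factor $1-u$ cancels in the end.

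There are two ways to finish this step.
\begin{itemize}
\item Assemble the $3\times3$ matrix $\mathcal B_P=\sum_i \widetilde p_i\varphi_i$ directly, so that $L_P=5I-\mathcal B_P$, and expand $\det(tI-L_P+3I)$.
\item Use the trace formulas for $\mathcal S_5$ and $\mathcal R_5$ in Lemma~\ref{lem:centered-spectral-invariants}, and then shift from $t-\tfrac{10}{3}$ to $t$ via $G_P=\widehat L_P+\tfrac13 I$.
\end{itemize}
In either form, one checks that $\sigma_2(G_P)$ and $-\det G_P$ reduce to the same rational function $(1-u)(u-v)(u+v-1)/(uv(1-v))$. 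This symbolic simplification is routine but is the computational core of the lemma.

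\textbf{Convexity region.} Strict convexity in cyclic order means the fifth vertex lies strictly on the inner side of the supporting lines of the edges $(0,1)(0,0)$, $(0,0)(1,0)$ and $(1,1)(0,1)$, together with the lines through the edges incident to $(u,v)$. The constraints coming from the square's edges force $u<0$ and $0<v<1$. Conversely, in that region all turning determinants have the same sign, so the region is exactly $\{u<0,\ 0<v<1\}$.

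\textbf{The bound.} On this region the numerator $(1-u)(u-v)(u+v-1)$ is positive and the denominator is negative, so $\kappa<0$. I put $w=-u>0$ and $s=(v-\tfrac12)^2\in[0,\tfrac14)$, and write $c=\tfrac12+w$, so that $c^2>\tfrac14$. Then
$$
(v-u)(1-u-v)=c^2-s,\qquad v(1-v)=\tfrac14-s,
$$
and therefore
$$
-\kappa=\frac{(1+w)}{w}\cdot\frac{c^2-s}{\tfrac14-s}.
$$
Differentiating in $s$ gives
$$
\frac{\d}{\d s}\,\frac{c^2-s}{\tfrac14-s}=\frac{c^2-\tfrac14}{(\tfrac14-s)^2}>0,
$$
so for fixed $w$ the quantity $-\kappa$ is minimized only at $s=0$, that is, at $v=\tfrac12$. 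This reduces the problem to minimizing the one-variable function $f(w)=4(1+w)(\tfrac12+w)^2/w$ over $w>0$.

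Setting $f'(w)=0$ and clearing the factor $(\tfrac12+w)$ leaves the quadratic $4w^2+2w-1=0$. Its positive root is $w=(\sqrt5-1)/4$, i.e.\ $u=(1-\sqrt5)/4$. Since $f\to\infty$ at both ends of $(0,\infty)$, this critical point is the unique global minimum. Substituting it should give $f=(11+5\sqrt5)/2$, which yields $\kappa\le\kappa_-$, with equality only at $\bigl(\tfrac{1-\sqrt5}{4},\tfrac12\bigr)$.

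The main obstacle is the first step: organizing the bracket computation so that both coefficients visibly collapse to the single function $\kappa$. I would carry it out with the explicit matrix $\mathcal B_P$ rather than with the cubic trace sums.
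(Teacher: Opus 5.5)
Your strategy is the paper's. For the characteristic polynomial you compute $G_P$ explicitly in this chart, using $\tr G_P=1$ from Glick's trace identity, and expand $\det(tI-G_P)$. For the bound you reduce to $v=\tfrac12$ and then minimize in $u$. The optimization half is correct: $-\kappa=\frac{1+w}{w}\cdot\frac{c^2-s}{1/4-s}$, the monotonicity in $s$, the quadratic $4w^2+2w-1=0$, and the value $\tfrac{11+5\sqrt5}{2}$ all check out. The paper makes the same two reductions without calculus. It writes $-\kappa=\frac{(x+1)^2}{y(1-y)}+\frac{x+1}{x}$ with $u=-x$, $v=y$, uses $y(1-y)\le\tfrac14$, and then the identity
\[
4(x+1)^2+\frac{x+1}{x}-\frac{11+5\sqrt5}{2}=\frac4x\Bigl(x-\frac{\sqrt5-1}{4}\Bigr)^2\Bigl(x+\frac{3+\sqrt5}{2}\Bigr),
\]
which exhibits the equality case immediately.

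The weak point is what you call the computational core. You leave it as ``one checks'', and the only concrete data you give for it are wrong. With the affine lifts,
\[
[512]=v,\qquad [123]=1,\qquad [234]=1,\qquad [345]=1-v,\qquad [451]=-u,
\]
not $-v,1,1,1-u,u$. Once the lifts are fixed there is no sign convention left to appeal to. The wrong signs of $[512]$ and $[451]$ negate $\varphi_1$ and $\varphi_5$, and so change $\mathcal B_P$. The bracket $[345]$ is twice the signed area of the triangle $(1,1),(0,1),(u,v)$, whose height over the side $y=1$ is $1-v$. The product of the five denominators is $-uv(1-v)$, which is exactly why the hypothesis reads $uv(1-v)\ne0$. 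Your remark that ``the factor $1-u$ cancels in the end'' only papers over the slip. Even if such a factor were present, its cancellation in $\chi_{G_P}$ would not make $L_P$ defined where it vanishes.

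With the correct brackets, $G_P=2I-\mathcal B_P$ is
\[
G_P=\begin{pmatrix}-1&0&1\\ -\frac vu&\frac{u-v}{1-v}&\frac{1-u}{1-v}\\ -\frac1u&\frac{(1-u)(1-2v)}{v(1-v)}&\frac{2-u-v}{1-v}\end{pmatrix}.
\]
Its principal minors give $\det G_P=\frac{v^2-v+(1-u)^2}{v(1-v)}+\frac{2u-1}{u}=-\kappa$ and $\sigma_2(G_P)=-\det G_P=\kappa$. This is the computation that actually has to be written out.

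The same brackets also repair your convexity paragraph. The segment from $(0,1)$ to $(0,0)$ is the diagonal $p_4p_1$, not an edge, and $p_5$ lies on its \emph{outer} side; the ``inner side'' you describe would give $u>0$. The clean argument is that strict convexity with cyclic labeling forces all five consecutive brackets to share the sign of $[123]=1$. That is, $v>0$, $1-v>0$ and $-u>0$.
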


\begin{proof}
Using the~standard affine lifts
$$
\begin{aligned}
\widetilde p_1&=(0,0,1)^{\top},
&
\widetilde p_2&=(1,0,1)^{\top},
&
\widetilde p_3&=(1,1,1)^{\top},
&
\widetilde p_4&=(0,1,1)^{\top},
&
\widetilde p_5&=(u,v,1)^{\top},
\end{aligned}
$$
and substituting them into the~definition
$$
L_P(w)
=
5w-
\sum_{j=1}^5
\frac{\det(\widetilde p_{j-1},w,\widetilde p_{j+1})}
     {\det(\widetilde p_{j-1},\widetilde p_j,\widetilde p_{j+1})}
\widetilde p_j,
$$
we obtain, with respect to the~standard basis of $\R^3$,
$$
G_P=L_P-3I
=
\begin{pmatrix}
-1
&
0
&
1
\\[2mm]
-\dfrac{v}{u}
&
\dfrac{u-v}{1-v}
&
\dfrac{1-u}{1-v}
\\[3mm]
-\dfrac{1}{u}
&
\dfrac{(1-u)(1-2v)}{v(1-v)}
&
\dfrac{2-u-v}{1-v}
\end{pmatrix}.
$$
Writing $\kappa=\kappa(P)$, we obtain
$$
\begin{aligned}
\chi_{G_P}(t)
&=\det(tI-G_P)
=
t^3-t^2
+
\frac{(1-u)(u-v)(u+v-1)}
     {uv(1-v)}\,t
+
\frac{(1-u)(u-v)(u+v-1)}
     {uv(1-v)}\\
&=
t^3-t^2+\kappa t+\kappa.
\end{aligned}
$$
Strict convexity gives the~normal form
$$
p_5=(-x,y),
\qquad
x>0,
\quad
0<y<1.
$$
Thus $u=-x$ and $v=y$, and
$$
\kappa
=
-\left(
\frac{(x+1)^2}{y(1-y)}
+
\frac{x+1}{x}
\right).
$$
Since $y(1-y)\leq1/4$,
$$
-\kappa
\geq
4(x+1)^2+\frac{x+1}{x}.
$$
Moreover,
$$
4(x+1)^2+\frac{x+1}{x}
-
\frac{11+5\sqrt5}{2}
=
\frac{4}{x}
\left(x-\frac{\sqrt5-1}{4}\right)^2
\left(x+\frac{3+\sqrt5}{2}\right)
\geq0.
$$
Hence $\kappa\leq\kappa_-$, with equality only at the~stated point.
\end{proof}

\begin{theorem}[Intrinsic pentagonal spectral classification]
\label{thm:intrinsic-pentagonal-spectrum}
Let $P=(p_1,\ldots,p_5)$ be a~labeled pentagon in the~domain of
Glick's operator.  For arbitrary nonzero homogeneous lifts, put
$$
[ijk]
=
\det(\widetilde p_i,\widetilde p_j,\widetilde p_k).
$$
Then
$$
\kappa(P)
=
\frac{
[235][315][425][124][134]
}{
[415][125][345][123][234]
}
$$
is well defined, independent of the~lifts, and invariant under
projective transformations and cyclic relabelling.  Moreover,
$\kappa(P)=-\det(G_P)$,
$G_P=L_P-3I$,
and
\begin{align}
\label{eq:pentagonal-characteristic-polynomial}
\chi_{G_P}(t)
&=
t^3-t^2+\kappa(P)t+\kappa(P).
\end{align}
Thus $\kappa(P)$ completely determines the~characteristic polynomial
and the~spectrum of $G_P$.

In terms of the~centered invariants from
Lemma~\ref{lem:centered-spectral-invariants},
$$
\mathcal S_5(P)=\frac13-\kappa(P),
\qquad
\mathcal R_5(P)=\frac{2}{27}-\frac43\kappa(P),
$$
so the~pentagonal invariants satisfy the~linear relation $\mathcal R_5
=
\frac43\mathcal S_5-\frac{10}{27}$.

For brevity, write $\kappa=\kappa(P)$ and put $\kappa_\pm=\frac{-11\pm5\sqrt5}{2}.$
The~spectral type is summarized in
Table~\ref{tab:pentagonal-spectral-types}.

\begingroup
\small

\begin{longtable}[c]
{@{}p{0.27\linewidth}p{0.65\linewidth}@{}}

\caption{Spectral types of $G_P$ as a function of $\kappa(P)$}
\label{tab:pentagonal-spectral-types}
\\

\toprule
\text{Range of $\kappa(P)$}
&
\text{Spectral type of $G_P$}
\\
\midrule
\endfirsthead

\multicolumn{2}{c}
{\tablename~\thetable\ continued from the previous page}
\\
\addlinespace

\toprule
\text{Range of $\kappa(P)$}
&
\text{Spectral type of $G_P$}
\\
\midrule
\endhead

\midrule
\multicolumn{2}{r}
{\textit{Continued on the next page}}
\\
\endfoot

\bottomrule
\endlastfoot
$\kappa<\kappa_-$
&
Three real eigenvalues with distinct moduli
\\
$\kappa=\kappa_-$
&
One simple and one double real eigenvalue
\\
$\kappa_-<\kappa<0$
&
One dominant real eigenvalue and a non-real conjugate pair
\\
$\kappa=0$
&
$\Spec(G_P)=\{1,0,0\}$
\\
$0<\kappa<\kappa_+$
&
Three real eigenvalues with distinct moduli
\\
$\kappa=\kappa_+$
&
One simple and one double real eigenvalue
\\
$\kappa>\kappa_+$
&
A dominant non-real conjugate pair and one real eigenvalue
\\
\end{longtable}

\endgroup

At the~two repeated-spectrum values,
$$
\Spec(G_P)=\left\{\begin{array}{ll}
\{2+\sqrt5,-\varphi,-\varphi\}
&\text{for }\kappa=\kappa_-,\\
\{2-\sqrt5,\varphi^{-1},\varphi^{-1}\}
&\text{for }\kappa=\kappa_+,
\end{array}\right.$$
where $\varphi=(1+\sqrt5)/2$.

If $P$ is strictly convex and labeled in cyclic order, then
$\kappa(P)\leq\kappa_-$,
with equality if and only if $P$ is projectively equivalent, with its
cyclic labeling, to a~regular pentagon.  Consequently, every strictly
convex pentagon has real spectrum, and outside the~projectively regular
class its eigenvalues have pairwise distinct moduli.
\end{theorem}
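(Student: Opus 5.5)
The plan is to reduce everything to the normal form of Lemma~\ref{lem:pentagonal-normal-form}. First, the bracket quotient is well defined and lift independent. Each index $1,\ldots,5$ occurs exactly three times in the numerator and three times in the denominator, so rescaling $\widetilde p_i\mapsto s_i\widetilde p_i$ multiplies both by $s_i^3$. A linear map $\psi$ multiplies every bracket by $\det\psi$, and there are five brackets above and below, so $\kappa$ is projectively invariant. Cyclic invariance is a direct check. Under $i\mapsto i+1$ the multiset of numerator brackets $\{[235],[315],[425],[124],[134]\}$ is permuted into itself up to signs from reordering, and likewise for the denominator. I would verify that the total sign is $+1$; this is a finite bookkeeping step.

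Next comes the identification $\kappa=-\det G_P$ and the characteristic polynomial. A generic pentagon (four points in general position) is projectively equivalent to $P(u,v)$. By Proposition~\ref{prop:glick}(i), $\chi_{G_P}$ is a projective invariant. So Lemma~\ref{lem:pentagonal-normal-form} gives $\chi_{G_P}(t)=t^3-t^2+\kappa t+\kappa$ with $\kappa=\kappa(u,v)$, hence $\det G_P=-\kappa$. It remains to evaluate the bracket quotient at the lifts $(0,0,1),(1,0,1),(1,1,1),(0,1,1),(u,v,1)$ and check that it equals $(1-u)(u-v)(u+v-1)/(uv(1-v))$. This is a short computation, since each bracket is twice a triangle area. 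The identity $-\det G_P=\kappa$ then extends to the whole domain of Glick's operator. Both sides are rational in the vertices and agree on a Zariski-dense set. Alternatively, one can note that pentagons with four vertices in general position already cover the domain once a suitable labelled quadruple is chosen.

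The centered invariants follow from Lemma~\ref{lem:centered-spectral-invariants}. With $n=5$ we have $L_P=G_P+3I$, so $\widehat L_P=G_P-\tfrac13 I$. Substituting $t=z+\tfrac13$ into $t^3-t^2+\kappa t+\kappa$ gives $z^3-(\tfrac13-\kappa)z+(\tfrac43\kappa-\tfrac2{27})$. Comparing with \eqref{eq:centered-characteristic-polynomial} yields the stated $\mathcal S_5$ and $\mathcal R_5$, and the linear relation between them is immediate.

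For the spectral table, note that $f(t)=t^3-t^2+\kappa t+\kappa$ is linear in $\kappa$. A real root $t\neq-1$ corresponds to $\kappa=-t^2(t-1)/(t+1)=:g(t)$. The discriminant of $f$ is a quadratic in $\kappa$ times a factor of $\kappa$; its vanishing set is $\kappa^2+11\kappa-1=0$, i.e.\ $\kappa=\kappa_\pm$. I would confirm that $g'$ vanishes exactly at $t=-\varphi$ (the case $\kappa_-$) and at $t=\varphi^{-1}$ (the case $\kappa_+$), giving the stated double roots, and that the simple roots are $2\pm\sqrt5$ by the trace relation. Counting preimages of the graph of $g$ then shows three real roots for $\kappa<\kappa_-$ and for $0<\kappa<\kappa_+$, and one real root otherwise. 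For moduli, real roots with equal modulus $r$ and opposite sign would force $f(r)=f(-r)=0$. This gives $r^2=\kappa$ and $r^2(1)=\ldots$, and such coincidences occur only at $\kappa=0$; at that value $\Spec=\{1,0,0\}$. In the non-real region, write the real root as $\tau$ and the pair as $\mu,\bar\mu$. Then $|\mu|^2=-\kappa/\tau$, and comparing $|\tau|$ with $|\mu|$ via $\tau+2\operatorname{Re}\mu=1$ decides dominance. I expect the main obstacle to be this modulus comparison: showing that $|\tau|^3>-\kappa$ on $(\kappa_-,0)$ and $|\tau|^3<\kappa$ on $(\kappa_+,\infty)$. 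I would check it at one point and use continuity, since equality $|\tau|=|\mu|$ forces an algebraic condition on $\kappa$ with no roots in these intervals.

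Finally, convexity. A strictly convex pentagon is projectively equivalent, with its labels, to $P(u,v)$ in the normal form, so Lemma~\ref{lem:pentagonal-normal-form} gives $\kappa\le\kappa_-$. Equality holds only at $((1-\sqrt5)/4,\tfrac12)$. That point is the projective image of the regular pentagon: one checks that the regular pentagon has $\kappa=\kappa_-$, for instance from the double eigenvalue forced by its $\mathbb Z_5$ symmetry, and uniqueness of the equality point does the rest. The table then gives real spectrum with distinct moduli for all other strictly convex pentagons.
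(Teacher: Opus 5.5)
Your proposal is correct in outline and shares the paper's backbone: lift and projective invariance by counting brackets, reduction to Lemma~\ref{lem:pentagonal-normal-form} plus a density argument, and the substitution $t=z+\frac13$. Three sub-arguments take a different route.

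\begin{itemize}
\item \emph{Cyclic invariance.} You check bracket signs directly. This works: the numerator and denominator consist of the five non-consecutive and the five consecutive triples, and in each exactly two brackets change sign under $i\mapsto i+1$. The paper instead gets cyclic invariance for free from $\kappa=-\det G_P$ and $L_{\Sigma_s(P)}=L_P$.
\item \emph{The spectral table.} You use the graph of $g(t)=t^2(1-t)/(t+1)$ and a continuity argument for dominance. The paper locates the real root via $\chi_{G_P}(1)=2\kappa$, $\chi_{G_P}(-1)=-2$, $\chi_{G_P}(0)=\kappa$, and reads dominance off the identity $r^2-|z|^2=r(r^2+1)/(r+1)$. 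So no ``check at one point'' is needed; your equal-modulus condition $\tau^3=-\kappa$ reduces to $\tau^2(1+\tau^2)=0$, which is the same fact.
\item \emph{The equality case.} You compute $\kappa$ of the regular pentagon and invoke uniqueness of the extremal point in the normal form. This is sound, and shorter than the paper's route through the explicit order-five projectivity $M$ and the classification of real projectivities of order five.
\end{itemize}

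Some details need fixing.

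\begin{itemize}
\item \emph{The critical point at $t=0$.} You state that $g'$ vanishes only at $-\varphi$ and $\varphi^{-1}$, but $g'(t)=-2t(t^2+t-1)/(t+1)^2$ also vanishes at $t=0$, with critical value $0$. Likewise the discriminant $-4\kappa(\kappa^2+11\kappa-1)$ also vanishes at $\kappa=0$. The local minimum of $g$ at $t=0$ is exactly what produces three real roots for $0<\kappa<\kappa_+$. The corrected graph also shows that the real root lies in $(1,\infty)$ for $\kappa_-<\kappa<0$ and in $(-1,0)$ for $\kappa>\kappa_+$. You need these signs to turn the comparison of $\tau^2$ with $|\mu|^2=-\kappa/\tau$ into your cubic inequalities.
\item \emph{The $\mathbb Z_5$ argument.} Symmetry under $\mathbb Z_5$ alone does not force a double eigenvalue. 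Commuting with a rotation of order five only makes $L_R$ act on the invariant plane as $aI+bJ$, with $J$ the quarter-turn, whose eigenvalues $a\pm \mathrm{i}b$ may be non-real. You need $b=0$. This follows from the reflection symmetry, since $L_P$ is unchanged by reversing the labeling. It also follows from reality of the spectrum once $\kappa(R)\le\kappa_-$ is known; then the double eigenvalue forces $\kappa(R)=\kappa_-$. Simplest of all, in the signed-area form of $\kappa$ each of the five height ratios equals $\varphi$, giving $\kappa(R)=-\varphi^5=\kappa_-$ directly.
\item \emph{Well-definedness.} You should note that the denominator is, up to sign, the product of the five consecutive brackets, hence nonzero on Glick's domain.
\end{itemize}
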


\begin{proof}
The~denominator in the~bracket quotient is, up to sign, the~product of
the~five consecutive brackets occurring in the~definition of $L_P$;
hence it is nonzero on the~stated domain.  If $\widetilde p_i$ is
replaced by $s_i\widetilde p_i$, then every bracket containing $p_i$ is
multiplied by $s_i$.  Each vertex occurs three times in the~numerator
and three times in the~denominator, so all lift-dependent factors
cancel.  Under a~projective transformation represented by
$A\in\GL(3,\R)$, every bracket is multiplied by $\det A$; both the~
numerator and the~denominator contain five brackets, so these factors
also cancel.

First suppose that $p_1,\ldots,p_4$ form a~projective frame.  Apply a~
projective transformation sending them to
$p_1=(0,0)$,
$p_2=(1,0)$,
$p_3=(1,1)$,
$p_4=(0,1)$,
and write $p_5=(u,v)$.  For the~standard affine lifts,
$$
\begin{aligned}
[235]&=1-u,
&
[315]&=u-v,
&
[425]&=u+v-1,&
[415]&=u,
&
[125]&=v,
&
[345]&=1-v,
\end{aligned}
$$
while $[124]=[134]=[123]=[234]=1.$ Therefore the~bracket quotient equals
$$
\frac{(1-u)(u-v)(u+v-1)}{uv(1-v)}
=
\kappa(P).
$$
Lemma~\ref{lem:pentagonal-normal-form} now gives
\eqref{eq:pentagonal-characteristic-polynomial} and
$\kappa(P)=-\det(G_P)$.  These are rational identities in the~homogeneous
vertex coordinates.  After clearing the~consecutive-bracket
denominators, they hold on the~dense open locus on which the~first four
vertices form a~projective frame and therefore hold identically.  Thus
the~formulas extend to every pentagon in the~domain of $L_P$.  Since the~
cyclic sum in \eqref{eq:glick-operator} gives
$L_{\Sigma_s(P)}=L_P$, the~identity $\kappa(P)=-\det(G_P)$ also proves
cyclic invariance.

Substituting $t=z+\frac13$ in
\eqref{eq:pentagonal-characteristic-polynomial} gives
$$
\chi_{G_P}\left(z+\frac13\right)
=
z^3
-
\left(\frac13-\kappa(P)\right)z
-
\left(\frac{2}{27}-\frac43\kappa(P)\right).
$$
Since $\widehat L_P=G_P-\frac13I$ for $n=5$, comparison with
Lemma~\ref{lem:centered-spectral-invariants} proves the~remaining
formulas.

The~discriminant of \eqref{eq:pentagonal-characteristic-polynomial} is $\operatorname{disc}(\chi_{G_P})
=
-4\kappa(\kappa^2+11\kappa-1),$
and the~roots of $\kappa^2+11\kappa-1$ are precisely
$\kappa_-$ and $\kappa_+$.  This gives the~real/complex alternatives in
Table~\ref{tab:pentagonal-spectral-types}.

Suppose that the~cubic has three distinct real roots.  If two of them
had the~same modulus, they would be $\lambda$ and $-\lambda$.  Since
the~sum of all three roots is $1$, the~third root would be $1$, whereas $\chi_{G_P}(1)=2\kappa$.
Thus, away from $\kappa=0$, the~three real roots have distinct moduli.

Now suppose that the~spectrum consists of one real root $r$ and a~
non-real pair $z,\overline z$.  Vieta's relations give $r|z|^2=-\kappa$.
Since $\chi_{G_P}(r)=0$,
$$
\kappa=\frac{r^2(1-r)}{r+1},
\qquad
|z|^2=\frac{r(r-1)}{r+1},
\quad\text{and}\quad
r^2-|z|^2
=
\frac{r(r^2+1)}{r+1}.
$$
For $\kappa_-<\kappa<0$, one has $\chi_{G_P}(1)=2\kappa<0$, so the~
unique real root satisfies $r>1$ and is dominant.  For
$\kappa>\kappa_+$, the~inequalities
$\chi_{G_P}(-1)=-2<0<\chi_{G_P}(0)=\kappa$ place the~unique real root in
$(-1,0)$, so the~conjugate pair is dominant.  In particular, there is
no mixed real/non-real equal-modulus locus.  Direct factorization at
$\kappa_\pm$ gives the~two displayed repeated spectra.

It remains to identify the~equality case in the~convex bound.  Normalize
the~first four vertices projectively and apply
Lemma~\ref{lem:pentagonal-normal-form}.  Equality holds only for $p_5=
\left(\frac{1-\sqrt5}{4},\frac12\right).$
Writing $\varphi=(1+\sqrt5)/2$, the~projectivity represented by
$$
M=
\begin{pmatrix}
\varphi-1&-\varphi&1\\
\varphi&1&0\\
\varphi-1&1&1
\end{pmatrix}
$$
cyclically permutes the~five vertices of this normalized pentagon.
Thus its fifth power is projectively the~identity.  A~nontrivial real
projectivity of order five is projectively conjugate to a~planar
rotation through $2\constpi/5$ or $4\constpi/5$; cyclic convexity selects
the~first alternative.  Hence the~equality case is projectively regular.
Conversely, projective invariance shows that every cyclically labeled
regular pentagon has $\kappa=\kappa_-$.
\end{proof}

The~scalar $-\det(L_P-3I)$ is a~projective invariant for every $n$, but
it is not in general a~complete spectral invariant.  Indeed, if
$c_n=\frac{2n-9}{3}$,
then Lemma~\ref{lem:centered-spectral-invariants} gives
$$
-\det(L_P-3I)
=
-c_n^3+c_n\mathcal S_n(P)-\mathcal R_n(P).
$$
Thus this determinant records only one combination of the~two centered
coefficients.  Pentagons are exceptional because the~identity
$\mathcal R_5=\frac43\mathcal S_5-\frac{10}{27}$ reduces those
coefficients to one parameter; for hexagons they are generally
independent.

The~two repeated-spectrum curves and the~three generic spectral types
are shown in Figure~\ref{fig:pentagon-spectral-regions} for pentagon $\big((0,0),(1,0),(1,1),(0,1),(u,v)\big)$.  The~
normalization uses the~first four vertices as a~projective frame, so
the~diagram is a~chart on the~two-dimensional moduli space of labeled
pentagons.  White denotes three real eigenvalues with distinct moduli;
the~light-gray region has one dominant real eigenvalue and a~subdominant
non-real pair, while the~dark-gray region has a~dominant non-real pair.
The~solid black curves are $\kappa=\kappa_\pm$ and the~singular locus
$\kappa=0$, whereas the~dotted lines $u=0$, $v=0$, and $v=1$ mark domain
degeneracies.

For standard affine lifts, Remark~\ref{rem:bracket-interpretation}
identifies each bracket with twice an~oriented triangle area.
Equivalently, with indices understood modulo $5$,
$$
\kappa(P)
=
-\prod_{i=1}^{5}
\frac{\sArea(p_i p_{i+1}p_{i+3})}
     {\sArea(p_i p_{i+1}p_{i+2})}.
$$
Thus $\kappa(P)$ may be viewed as a~projectively invariant signed-area
multi-ratio.  Since the~two triangles in each factor have the~same
oriented base $p_ip_{i+1}$, that factor is also the~ratio of their
signed heights.  In particular, all five ratios are positive for a~
strictly convex cyclically labeled pentagon, so the~formula immediately
gives $\kappa(P)<0$.

\begin{figure}[htbp]
    \centering
    \includegraphics[width=0.666\textwidth]
        {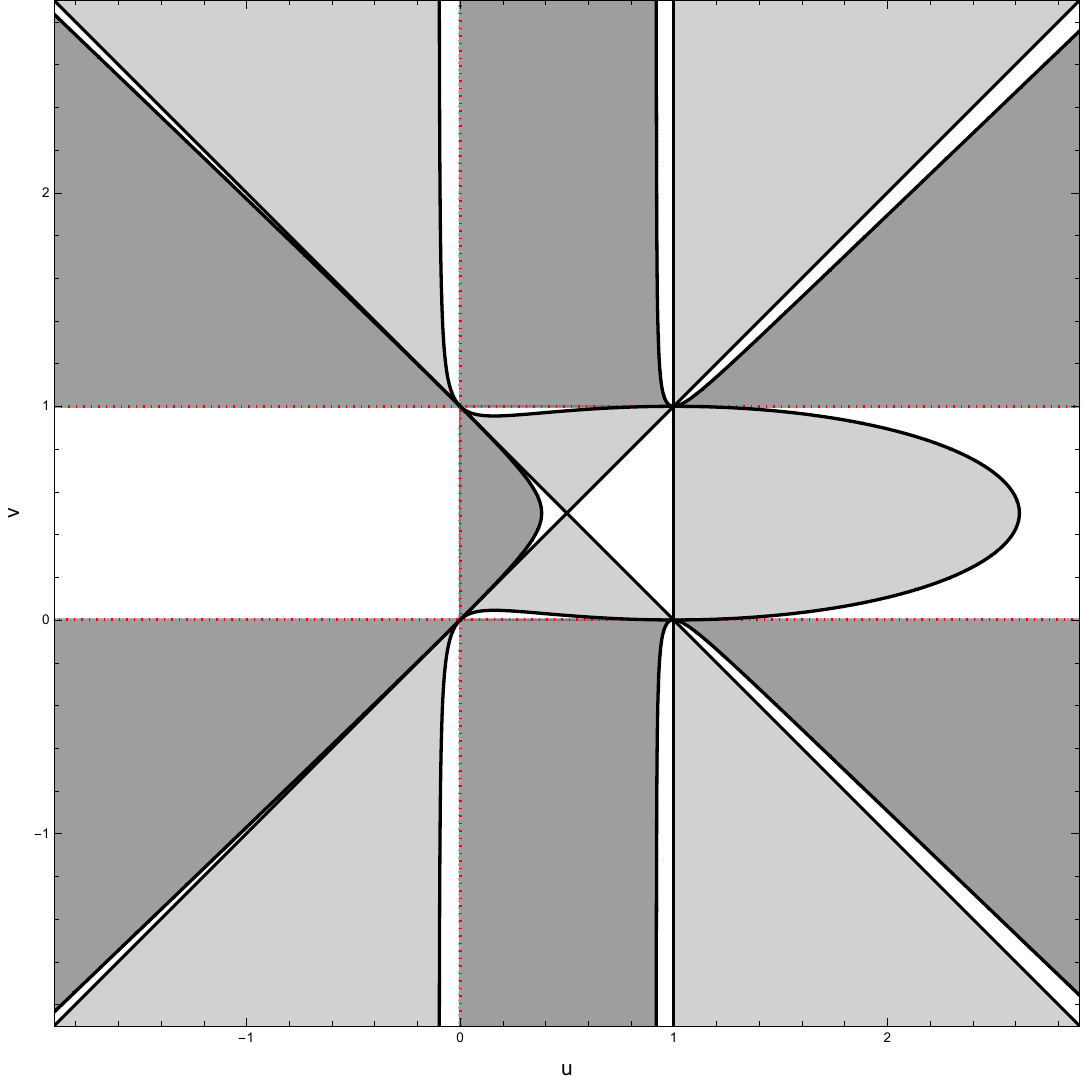}
    \caption{Pentagonal spectral regions in the~normalized chart
    $(u,v)$ for pentagon $\big((0,0),(1,0),(1,1),(0,1),(u,v)\big)$}
    \label{fig:pentagon-spectral-regions}
\end{figure}

\begin{remark}[Geometry of the~pentagonal diagram]
The~denominator of $\kappa$ vanishes on $u=0$, $v=0$, and on $v=1$.
These are genuine domain-degeneracy lines for Glick's operator.  The~
numerator vanishes on $u=1$, $u=v$, and on $u+v=1$,
and there $G_P$ is singular with spectrum $\{1,0,0\}$.  The~two
remaining black curves are the~cubics
$$
(1-u)(u-v)(u+v-1)
=
\kappa_\pm uv(1-v).
$$
Each cubic is smooth at the~four vertices of the~reference square, but
the~two cubics have a~common tangent and contact of order two there;
hence their union has an~\textit{$A_3$-tacnode}, i.e., it is locally analytically
equivalent to $y^2=x^4$, with two smooth branches tangent to the~same
line.  For example, at $(u,v)=(0,0)$ their common linear term is
$v-u$, while the~difference of their defining equations is
$$
-(\kappa_+-\kappa_-)uv(1-v),
$$
whose restriction to the~common tangent $v=u$ has a~nonzero quadratic
term.  The~other three corners follow by the~cyclic symmetry of the~
projective frame.  These corner points are base points of the~rational function
$\kappa$ and do not represent nondegenerate pentagons.
\end{remark}

\begin{proposition}[Spectral diagram for hexagons]
\label{prop:hexagon-spectral-diagram}
Let $P$ be a~labeled hexagon in the~domain of Glick's operator, and put $G_P=L_P-3I.$
Write
$$
\mathcal S(P)=\mathcal S_6(P)
=
\frac12\tr\bigl((G_P-I)^2\bigr),
\qquad
\mathcal R(P)=\mathcal R_6(P)
=
\det(G_P-I).
$$
Then $\mathcal S(P)$ and $\mathcal R(P)$ are projective invariants and
$$
\chi_{G_P}(t)
=
(t-1)^3-\mathcal S(P)(t-1)-\mathcal R(P).
$$
In the~notation of Remark~\ref{rem:bracket-interpretation},
$$
\mathcal S(P)
=
\sum_{1\leq i<j\leq6}m_{ij}-3
\quad\text{and}\quad
\mathcal R(P)
=
2\mathcal S(P)
-
\sum_{1\leq i<j<k\leq6}
\bigl(\Gamma_{ijk}+\Gamma_{ikj}\bigr).
$$
Put
$
\Delta_P
=
4\mathcal S(P)^3-27\mathcal R(P)^2.
$
Then the~following statements hold.
\begin{enumerate}[label=\textup{(\roman*)}]
\item $\Delta_P>0$ if and only if $G_P$ has three distinct real
 eigenvalues.

\item $\Delta_P=0$ if and only if $G_P$ has a~repeated real
 eigenvalue.  More precisely, there is a~unique $s\in\R$ such that
$\mathcal S(P)=3s^2,$
$\mathcal R(P)=2s^3,$
and
$\Spec(G_P)=\{1+2s,1-s,1-s\}$.
The~case $s=0$ is the~triple spectrum $\{1,1,1\}$.

\item $\Delta_P<0$ if and only if $G_P$ has one real eigenvalue and a~
non-real conjugate pair.  Let $r$ be the~unique real root of
$z^3-\mathcal S(P)z-\mathcal R(P)=0$
and put
$$
\omega
=
\sqrt{\frac{3r^2}{4}-\mathcal S(P)}.
$$
Then $\omega>0$ and
$
\Spec(G_P)
=
\left\{
1+r,
1-\frac r2+\consti\omega,
1-\frac r2-\consti\omega
\right\}.
$
Moreover,
$$
|1+r|^2
-
\left|1-\frac r2+\consti\omega\right|^2
=
3r+\mathcal S(P).
$$
Thus the~real eigenvalue is dominant, has the~same modulus as the~
non-real pair, or is subordinate according as
$3r+\mathcal S(P)$ is positive, zero, or negative.

\item Suppose that $G_P$ has three distinct real eigenvalues.  Two of
them have the~same modulus if and only if
$$
\mathcal R(P)=8-2\mathcal S(P),
\qquad
\mathcal S(P)>3,
\qquad
\mathcal S(P)\ne12.
$$
On this locus, $\Spec(G_P)
=
\left\{
3,
\sqrt{\mathcal S(P)-3},
-\sqrt{\mathcal S(P)-3}
\right\}.$
The~eigenvalue $3$ is uniquely dominant when
$3<\mathcal S(P)<12$, whereas the~opposite pair has the~common maximal
modulus when $\mathcal S(P)>12$.
\end{enumerate}
\end{proposition}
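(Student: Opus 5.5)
The plan is to specialize Lemma~\ref{lem:centered-spectral-invariants} to $n=6$ and then analyze the depressed cubic by hand. For $n=6$ we have $2n/3=4$, so $\widehat L_P=L_P-4I=G_P-I$. Hence $\mathcal S(P)=\frac12\tr((G_P-I)^2)$ and $\mathcal R(P)=\det(G_P-I)$ agree with $\mathcal S_6,\mathcal R_6$, and projective invariance is inherited. Substituting $t=\lambda-3$ in \eqref{eq:centered-characteristic-polynomial} gives $\chi_{G_P}(t)=\chi_{L_P}(t+3)=(t-1)^3-\mathcal S(t-1)-\mathcal R$.

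For the bracket formulas we use $c_{ii}=1$, so $m_{ii}=1$ and $\sum_{i,j}c_{ij}c_{ji}=6+2\sum_{i<j}m_{ij}$. Then $\mathcal S_6=\frac12(6+2\sum_{i<j}m_{ij}-12)=\sum_{i<j}m_{ij}-3$.

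For the triple sum we split the index triples $(i,j,k)$ by coincidence pattern:
\begin{itemize}
\item all equal: contributes $6$;
\item exactly two equal: each such cyclic word reduces to $m_{ij}$ (using $c_{ii}=1$), and there are three positions for the distinct index, giving $3\sum_{i\neq j}m_{ij}=6\sum_{i<j}m_{ij}$;
\item all distinct: each unordered triple contributes three cyclic rotations of each orientation, giving $3\sum_{i<j<k}(\Gamma_{ijk}+\Gamma_{ikj})$.
\end{itemize}
Inserting these into $\mathcal R_6=-16+2\sum c_{ij}c_{ji}-\frac13\sum c_{ij}c_{jk}c_{ki}$ gives $-16+12+4\Sigma_m-2-2\Sigma_m-\Sigma_\Gamma=2\Sigma_m-6-\Sigma_\Gamma=2\mathcal S-\Sigma_\Gamma$, as stated. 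I expect this bookkeeping to be the main place where errors could creep in, so I would double-check the count of two-equal triples.

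Items (i)--(iii) follow from the classical discriminant $4\mathcal S^3-27\mathcal R^2$ of $z^3-\mathcal Sz-\mathcal R$, with $z=t-1$.

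For (ii), a double root $z=-s$ with simple root $2s$ (the roots sum to zero) gives $\mathcal S=3s^2$ and $\mathcal R=2s^3$. The value of $s$ is unique because $s\mapsto 2s^3$ is injective.

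For (iii), the roots are $r$ and $-r/2\pm\consti\omega$, since the roots sum to zero. Vieta's relation for the pairwise products gives $-\mathcal S=-r^2+\frac{r^2}4-\frac{r^2}{4}\cdot 0+\ldots$; more precisely, $-\mathcal S=r\cdot(-r)+(r^2/4+\omega^2)$, so $\omega^2=\frac{3r^2}{4}-\mathcal S$, which is positive precisely because the pair is non-real. The modulus comparison is then the direct computation
$(1+r)^2-(1-r/2)^2-\omega^2=3r+\frac{3r^2}{4}-\omega^2=3r+\mathcal S$.

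For (iv), with three distinct real roots, equal moduli means two eigenvalues are $\pm\mu$ with $\mu>0$. Since the eigenvalues sum to $\tr G_P=3$, the third eigenvalue is $3$, so $z=2$ is a root of $z^3-\mathcal Sz-\mathcal R$. This gives $8-2\mathcal S-\mathcal R=0$. The remaining quadratic factor is $z^2+2z+(4-\mathcal S)$, with roots $z=-1\pm\sqrt{\mathcal S-3}$, i.e. eigenvalues $\pm\sqrt{\mathcal S-3}$.

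Conversely, if $\mathcal R=8-2\mathcal S$, the eigenvalues are real, distinct and opposite exactly when $\mathcal S>3$ and $\sqrt{\mathcal S-3}\neq3$, i.e. $\mathcal S\neq12$. The dominance comparison is then $3$ versus $\sqrt{\mathcal S-3}$, which is immediate.
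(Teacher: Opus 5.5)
Your proposal is correct and follows essentially the same route as the paper: you specialize Lemma~\ref{lem:centered-spectral-invariants} at $n=6$ (where $\widehat L_P=G_P-I$), group the trace sums by index-coincidence pattern to get $6+2M$ and $6+6M+3Q$, and then run the depressed-cubic discriminant analysis, using $\tr G_P=3$ in (iv) to force the eigenvalue $3$. The only cosmetic difference is that in (iii) you obtain $\omega^2=\frac{3r^2}{4}-\mathcal S$ from Vieta's pairwise-product relation, where the paper factors $q(z)=(z-r)(z^2+rz+r^2-\mathcal S)$; these are equivalent.
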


\begin{proof}
Since $n=6$,
$G_P-I=L_P-4I=\widehat L_P$.
The~characteristic-polynomial identity and projective invariance follow
from Lemma~\ref{lem:centered-spectral-invariants}, in particular from
\eqref{eq:centered-characteristic-polynomial}.  To obtain the~
specialized bracket formulas, put
$$
M=\sum_{1\leq i<j\leq6}m_{ij},
\qquad
Q=
\sum_{1\leq i<j<k\leq6}
\bigl(\Gamma_{ijk}+\Gamma_{ikj}\bigr).
$$
Since $c_{ii}=1$ and $c_{i,i\pm1}=0$, grouping ordered pairs and
ordered triples by their underlying unordered index sets gives
$$
\sum_{i,j=1}^6c_{ij}c_{ji}=6+2M
\quad\text{and}\quad
\sum_{i,j,k=1}^6c_{ij}c_{jk}c_{ki}
=
6+6M+3Q.
$$
Substitution into Lemma~\ref{lem:centered-spectral-invariants} yields
$\mathcal S=M-3$,
$\mathcal R=2\mathcal S-Q$.

Set $z=t-1$.  The~shifted characteristic polynomial is
\begin{align}
\label{eq:formulaqzcharpol}
q(z)&=z^3-\mathcal S z-\mathcal R,
\end{align}
whose discriminant is
$\Delta_P=4\mathcal S^3-27\mathcal R^2.$
The~standard discriminant criterion for a~real depressed cubic proves
\textup{(i)} and the~first assertions of \textup{(ii)} and
\textup{(iii)}.

If $\Delta_P=0$, the~roots of $q$ can be written uniquely as $2s,-s,-s$
for some $s\in\R$.  Hence
$$
q(z)=(z-2s)(z+s)^2
=z^3-3s^2z-2s^3,
$$
which proves \textup{(ii)}.

Assume that $\Delta_P<0$, and let $r$ be the~unique real root of $q$.
Since $\mathcal R=r^3-\mathcal S r$, one has
$$
q(z)
=
(z-r)\bigl(z^2+rz+r^2-\mathcal S\bigr).
$$
The~quadratic factor has non-real roots, so
$3r^2-4\mathcal S>0$.
They are therefore
$$
-\frac r2\pm\consti
\sqrt{\frac{3r^2}{4}-\mathcal S}.
$$
After shifting back by $1$, this gives the~spectrum in \textup{(iii)}.
Furthermore,
$$
\left|1-\frac r2+\consti\omega\right|^2
=
1-r+r^2-\mathcal S,
\quad\text{and hence}\quad
|1+r|^2
-
\left|1-\frac r2+\consti\omega\right|^2
=
3r+\mathcal S.
$$

Finally, suppose that three distinct real eigenvalues contain two of
the~same modulus.  They are $\eta$ and $-\eta$, and the~trace condition
forces the~third eigenvalue to be $3$.  The~shifted roots are $2$, $\eta-1$, and $-\eta-1$.
Comparison with $q(z)$ in \eqref{eq:formulaqzcharpol} gives
$\mathcal S=\eta^2+3$ and
$\mathcal R=2(1-\eta^2)=8-2\mathcal S$.
Distinctness is equivalent to $\eta\ne0$ and $\eta^2\ne9$, that is,
$\mathcal S>3$ and $\mathcal S\ne12$.  Conversely, under these
conditions the~characteristic polynomial factors as
$
\chi_{G_P}(t)
=
(t-3)\bigl(t^2-(\mathcal S-3)\bigr),
$
which proves \textup{(iv)}, including the~assertions about maximal
moduli.
\end{proof}

\begin{corollary}[Spectral restriction for convex hexagons]
\label{cor:convex-hexagon-spectral-restriction}
Let $P$ be a~strictly convex labeled hexagon whose vertices are
labeled in cyclic order.  Then
$\mathcal S(P)>0$ and
$\mathcal R(P)>0$.
Consequently, $G_P=L_P-3I$ has a~unique eigenvalue of maximal modulus.
More precisely:
\begin{enumerate}[label=\textup{(\roman*)}]
\item if $\Delta_P<0$, the~real eigenvalue is strictly dominant over
the~non-real conjugate pair;

\item if $\Delta_P=0$, then for a~unique $s>0$,
$
\Spec(G_P)=\{1+2s,1-s,1-s\},
$
and $1+2s$ is strictly dominant;

\item if $\Delta_P>0$, the~three eigenvalues are real and one of them
has strictly largest modulus.  The~two subordinate eigenvalues have the~
same modulus precisely when
$$
\mathcal R(P)=8-2\mathcal S(P),
\qquad
3<\mathcal S(P)<4,
$$
in which case
$
\Spec(G_P)
=
\left\{
3,
\sqrt{\mathcal S(P)-3},
-\sqrt{\mathcal S(P)-3}
\right\}.
$
\end{enumerate}
\end{corollary}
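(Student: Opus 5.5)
The plan is to read off signs of the bracket quantities from strict convexity, obtain $\mathcal R(P)>2\mathcal S(P)$ as a by-product, prove $\mathcal S(P)>0$ separately, and then deduce (i)--(iii) from Proposition~\ref{prop:hexagon-spectral-diagram}.

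\textbf{Step 1: signs of the $c_{ij}$.} Use the standard affine lifts and orient $P$ counterclockwise. Then every consecutive bracket $[i-1,i,i+1]$ is positive. For $j\notin\{i-1,i,i+1\}$, strict convexity puts $p_j$ strictly on the opposite side of the diagonal $p_{i-1}p_{i+1}$ from $p_i$, so $[i-1,j,i+1]<0$. Hence
\begin{itemize}
\item $c_{ii}=1$ and $c_{i,i\pm1}=0$;
\item $c_{ij}<0$ for every non-adjacent pair $i\neq j$.
\end{itemize}
In the notation of Remark~\ref{rem:bracket-interpretation}, this gives $m_{ij}=0$ for adjacent pairs and $m_{ij}>0$ for the nine non-adjacent pairs (six short diagonals, three long ones). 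A cycle product $\Gamma_{ijk}$ vanishes unless $i,j,k$ are pairwise non-adjacent. In a hexagon this happens only for $\{1,3,5\}$ and $\{2,4,6\}$. For those triples $\Gamma$ is a product of three negative numbers, hence negative. Therefore $Q<0$, and the formula of Proposition~\ref{prop:hexagon-spectral-diagram} gives $\mathcal R=2\mathcal S-Q>2\mathcal S$. So $\mathcal R>0$ follows once $\mathcal S>0$ is known.

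\textbf{Step 2 (main obstacle): $\mathcal S(P)>0$, i.e.\ $M=\sum m_{ij}>3$.} For the regular hexagon each short-diagonal term equals $4$, so the inequality has a lot of room. The difficulty is that individual terms need not be large.
\begin{itemize}
\item First attempt: apply AM--GM to the six short-diagonal terms, $\sum_i m_{i,i+2}\ge 6\bigl(\prod_i m_{i,i+2}\bigr)^{1/6}$. The product is a balanced quotient of brackets of the forms $[a,a+2,a+3]$ and $[a,a+1,a+3]$ over squares of consecutive brackets. I would try to bound it below by $1/4$ using convexity comparisons between triangles sharing a side, in the spirit of the signed-area multi-ratio computation for pentagons.
\item Fallback, closer to Lemma~\ref{lem:pentagonal-normal-form}: normalize $p_1,\dots,p_4$ to the unit square and keep $p_5,p_6$ free in the convex region. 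Write $\mathcal S$ as a rational function of four variables, clear the positive consecutive-bracket denominators, and show the resulting polynomial is positive on the convex chamber. This can be done by exhibiting it as a sum of products of the positive and negative bracket factors with the correct sign pattern.
\end{itemize}

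\textbf{Step 3: consequences.} Write $q(z)=z^3-\mathcal S z-\mathcal R$ and recall that the eigenvalues of $G_P$ are $1+z$ for the roots $z$ of $q$.
\begin{itemize}
\item Case (i), $\Delta_P<0$. Since $q(0)=-\mathcal R<0$ and $q\to+\infty$, the real root satisfies $r>0$. Then $3r+\mathcal S>0$, so part (iii) of Proposition~\ref{prop:hexagon-spectral-diagram} makes the real eigenvalue strictly dominant.
\item Case (ii), $\Delta_P=0$. Here $\mathcal R=2s^3>0$ forces $s>0$. Then $1+2s>|1-s|$, so $1+2s$ is strictly dominant.
\item Case (iii), $\Delta_P>0$. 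By part (iv) of the Proposition, two eigenvalues share a modulus only on the locus $\mathcal R=8-2\mathcal S$, $\mathcal S>3$, $\mathcal S\ne12$. There $\mathcal R>0$ forces $\mathcal S<4$, so $\mathcal S<12$, and the eigenvalue $3$ is the unique one of maximal modulus. Off this locus all three moduli are distinct.
\end{itemize}
In every case $G_P$ has a unique eigenvalue of maximal modulus, which is the main conclusion of the corollary.
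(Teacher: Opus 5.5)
Your Steps~1 and~3 are sound. The sign analysis of the $c_{ij}$ is correct, and only the triples $\{1,3,5\}$ and $\{2,4,6\}$ contribute to $Q$, each with negative $\Gamma$'s, so $\mathcal R=2\mathcal S-Q>2\mathcal S$. The case analysis of (i)--(iii) is also right once $\mathcal S,\mathcal R>0$ are known.

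The gap is Step~2, which is the actual content of the corollary and is not proved. You list two strategies but carry out neither.
\begin{itemize}
\item The AM--GM route needs a lower bound $\prod_i m_{i,i+2}>1/64$ (you aim for $1/4$), and no argument for any such bound is given.
\item The fallback only asserts that the cleared numerator in the unit-square chart admits a sign-definite bracket decomposition; none is produced.
\end{itemize}
The case analysis cannot do without this step. The inequality $\mathcal R>2\mathcal S$ alone allows, say, $\mathcal S=-3$, $\mathcal R=-5$. There the real root $r$ of $z^3-\mathcal Sz-\mathcal R$ is negative and $3r+\mathcal S<0$, so the conjugate pair would dominate.

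The paper closes this step with a choice of frame rather than an inequality among the $m_{ij}$. It sends the alternate vertices $p_1,p_3,p_5$ to the coordinate points and $p_2$ to $[1:1:1]$. In that basis $\widehat L_P=G_P-I$ has zero diagonal. Hence $\mathcal S=\alpha\alpha'+\beta\beta'+\gamma\gamma'$ and $\mathcal R=\alpha\beta\gamma+\alpha'\beta'\gamma'$ are sums of products of off-diagonal entries over transposed pairs and over $3$-cycles. The convexity inequalities fix the signs of those entries so that every term is positive.

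Within your own framework there is an even shorter repair. Apply the three-term Grassmann--Pl\"ucker relation $[x,a,c][x,b,d]=[x,a,b][x,c,d]+[x,a,d][x,b,c]$ with $x=i+1$ and $(a,b,c,d)=(i-1,i,i+2,i+3)$. It gives
\[
m_{i,i+2}=1+\frac{[i-1,i+1,i+3]\,[i,i+1,i+2]}{[i-1,i,i+1]\,[i+1,i+2,i+3]}.
\]
With standard affine lifts of a counterclockwise strictly convex hexagon, all four brackets on the right are positive. So each short-diagonal term exceeds $1$, hence $M>6$ and $\mathcal S>3$. Combined with your Step~1, this gives $\mathcal R>2\mathcal S>6$.

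Your inequality also shows that the equal-modulus alternative in (iii) never occurs for strictly convex hexagons: $\mathcal R=8-2\mathcal S$ together with $\mathcal R>2\mathcal S$ would force $\mathcal S<2$.
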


\begin{proof}
By projective invariance, normalize the~homogeneous vertices as
$$
\begin{aligned}
p_1&=[1:0:0],&
p_2&=[1:1:1],&
p_3&=[0:1:0],&
p_4&=[1:a:b],&
p_5&=[0:0:1],&
p_6&=[1:c:d].
\end{aligned}
$$
This is the~standard projective-frame chart obtained from
$p_1,p_2,p_3,p_5$; the~chosen signs below orient the~six consecutive
triples consistently.  Taking the~lifts
$\widetilde p_1=p_1$,
$\widetilde p_2=p_2$, and
$\widetilde p_j=-p_j$ for $j=3,\ldots,6$,
the~six consecutive orientation determinants are
$$
1,
\qquad
b-1,
\qquad
1,
\qquad
c-a,
\qquad
-c,
\qquad
c-d.
$$
Strict convexity therefore implies
$a<c<0$,
$d<c<0$, and
$b>1$.
A~direct computation gives
$$
G_P
=
\begin{pmatrix}
1 &
\dfrac{d(1-c)}{c(c-d)} &
\dfrac{d-1}{c-d}
\\[3mm]
\dfrac{b(1-a)}{b-1} &
1 &
\dfrac{a-b}{b-1}
\\[3mm]
\dfrac{a(d-b)}{a-c} &
\dfrac{bc-ad}{c(a-c)} &
1
\end{pmatrix}=:
\begin{pmatrix}
1&\alpha&\gamma'\\
\alpha'&1&\beta\\
\gamma&\beta'&1
\end{pmatrix}.
$$
The~convexity inequalities imply
$\alpha,\alpha'>0$,
$\beta,\beta',\gamma,\gamma'<0$.
Consequently,
$$
\mathcal S
=
\alpha\alpha'+\beta\beta'+\gamma\gamma'>0
\quad\text{and}\quad
\mathcal R
=
\alpha\beta\gamma+\alpha'\beta'\gamma'>0.
$$

It remains to apply Proposition~\ref{prop:hexagon-spectral-diagram}.
If $\Delta_P<0$, then the~unique real root $r$ of
$z^3-\mathcal S z-\mathcal R$ is positive because its value at $0$ is
$-\mathcal R<0$.  Hence
$3r+\mathcal S>0,$ so the~real eigenvalue is strictly dominant.  If $\Delta_P=0$, the~
identity $\mathcal R=2s^3>0$ gives $s>0$, and
$$
(1+2s)^2-(1-s)^2=3s(s+2)>0.
$$
Finally, in the~real simple-spectrum case, any equality of two moduli
is described by Proposition~\ref{prop:hexagon-spectral-diagram}.  On
that locus, positivity of $\mathcal R=8-2\mathcal S$ forces
$\mathcal S<4$.  Thus the~pair
$\pm\sqrt{\mathcal S-3}$ has modulus less than $1$, while the~third
eigenvalue is $3$.  This proves both uniqueness of the~
maximal modulus and the~final characterization.
\end{proof}

The~converse to the~positivity assertion is false.  For example, the~
nondegenerate hexagon
$$
P=
\left(
(0,0),(2,0),\left(\frac12,1\right),(2,2),(0,2),(-1,1)
\right)
$$
is not convex.  Nevertheless, a~direct computation gives
$$
G_P-I=L_P-4I
=
\begin{pmatrix}
-\frac23&0&\frac13\\[1mm]
\frac53&-\frac32&\frac{11}{3}\\[1mm]
\frac53&0&\frac{13}{6}
\end{pmatrix},
$$
and hence
$\chi_{G_P-I}(z)
=z^3-\frac{17}{4}z-3$.
Thus $\mathcal S(P)=17/4>0$ and $\mathcal R(P)=3>0$.

\begin{corollary}[Spectral flattening for convex pentagons]
\label{cor:convex-pentagon-flattening}
Let $P$ be a~strictly convex labeled pentagon whose vertices are listed
in cyclic order, and assume that $P$ is not projectively equivalent,
with its cyclic labeling, to a~regular pentagon.  Let
$\mu_1,\mu_2,\mu_3$ be the~eigenvalues of
$G_P=L_P-3I$, ordered so that
$|\mu_1|>|\mu_2|>|\mu_3|$,
and let $u$ be the~affine direction of the~line through the~eigenpoints
corresponding to $\mu_1$ and $\mu_2$.  Then
$$
\max_i
\dist\bigl((\That^k(P))_i,C(P)+\R u\bigr)
\rightarrow0
\quad\text{and}\quad
\diam\bigl(\That^k(P)\bigr)
\rightarrow\infty.
$$
More precisely, the~transverse distance satisfies
$$
\max_i
\dist\bigl((\That^k(P))_i,C(P)+\R u\bigr)
=
O\!\left(
\left|\frac{\mu_3}{\mu_2}\right|^{k/2}
\right),
$$
and there exists a~constant $D(P)>0$ such that
$$
\diam\bigl(\That^k(P)\bigr)
\sim
D(P)
\left|\frac{\mu_2}{\mu_3}\right|^{k/2}.
$$
\end{corollary}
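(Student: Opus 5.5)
We reduce the corollary to Theorem~\ref{thm:spectral-flattening} with $m=1$, $G=G_P$, $s=-1$, and then read off the rates from the explicit exponential scales. By Theorem~\ref{thm:intrinsic-pentagonal-spectrum}, strict convexity outside the projectively regular class gives $\kappa(P)<\kappa_-$, hence three real eigenvalues of $G_P$ with pairwise distinct moduli. Thus $\ell_G=X_1X_2$ is a dominant real spectral line in the sense of Definition~\ref{def:dominant-real-spectral-line}\,(i). Hypothesis~\ref{ass:flattening-forward-domain} follows from Schwartz's theorem that convex polygons stay strictly convex and nest. Hypothesis~\ref{ass:flattening-projective-periodicity} is Lemma~\ref{lem:projective-periodicity-input}\,(i); note that $G_P$ is invertible because $\det G_P=-\kappa\neq0$. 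Since $L_{\T(P)}=L_P$, the same $G_P$ governs every iterate.

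The substantive step is hypothesis~\ref{ass:flattening-two-scale} together with finiteness of $u$. I would use Glick's Proposition~\ref{prop:glick}\,(iii): $\T^k(P)$ shrinks to a point $X$ inside $P$ that is an eigenpoint. Because $G_P^k(P)$ collapses to $X$ for generic starting data, $X$ must be the attracting eigenpoint, namely $X_1$ for the dominant $|\mu_1|$. Being interior, $X_1$ is finite.

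Work in an affine chart centered at $X_1$ with axes along the affine directions of the lines $X_1X_2$ and $X_1X_3$. In this chart $G_P$ acts near $X_1$ as
\[
(x,y)\mapsto\frac{(\tfrac{\mu_2}{\mu_1}x,\ \tfrac{\mu_3}{\mu_1}y)}{1+O(x,y)} .
\]
Writing the vertices of $G_P^k(P)$ in eigencoordinates gives
\[
x_{k,i}=\left(\tfrac{\mu_2}{\mu_1}\right)^k\bigl(x_i+O(\theta^k)\bigr),\qquad
y_{k,i}=\left(\tfrac{\mu_3}{\mu_1}\right)^k\bigl(y_i+O(\theta^k)\bigr)
\]
for some $\theta<1$. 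After subtracting the barycenter, this is a centered two-scale asymptotic with $\alpha_k=(\mu_2/\mu_1)^k$, $\beta_k=(\mu_3/\mu_1)^k$, and $|\beta_k/\alpha_k|=|\mu_3/\mu_2|^k\to0$. For the cyclic relabelling, use Lemma~\ref{lem:cyclic-relabel}.

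The main obstacle is area-nondegeneracy: we need $B\ne0$. In exact eigencoordinates the projective chart near $X_1$ has derivative $\mathrm{diag}$ up to the triangular form used in the proof of Corollary~\ref{cor:convex-poncelet-spectrum}. There $B$ equals, up to the nonzero factor $\det\mathrm{D}\Psi(0)$, the oriented area of the projection of $P$ into the eigencoordinates $(x_i,y_i)$, i.e. of the image of $P$ under the projectivity sending $X_1$ to the origin. If the denominator terms contribute only at higher order, $B$ is the signed area of a strictly convex pentagon in that affine chart, which is nonzero. Such a chart exists: $X_1$ lies inside $P$, so we may send a line missing $P$ to infinity. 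I would try to make this precise by transferring the argument in Corollary~\ref{cor:convex-poncelet-spectrum}. Working exactly in the eigenchart where $G_P$ is linear and diagonal, the centered polygon coordinates scale exactly by $\mu_2^k,\mu_3^k$ (the return is linear up to the homogenizing factor $\mu_1^k$). The leading mixed area is then the area of a convex pentagon, and the conjugacy back to the Euclidean chart multiplies it by $\alpha\gamma\neq0$. The remaining subtlety is checking that the translation part does not spoil this.

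With all hypotheses verified, Theorem~\ref{thm:spectral-flattening} gives the flattening and $\diam\to\infty$. Proposition~\ref{prop:centered-two-scale} then gives the rate $O(\sqrt{|\beta_k/\alpha_k|})=O(|\mu_3/\mu_2|^{k/2})$. It also gives
\[
\diam\sim D_u\sqrt{\Area(P)/|B|}\,|\mu_2/\mu_3|^{k/2},
\]
so $D(P)=D_u\sqrt{\Area(P)/|B|}>0$.
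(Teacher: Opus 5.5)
\textbf{There is a genuine gap: area-nondegeneracy is not proved.} Your reduction to Theorem~\ref{thm:spectral-flattening} with $m=1$, $s=-1$ matches the paper, and so does your extraction of the rates from Proposition~\ref{prop:centered-two-scale}. The gap is the step you flag yourself, and your justification for it does not work. The chart in which $G_P$ is linear and diagonal is not yours to choose. It is the eigenchart $\Psi([z])=z/\varphi_1(z)-v_1$, with $\varphi_1$ the left eigen-covector for $\mu_1$, and it necessarily sends the invariant line $X_2X_3=\mathbb P(\ker\varphi_1)$ to infinity. So the remark that, $X_1$ being interior, ``we may send a line missing $P$ to infinity'' is beside the point. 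What must be shown is that this particular line misses $\conv(P)$. Equivalently, the dominant components $c_i=\varphi_1(\widetilde p_i)$ of the affine lifts must all be nonzero and of one sign. Only then is $\Xi=\Psi(P)$ a strictly convex pentagon, so that $B=\alpha\gamma\det(u,v)\sArea(\Xi)\neq0$. If $X_2X_3$ crossed $P$, the straight-segment polygon on the points $\Psi(p_i)$ would not be the image of $P$, and its signed area could vanish. Relatedly, your expansion $x_{k,i}=(\mu_2/\mu_1)^k(x_i+O(\theta^k))$ is only correct when $(x_i,y_i)$ are the $\Psi$-coordinates. In an affine chart of the original plane centered at $X_1$, the accumulated denominators converge to vertex-dependent limits (essentially the $c_i$), and that is exactly where the difficulty sits. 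You also need $c_i\neq0$ for every vertex, not just for ``generic starting data''.

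\textbf{Missing idea: use that the whole orbit stays strictly convex.} The paper supplies this through Lemma~\ref{lem:convex-projective-orbit-nondegeneracy}, applied to $G_P^q(P)=\Sigma_q(\T^q(P))$, which is strictly convex for every $q$. Suppose a projectivity $H$ with representative $B_H$ keeps a strictly convex, cyclically labeled polygon strictly convex. Then $D_i'=\det B_H\,D_i/(h_ih_{i+1}h_{i+2})$, with $h_i=e_3^{\top}B_H\widetilde p_i$, forces $\operatorname{sgn}h_i=\operatorname{sgn}h_{i+3}$. Since $3\nmid5$, all $h_i$ share a sign. Apply this to $H=G_P^q$ and let $q\to\infty$ in $e_3^{\top}G_P^q\widetilde p_i=\mu_1^q(e_3^{\top}v_1)c_i+o(|\mu_1|^q)$, using $e_3^{\top}v_1\neq0$ because $X_1$ is finite. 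This shows the $c_i$ have a common sign.

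Nonvanishing comes from the common limit. At least one $c_j\neq0$ because the lifts span $\R^3$, which is the clean way to identify the limit as $X_1$. If some $c_i=0$, the orbit of $p_i$ would stay on $X_2X_3$, which does not contain $X_1$, a contradiction. Then $\varphi_1$ has constant sign on the lifts of $\conv(P)$, $\Psi$ has no pole there, $\Xi$ is strictly convex, and your formula for $B$ closes the argument. The centering you worry about is harmless, since signed area is translation invariant.
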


\begin{proof}
By Theorem~\ref{thm:intrinsic-pentagonal-spectrum},
$\kappa(P)<\kappa_-<0$.
Consequently, $G_P$ is invertible and has three real eigenvalues with
pairwise distinct moduli.

Strict convexity implies that all forward pentagram iterates are
defined, remain strictly convex, have nonzero area, and converge
vertexwise to a~common finite point
\cite{GlickLimit,SchwartzPentagram}.  Lemmas~
\ref{lem:projective-periodicity-input} and~\ref{lem:cyclic-relabel}
give
$$
G_P^k(P)
=
\Sigma_k\bigl(\T^k(P)\bigr),
\qquad
k\geq0.
$$
Thus the~projective orbit of $P$ under $G_P$ satisfies the~hypotheses
of Lemma~\ref{lem:convex-projective-orbit-nondegeneracy}.  That lemma
shows that the~dominant eigenpoint is finite and that $P$ has an~
area-nondegenerate centered two-scale asymptotic in the~direction $u$,
with scales
$$
\alpha_k=\left(\frac{\mu_2}{\mu_1}\right)^k,
\qquad
\beta_k=\left(\frac{\mu_3}{\mu_1}\right)^k.
$$

All the~hypotheses of Theorem~\ref{thm:spectral-flattening} therefore
hold with $m=1$ and $s=-1$, proving the~flattening and diameter
conclusions.  Applying Proposition~\ref{prop:centered-two-scale} to
the~displayed scales gives
$$
\max_i
\dist\bigl((\That^k(P))_i,C(P)+\R u\bigr)
=
O\!\left(
\left|\frac{\mu_3}{\mu_2}\right|^{k/2}
\right)
$$
and
$$
\diam\bigl(\That^k(P)\bigr)
\sim
D(P)
\left|\frac{\mu_2}{\mu_3}\right|^{k/2}
$$
for some $D(P)>0$.
\end{proof}

\begin{remark}
The~exponentially long-and-thin behavior of constant-unsigned-area
rescalings of non-projectively-regular convex pentagons was first
proved by Schwartz
\cite[Theorem~2.2]{SchwartzPentagram}, by analyzing the~differential at
the~collapse point of the~projectivity carrying the~pentagon to its
pentagram image.  The~corollary above gives an~alternative spectral
derivation on the~entire strictly convex non-projectively-regular
locus.  It does not claim a~new flattening result; rather, it identifies
the~selected line through $L_P-3I$ and gives the~corresponding
transverse and diameter rates.

For a~nonconvex labeled pentagon,
Theorem~\ref{thm:intrinsic-pentagonal-spectrum} still gives the~complete
spectral classification, but the~convex-orbit hypotheses of
Lemma~\ref{lem:convex-projective-orbit-nondegeneracy} are no longer
automatic.  In particular, the~forward-domain and centered
nondegeneracy conditions must be checked separately, and the~chamber
$\kappa(P)>\kappa_+$, in which the~non-real conjugate pair is dominant,
is not covered by Proposition~\ref{prop:elliptic-case}.  We therefore
do not assert an~unconditional line-or-ellipse dichotomy for all
labeled pentagons.
\end{remark}

Thus the~pentagonal part of our framework recovers the~classical
normalized-shape result spectrally on the~entire strictly convex
non-projectively-regular locus and identifies the~selected direction
through $L_P-3I$.  For convex hexagons the~spectrum may instead contain
a~non-real conjugate pair.  Corollary~
\ref{cor:convex-hexagon-spectral-restriction}
shows that such a~pair is necessarily subdominant; the~corresponding
area-normalized dynamics is analyzed in the~next section.

\section{Complex subdominant spectra and elliptic asymptotics}
\label{sec:elliptic-oscillatory}

\noindent Theorem~\ref{thm:spectral-flattening} also shows how flattening can fail.  What is
needed is not merely a~large eigenvalue, but a~dominant real projective
line together with a~centered two-scale asymptotic.  If the~first
nonzero centered term is genuinely two-dimensional with equal rates,
then the~area normalization may not select a~stable line.

This phenomenon occurs already for strictly convex hexagons.  Suppose
that one real eigenvalue is strictly dominant and the~two remaining
eigenvalues form a~non-real conjugate pair:
$$
|\mu_1|>|\mu_2|=|\mu_3|,
\qquad
\mu_3=\overline{\mu_2}\notin\R.
$$
Then the~ordinary iterates may converge projectively to the~dominant
real eigenpoint, but after centering the~first nonzero term belongs to
a~two-dimensional rotating subdominant block.  Area normalization then
generically keeps a~bounded two-dimensional oscillation rather than
selecting one fixed line.  Equal-modulus configurations with a~repeated
real eigenvalue, a~nontrivial Jordan block, or two distinct real
eigenvalues of opposite sign and equal modulus require a~separate
analysis and are not covered by the~proposition below.

All displayed decimal approximations below are rounded to at most six
places after the~decimal point.

\begin{example}[A~convex hexagon with non-real spectrum]
\label{ex:convex-hexagon-complex-spectrum}
For the~strictly convex hexagon
$$
P=
((-3,-2),(-2,-3),(3,1),(3,3),(-2,3),(-3,2)),
$$
the~six consecutive turning determinants are
$9,10,10,5,4,4$, so the~displayed cyclic ordering is strictly convex.
Moreover,
the~intrinsic invariants are
$$
\mathcal S(P)=\frac{461}{5},
\qquad
\mathcal R(P)=\frac{8604}{25},
\qquad
\Delta_P=-\frac{39334412}{625}<0.
$$
Consequently,
$$
\chi_{G_P}(t)
=t^3-3t^2-\frac{446}{5}t-\frac{6324}{25}
\quad\text{and}\quad
\Spec(G_P)
\approx
\{12.099816,-4.549908\pm0.452150\consti\}.
$$
Thus the~real eigenvalue is dominant and the~subdominant block is a~
non-real conjugate pair.  This is the~convex hexagonal configuration
whose area-normalized dynamics is described by the~results below.
\end{example}

This should be contrasted with the~case of a~spectrally dominant
complex-conjugate pair,
$|\mu_2|=|\mu_3|>|\mu_1|$,
where the~pair itself is the~leading spectral block.  In that case the~
real plane spanned by the~real and imaginary parts of a~complex
eigenvector determines a~leading real projective line.

We now make the~preceding discussion more precise.  This is not part of
Theorem~\ref{thm:spectral-flattening}; it describes the~complementary spectral case in
which a~single real eigenvalue is dominant and the~first nonzero centered
term is generated by a~complex-conjugate pair.

\begin{proposition}[Elliptic asymptotics]
\label{prop:elliptic-case}
Let $P$ be a~labeled $n$-gon, and assume that the~following conditions
hold.

\begin{enumerate}[label=\textup{(\roman*)}]
\item\label{ass:elliptic-periodic}
There are an~integer $m\geq1$, a~real projectivity $G$, and a~cyclic
relabelling $\Sigma_s$ (with $s=0$ allowed) such that
$\T^m(P)=\Sigma_s(G(P))$.
All forward pentagram iterates of $P$ are defined and have nonzero
unsigned area.

\item\label{ass:elliptic-spectrum}
Choose a~real matrix
$\widetilde G\in\GL(3,\R)$ representing the~projectivity $G$.  Assume
that $\widetilde G$ has one simple real eigenvalue $\lambda_1$ and one
non-real conjugate pair
$\lambda_{2,3}
=
\rho\conste^{\pm\consti\theta}$,
$\rho>0$,
$0<\theta<\constpi$,
with
$|\lambda_1|>\rho$.
Let $v_1\in\R^3$ be a~real eigenvector for $\lambda_1$ and set
$$
X_1=[v_1]\in\RP^2.
$$
Assume that $X_1$ belongs to the~fixed affine chart
$\R^2\subset\RP^2$ in which the~polygon is considered.  Let $f$ denote
the~affine expression of $G$ on a~neighborhood of $X_1$.  Put
$$
\tau=\frac{\rho}{|\lambda_1|}<1
$$
and choose $\vartheta$ modulo $2\constpi$ so that
$$
\frac{\lambda_2}{\lambda_1}
=
\tau\conste^{\consti\vartheta}.
$$

\item\label{ass:elliptic-regular}
For each residue class $r=0,\ldots,m-1$, put
$$
Q_r=\T^r(P).
$$
Assume that $Q_r$ is in regular spectral position with respect to $G$:
for every homogeneous vertex lift, its spectral projection onto the~
eigenspace $\R v_1$, along the~real invariant plane associated with
$\lambda_2$ and $\lambda_3$, is nonzero.  Put
$$
M=\mathrm{D}f_{X_1},
\qquad
R=\tau^{-1}M,
$$
and define $\xi_{r,i}$ by the~first-order expansion
$$
f^q(q_{r,i})
=
X_1+\tau^qR^q\xi_{r,i}+O(\tau^{2q}).
$$
Set
$$
w_{r,i}
=
\xi_{r,i}-\frac1n\sum_j\xi_{r,j},
\qquad
B_r
=
\left|
\frac12\sum_i\det(w_{r,i},w_{r,i+1})
\right|,
$$
where $\det$ is the~standard determinant in the~chosen affine
coordinate plane.  Assume that
$w_{r,i}\ne0$
for every $i$, and
$B_r>0$.
\end{enumerate}

Let $P_k=\That^k(P)$, put $C_0=C(P)$ and
$A_0=\Area(P)$, and undo the~accumulated cyclic shift by setting
$$
\widetilde P_{q,r}
=
\Sigma_{-qs}(P_{qm+r}).
$$
Then, for every residue class $r=0,\ldots,m-1$, the~vertices have the~
asymptotic form
\begin{align}
\label{eq:elliptic-leading-asymptotic}
\widetilde p_{q,r,i}-C_0
&=
\sqrt{\frac{A_0}{B_r}}\,R^qw_{r,i}+o(1).
\end{align}
The~matrix $R$ has eigenvalues $\conste^{\pm\consti\vartheta}$ and
$\det R=1$.  There is a~positive definite quadratic form $\mathcal Q$,
independent of $r$, for which $R$ is a~rotation.  Consequently,
$$
\mathcal Q(\widetilde p_{q,r,i}-C_0)
\to
\frac{A_0}{B_r}\mathcal Q(w_{r,i}).
$$
Thus, for every fixed residue class $r$ and vertex $i$, the~relabelled
area-normalized vertices are asymptotic to a~level set of one positive
definite quadratic form.
\end{proposition}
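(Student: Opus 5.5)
The plan is to reduce everything to the local linear dynamics of $f$ at the attracting fixed point $X_1$, and then let the area normalization cancel the common contraction rate $\tau^q$.

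\emph{Linearization.} First we identify $M=\mathrm{D}f_{X_1}$. Choose a real basis $v_1,e_2,e_3$ of $\R^3$ adapted to the $\widetilde G$-invariant splitting $\R v_1\oplus E$, where $E$ is the real plane of $\lambda_{2,3}$. In the affine chart centred at $X_1$, the map $f$ is then the projective map $y\mapsto \lambda_1^{-1}Ny/(1+\dots)$, where $N$ is the restriction of $\widetilde G$ to $E$. Hence $M$ is conjugate to $\lambda_1^{-1}N$, and its eigenvalues are $\lambda_{2,3}/\lambda_1=\tau e^{\pm\consti\vartheta}$. It follows that $R=\tau^{-1}M$ has eigenvalues $e^{\pm\consti\vartheta}$ and $\det R=1$. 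A real $2\times2$ matrix with non-real unimodular eigenvalues is conjugate to a rotation, say $R=S\,\mathrm{Rot}_\vartheta S^{-1}$. Put $\mathcal Q(x)=\|S^{-1}x\|^2$. This form is positive definite, $R$ preserves it, and it depends only on $G$, so it is independent of $r$.

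\emph{First-order expansion of the orbit.} Regular spectral position means that each vertex of $Q_r$ lies off the invariant line $\mathbb P(E)$. Consequently $\widetilde G^q\widetilde q_{r,i}$ is $\lambda_1^q(c_iv_1+O(\tau^q))$ with $c_i\ne0$, so $f^q(q_{r,i})\to X_1$ at rate $\tau^q$. To get the stated expansion with a genuine $O(\tau^{2q})$ remainder, I would compute directly in the homogeneous eigen-coordinates rather than invoke a Hartman-type linearization. The affine coordinates of $f^q(q_{r,i})-X_1$ are $\lambda_1^{-q}N^q y_i/(c_i+\lambda_1^{-q}\ell(N^qy_i))$ for a linear functional $\ell$, composed with the fixed chart change. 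Expanding gives a leading term $\tau^qR^q\xi_{r,i}$ and an error of order $\tau^{2q}$, because $R^q$ is bounded, which holds since it is conjugate to a rotation. This also shows that $\xi_{r,i}$ is well defined.

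\emph{Normalization.} By Lemma~\ref{lem:cyclic-relabel}, $\T^{qm+r}(P)=\Sigma_{qs}(G^q(Q_r))$. Centering the leading expansion gives
\[
G^q(Q_r)_i-C(G^q(Q_r))=\tau^qR^qw_{r,i}+O(\tau^{2q}).
\]
Because $\det R=1$, the signed area equals
\[
\tau^{2q}\,\frac12\sum_i\det(w_{r,i},w_{r,i+1})+O(\tau^{3q}).
\]
Its absolute value is therefore $\tau^{2q}B_r(1+o(1))$, which is nonzero since $B_r>0$. By \eqref{eq:normalization-classical-iterates} the scaling factor is $\rho_q\sim\sqrt{A_0/B_r}\,\tau^{-q}$. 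Multiplying through gives \eqref{eq:elliptic-leading-asymptotic}, with an error of order $O(\tau^q)=o(1)$. Undoing the shift $\Sigma_{qs}$ only restores the labels. Finally, $\mathcal Q$ is continuous and $R$-invariant, so
\[
\mathcal Q(\widetilde p_{q,r,i}-C_0)=\frac{A_0}{B_r}\mathcal Q(w_{r,i})+o(1).
\]
The hypothesis $w_{r,i}\ne0$ ensures that this limiting level set is a genuine ellipse rather than a point.

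\emph{Main obstacle.} The delicate point is the uniform control of the remainder. The second-order terms must be $O(\tau^{2q})$ and not merely $o(\tau^q)$, since after multiplication by $\tau^{-q}$ they have to vanish. This relies on the boundedness of $R^q$, i.e.\ on $\theta$ genuinely giving a non-real pair with no Jordan block, and on the explicit rational form of $f$ in eigen-coordinates. I would carry out that calculation explicitly rather than appeal to abstract linearization.
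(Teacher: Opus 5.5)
Your proposal is correct and follows essentially the same route as the paper. Both arguments decompose each vertex lift spectrally with a nonzero dominant component and divide by the dominant coordinate to get the expansion $\tau^qR^q\xi_{r,i}+O(\tau^{2q})$. Both then identify the eigenvalues of $M$ as $\lambda_{2,3}/\lambda_1$, so that $R$ is conjugate to a rotation preserving a form that depends only on $G$, and use $\det R=1$ to obtain $\Area\sim\tau^{2q}B_r$ and the normalizing factor $\sqrt{A_0/B_r}\,\tau^{-q}$. Working in a real adapted basis rather than with a complex eigenvector is a cosmetic difference. Your explicit remark that boundedness of $R^q$ is what makes the remainder genuinely $O(\tau^{2q})$ spells out a point the paper uses only implicitly.
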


\begin{proof}
Fix a~residue class $r$, and put $Q_r=\T^r(P)$.  By
Lemma~\ref{lem:cyclic-relabel},
$$
\T^{qm+r}(P)=\Sigma_{qs}\bigl(G^q(Q_r)\bigr).
$$
The~normalization commutes with cyclic relabelling because oriented
area and barycenter are unchanged by it.  Consequently,
$\widetilde P_{q,r}$ is exactly the~normalization of $G^q(Q_r)$ to the~
fixed unsigned area.  Thus the~labeled asymptotics of the~$r$-th relabelled
subsequence reduce to iterating the~single projectivity $G$.

Let $v\in\mathbb C^3$ be a~complex eigenvector for $\lambda_2$.  The~
vector $v_1$ and the~eigenpoint $X_1=[v_1]$ are those fixed in
condition~\ref{ass:elliptic-spectrum}.  By regular spectral position, a~
homogeneous lift of each vertex of $Q_r$ has the~spectral decomposition
$$
\widetilde q_{r,i}
=
c_{r,i}v_1+z_{r,i}v+
\overline{z_{r,i}}\,\overline v,
\qquad c_{r,i}\ne0.
$$
Applying $G^q$ and dividing by the~dominant coordinate gives the~affine
expansion
$$
f^q(q_{r,i})
=
X_1+
\tau^q R^q\xi_{r,i}
+O(\tau^{2q}),
$$
which also proves the~expansion used in
condition~\ref{ass:elliptic-regular}.  The~eigenvalues of
$M=\mathrm{D}f_{X_1}$ are the~projective ratios
$\lambda_2/\lambda_1$ and $\lambda_3/\lambda_1$.  Hence $R=M/\tau$
has eigenvalues $\conste^{\pm\consti\vartheta}$ and $\det R=1$.
Since these eigenvalues are non-real, $R$ is real-similar to the~
Euclidean rotation through angle $\vartheta$.  It therefore preserves a~
positive definite quadratic form $\mathcal Q$, determined by $G$ and
independent of the~residue class.

Subtracting the~barycenter gives
$$
f^q(q_{r,i})-C(f^q(Q_r))
=
\tau^q R^qw_{r,i}+O(\tau^{2q}).
$$
Because $\det R=1$, the~polygonal area formula and $B_r>0$ yield
$$
\Area(f^q(Q_r))
=
\tau^{2q}B_r+o(\tau^{2q}).
$$
Hence the~area-normalizing factor satisfies
$$
\sqrt{\frac{A_0}{\Area(f^q(Q_r))}}
=
\sqrt{\frac{A_0}{B_r}}\,\tau^{-q}+o(\tau^{-q}).
$$
Multiplying the~centered expansion by this factor gives
$$
\widetilde p_{q,r,i}-C_0
=
\sqrt{\frac{A_0}{B_r}}\,R^qw_{r,i}+o(1).
$$
Since $R$ preserves $\mathcal Q$, we obtain
$$
\mathcal Q(\widetilde p_{q,r,i}-C_0)
\to
\frac{A_0}{B_r}\mathcal Q(w_{r,i}).
$$
Its level sets are ellipses centered at $C_0$.
\end{proof}

\begin{remark}
The~level sets in Proposition~\ref{prop:elliptic-case} are concentric
ellipses in the~chosen affine plane.  The~proposition is only
asymptotic: the~normalized vertices need not lie exactly on these
ellipses for finite $q$.  When $s\ne0$, a~fixed label in the~original
sequence is cyclically permuted as $q$ changes; the~displayed
vertex-by-vertex formula applies after undoing this relabelling, while
the~family of ellipses is unchanged as a~Euclidean set.  If
$\vartheta/\constpi$ is rational, each normalized subsequence is
asymptotic to a~finite periodic leading orbit.  If
$\vartheta/\constpi$ is irrational and $w_{r,i}\ne0$, the~leading
motion of the~$i$-th vertex in the~$r$-th residue class is dense on its
corresponding ellipse.
\end{remark}

\begin{proposition}[The~asymptotic ellipses]
\label{prop:predicting-ellipses}
Assume the~hypotheses of Proposition~\ref{prop:elliptic-case}.  Suppose
that, in the~chosen affine chart, the~projectivity $G$ is written as
$$
f(x)=\frac{Ax+b}{c^{\top}x+d},
$$
where $A$ is a~real $2\times2$ matrix, $b,c\in\R^2$, and
$d\in\R$.  Let $X_1$ be the~finite attracting fixed point corresponding
to the~dominant real eigenvalue.  Then
$$
M=\mathrm{D}f_{X_1}
=
\frac{A-X_1c^{\top}}{c^{\top}X_1+d}
$$
has eigenvalues $\tau\conste^{\pm\consti\vartheta}$, where
$\tau=\rho/|\lambda_1|$.  There is a~positive definite symmetric matrix
$H$, unique up to multiplication by a~positive scalar, such that
\begin{align}
\label{eq:conformal-quadratic-form}
M^{\top}HM&=\tau^2H.
\end{align}

With the~notation of Proposition~\ref{prop:elliptic-case}, put
$$
B_r=
\left|\frac12\sum_i\det(w_{r,i},w_{r,i+1})\right|
$$
and define
$$
s_{r,i}
=
\frac{A_0}{B_r}\,w_{r,i}^{\top}H w_{r,i}.
$$
Then the~asymptotic ellipse followed by the~$i$-th vertex in the
$r$-th residue class is
$$
(y-C_0)^{\top}H(y-C_0)=s_{r,i}.
$$
Replacing $H$ by a~positive multiple multiplies every $s_{r,i}$ by the~
same factor and leaves the~geometric ellipses unchanged.  Consequently,
all the~ellipses are concentric and homothetic.  Their common principal
directions, unless they are circles, and their common axis ratio are
determined by $G$ alone, whereas their levels may depend on the~residue
class and the~vertex.

Equivalently, the~area-normalized orbit is organized into $m$ indexed
elliptic families, one for each residue class modulo $m$.  If $s\ne0$,
the~vertex index $i$ refers to the~relabelled subsequence
$$
\widetilde P_{q,r}
=
\Sigma_{-qs}(P_{qm+r});
$$
without undoing this cyclic relabelling, the~same conclusion holds
setwise.
\end{proposition}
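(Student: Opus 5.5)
The argument has three parts: compute $M$, construct $H$, and read off the ellipses from Proposition~\ref{prop:elliptic-case}.

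\emph{Computing $M$.} Differentiate $f(x)=(Ax+b)/(c^{\top}x+d)$ by the quotient rule at $X_1$ to get
$$
\mathrm{D}f_{X_1}=\frac{A}{c^{\top}X_1+d}-\frac{(AX_1+b)c^{\top}}{(c^{\top}X_1+d)^2}.
$$
Since $X_1$ is fixed, $AX_1+b=(c^{\top}X_1+d)X_1$, and the stated formula for $M$ follows. The eigenvalues of $M$ are $\lambda_{2,3}/\lambda_1=\tau\conste^{\pm\consti\vartheta}$, as already recorded in the proof of Proposition~\ref{prop:elliptic-case}. The denominator $c^{\top}X_1+d$ is nonzero because $X_1$ is finite; in the lift $(X_1,1)$ it is essentially $\lambda_1$ up to the chosen scale.

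\emph{Existence and uniqueness of $H$.} Put $R=\tau^{-1}M$. Since $R$ has non-real eigenvalues $\conste^{\pm\consti\vartheta}$ with $0<\vartheta<\constpi$ modulo sign, there is a real $S\in\GL(2,\R)$ with $S^{-1}RS=\mathrm{Rot}_\vartheta$. Take $H=S^{-\top}S^{-1}$; then $R^{\top}HR=H$, which is equivalent to \eqref{eq:conformal-quadratic-form}. For uniqueness, conjugate by $S$ and reduce to $K=S^{\top}HS$ commuting in the sense $\mathrm{Rot}^{\top}K\,\mathrm{Rot}=K$. Write $K=\frac{\tr K}{2}I+K_0$ with $K_0$ traceless symmetric. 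Conjugation by $\mathrm{Rot}_\vartheta$ acts on traceless symmetric matrices as rotation by $2\vartheta$. Since $2\vartheta\notin 2\constpi\mathbb Z$ because $\vartheta\notin\constpi\mathbb Z$, this forces $K_0=0$, so $K$ is a scalar and $H$ is unique up to a positive multiple.

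\emph{Identifying the ellipses.} Set $\mathcal Q(y)=y^{\top}Hy$. Proposition~\ref{prop:elliptic-case} gives
$$
\widetilde p_{q,r,i}-C_0=\sqrt{A_0/B_r}\,R^qw_{r,i}+o(1).
$$
Since $R$ preserves $\mathcal Q$, the leading term has $\mathcal Q$-value exactly $s_{r,i}$, and continuity of $\mathcal Q$ gives convergence of $\mathcal Q(\widetilde p_{q,r,i}-C_0)$ to $s_{r,i}$. Hence the vertex is asymptotic to the stated level set. Replacing $H$ by $cH$ replaces $s_{r,i}$ by $cs_{r,i}$, so each set $\{y:(y-C_0)^{\top}H(y-C_0)=s_{r,i}\}$ is unchanged.

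All these ellipses are level sets of a single quadratic form centered at $C_0$, so they are concentric and homothetic. Their principal directions and axis ratio come from the eigenvectors and eigenvalue ratio of $H$, which depend only on $M$, hence only on $G$. The levels depend on $r$ and $i$ through $w_{r,i}$ and $B_r$. The $m$ families correspond to the residue classes of Lemma~\ref{lem:cyclic-relabel}. Without undoing the shift, the same Euclidean sets are traced because $\Sigma_{qs}$ only permutes labels.

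The main obstacle is the uniqueness of $H$ up to scale, which relies on $\vartheta\notin\constpi\mathbb Z$. This holds because the pair is non-real. The rest follows from Proposition~\ref{prop:elliptic-case}.
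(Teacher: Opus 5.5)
Your proposal is correct and follows the paper's proof essentially step by step. You obtain $M$ from the quotient rule and the fixed-point identity, take $H=(SS^{\top})^{-1}$ from the real similarity of $\tau^{-1}M$ to a rotation, prove uniqueness by conjugating to a rotation-invariant symmetric matrix, and read off the levels $s_{r,i}$ from the leading asymptotic of Proposition~\ref{prop:elliptic-case}. Your observation that conjugation acts on traceless symmetric matrices as rotation by $2\vartheta$ simply supplies the justification for the uniqueness step, which the paper asserts in one line.
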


\begin{proof}
Since $M$ has non-real conjugate eigenvalues
$\tau\conste^{\pm\consti\vartheta}$, it is real-similar to $\tau R_\vartheta$.  Thus one
may write
$M=S(\tau R_\vartheta)S^{-1}$
with $S\in \GL(2,\R)$.  Then
$H=(SS^{\top})^{-1}$
satisfies
$M^{\top} H M=\tau^2 H$.
Conversely, equation~\eqref{eq:conformal-quadratic-form} determines the~
conformal structure for which $M/\tau$ is a~rotation.  More explicitly,
if $H'$ is another positive definite solution and
$J=S^{\top}H'S$, then
$R_\vartheta^{\top}JR_\vartheta=J$.
Because $R_\vartheta$ has non-real eigenvalues, a~real symmetric matrix
invariant under this rotation is a~scalar multiple of $I$.  Thus $H'$
is a~positive scalar multiple of $H$.

The~derivative formula follows by differentiating
$$
f(x)=\frac{Ax+b}{c^{\top}x+d}
$$
and using the~fixed point identity
$
AX_1+b=(c^{\top}X_1+d)X_1.
$
The~asymptotic expansion
\eqref{eq:elliptic-leading-asymptotic} gives
$$
\widetilde p_{q,r,i}-C_0
=
\sqrt{\frac{A_0}{B_r}}\,R^q w_{r,i}+o(1).
$$
Since equation~\eqref{eq:conformal-quadratic-form} is equivalent to
$R^{\top}HR=H$, we obtain
$$
(\widetilde p_{q,r,i}-C_0)^{\top}
H(\widetilde p_{q,r,i}-C_0)
\rightarrow
\frac{A_0}{B_r}w_{r,i}^{\top}Hw_{r,i}
=
s_{r,i}.
$$
The~matrix $H$ depends only on $G$ and is therefore common to all
residue classes; only $w_{r,i}$, $B_r$, and hence $s_{r,i}$ may depend
on $r$.  The~statement concerning shifted labels follows directly from
the~definition of $\widetilde P_{q,r}$.  This proves all the~assertions.
\end{proof}

\begin{remark}
If $0<h_1\leq h_2$ are the~eigenvalues of $H$ and $e_1,e_2$ are
corresponding orthonormal eigenvectors, then $e_1,e_2$ give common
principal directions when $h_1<h_2$.  If $h_1=h_2$, the~level sets are
circles and no pair of principal directions is distinguished.  The~
common axis ratio is $a/b=\sqrt{h_2/h_1}$.

For the~level $s_{r,i}$, the~asymptotic semi-axis lengths are
$a_{r,i}=\sqrt{s_{r,i}/h_1}$ and
$b_{r,i}=\sqrt{s_{r,i}/h_2}$.
Thus $G$ determines the~conformal shape of all limiting ellipses, while
the~initial polygon and the~chosen residue class determine the~individual
levels.
\end{remark}

Before discussing the~role of residue classes, we illustrate the~
elliptic case numerically with a~nonconvex pentagon.  By
Theorem~\ref{thm:intrinsic-pentagonal-spectrum}, such a~complex
subdominant pair cannot occur for a~strictly convex pentagon.
Figure~\ref{fig:pentagon-ellipses} displays an~orbit in which the~
area-normalized iterates do not flatten to a~line.  Instead, the~
vertices oscillate around the~attracting projective point and approach
the~ellipse levels.  The~first panel shows the~initial
pentagon and the~ellipses; the~next three show iterates $65$,
$66$, and $67$, while the~last two show the~first $100$ and the~
first $700$ iterates.  The~projective dynamics contracts toward the~
dominant fixed point, while the~transverse component rotates and
produces the~visible elliptic pattern after area normalization.

\begin{remark}[The~hexagon case]
\label{rem:two-ellipse-families}
For a~labeled hexagon,
Lemma~\ref{lem:projective-periodicity-input} gives
$$
\T^2(P)=G_P(P),
\qquad
G_P=L_P-3I,
$$
so one may take $m=2$ and $s=0$.  Whenever $G_P$ satisfies the~
hypotheses of Proposition~\ref{prop:elliptic-case},
Proposition~\ref{prop:predicting-ellipses} therefore yields two indexed
families of six asymptotic ellipses: one for the~even iterates and one
for the~odd iterates.  Both families have the~same center, principal
directions, and axis ratio, although their individual levels may
differ or occasionally coincide.
\end{remark}

\begin{figure}[htbp]
    \centering
    \includegraphics[width=0.23\textwidth,angle=90]{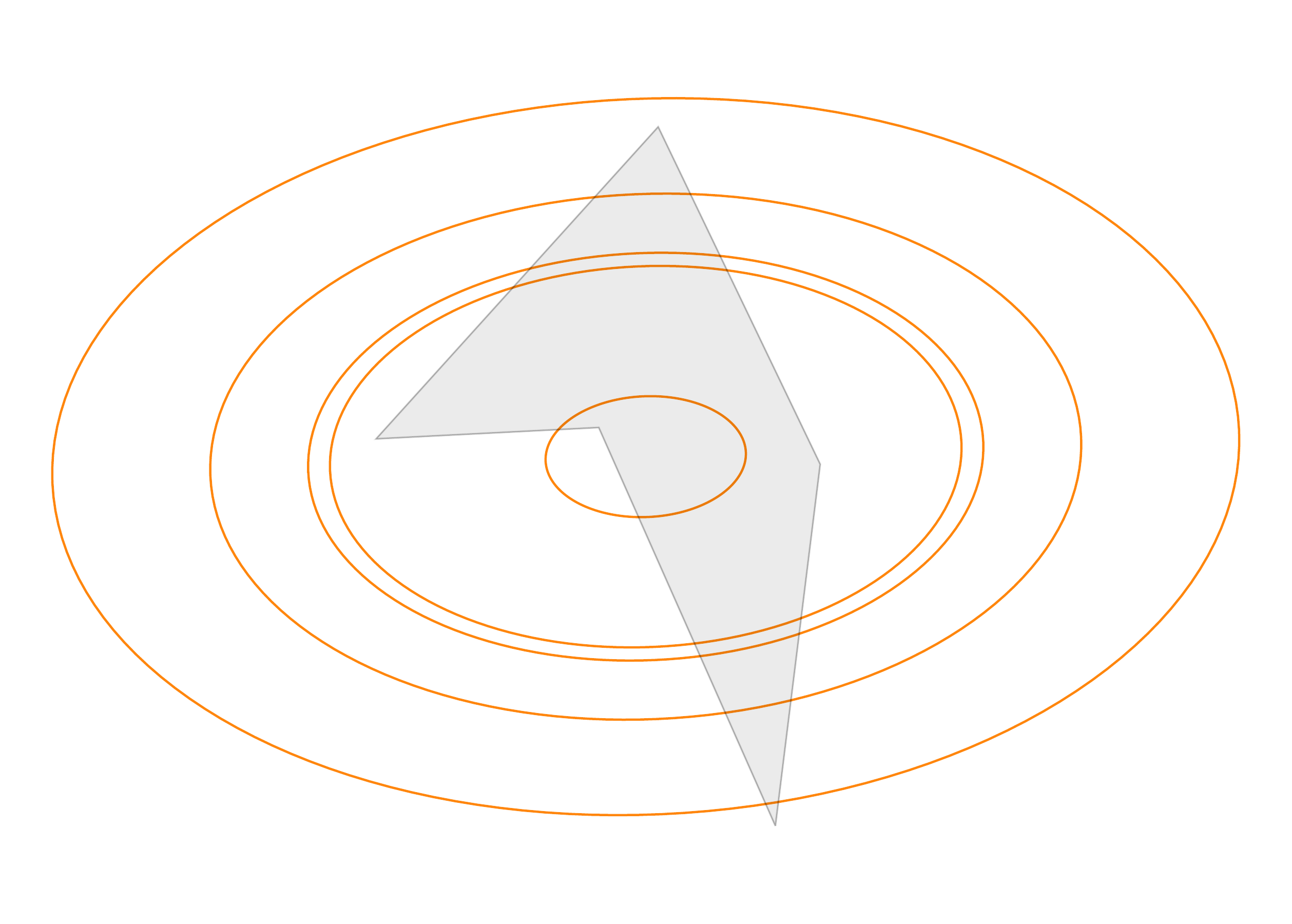}
    \hfill
    \includegraphics[width=0.23\textwidth,angle=90]{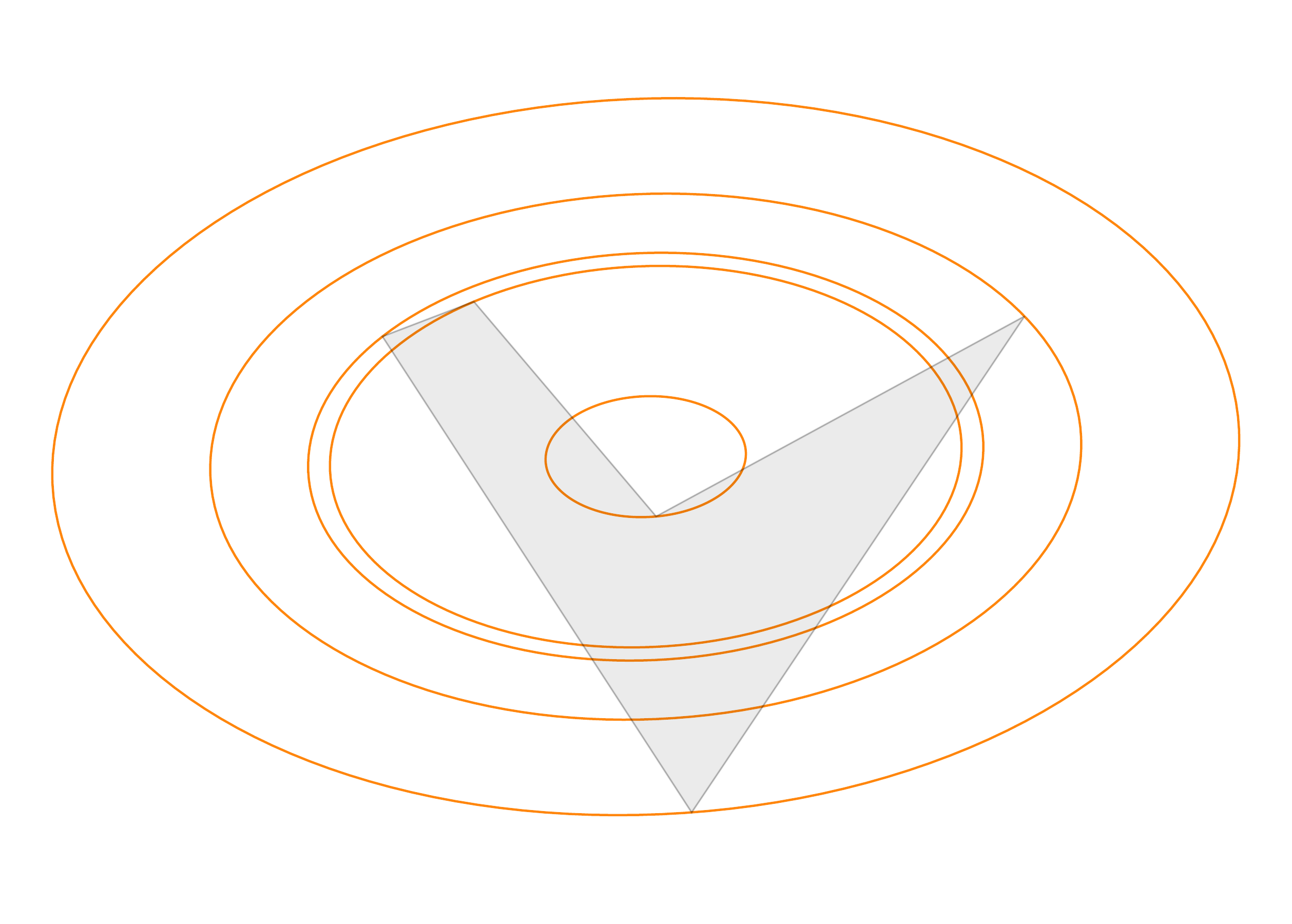}
    \hfill
    \includegraphics[width=0.23\textwidth,angle=90]{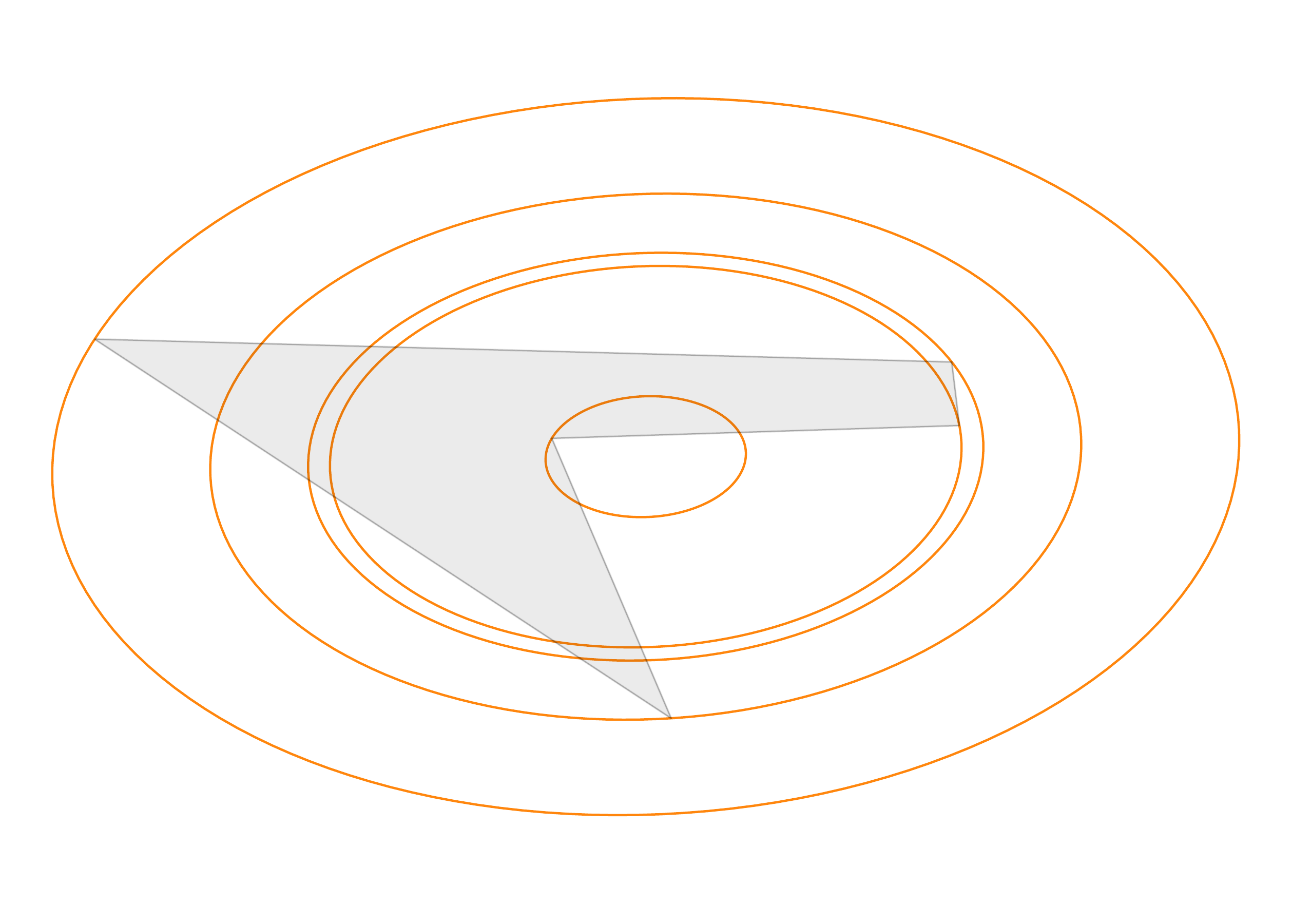}

    \vspace{0.25cm}

    \includegraphics[width=0.23\textwidth,angle=90]{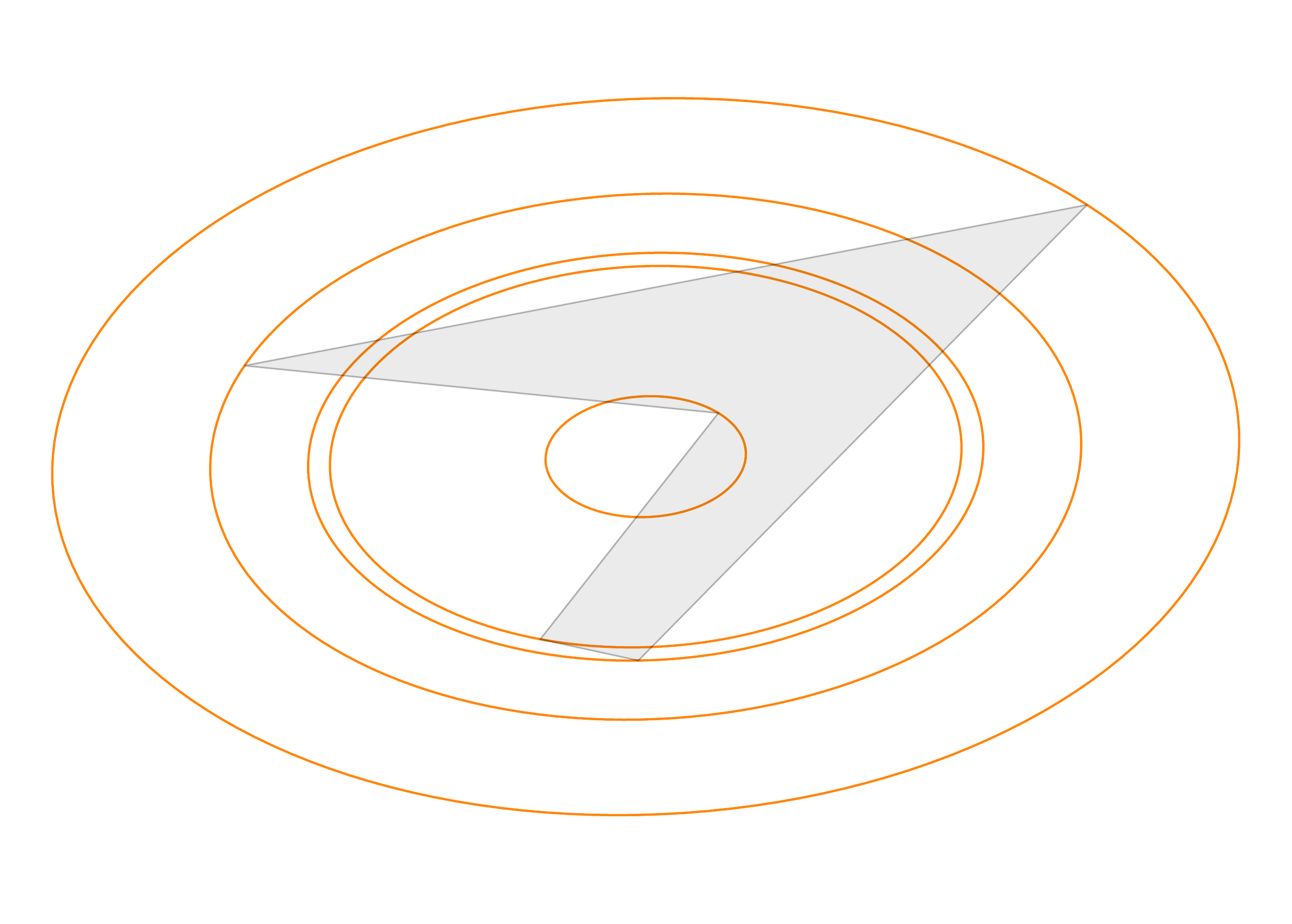}
    \hfill
    \includegraphics[width=0.23\textwidth,angle=90]{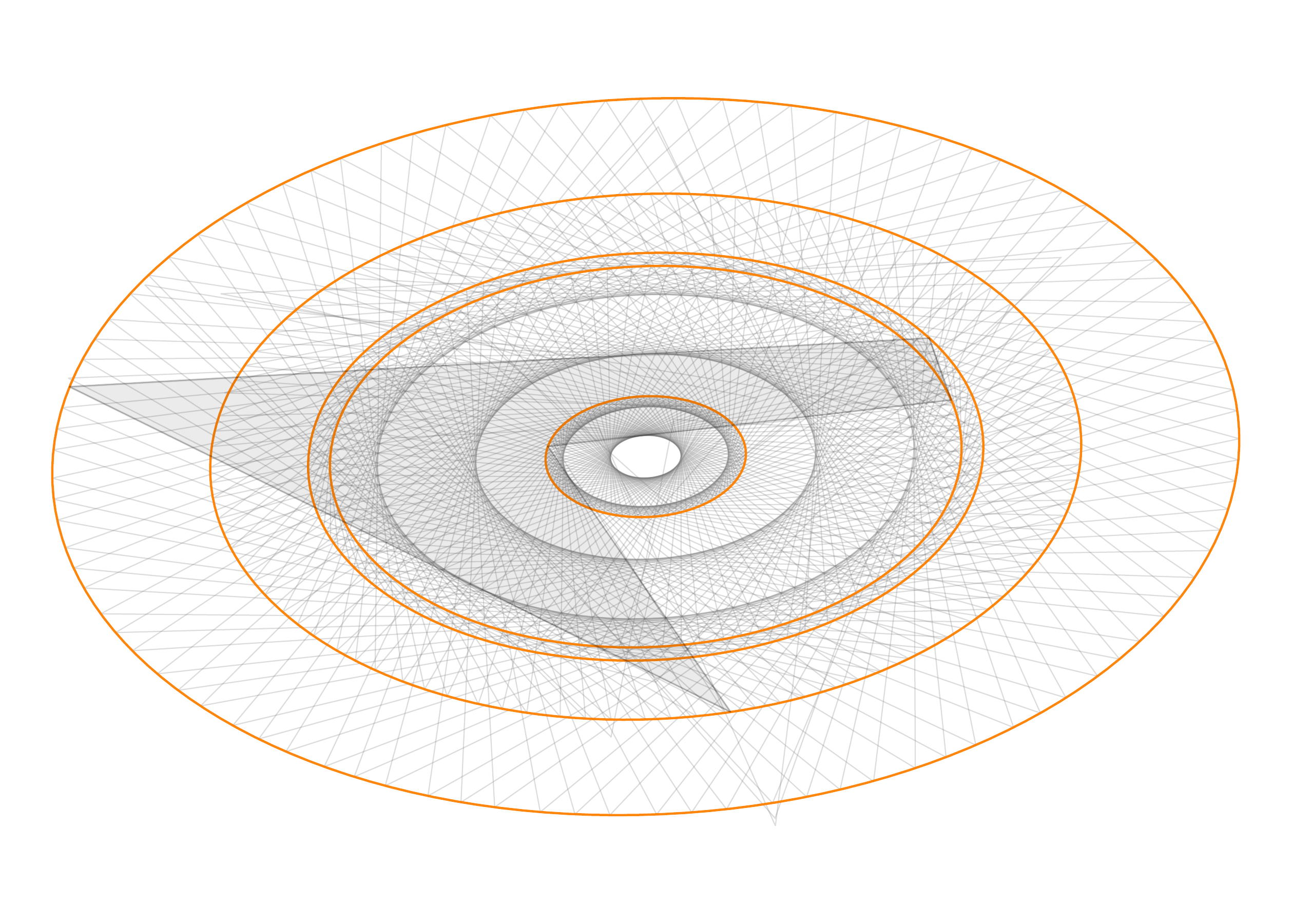}
    \hfill
    \includegraphics[width=0.23\textwidth,angle=90]{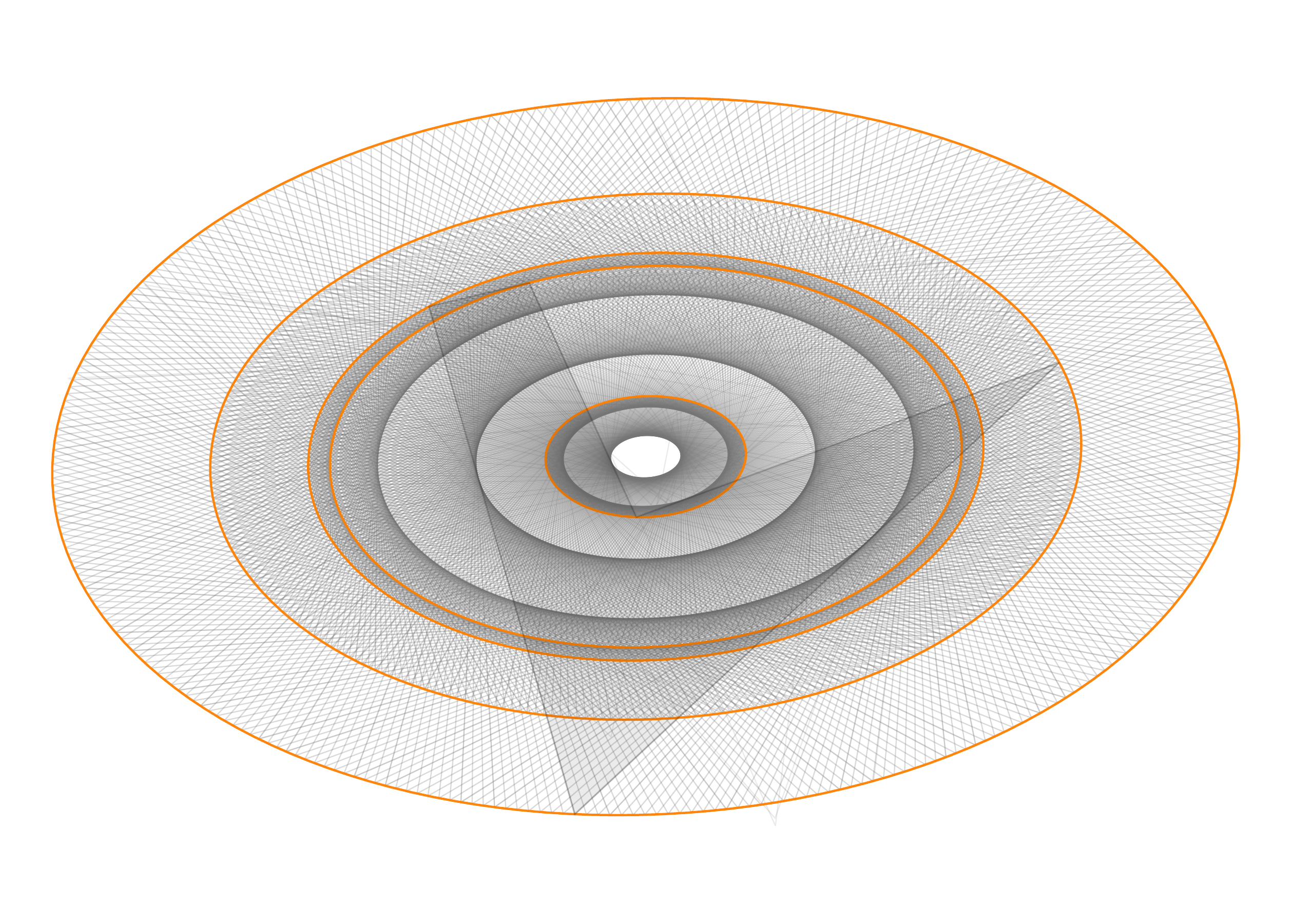}

    \caption{Elliptic asymptotics for a~nonconvex pentagon}
    \label{fig:pentagon-ellipses}
\end{figure}

\begin{figure}[htbp]
    \centering

    \includegraphics[width=0.32\textwidth]{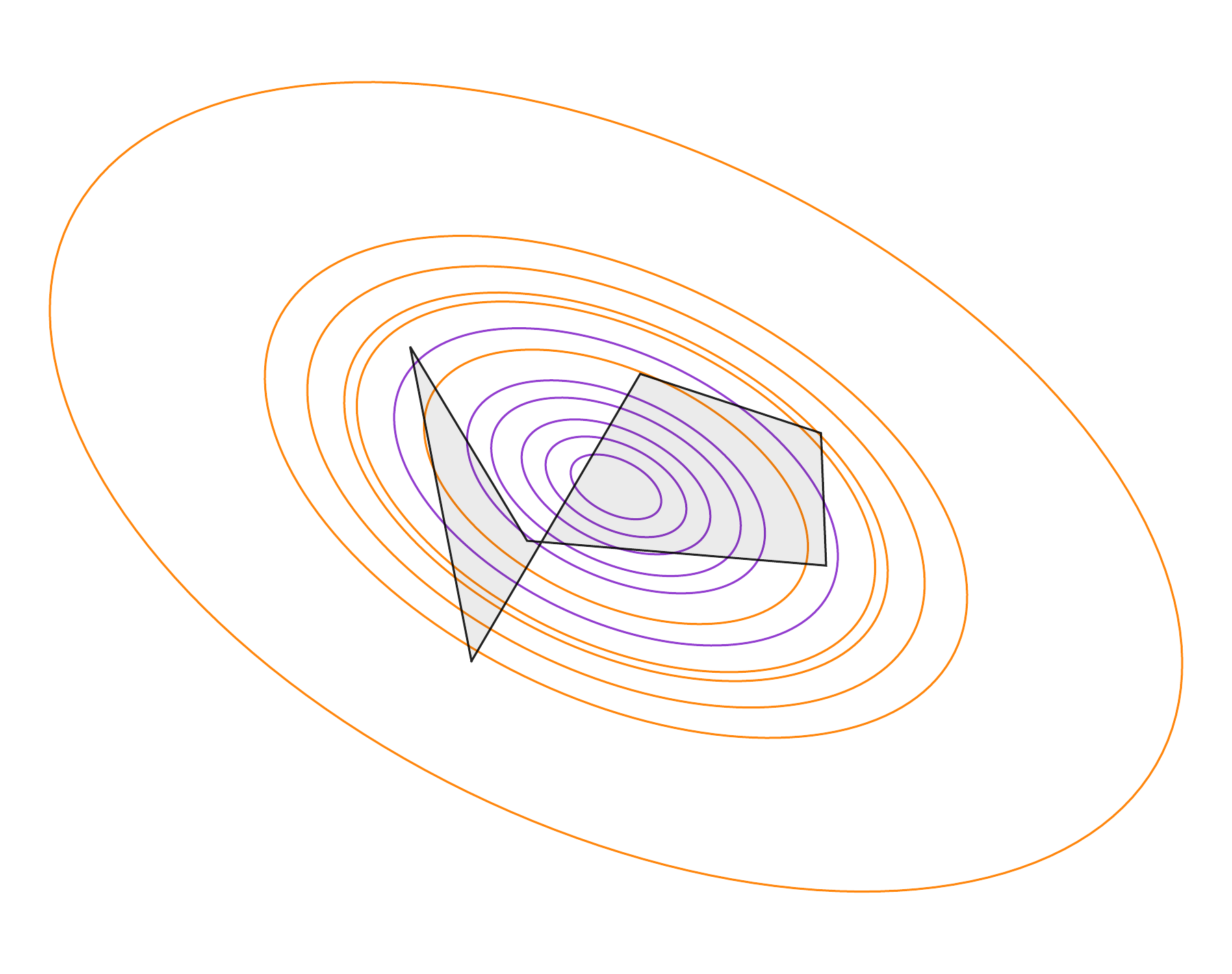}
    \hfill
    \includegraphics[width=0.32\textwidth]{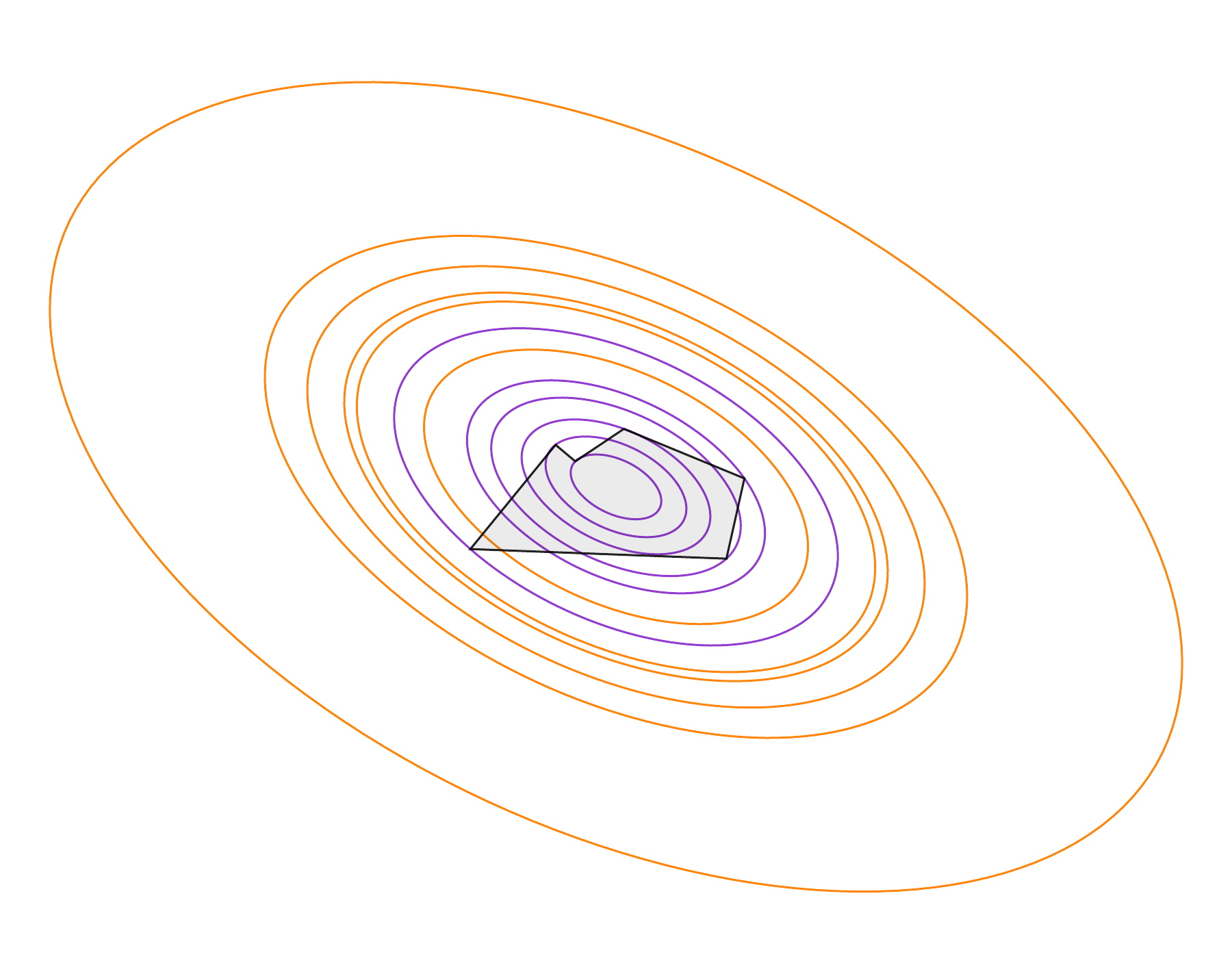}
    \hfill
    \includegraphics[width=0.32\textwidth]{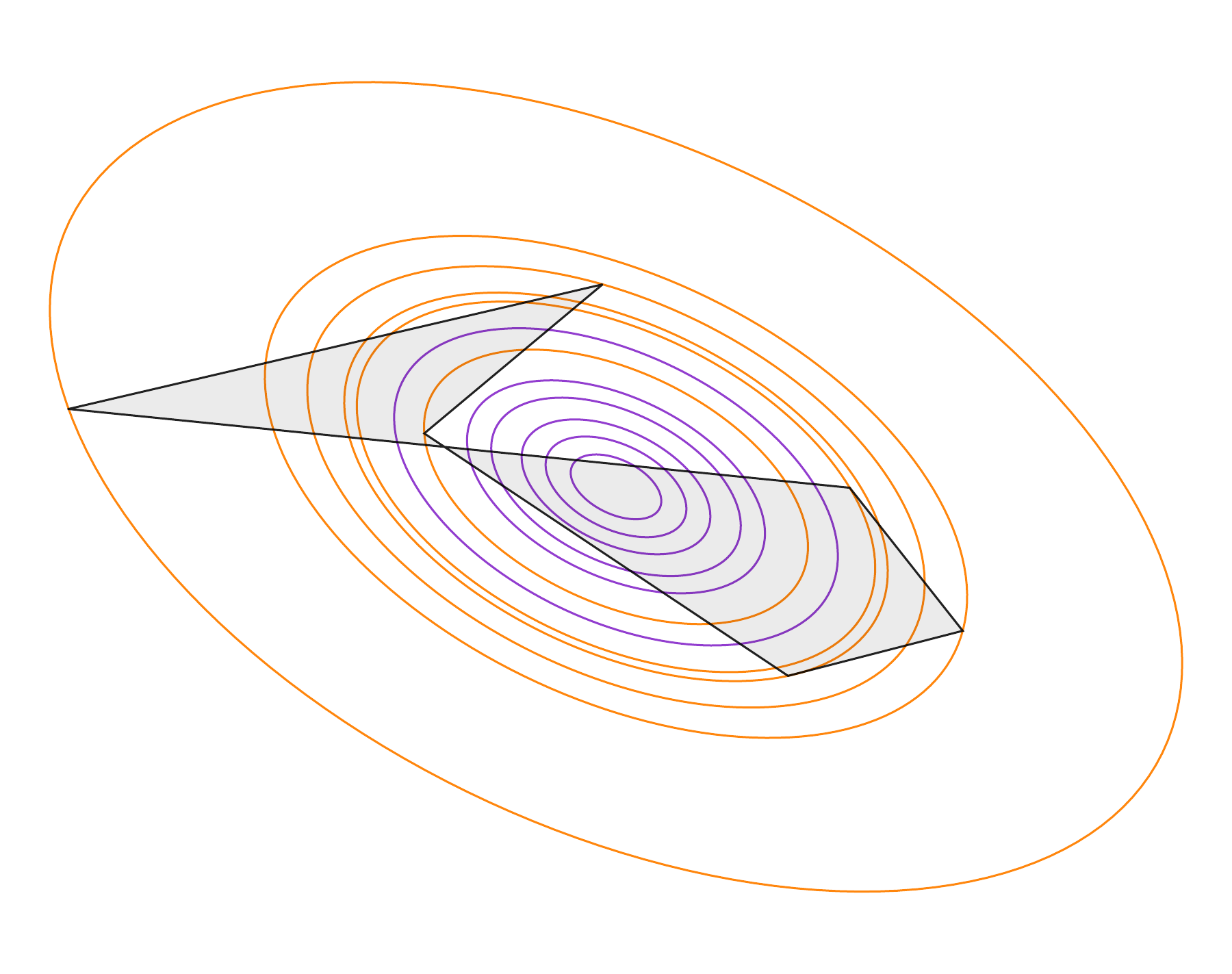}

    \vspace{0.25cm}

    \includegraphics[width=0.32\textwidth]{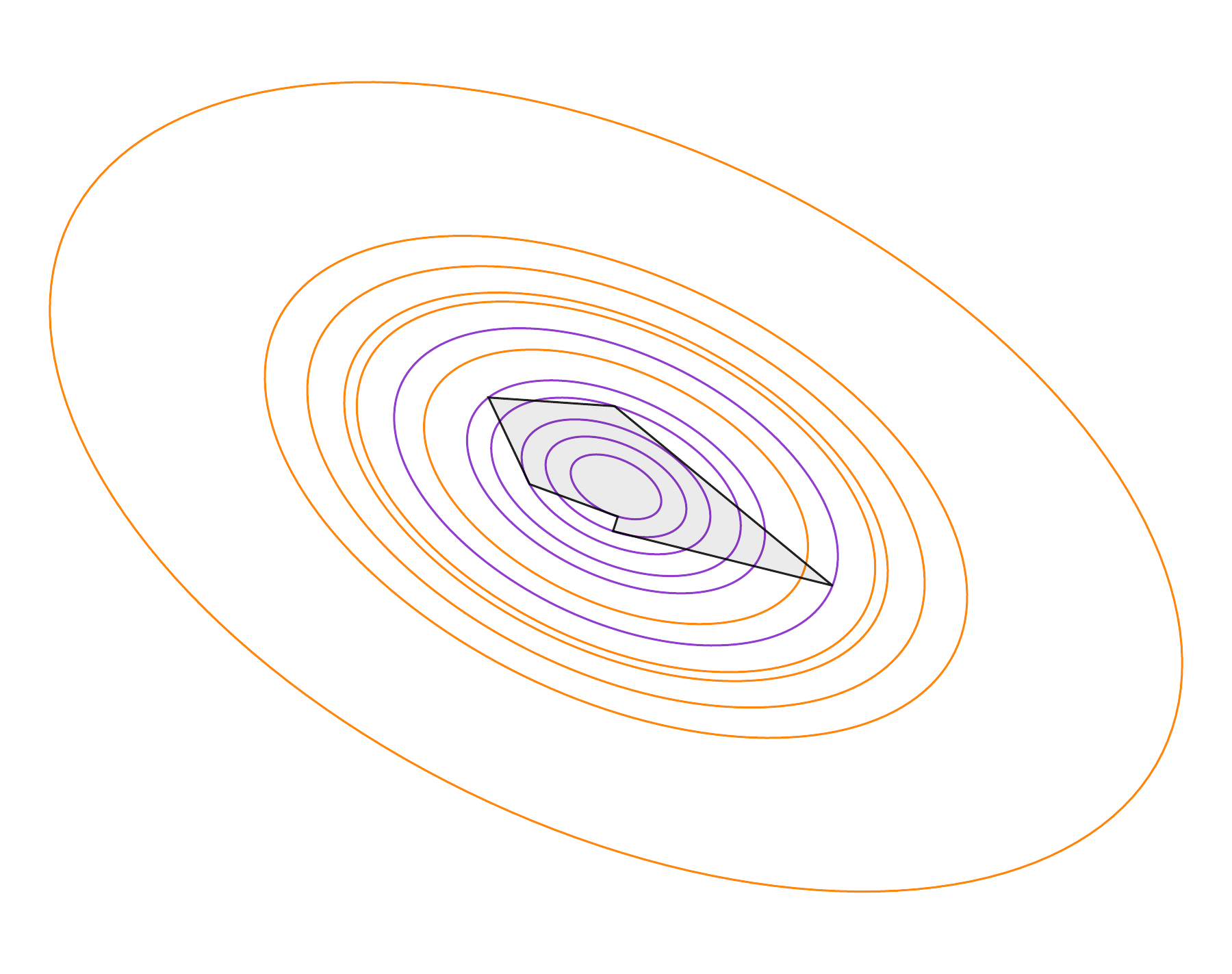}
    \hfill
    \includegraphics[width=0.32\textwidth]{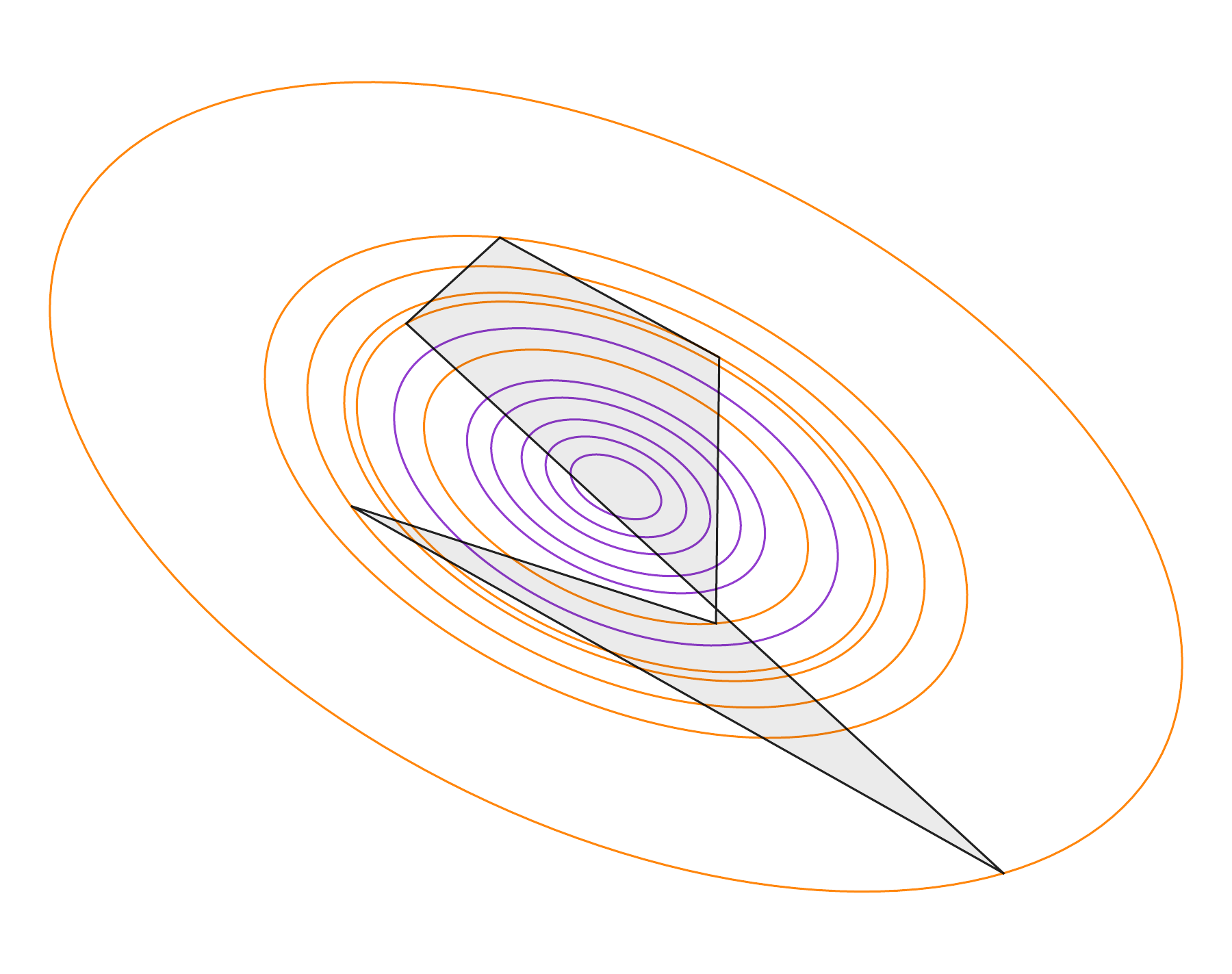}
    \hfill
    \includegraphics[width=0.32\textwidth]{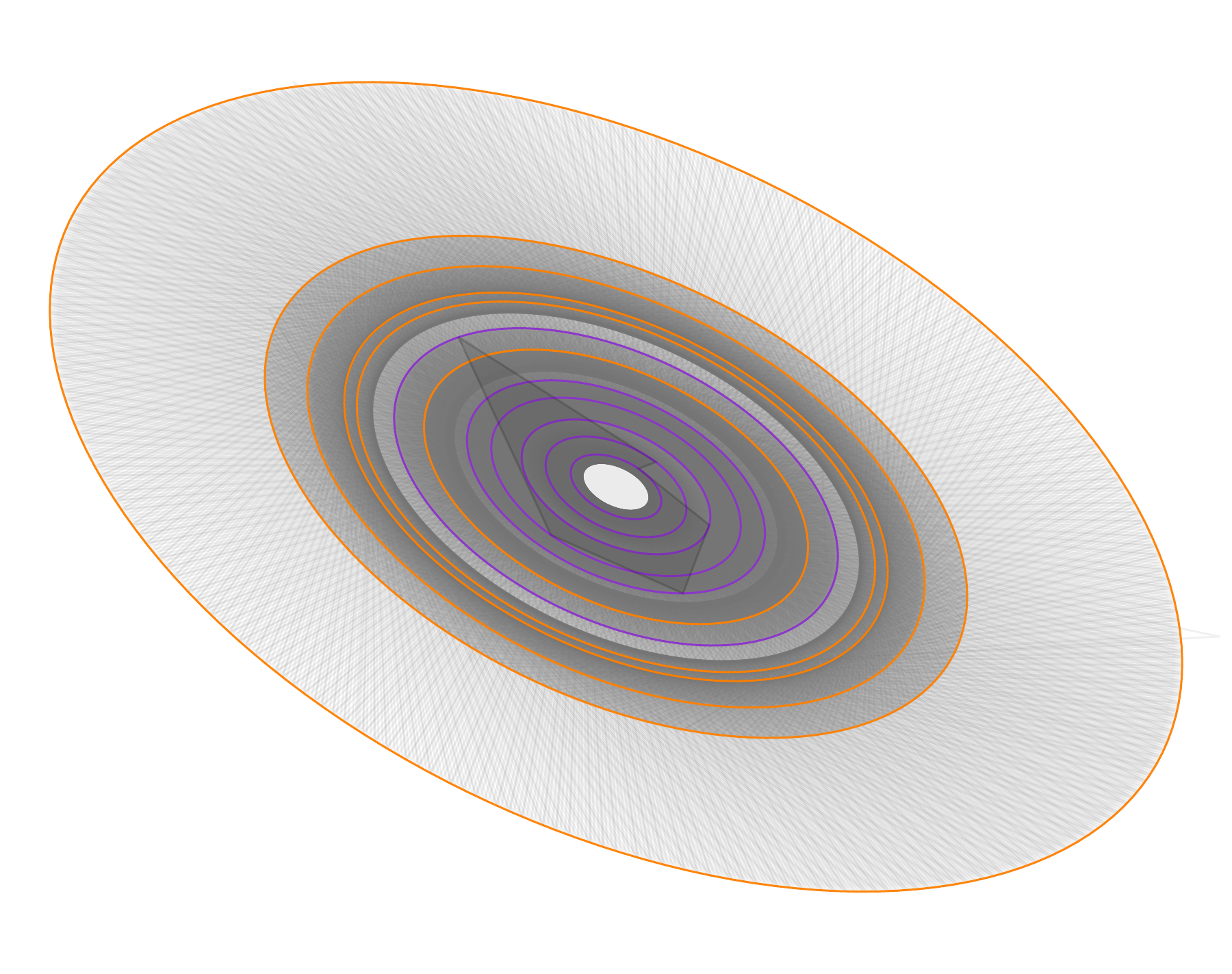}

    \caption{Two residue-class families of asymptotic ellipses for a~hexagon}
    \label{fig:hexagon-two-ellipse-families}
\end{figure}

The~heptagon in Example~\ref{ex:period-three-heptagon} has exact minimal
projective return time $m=3$, so the~same result predicts three
residue-class families whenever the~elliptic nondegeneracy hypotheses
hold.  For that example these additional hypotheses are supported
numerically.

We now illustrate the~residue-class phenomenon in the~hexagonal
case.  Since the~natural projective-periodicity relation for
hexagons involves the~second iterate, the~area-normalized dynamics
splits into two subsequences.  Figure~\ref{fig:hexagon-two-ellipse-families}
shows the~two corresponding families of ellipses.  The~even
and odd iterates share the~same asymptotic center and the~same principal
axes, but they generally lie on different levels of the~same quadratic
form.  The~first panel shows the~initial hexagon and both
families; the~next four show iterates $65$, $66$, $67$, and $68$,
while the~last panel illustrates the~first $2137$ iterates.

\begin{remark}
The~conclusion of Proposition~\ref{prop:elliptic-case} is asymptotic in
a~general projective chart.  Exact containment in fixed Euclidean ellipses
requires additional structure, for instance that the~relevant projectivity
acts affinely in the~chosen chart and preserves a~positive definite
quadratic form up to scale.  Without such an~additional assumption, the~
projective change of coordinates contributes higher-order terms, so the~
elliptic picture becomes accurate only in the~normalized limit.
\end{remark}

\begin{lemma}[Convex projective-orbit nondegeneracy]
\label{lem:convex-projective-orbit-nondegeneracy}
Let
$
Q=(q_1,\ldots,q_n),
\qquad
n\geq5,
$
be a~strictly convex labeled polygon whose vertices are listed in
cyclic order, and let $G$ be a~real projectivity.  Assume that, for
every $q\geq0$, the~polygon $G^q(Q)$ lies in the~chosen affine chart,
is strictly convex with the~induced cyclic labeling, and converges
vertexwise to a~common finite point as $q\to\infty$.

Suppose that $G$ has a~simple real eigenvalue $\lambda_1$ such that
the~moduli of all other eigenvalues are strictly smaller than
$|\lambda_1|$.  Let $X_1$ be the~eigenpoint corresponding to
$\lambda_1$.  Then the~following statements hold.
\begin{enumerate}[label=\textup{(\roman*)}]
\item The~common limit of $G^q(Q)$ is $X_1$, and every vertex of $Q$
is in regular spectral position with respect to the~dominant
eigendirection.

\item Suppose that the~remaining eigenvalues are real and can be
ordered so that
$|\lambda_1|>|\lambda_2|>|\lambda_3|$.
If $X_j$ is the~eigenpoint corresponding to $\lambda_j$, then $Q$ has
an~area-nondegenerate centered two-scale asymptotic under $G$ in the~
affine direction of the~line $X_1X_2$, with scales
$$
\alpha_q
=
\left(\frac{\lambda_2}{\lambda_1}\right)^q,
\qquad
\beta_q
=
\left(\frac{\lambda_3}{\lambda_1}\right)^q.
$$

\item Suppose that the~remaining eigenvalues form a~non-real conjugate
pair.  Then $Q$ satisfies all the~requirements in
condition~\ref{ass:elliptic-regular} of
Proposition~\ref{prop:elliptic-case}, including
$$
w_i\ne0
\quad\text{for every }i,
\qquad
\left|
\frac12\sum_i\det(w_i,w_{i+1})
\right|>0.
$$
\end{enumerate}
\end{lemma}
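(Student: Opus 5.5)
The plan is to work in homogeneous coordinates with a real eigenbasis adapted to $G$ and derive everything from the spectral decomposition of the vertex lifts. Write $\widetilde q_i=c_iv_1+y_i$ with $y_i$ in the complementary $G$-invariant plane $W$ (spanned by $v_2,v_3$ or by $\operatorname{Re}v,\operatorname{Im}v$). Then $\widetilde G^q\widetilde q_i=\lambda_1^q(c_iv_1+\lambda_1^{-q}\widetilde G^qy_i)$, and $\lambda_1^{-q}\widetilde G^q|_W$ decays like $\tau^q$.

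\textbf{Part (i).} If $c_i\neq0$, then $G^q(q_i)\to X_1$ projectively. If instead $c_i=0$, the orbit of $q_i$ stays on the line $\mathbb P(W)$. We show that all $c_i$ vanish simultaneously or none do. Since the common limit is a single point, a vertex with $c_i\ne0$ converges to $X_1$, whereas a vertex with $c_i=0$ has accumulation points on $\mathbb P(W)$, which does not contain $X_1$. So if some but not all $c_i$ vanish, we get a contradiction with a common limit. If all $c_i=0$, then every vertex of $Q$ lies on the line $\mathbb P(W)$, contradicting strict convexity. Hence all $c_i\neq0$, the limit is $X_1$, and $X_1$ is finite because the limit is a finite point.

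\textbf{Parts (ii) and (iii).} Pass to affine coordinates centred at $X_1$, and write $f$ for the affine expression of $G$. The first-order expansion is
\[
f^q(q_i)=X_1+\mathrm{D}f_{X_1}^q\,\xi_i+O(\tau^{2q}),
\]
where $\xi_i$ is the image of $y_i/c_i$ under the linear identification $W\cong T_{X_1}\R^2$.

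In case (ii), take $u,v$ to be the affine directions of $X_1X_2$ and $X_1X_3$, which are the eigenvectors of $\mathrm{D}f_{X_1}$ with eigenvalues $\lambda_2/\lambda_1$ and $\lambda_3/\lambda_1$. Write $\xi_i=a'_iu+b'_iv$, centre by subtracting the means, and set $a_i=a'_i-\bar a$, $b_i=b'_i-\bar b$. This gives the centred two-scale form with $\alpha_q,\beta_q$ as stated. The $O(\tau^{2q})$ error is $o(\beta_q)$ provided $|\lambda_3/\lambda_1|>(|\lambda_2|/|\lambda_1|)^2$. If this fails, I will use the exact diagonal action in the eigenchart, as in the Poncelet argument, and transfer via the local projective change $\Psi$ with $\mathrm{D}\Psi(0)$ upper triangular. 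The longitudinal coefficients $a_i$ only need $o(\alpha_q)$ error. For the transverse ones, $G$ preserves $X_1X_2$, so the transverse coordinate of $f^q$ is exactly $\beta_q\cdot(\text{smooth factor})$ with a nonvanishing denominator. This yields $y_{q,i}/\beta_q\to b_i$.

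\textbf{Area nondegeneracy (the main obstacle).} We need $B\neq0$ in (ii) and $\tfrac12\sum_i\det(w_i,w_{i+1})\neq0$ and all $w_i\neq0$ in (iii). In case (ii), $B$ is, up to the nonzero factor $\det(u,v)$, the oriented area of the polygon $(\xi_i)$. In case (iii) it is the same up to $\det R^q=1$.

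The key geometric input is that the polygon $(\xi_i)$ is a projective image of $Q$. The map $[c\,v_1+y]\mapsto y/c$ is the central projection of $\RP^2\setminus\mathbb P(W)$ onto an affine chart with $\mathbb P(W)$ as the line at infinity. Hence $\xi_i=\Phi(q_i)$ for a single projectivity $\Phi$ that sends $\mathbb P(W)$ to infinity.

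I then argue that $\mathbb P(W)$ does not meet the convex hull of $Q$. The iterates $G^q(Q)$ are strictly convex, lie in the chart, and shrink to $X_1\notin\mathbb P(W)$, while $\mathbb P(W)$ is $G$-invariant. If a segment of the hull met $\mathbb P(W)$, its images would have to stay convex in the chart while meeting a fixed line bounded away from $X_1$, which is impossible for large $q$. To make this rigorous I would first try the following: the images of the hull are the convex hulls of the images of $Q$, because each image polygon is strictly convex in the chart. Therefore these hulls shrink to $X_1$, which contradicts meeting the fixed line $\mathbb P(W)$.

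Consequently $\Phi$ is affine-regular on the hull of $Q$, so $(\xi_i)$ is a strictly convex polygon. Its oriented area is therefore nonzero, giving $B\neq0$. Moreover the $\xi_i$ are pairwise distinct and no vertex equals the barycentre, since the barycentre is interior to a strictly convex polygon. This gives $w_i\neq0$ and finishes (iii).
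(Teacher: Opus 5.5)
Your overall architecture matches the paper's. You get regular spectral position from the invariant complementary line $\mathbb P(W)$, and for (ii) you use the exact diagonal action in the dominant eigenchart, with the transverse coordinate exactly $\beta_q$ times a bounded factor. You then reduce $B$ and $\frac12\sum_i\det(w_i,w_{i+1})$ to the oriented area of the eigenchart polygon, which is nonzero once $\mathbb P(W)\cap\conv(Q)=\emptyset$.

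The gap is at that crux. You assert $G^q(\conv Q)=\conv(G^q(Q))$ ``because each image polygon is strictly convex in the chart.'' That implication is the whole difficulty and is not automatic. It fails for triangles: send a line crossing a triangle away from its vertices to infinity. The three image points still form a nondegenerate triangle, yet the image of the closed triangle passes through the line at infinity. What must be shown is that the affine denominator $h(x)=e_3^{\top}B(x,1)^{\top}$ of a lift $B$ of $G^q$ has constant sign on the vertices of $Q$. Any proof of this has to use $n\geq5$ together with strict convexity of both $Q$ and $G^q(Q)$.

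The paper proves this from the identity $D_i'=\det B\,D_i/(h_ih_{i+1}h_{i+2})$ for consecutive orientation determinants, which gives $\operatorname{sgn}h_i=\operatorname{sgn}h_{i+3}$. When $3\mid n$, it rules out a nonconstant $3$-periodic sign pattern, because an affine function has at most two cyclic sign changes on a strictly convex polygon.

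A direct variant also works. If $\{h=0\}$ met $\conv(Q)$, it would avoid the vertices and split them into two nonempty cyclic arcs. Since $n\geq5$, one arc has length at least $3$. A triple inside that arc and a triple straddling its end then give values of $D'$ of opposite sign, contradicting strict convexity of $G^q(Q)$.

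With this lemma in place, your shrinking-hull argument is valid: $\conv(G^q(Q))\to X_1$, while $\mathbb P(W)$ is $G$-invariant and its affine part stays a positive distance from $X_1$. The paper instead inserts the common sign of the $h_i$ into the dominant expansion $e_3^{\top}A^q\widetilde q_i=\lambda_1^q(e_3^{\top}v_1)c_i+o(|\lambda_1|^q)$, concluding that all $c_i$ share one sign; this is the same conclusion.

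Without such a lemma, $B\neq0$ in (ii) and the conditions $w_i\neq0$ and nonzero leading area in (iii) remain unproved.
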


\begin{proof}
We first record a~sign property valid for every $n\geq5$.
Let $H$ be a~projectivity such that both $Q$ and $H(Q)$ are strictly
convex in the~chosen affine chart.  Choose a~matrix representative
$B\in\GL(3,\R)$ of $H$, put
$$
\widetilde q_i=(q_i,1)^{\top},
\qquad
h_i=e_3^{\top}B\widetilde q_i,
\quad\text{and}\quad
D_i
=
\det(\widetilde q_i,\widetilde q_{i+1},\widetilde q_{i+2}).
$$
All indices in this proof are taken modulo $n$.  The~normalized affine
lift of $H(q_i)$ is $B\widetilde q_i/h_i$, and hence
\begin{align}
\label{eq:convex-orbit-sign-product}
D_i'
&=
\det\left(
\frac{B\widetilde q_i}{h_i},
\frac{B\widetilde q_{i+1}}{h_{i+1}},
\frac{B\widetilde q_{i+2}}{h_{i+2}}
\right)
=
\frac{\det B}{h_i h_{i+1}h_{i+2}}D_i.
\end{align}
Strict convexity and cyclic labeling imply that the~signs of the~
numbers $D_i$ are independent of $i$, and the~same is true of the~
numbers $D_i'$.  It follows from
\eqref{eq:convex-orbit-sign-product} that
$\operatorname{sgn}(h_i h_{i+1}h_{i+2})$
is independent of $i$.  Comparing the~expressions corresponding to
$i$ and $i+1$ gives
$$
\operatorname{sgn}(h_i)
=
\operatorname{sgn}(h_{i+3}).
$$

If $3\nmid n$, addition by $3$ acts transitively on
$\mathbb Z/n\mathbb Z$, so all the~numbers $h_i$ have the~same sign.
If $3\mid n$, their cyclic sign pattern is $3$-periodic.  Any
nonconstant $3$-periodic sign pattern has at least
$2n/3\geq4$ cyclic sign changes.  On the~other hand, the~function
$x\mapsto e_3^{\top}B(x,1)^{\top}$
is affine linear.  Its zero set is a~line, which can cross the~boundary
of a~strictly convex polygon at most twice.  Hence its values on the~
cyclically ordered vertices can have at most two cyclic sign changes.
The~$3$-periodic pattern must therefore be constant.  Consequently,
\begin{align}
\label{eq:convex-orbit-common-denominator-sign}
\operatorname{sgn}(h_1)
=
\cdots
=
\operatorname{sgn}(h_n).
\end{align}

We apply this observation to $H=G^q$.  Let $A\in\GL(3,\R)$ represent
$G$, choose a~right eigenvector $v_1$ and a~left eigen-covector
$\varphi_1$ such that
$Av_1=\lambda_1v_1$,
$\varphi_1A=\lambda_1\varphi_1$,
$\varphi_1(v_1)=1$,
and put
$c_i=\varphi_1(\widetilde q_i)$.
The~homogeneous vertex lifts span $\R^3$, so $c_j\ne0$ for at least
one $j$.  The~projective orbit of $q_j$ therefore converges to
$[v_1]$.  Since all vertices have a~common limit, that limit is
$X_1=[v_1]$.
If $c_i=0$ for some $i$, then the~whole orbit of $q_i$ lies in the~
invariant projective line
$
\mathbb P(\ker\varphi_1).
$
This line does not contain $[v_1]$, contradicting convergence to
$X_1$.  Hence $c_i\ne0$ for every $i$.

Since $X_1$ is finite, $e_3^{\top}v_1\ne0$.  The~dominant spectral
expansion gives
\begin{align}
\label{eq:convex-orbit-dominant-denominators}
e_3^{\top}A^q\widetilde q_i
&=
\lambda_1^q(e_3^{\top}v_1)c_i
+
o(|\lambda_1|^q).
\end{align}
For every $q$, the~$n$ quantities on the~left-hand side have the~same
sign by \eqref{eq:convex-orbit-common-denominator-sign}.  Letting
$q\to\infty$ in
\eqref{eq:convex-orbit-dominant-denominators} shows that all $c_i$ have the~
same sign.  Replacing simultaneously $(\varphi_1,v_1)$ by
$(-\varphi_1,-v_1)$ if necessary, we may assume that $c_i>0$ for every $i$.

Consider the~dominant eigenchart
$$
\Psi([z])
=
\frac{z}{\varphi_1(z)}-v_1
\in\ker\varphi_1
$$
and put
$$
\zeta_i=\Psi(q_i),
\qquad
\Xi=(\zeta_1,\ldots,\zeta_n).
$$
Because the~numbers $c_i$ are positive, $\varphi_1$ is positive on
the~standard affine lifts of the~whole convex hull of $Q$.  Thus
$\Psi$ has no pole on this convex hull.  A~projective map whose affine
denominator has constant sign maps line segments to line segments.
It follows that $\Xi$ is again a~strictly convex affine $n$-gon.
In particular,
$\sArea(\Xi)\ne0$.
Moreover, its vertex barycenter
$\bar\zeta=\frac1n\sum_i\zeta_i$
lies in the~interior of $\conv(\Xi)$, and hence
\begin{align}
\label{eq:convex-orbit-eigenchart-barycenter}
\zeta_i-\bar\zeta\ne0
\quad\text{for every }i.
\end{align}

Suppose first that the~spectrum is real.  Choose eigenvectors
$v_2,v_3$ corresponding to $\lambda_2,\lambda_3$ and write
$\zeta_i=x_i v_2+y_i v_3$.
In these coordinates the~action of $G$ is exactly
$$
(x,y)\mapsto (a x,b y),
\qquad
a=\frac{\lambda_2}{\lambda_1},
\qquad
b=\frac{\lambda_3}{\lambda_1},
$$
where
$|b|<|a|<1$.
Let $\Phi=\Psi^{-1}$ be the~change from the~eigenchart to the~original
affine chart.  Choose an~affine vector $u$ parallel to $X_1X_2$ and a~
vector $v$ transverse to $u$.  Since $\Phi$ maps the~line $y=0$ to
the~line $X_1X_2$, there exist real constants
$\alpha,\beta,\gamma,c,d$, with $\alpha\gamma\ne0$, such that
\begin{align}
\label{eq:convex-orbit-real-eigenchart}
\Phi(x,y)-X_1
&=
\frac{(\alpha x+\beta y)u+\gamma yv}
     {1+cx+dy}.
\end{align}
Substituting $(a^qx_i,b^qy_i)$ into
\eqref{eq:convex-orbit-real-eigenchart} and subtracting the~vertex
barycenter gives, uniformly in $i$,
$$
G^q(q_i)-C(G^q(Q))
=
\bigl(
a^q\alpha(x_i-\bar x)+o(|a|^q)
\bigr)u
+
\bigl(
b^q\gamma(y_i-\bar y)+o(|b|^q)
\bigr)v,
$$
where
$\bar x=\frac1n\sum_i x_i$ and
$\bar y=\frac1n\sum_i y_i$.
This is a~centered two-scale asymptotic with scales
$$
\alpha_q=a^q,
\qquad
\beta_q=b^q,
\qquad
\left|\frac{\beta_q}{\alpha_q}\right|
=
\left|\frac ba\right|^q\rightarrow0.
$$
Its leading mixed-area coefficient is
$$
B_Q
=
\alpha\gamma\det(u,v)\,\sArea(\Xi),
$$
where $\sArea(\Xi)$ is computed in the~$(x,y)$-coordinates.  Every
factor on the~right-hand side is nonzero.  This proves \textup{(ii)}.

Suppose now that the~subordinate eigenvalues form a~non-real conjugate
pair.  Choose real coordinates on $\ker\varphi_1$ in which the~induced
action of $G$ is
$$
z\mapsto\tau R_0z,
\qquad
0<\tau<1,
$$
where $R_0$ is a~Euclidean rotation.  If
$
J=\mathrm{D}\Phi_0,
$
then $J$ is invertible and
$$
\Phi(z)=X_1+Jz+O(\|z\|^2).
$$
Consequently,
\begin{align}
\label{eq:convex-orbit-complex-eigenchart}
G^q(q_i)
=
\Phi(\tau^qR_0^q\zeta_i)
=
X_1+\tau^qJ R_0^q\zeta_i+O(\tau^{2q})
=
X_1+\tau^qR^q\eta_i+O(\tau^{2q}),
\end{align}
where
$R=J R_0J^{-1}$ and
$\eta_i=J\zeta_i$.
The~matrix $R$ is precisely
$
\tau^{-1}\mathrm{D}f_{X_1}
$
in the~notation of Proposition~\ref{prop:elliptic-case}, and
$\det R=1$.  After centering, the~leading vectors are
$$
w_i
=
\eta_i-\frac1n\sum_j\eta_j
=
J(\zeta_i-\bar\zeta).
$$
Equation~\eqref{eq:convex-orbit-eigenchart-barycenter} and invertibility of
$J$ give $w_i\ne0$ for every $i$.  Finally,
$$
\left|
\frac12\sum_i\det(w_i,w_{i+1})
\right|
=
|\det J|\,|\sArea(\Xi)|
>0.
$$
Thus all the~requirements in
condition~\ref{ass:elliptic-regular} hold, proving \textup{(iii)}.
\end{proof}

\begin{theorem}[Generic line-or-ellipse dichotomy for convex hexagons]
\label{thm:convex-hexagon-dichotomy}
Let $\mathcal C_6$ denote the~space of strictly convex labeled
hexagons whose vertices are listed in cyclic order.  For
$P\in\mathcal C_6$, put
$$
G_P=L_P-3I,
\qquad
F(P)=\mathcal R(P)+2\mathcal S(P)-8,
$$
and define
$$
\mathcal U_6
=
\{P\in\mathcal C_6:\Delta_P<0\}
\cup
\{P\in\mathcal C_6:\Delta_P>0,\ F(P)\ne0\}.
$$
Then $\mathcal U_6$ is open and dense in $\mathcal C_6$.  For every
$P\in\mathcal U_6$, exactly one of the~following alternatives holds.
\begin{enumerate}[label=\textup{(\roman*)}]
\item If $\Delta_P>0$, the~eigenvalues of $G_P$ are real and can be
ordered so that
$|\mu_1|>|\mu_2|>|\mu_3|$.
Let $X_j$ be the~eigenpoint corresponding to $\mu_j$, and let $u$ be
the~affine direction of the~line $X_1X_2$.  Then
$$
\max_i
\dist\bigl((\That^k(P))_i,C(P)+\R u\bigr)
\rightarrow0
\quad\text{and}\quad
\diam\bigl(\That^k(P)\bigr)
\rightarrow\infty.
$$

\item If $\Delta_P<0$, the~spectrum of $G_P$ consists of one strictly
dominant real eigenvalue and a~subdominant non-real conjugate pair.
All the~hypotheses of Proposition~\ref{prop:elliptic-case} are then
satisfied automatically with $m=2$ and $s=0$.  Consequently, the~even
and odd area-normalized subsequences satisfy the~elliptic asymptotics
of Proposition~\ref{prop:elliptic-case}; their limiting ellipses are
given by Proposition~\ref{prop:predicting-ellipses}.
\end{enumerate}
\end{theorem}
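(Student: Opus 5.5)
The plan has two parts: first the topological claim about $\mathcal U_6$, then the two dynamical alternatives, each obtained by feeding Lemma~\ref{lem:convex-projective-orbit-nondegeneracy} into Theorem~\ref{thm:spectral-flattening} or Proposition~\ref{prop:elliptic-case}.

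\emph{Openness and density.} On $\mathcal C_6$ Glick's operator is defined, and $\mathcal S$, $\mathcal R$, $\Delta_P$ and $F$ are rational functions of the vertex coordinates with nonvanishing denominators. Hence $\mathcal U_6$ is open, since it is cut out by strict inequalities and the condition $F\neq0$. Its complement in $\mathcal C_6$ lies in the zero set $\{\Delta_P=0\}\cup\{F=0\}$. To show this set has empty interior in the connected open set $\mathcal C_6$, it suffices to check that neither polynomial vanishes identically. Example~\ref{ex:convex-hexagon-complex-spectrum} gives $\Delta_P\neq0$, with $F=\frac{8604}{25}+\frac{922}{5}-8\neq0$. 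By real-analyticity, each of the two functions therefore vanishes only on a closed nowhere dense set, and density follows.

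\emph{Case $\Delta_P>0$.} Proposition~\ref{prop:hexagon-spectral-diagram}(iv) shows that the eigenvalues have distinct moduli except on the locus $\mathcal R=8-2\mathcal S$, which is exactly $F=0$. Since $F\neq0$ on $\mathcal U_6$, we get $|\mu_1|>|\mu_2|>|\mu_3|$, and in particular $G_P$ is invertible. We have $\T^2(P)=G_P(P)$ (Lemma~\ref{lem:projective-periodicity-input}, $m=2$, $s=0$). Strict convexity is preserved by $\T$ and the iterates converge to a finite point \cite{SchwartzPentagram,GlickLimit}. So for $r=0,1$ the polygon $Q^{(r)}=\T^r(P)$ satisfies the hypotheses of Lemma~\ref{lem:convex-projective-orbit-nondegeneracy}: each $G_P^j(Q^{(r)})=\T^{2j+r}(P)$ is strictly convex, lies in the chart, and has a common limit. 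Part~(ii) of that lemma then gives an area-nondegenerate centered two-scale asymptotic in the direction of $X_1X_2$.

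The direction of $X_1X_2$ is independent of $r$ because it depends only on $G_P$. Finiteness of the affine direction $u$ follows because the lemma places $X_1$ in the finite chart. One subtle point needs checking: $X_2$ could lie at infinity, but the line $X_1X_2$ is still not the line at infinity because $X_1$ is finite, so $u$ exists. Theorem~\ref{thm:spectral-flattening} then yields (i).

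\emph{Case $\Delta_P<0$.} Corollary~\ref{cor:convex-hexagon-spectral-restriction}(i) gives a strictly dominant real eigenvalue $\lambda_1$ and a non-real pair with $0<\theta<\constpi$. The spectrum is simple, and $G_P$ is invertible since the real eigenvalue is $1+r>0$ and the pair is non-real. Applying Lemma~\ref{lem:convex-projective-orbit-nondegeneracy}(i),(iii) to $Q_r=\T^r(P)$ for $r=0,1$, exactly as above, gives the following: $X_1$ is finite, every vertex is in regular spectral position, and $w_{r,i}\neq0$ and $B_r>0$. The forward-domain assumption holds by convexity. Thus every hypothesis of Proposition~\ref{prop:elliptic-case} holds with $m=2$, $s=0$, and Proposition~\ref{prop:predicting-ellipses} describes the ellipses.

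Exactly one alternative occurs because the sign of $\Delta_P$ is determined. The main obstacle I expect is the density argument, specifically the claim that $\{F=0\}\cap\{\Delta_P>0\}$ has empty interior. Analyticity handles this once $F\not\equiv0$ on the connected set $\mathcal C_6$, and that requires the connectedness of $\mathcal C_6$. Connectedness should follow because strictly convex cyclically ordered hexagons form a star-shaped (indeed path-connected) family under convex-combination homotopies, up to orientation. If orientation splits $\mathcal C_6$ into two components, I would check nonvanishing of $F$ on the reflected example as well, since $F$ and $\Delta_P$ are projective invariants.
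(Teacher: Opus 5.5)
Your outline follows the paper's proof: density via the example hexagon, then Lemma~\ref{lem:convex-projective-orbit-nondegeneracy} applied to $Q_0=P$ and $Q_1=\T(P)$, fed into Theorem~\ref{thm:spectral-flattening} or Proposition~\ref{prop:elliptic-case}. There is, however, one genuine gap in case (i).

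The sentence ``we get $|\mu_1|>|\mu_2|>|\mu_3|$, and in particular $G_P$ is invertible'' is a non sequitur, because distinct moduli allow $\mu_3=0$. Since $\det G_P=-\chi_{G_P}(0)=1-\mathcal S+\mathcal R$, the spectral data you use do not rule this out. For instance, $(\mathcal S,\mathcal R)=(5,4)$ satisfies $\mathcal S,\mathcal R>0$, $\Delta_P=68>0$ and $F=6\neq0$. Yet it gives $\Spec(G_P)=\{0,\tfrac{3+\sqrt{17}}{2},\tfrac{3-\sqrt{17}}{2}\}$, whose moduli are pairwise distinct.

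Invertibility is not cosmetic. Lemma~\ref{lem:projective-periodicity-input}(ii), which you cite for $\T^2(P)=G_P(P)$, assumes it. Lemma~\ref{lem:convex-projective-orbit-nondegeneracy} and Theorem~\ref{thm:spectral-flattening} also need $G_P$ to be a projectivity.

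The missing step is geometric, not spectral. Glick's linear identity in Proposition~\ref{prop:glick}(iv) holds without any invertibility assumption, so the vectors $G_P\widetilde p_i$ are lifts of the vertices of $\T^2(P)$. If $G_P$ had rank at most two, all vertices of $\T^2(P)$ would lie on one projective line, contradicting strict convexity of $\T^2(P)$. This is how the paper gets rank three, for both cases at once. Your invertibility argument in case (ii), via $1+r>0$ and the non-real pair, is fine.

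On density, which you flagged as the main obstacle: you do not need connectedness of $\mathcal C_6$. Clearing the consecutive-bracket denominators turns $\Delta_PF(P)$ into a polynomial on all of $(\R^2)^6\cong\R^{12}$. It is nonzero by Example~\ref{ex:convex-hexagon-complex-spectrum}. Its zero set has empty interior in $\R^{12}$, hence in every open subset; this is the paper's argument.

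This route also spares you the justification you sketched, which is false as stated. Vertexwise convex combinations of strictly convex hexagons can degenerate: for a regular hexagon $R$ and its rotation $-R$ with the same labels, the combination at $t=1/2$ collapses to a point. So $\mathcal C_6$ is not star-shaped in that sense, and connectedness would need a different proof.
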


\begin{proof}
The~set $\mathcal U_6$ is open because $\Delta_P$ and $F(P)$ depend
continuously on the~vertices throughout $\mathcal C_6$.  Its complement
is contained in
$$
\{P\in\mathcal C_6:\Delta_PF(P)=0\}.
$$
The~invariants $\mathcal S$ and $\mathcal R$ are rational functions of
the~vertex coordinates, with nonvanishing denominators on
$\mathcal C_6$.  After clearing these denominators,
$\Delta_PF(P)$ therefore has a~polynomial numerator.

This numerator is not the~zero polynomial.  Indeed, for the~hexagon
in Example~\ref{ex:convex-hexagon-complex-spectrum},
$\Delta_P<0$
and
\begin{align}
F(P)
&=
\frac{8604}{25}
+
2\cdot\frac{461}{5}
-8
=
\frac{13014}{25}
\ne0.
\end{align}
The~zero set of a~nonzero real polynomial has empty interior.
Consequently, $\mathcal U_6$ is dense in $\mathcal C_6$.

We now prove the~dynamical conclusions.  Strict convexity implies that
all forward pentagram iterates are strictly convex and that their
vertices converge to a~common finite limit point
\cite{GlickLimit,SchwartzPentagram}.  Glick's homogeneous identity
\cite{GlickLimit} gives
$G_P(P)=\T^2(P)$.
Since $\T^2(P)$ is nondegenerate, this identity forces $G_P$ to have
rank three.  Thus $G_P$ represents a~projectivity.  For
$Q_0=P$,
$Q_1=\T(P)$,
Lemma~\ref{lem:cyclic-relabel} gives
\begin{align}
\label{eq:convex-hexagon-residue-orbits}
G_P^q(Q_r)
&=
\T^{2q+r}(P),
\qquad
r=0,1.
\end{align}
Hence both residue polygons satisfy the~convexity and convergence
hypotheses of
Lemma~\ref{lem:convex-projective-orbit-nondegeneracy}.

Assume first that $\Delta_P>0$.  By the~definition of
$\mathcal U_6$, one has $F(P)\ne0$.  Proposition~
\ref{prop:hexagon-spectral-diagram} and Corollary~
\ref{cor:convex-hexagon-spectral-restriction} imply that the~three
eigenvalues are real and have pairwise distinct moduli.  They may
therefore be ordered as
$|\mu_1|>|\mu_2|>|\mu_3|$.
The~dominant eigenpoint $X_1$ is finite by
Lemma~\ref{lem:convex-projective-orbit-nondegeneracy}.  Hence the~line
$X_1X_2$ is not the~line at infinity and has a~well-defined affine
direction $u$.  The~same lemma shows that both $Q_0$ and $Q_1$ have
area-nondegenerate centered two-scale asymptotics in this direction.
All the~hypotheses of Theorem~\ref{thm:spectral-flattening} are thus
satisfied with $m=2$ and $s=0$, proving \textup{(i)}.

Assume now that $\Delta_P<0$.  By
Corollary~\ref{cor:convex-hexagon-spectral-restriction}, the~unique
real eigenvalue is strictly dominant over the~non-real conjugate pair.
Applying part~\textup{(iii)} of
Lemma~\ref{lem:convex-projective-orbit-nondegeneracy} to $Q_0$ and
$Q_1$ verifies regular spectral position, the~nonvanishing of all
leading centered vectors, and the~positivity of both leading area
coefficients.  Propositions~\ref{prop:elliptic-case}
and~\ref{prop:predicting-ellipses} now give \textup{(ii)}.
\end{proof}

\begin{proposition}[Projectively regular convex Poncelet polygons]
\label{prop:projectively-regular-poncelet}
Let $n\geq 5$, and let
$$
R_n=(v_0,\ldots,v_{n-1}),
\qquad
v_j=
\left(
\cos\frac{2\constpi j}{n},
\sin\frac{2\constpi j}{n}
\right),
$$
be the~regular $n$-gon inscribed in the~unit circle.  Let $\Phi$ be a~real
projective transformation defined on a~neighborhood of the~closed unit
disk, and put $P=\Phi(R_n)$.
Set
$$
\rho_n=
\frac{\cos(2\constpi/n)}{\cos(\constpi/n)}.
$$
Then the~following statements hold.

\begin{enumerate}[label=\textup{(\roman*)}]
\item
The~polygon $P$ is a~strictly convex Poncelet polygon.  More precisely, its vertices lie
on $\Phi(S^1)$ and its sides are tangent to
$
\Phi\!\left(\cos\frac{\constpi}{n}\,S^1\right).
$
In particular, it is projectively regular in the~sense of
Definition~\ref{def:projectively-regular}.

\item
For every $k\geq0$ and every $j$, one has
$$
\T^k(P)_j
=
\Phi\!\left(
\rho_n^k R_{3k\constpi/n}v_j
\right),
$$
where $R_\theta$ denotes the~Euclidean rotation through the~angle
$\theta$.  In particular,
$$
\T^2(P)=\Sigma_3(G(P)),
$$
where
$G=\Phi D_n \Phi^{-1}$ and
$D_n[x:y:z]=[\rho_n^2x:\rho_n^2y:z]$.
Consequently, up to a~common nonzero scalar factor,
$$
\Spec(G)
=
\{1,\rho_n^2,\rho_n^2\}.
$$
Thus the~cyclically aligned Darboux--Schwartz projectivity has three real
eigenvalues, and the~two subdominant eigenvalues coincide, for every
$n\geq5$.

\item
Let
$c=\Phi(0)$,
$J_{\Phi}=\mathrm{D}\Phi(0)$,
and let
$P_k=\That^k(P)$
be the~area-normalized iterates.  Then, uniformly in $j$,
$$
p_{k,j}-C(P)
=
\kappa J_{\Phi} R_{3k\constpi/n}v_j+O(\rho_n^k),
$$
where
$$
\kappa
=
\sqrt{
\frac{\Area(P)}
{|\det J_{\Phi}|\,\Area(R_n)}
},
\qquad
\Area(R_n)
=
\frac n2\sin\frac{2\constpi}{n}.
$$
Consequently, all asymptotic conics corresponding to all vertices and
all residue classes coincide with the~single ellipse
$$
(x-C(P))^{\top}
J_{\Phi}^{-\top}J_{\Phi}^{-1}
(x-C(P))
=
\kappa^2.
$$
\end{enumerate}

\end{proposition}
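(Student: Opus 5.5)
The plan is to do everything first for $R_n$ itself and then transport by $\Phi$, using projective naturality of $\T$. Throughout, $\Phi$ is defined on a neighborhood of the closed disk, so its affine denominator has constant sign there. Consequently $\Phi$ maps the disk to a convex region and segments to segments, and it preserves strict convexity and cyclic order of polygons contained in the disk.

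For (i), each side $v_jv_{j+1}$ of $R_n$ lies at distance $\cos(\constpi/n)$ from the origin. It is therefore tangent to the circle $\cos(\constpi/n)S^1$ at the midpoint direction. Applying $\Phi$ gives the conic pair $\Phi(S^1)$, $\Phi(\cos\frac{\constpi}{n}S^1)$. Both are nondegenerate conics because $\Phi$ is a projectivity, and tangency is preserved. Hence $P$ is a strictly convex Poncelet polygon, and it is projectively regular by construction.

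For (ii), I will compute $\T(R_n)$ directly. The diagonals $v_jv_{j+2}$ and $v_{j+1}v_{j+3}$ are chords at distance $\cos(2\constpi/n)$ from $0$. They are symmetric about the ray at angle $2\constpi(j+3/2)/n$, so they meet on that ray. Their intersection lies at radius $\cos(2\constpi/n)/\cos(\constpi/n)=\rho_n$, by elementary trigonometry: the point at polar angle offset $\constpi/n$ from the chord normal has radius $d/\cos(\constpi/n)$.

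This gives $\T(R_n)_j=\rho_nR_{3\constpi/n}v_j$. Induction, using that $\T$ commutes with homotheties and rotations, gives $\T^k(R_n)_j=\rho_n^kR_{3k\constpi/n}v_j$. Projective naturality then yields the displayed formula for $\T^k(P)$.

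For $k=2$, the rotation $R_{6\constpi/n}$ sends $v_j$ to $v_{j+3}$. Hence $\T^2(R_n)=\Sigma_3(\rho_n^2R_n)=\Sigma_3(D_n(R_n))$, and conjugating by $\Phi$ gives $\T^2(P)=\Sigma_3(G(P))$. The spectrum of $G$ is that of $D_n$, up to a scalar, because conjugation preserves the spectrum.

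For (iii), by \eqref{eq:normalization-classical-iterates} it suffices to expand $Q_k=\T^k(P)$. Write $z_{k,j}=\rho_n^kR_{3k\constpi/n}v_j$, which satisfies $\|z_{k,j}\|=\rho_n^k\to0$. Taylor's theorem at $0$ gives
$$
\Phi(z)=c+J_\Phi z+O(\|z\|^2),
$$
so $q_{k,j}=c+\rho_n^kJ_\Phi R_{3k\constpi/n}v_j+O(\rho_n^{2k})$ uniformly in $j$.

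The vertices $R_{3k\constpi/n}v_j$ sum to zero, so $C(Q_k)=c+O(\rho_n^{2k})$. The centered vertices are therefore $\rho_n^kJ_\Phi R_{3k\constpi/n}v_j+O(\rho_n^{2k})$.

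The oriented area is quadratic in these centered vertices, and rotation preserves area. Hence
$$
\sArea(Q_k)=\rho_n^{2k}\det J_\Phi\,\sArea(R_n)+O(\rho_n^{3k}).
$$
The normalizing factor is then
$$
\sqrt{\Area(P)/\Area(Q_k)}=\rho_n^{-k}\kappa\bigl(1+O(\rho_n^k)\bigr).
$$
Multiplying the two expansions gives $p_{k,j}-C(P)=\kappa J_\Phi R_{3k\constpi/n}v_j+O(\rho_n^k)$.

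Finally, $\|J_\Phi^{-1}(p_{k,j}-C(P))\|^2\to\kappa^2$ because $\|R_\theta v_j\|=1$. This identifies the single ellipse for all $j$ and both residue classes.

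The main obstacle is bookkeeping rather than ideas. The delicate step is making the error terms uniform and checking that the barycenter shift is genuinely of order $\rho_n^{2k}$ and not $\rho_n^k$. This is where the cancellation $\sum_jR_\theta v_j=0$ is essential. The trigonometric radius computation in (ii) and the sign conventions of the cyclic shift $\Sigma_3$ also need care.
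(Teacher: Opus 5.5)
Your proposal is correct and follows essentially the same route as the paper's proof. Both arguments transport everything from $R_n$ by $\Phi$ and locate $\T(R_n)_j$ on the bisecting ray via $r\cos(\constpi/n)=\cos(2\constpi/n)$; both then use $R_{6\constpi/n}v_j=v_{j+3}$ to obtain the $\Sigma_3$-shifted return, and in (iii) both Taylor-expand $\Phi$ at $0$, using $\sum_j v_j=0$ to make the barycenter error $O(\rho_n^{2k})$ before normalizing the area.
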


\begin{proof}
The~regular $n$-gon is inscribed in the~unit circle, and each of its
sides is tangent to the~concentric circle of radius
$\cos(\constpi/n)$.  Projective transformations preserve incidence and
tangency.  Therefore $P$ is inscribed in $\Phi(S^1)$ and circumscribed
about $\Phi(\cos(\constpi/n)S^1)$.  Since the~denominator of $\Phi$
has constant sign on the~closed unit disk, $\Phi$ maps its line
segments to line segments and its boundary to a~strictly convex conic.
This proves \textup{(i)}.

Let $\delta=2\constpi/n$.
The~chord joining $v_j$ to $v_{j+2}$ has equation
$\langle x,v_{j+1}\rangle=\cos\delta$,
whereas the~chord joining $v_{j+1}$ to $v_{j+3}$ has equation
$\langle x,v_{j+2}\rangle=\cos\delta$.
Their intersection lies on the~angular bisector of the~two normal
directions, hence on the~ray making the~angle
$\left(j+\frac32\right)\delta$ with the~positive horizontal axis.  Its distance from the~origin is
therefore determined by $r\cos\frac\delta2=\cos\delta.$
Thus
$r=\rho_n$
and
$\T(R_n)_j
=
\rho_n R_{3\constpi/n}v_j$.
Iteration gives
$\T^k(R_n)_j
=
\rho_n^kR_{3k\constpi/n}v_j$.
Projective naturality of the~pentagram map now yields
$\T^k(P)_j=
\Phi\!\left(
\rho_n^kR_{3k\constpi/n}v_j
\right)$.
Since
$R_{6\constpi/n}v_j=v_{j+3}$,
we obtain
$\T^2(P)_j=\Phi(\rho_n^2v_{j+3})=G(p_{j+3})$,
which is precisely
$\T^2(P)=\Sigma_3(G(P))$.
The~matrix of $D_n$ in homogeneous coordinates is
$\operatorname{diag}(\rho_n^2,\rho_n^2,1)$,
so the~spectral statement follows by conjugacy.

It remains to determine the~area-normalized asymptotics.  Since
$0<\rho_n<1$ for $n\geq5$, Taylor expansion of $\Phi$ at the~attracting
point $0$ of $D_n$ gives
$$
\Phi\!\left(
\rho_n^kR_{3k\constpi/n}v_j
\right)
=
c+
\rho_n^k J_{\Phi} R_{3k\constpi/n}v_j
+O(\rho_n^{2k}).
$$
Because
$\sum_{j=0}^{n-1}v_j=0$,
the~barycenter satisfies
$C(\T^k(P))=c+O(\rho_n^{2k})$.
Moreover, multilinearity of the~determinant in the~polygonal area
formula gives
$$
\Area(\T^k(P))
=
|\det J_{\Phi}|\,\rho_n^{2k}\Area(R_n)
+O(\rho_n^{3k}).
$$
Using equation~\eqref{eq:normalization-classical-iterates}, we conclude that
$$
p_{k,j}-C(P)
=
\sqrt{
\frac{\Area(P)}
{\Area(\T^k(P))}
}
\bigl(
\T^k(P)_j-C(\T^k(P))
\bigr)
=
\kappa J_{\Phi} R_{3k\constpi/n}v_j+O(\rho_n^k).
$$
Finally, if
$y=\kappa J_{\Phi} R_{3k\constpi/n}v_j$,
then
$$
y^{\top}J_{\Phi}^{-\top}J_{\Phi}^{-1}y
=
\kappa^2\|v_j\|^2
=
\kappa^2.
$$
Thus every vertex and every residue class has the~same asymptotic
ellipse.
\end{proof}

\begin{corollary}[Circular asymptotics for projective images of regular polygons]
\label{cor:poncelet-circular-asymptotics}
Under the~assumptions and notation of
Proposition~\ref{prop:projectively-regular-poncelet}, the~common
asymptotic conic is a~Euclidean circle if and only if the~derivative
$J_{\Phi}=\mathrm{D}\Phi(0)$ is a~Euclidean similarity.  Equivalently, there is a~number
$\sigma>0$ such that
$J_{\Phi}J_{\Phi}^{\top}=\sigma^2I$.
In this case the~common circle is centered at $C(P)$ and has equation
$$
\|x-C(P)\|^2
=
\frac{\Area(P)}{\Area(R_n)}
=
\frac{2\Area(P)}{n\sin(2\constpi/n)}.
$$
\end{corollary}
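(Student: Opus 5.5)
The plan is to read everything off part~(iii) of Proposition~\ref{prop:projectively-regular-poncelet}. That result says all vertices of all area-normalized iterates are asymptotic to the single ellipse
$$
(x-C(P))^{\top}J_{\Phi}^{-\top}J_{\Phi}^{-1}(x-C(P))=\kappa^2 .
$$
So the corollary reduces to two things: deciding when this conic is a Euclidean circle, and evaluating its radius in that case.

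For the characterization, I will work with the symmetric positive definite matrix $H=J_{\Phi}^{-\top}J_{\Phi}^{-1}=(J_{\Phi}J_{\Phi}^{\top})^{-1}$. The level set $\{y:y^{\top}Hy=\kappa^2\}$ with $\kappa>0$ is a circle if and only if $H$ is a positive scalar multiple of $I$. The "if" direction is immediate. For "only if", a circle centered at the origin forces $y^{\top}Hy$ to be constant on the unit circle, hence $H=hI$ by polarization, or equivalently by comparing eigenvalues. Then $H=hI$ is equivalent to $J_{\Phi}J_{\Phi}^{\top}=\sigma^2I$ with $\sigma=h^{-1/2}>0$. This in turn is equivalent to $\sigma^{-1}J_{\Phi}$ being orthogonal, that is, to $J_{\Phi}$ being a Euclidean similarity (possibly orientation reversing).

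For the radius, assume $J_{\Phi}J_{\Phi}^{\top}=\sigma^2I$. Then $|\det J_{\Phi}|=\sigma^2$ and $H=\sigma^{-2}I$. The ellipse becomes $\|x-C(P)\|^2=\sigma^2\kappa^2$. Substituting the value of $\kappa$ from Proposition~\ref{prop:projectively-regular-poncelet}, we get
$$
\sigma^2\kappa^2=\sigma^2\,\frac{\Area(P)}{\sigma^2\Area(R_n)}=\frac{\Area(P)}{\Area(R_n)}.
$$
The second expression in the statement then follows from $\Area(R_n)=\tfrac n2\sin(2\constpi/n)$. The center is $C(P)$ by the same proposition.

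I expect no real obstacle. The only point needing care is the "only if" direction, namely that a circular level set forces $H$ to be scalar. This is the elementary fact that a positive definite quadratic form constant on a circle is a multiple of $\|\cdot\|^2$.
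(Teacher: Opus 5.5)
Your proposal is correct and follows the paper's proof essentially step for step. Both read the conic off part~(iii) of Proposition~\ref{prop:projectively-regular-poncelet}, reduce circularity to $J_{\Phi}^{-\top}J_{\Phi}^{-1}$ being a positive multiple of $I$ (equivalently $J_{\Phi}J_{\Phi}^{\top}=\sigma^2I$), and then use $|\det J_{\Phi}|=\sigma^2$ in $\kappa^2$ to obtain the radius. Your polarization argument for the ``only if'' direction just supplies a justification the paper states without comment.
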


\begin{proof}
By Proposition~\ref{prop:projectively-regular-poncelet}, the~common
asymptotic conic has quadratic matrix $J_{\Phi}^{-\top}J_{\Phi}^{-1}$.
It is a~Euclidean circle if and only if this matrix is a~positive
multiple of the~identity.  Since $J_{\Phi}$ is invertible, this is equivalent
to $J_{\Phi}J_{\Phi}^{\top}=\sigma^2I$
for some $\sigma>0$, or equivalently to $J_{\Phi}=\sigma Q$ with
$Q\in O(2)$.  In that case $|\det J_{\Phi}|=\sigma^2$, and the~ellipse equation
from Proposition~\ref{prop:projectively-regular-poncelet} becomes
$$
\sigma^{-2}\|x-C(P)\|^2
=
\frac{\Area(P)}{\sigma^2\Area(R_n)}.
$$
The~asserted circle equation follows.
\end{proof}

\begin{remark}[Shifted and unshifted spectra]
The~repeated real spectrum above belongs to the~projectivity obtained
after removing the~natural cyclic shift by three vertices.  If one
instead uses the~unshifted projectivity
$$
\widetilde G
=
\Phi\widetilde D_n\Phi^{-1},
\qquad
\widetilde D_n
=
\begin{pmatrix}
\rho_n^2R_{6\constpi/n}&0\\
0&1
\end{pmatrix},
$$
so that $\T^2(P)=\widetilde G(P)$
without relabelling, then
$$
\Spec(\widetilde G)
=
\left\{
1,
\rho_n^2\conste^{6\constpi\consti/n},
\rho_n^2\conste^{-6\constpi\consti/n}
\right\}.
$$
Thus the~two subdominant eigenvalues are non-real for every $n\geq5$
except $n=6$; in the~exceptional case they coincide with the~real value
$-\rho_6^2=-\frac13$.
After the~cyclic shift is removed, the~spectrum is $\{1,\rho_n^2,\rho_n^2\}$
for every $n\geq5$.

More generally, if one writes the~shifted relation as
$\T^2(P)=\Sigma_s(G_s(P))$, then the~rotation angle of $G_s$ is
$(6-2s)\constpi/n$.  In particular, for
$n=6$ one may take $s=1$ to obtain a~non-real conjugate pair and hence a~
nondegenerate rotating spectral representation of the~same common
asymptotic ellipse.
\end{remark}

\section{Exact projective returns and numerical spectral evidence}
\label{sec:examples-evidence}

\noindent We collect here two exact projectively periodic examples with
minimal return times three and four, followed by a~larger-polygon test case in which the~conserved operator
does not generate the~orbit.
The~first two examples illustrate the~asymptotic mechanisms analyzed
above, but at different levels of verification.  Both return identities
and their minimality are exact.  The~period-three example includes
high-precision uniform estimates, but they are reported as numerical
evidence rather than formal interval certificates.  The~period-four
example has numerically verified nondegeneracy only along the~computed
finite orbit segment.
The~last example motivates the~conjectural spectral organization for
general polygons with $n\geq7$.

\subsection{A~projective period-three heptagon}

\begin{example}[A~projective period-three heptagon]
\label{ex:period-three-heptagon}
A~fixed projectivity generates the~third pentagram iterate of the~
following heptagon, but no smaller positive iterate.  Let
$$
\begin{aligned}
p_1&=(0,0), & p_2&=(1,0), & p_3&=(1,1),&
p_4&=(0,1), & p_5&=\left(\frac35,\frac25\right),
\end{aligned}
$$
$$
\begin{aligned}
p_6&=\left(\frac{10+63\sqrt5}{359},
          \frac{170-6\sqrt5}{359}\right), &
p_7&=\left(\frac{6\sqrt5-5}{31},
          \frac{13+3\sqrt5}{31}\right)
\end{aligned}
$$
(see the first panel of Figure~\ref{fig:period-three-heptagon}).

\begin{figure}[htbp]
    \centering
    \includegraphics[width=0.31\textwidth]{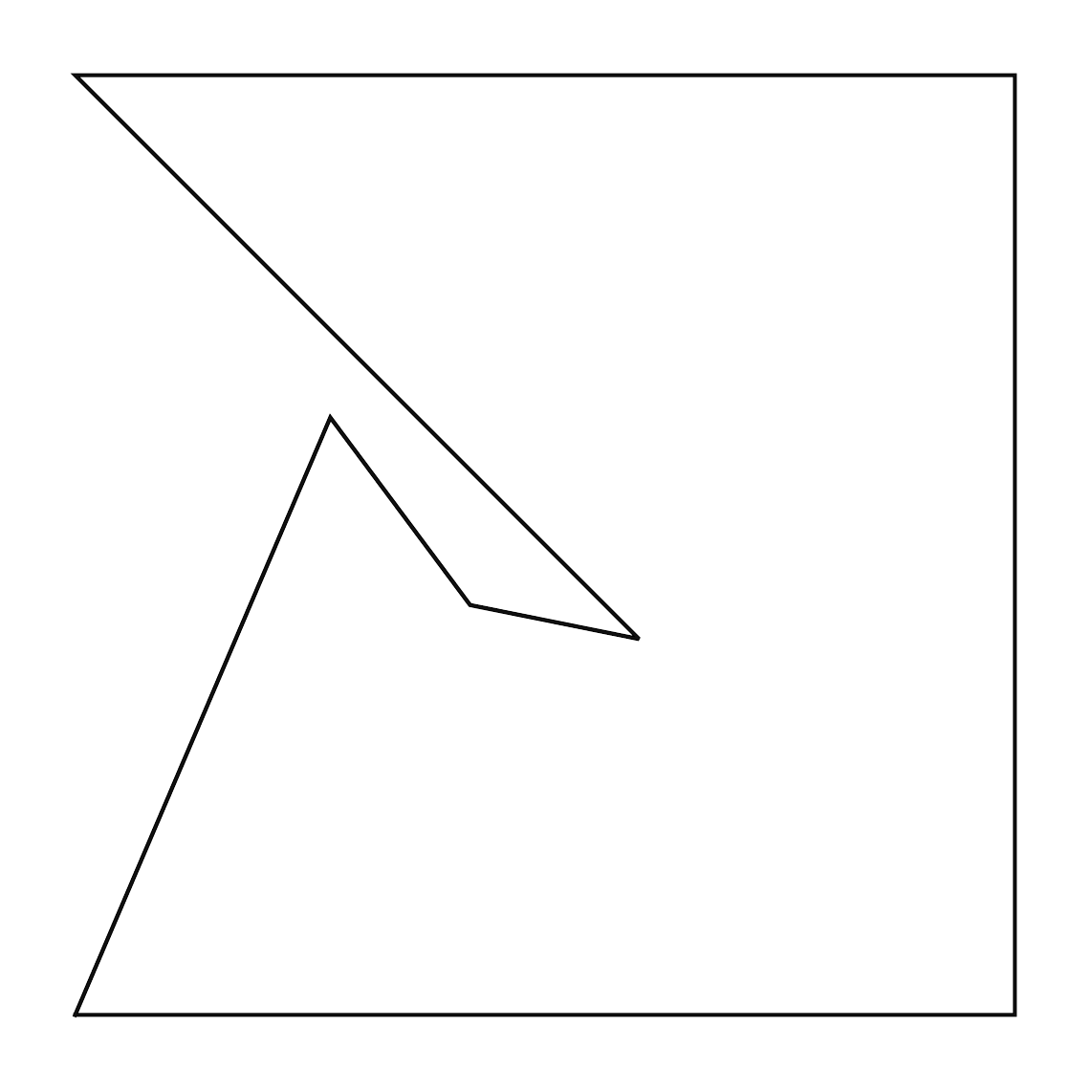}
    \hfill
    \includegraphics[width=0.31\textwidth]{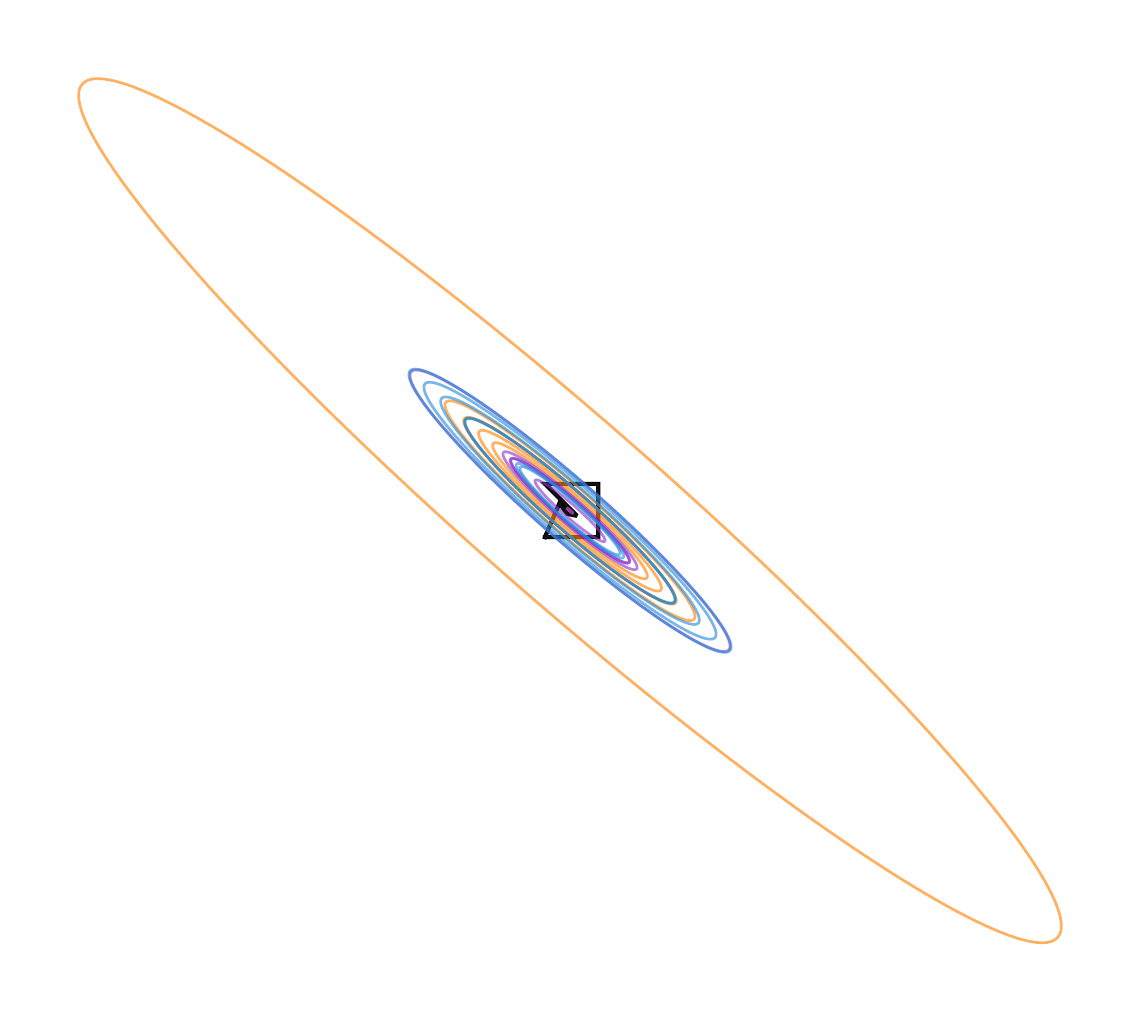}
    \hfill
    \includegraphics[width=0.31\textwidth]{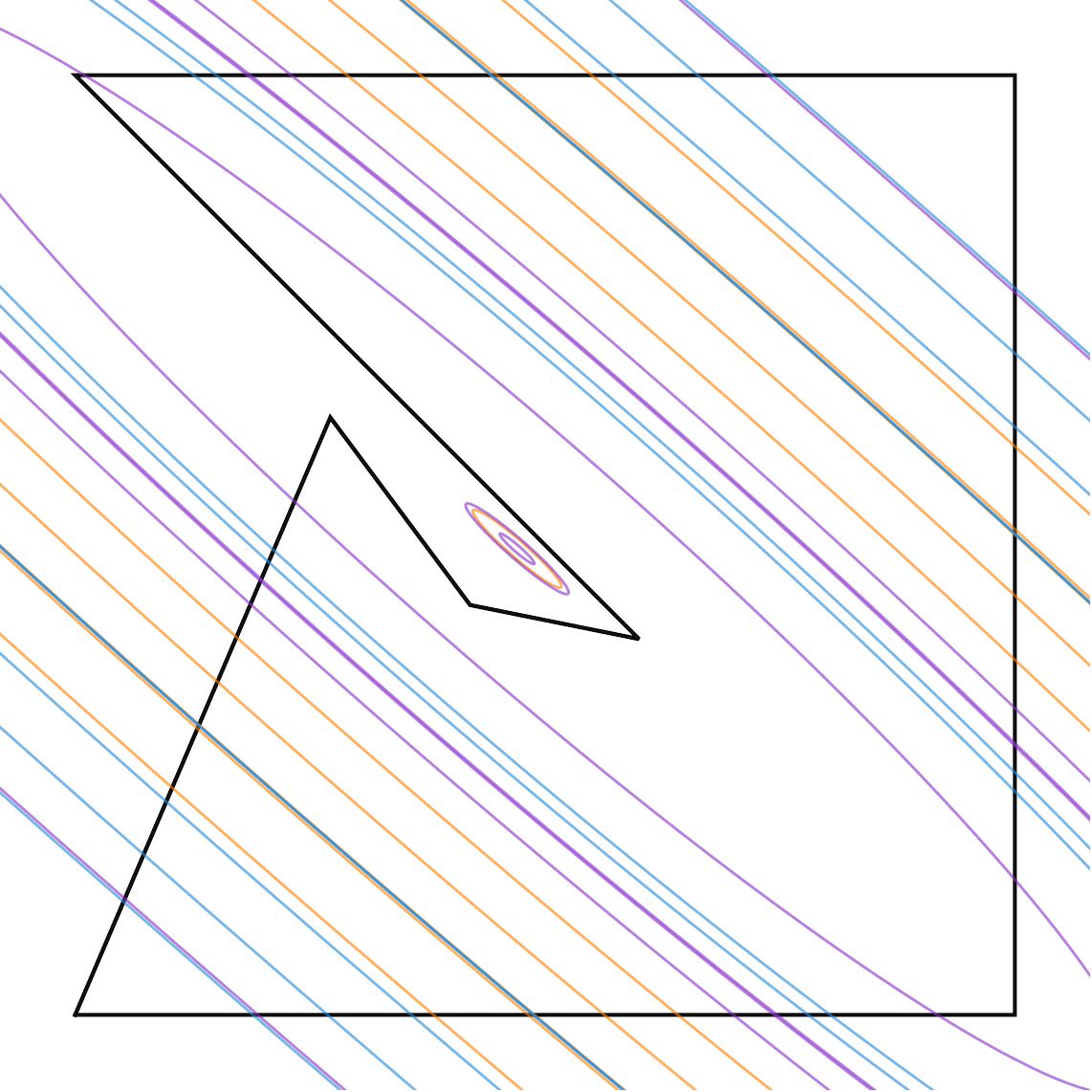}

    \caption{A~period-three heptagon and its asymptotic ellipses}
    \label{fig:period-three-heptagon}
\end{figure}

Define the~following nonsingular matrix:
$$
G=
\begin{pmatrix}
-\dfrac{19+9\sqrt5}{22}
&-\dfrac{51+67\sqrt5}{132}
& \dfrac{6+4\sqrt5}{33}\\[2mm]
-\dfrac{19+9\sqrt5}{22}
&-\dfrac{6+4\sqrt5}{11}
& \dfrac{7+\sqrt5}{22}\\[2mm]
-\dfrac{37+21\sqrt5}{22}
&-\dfrac{45+41\sqrt5}{44}
&1
\end{pmatrix}.
$$

All computations take place exactly in $\mathbb Q(\sqrt5)$.  Direct
substitution gives $\T^3(P)_i=G(p_i)$ for every $i$,
so that
$\T^3(P)=G(P)$
with the~displayed labeling and without a~cyclic shift.  The~return
time is minimal.  Indeed, write an~unknown projectivity as a
$3\times3$ matrix $K$.  For each correspondence
$p_i\mapsto\T^d(P)_{i+s}$, the~equations $K\widetilde p_i\times
\widetilde{\T^d(P)_{i+s}}=0$
give a~homogeneous linear system in the~nine entries of $K$.  Its
coefficient matrix is taken to have all $3n$ cross-product rows (only
two rows per correspondence are independent).  It has rank $9$ for
every $d=1,2$ and every
$s=0,\ldots,6$.  For $d=3$ it has rank $8$ only when $s=0$, and its
one-dimensional kernel is generated by the~displayed invertible matrix
$G$.  Hence there is no projectivity $K$ and no cyclic shift $\Sigma_s$
such that $\T^d(P)=\Sigma_s(K(P))$
for $d=1,2$, and the~minimal projective return time is $m=3$.

The~characteristic polynomial is
$$
\chi_G(t)
=
t^3+\frac{9+17\sqrt5}{22}t^2
-\frac{35+27\sqrt5}{242}t
+\frac{-17+18\sqrt5}{1331}.
$$
Hence $G$ has one strictly dominant real eigenvalue and one non-real conjugate pair:
$$
\lambda_0\approx-2.310784,
\qquad
\lambda_{\pm}
\approx0.086911
\pm0.002359\consti.
$$
An~exact solution of the~dominant real eigenvector equations subject to
the~line-at-infinity condition gives only the~zero vector, so the~
dominant real eigenpoint is finite; numerically,
$$
X_0\approx(0.486635,0.467968).
$$

We next check the~remaining hypotheses of
Proposition~\ref{prop:elliptic-case}.  Put
$Q_r=\T^r(P)$ for $r=0,1,2$,
and
$$
t_j=\left(\frac{\lambda_+}{\lambda_0}\right)^j,
\qquad
\tau=\left|\frac{\lambda_+}{\lambda_0}\right|
\approx0.037625.
$$
An~exact calculation in $\mathbb Q(\sqrt5)$ shows that the~first three
oriented areas are nonzero.  Their rounded values are
$$
\begin{aligned}
\sArea(Q_0)
& \approx 0.821178,
&
\sArea(Q_1)
&
 \approx-0.027618,
 &
\sArea(Q_2)
&
 \approx 0.000741.
\end{aligned}
$$
Choose eigenvectors
$v_0,v_+,\overline v_+$ with $(v_0)_3=1$.  The~$100$-digit spectral
decomposition gives $c_{r,i}\ne0$ for all residue vertices and allows
us to write
$$
\widetilde q_{r,i}
=c_{r,i}\bigl(v_0+\zeta_{r,i}v_+
+\overline{\zeta_{r,i}}\,\overline v_+\bigr).
$$
After $j$ powers of $G$, its affine denominator, apart from the~nonzero
factor $c_{r,i}\lambda_0^j$, is therefore
$$
D_{r,i}(t_j)
=
1+2\operatorname{Re}
\bigl(\zeta_{r,i}(v_+)_3t_j\bigr).
$$
For $j\geq1$ we have $|t_j|\leq \tau$.  A~$100$-digit computation gives
the~following uniform numerical estimates:
$$
\max_i\sup_{|t|\leq \tau}|D_{r,i}(t)-1|
<0.2110,\quad0.01540,\quad0.002397
$$
for $r=0,1,2$, respectively.  The~$j=0$ denominators are the~original
nonzero affine denominators.  If certified as interval bounds, these
inequalities would show that no forward residue vertex reaches the~line
at infinity; here they are retained as high-precision numerical
evidence.

The~signed area is a~rational function of $t$ and $\bar t$.  If either
variable is set to zero, the~complexified vertices lie on one affine
spectral line, so the~area vanishes.  After removing the~nonzero affine
denominators, the~numerator is therefore divisible by $t\bar t$, and
the~same decomposition gives
$$
\sArea(G^j(Q_r))
=|t_j|^2F_r(t_j,\overline{t_j}),
$$
where $F_r(t,\bar t)$ is continuous for $|t|\leq \tau$.  Put
$b_r=F_r(0,0)$.  The~rounded leading values are
$$
b_0\approx-0.607424,
\quad
b_1\approx-0.033920,
\quad
b_2\approx0.000737.
$$
Using the~preceding denominator bounds term by term in the~polygonal
area formula gives
$$
|F_r(t,\bar t)-b_r|
<0.2270,\quad0.000658,\quad0.000005
\qquad (|t|\leq \tau).
$$
Each reported error bound is smaller than $|b_r|$.  Certified interval
bounds of this form, together with the~exact nonzero areas at $j=0$,
would prove that every forward residue area is nonzero.  The~same
high-precision computation indicates that every leading centered vector
$w_{r,i}$ in Proposition~\ref{prop:elliptic-case} is nonzero.  Thus the~
calculation supports all hypotheses of that proposition in the~three
residue classes. 

For the~ellipse levels below, $B_r=|b_r|$ in the~notation of
Proposition~\ref{prop:predicting-ellipses}.  The~barycenter is
$$
C(P)
\approx(0.470251,0.495988).
$$
With the~normalization $\det H=1$, the~common positive definite
quadratic form from
Proposition~\ref{prop:predicting-ellipses} is
$$
H\approx
\begin{pmatrix}
3.174997 & 3.469432\\
3.469432 & 4.106133
\end{pmatrix}.
$$
Its eigenvalues are approximately $0.140035$ and $7.141096$, so the~
common major-to-minor semiaxis ratio is approximately $7.141096$.
Writing $C(P)=(c_x,c_y)$, the~twenty-one  asymptotic ellipses
have the~form
$$
\begin{aligned}
&3.174997(x-c_x)^2
+6.938865(x-c_x)(y-c_y)
+4.106133(y-c_y)^2=s_{r,i},
\end{aligned}
$$
for $r=0,1,2$,
and $i=1,\ldots,7$.
Equivalently, in the~original coordinates $(x,y)$,
$$
\begin{aligned}
&3.174997x^2
+6.938865xy
+4.106133y^2
-6.427681x
-7.336189y
+3.330640=s_{r,i}.
\end{aligned}
$$
The~rounded levels, in vertex order, are
$$
\begin{aligned}
(s_{0,i})={}&(
20.739175,
0.969678,
0.000539,
0.514275,
0.718270,
1.340759,
0.947431),\\[1mm]
(s_{1,i})={}&(
0.385919,
0.306329,
2.201430,
0.300967,
0.105859,
0.000732,
0.000083),\\[1mm]
(s_{2,i})={}&(
2.228464,
1.437116,
0.949837,
0.952697,
0.248658,
1.833281,
0.215740).
\end{aligned}
$$
Multiplying $H$ by a~positive constant multiplies all levels by the~
same constant, so the~geometric ellipses are independent of this
normalization.

The~example is illustrated in Figure~\ref{fig:period-three-heptagon},
together with its three families of asymptotic ellipses.  The~left
panel shows the~initial heptagon, while the~middle and right panels show
the~area-normalized orbit and the~ellipses at two different
scales.  Some ellipses are visually indistinguishable because their
levels are very close -- the~same near-coincidence is also apparent from
the~direct computations.

\end{example}

\subsection{A~projective period-four octagon}

\begin{example}[A~projective period-four octagon]
\label{ex:period-four-octagon}
Let $\sigma=\sqrt{3010}$.  Consider the~simple
nonconvex octagon
$$
P=\left(
(0,0),(1,0),(1,1),(0,1),\left(\frac{-104+2\sigma}{9},\frac{113-2\sigma}{9}\right),
\left(-\frac{8}{81},\frac19\right),
\left(-\frac19,0\right),
\left(1,-\frac98\right)
\right)
$$
(see the~first panel of Figure~\ref{fig:period-four-octagon}).

\begin{figure}[htbp]
    \centering
    \includegraphics[width=0.28\textwidth]{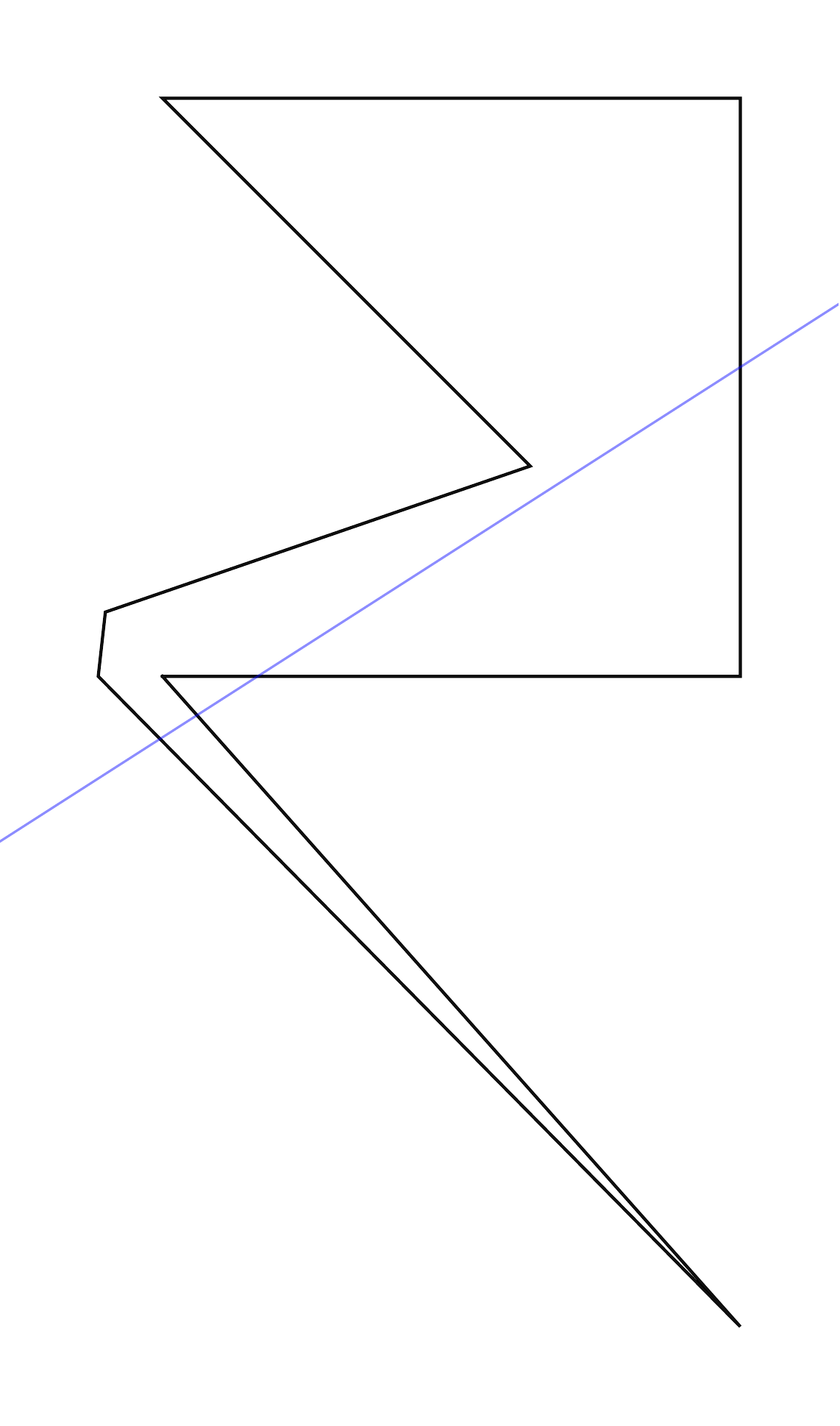}
    \hfill
    \includegraphics[width=0.70\textwidth]{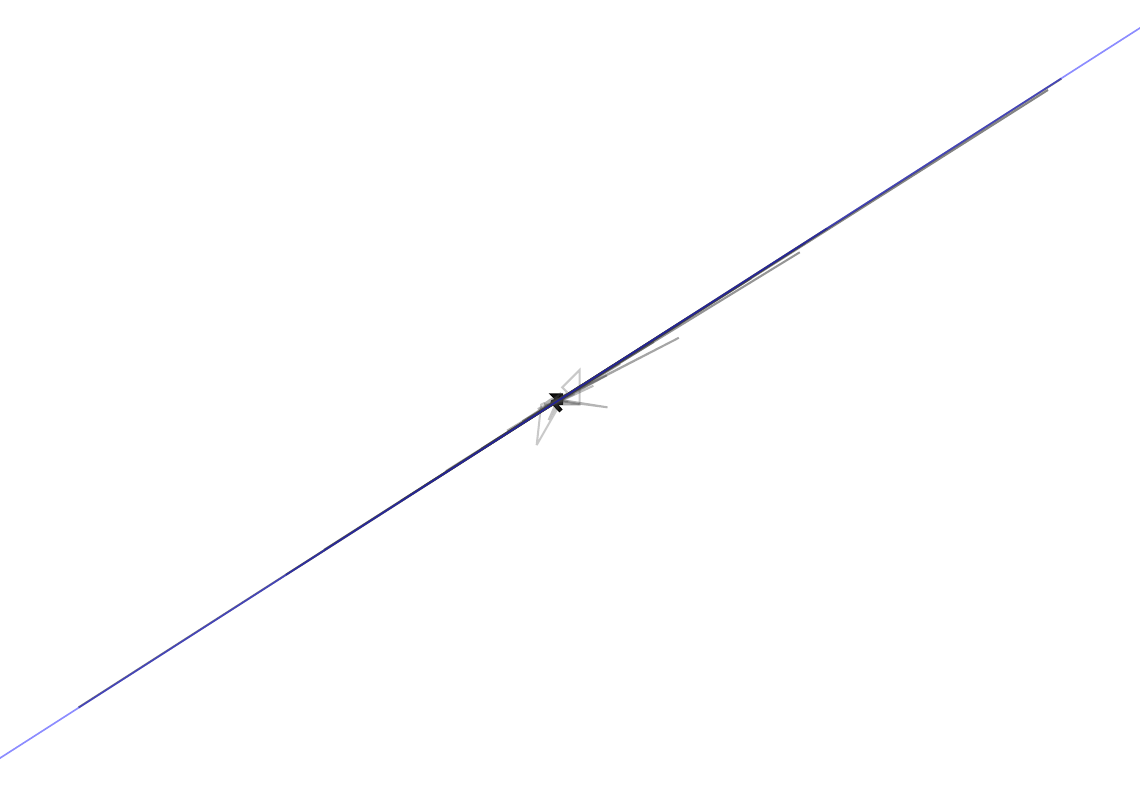}
    \caption{Numerical flattening toward the~spectral line}
    \label{fig:period-four-octagon}
\end{figure}

An~exact lift of the~return projectivity is
$$
\widetilde G=
\begin{pmatrix}
51852312+4684554\sigma & -504976378+7873054\sigma & 192104298-2612574\sigma\\
195159861-339714\sigma & 143953731-2747214\sigma & 208027359-3170826\sigma\\
247012173+4344840\sigma & -304120247+10266640\sigma & 464146857
\end{pmatrix}.
$$
Let $G$ be the~projectivity represented by $\widetilde G$.  Direct
substitution in $\mathbb Q(\sigma)$ gives $\T^4(P)=G(P)$.  Moreover, $\det\widetilde G\ne0.$
The~$3n\times9$ correspondence matrices
have rank $9$ for every cyclic shift and every $j=1,2,3$; for $j=4$
the~rank is $8$ only for the~zero shift, and the~kernel is generated by
$\widetilde G$.  Thus the~minimal projective return time is $m=4$.

With the~linear lift normalized by dividing $\widetilde G$ by its
lower-right entry, a~numerical spectral calculation gives
$$
\Spec(G)
\approx
\{1.224843,\;0.343532,\;0.082487\}.
$$
The~computed eigenvalues are positive, distinct, and have strictly
ordered moduli.  In the~chosen affine chart, after normalizing each
finite eigenvector to have third coordinate $1$, the~leading mixed-area
coefficients for the~four
residue classes are approximately
$$
0.846418,\qquad -4.661523,\qquad -0.705996,\qquad 0.052771,
$$
and all computed dominant vertex projections are nonzero.  These
checks, as well as nonvanishing of the~areas along the~computed finite
orbit segment,
provide strong numerical evidence for the~remaining hypotheses of
Theorem~\ref{thm:spectral-flattening}.  We do not claim here that the~
infinite-time forward-domain condition has been certified for this
octagon.

The~affine eigenpoints corresponding to the~two eigenvalues of largest
moduli determine the~direction $u\approx(-0.628006,-0.402221).$
Since the~area normalization preserves the~barycenter, the~spectral
line passes through $C(P)
\approx(0.428306,0.168723).$
The~spectral calculation therefore predicts, and the~computed
area-normalized orbit numerically flattens toward, $\mathcal L=C(P)+\R u,$
that is, approximately,
$$
\mathcal L:
\quad
0.539336\,x
-
0.842091\,y
-
0.088921
=0.
$$

Figure~\ref{fig:period-four-octagon} compares the~initial octagon with
the~twentieth area-normalized iterate and the~spectral line.
\end{example}

\subsection{Glick's heptagon}

Glick's paper \cite{GlickLimit} contains a~useful test example for
$n=7$.  Consider the~convex heptagon
$$
P=((2,0),(3,1),(3,2),(2,3),(1,3),(0,2),(0,1))
$$
(see the~first panel of Figure~\ref{fig:GlicksHeptagon}).  The~four panels show the~initial
configuration and the~area-normalized iterates after $5$, $20$, and
$50$ steps, each together with the~spectral tangent-line candidate
$C(P)+\R u_{12}$.

\begin{figure}[htbp]
    \centering

    \includegraphics[width=0.48\textwidth]{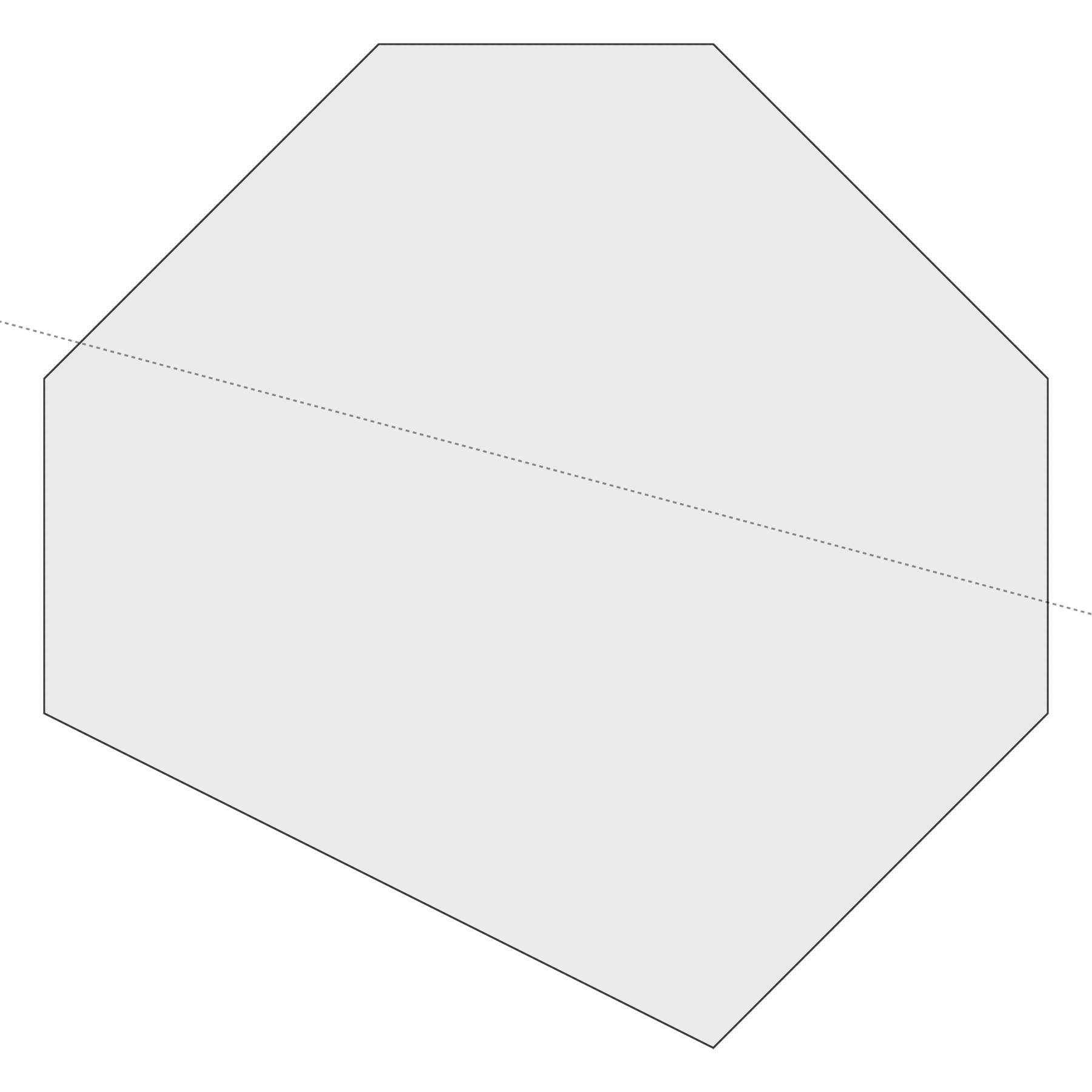}
    \hfill
    \includegraphics[width=0.48\textwidth]{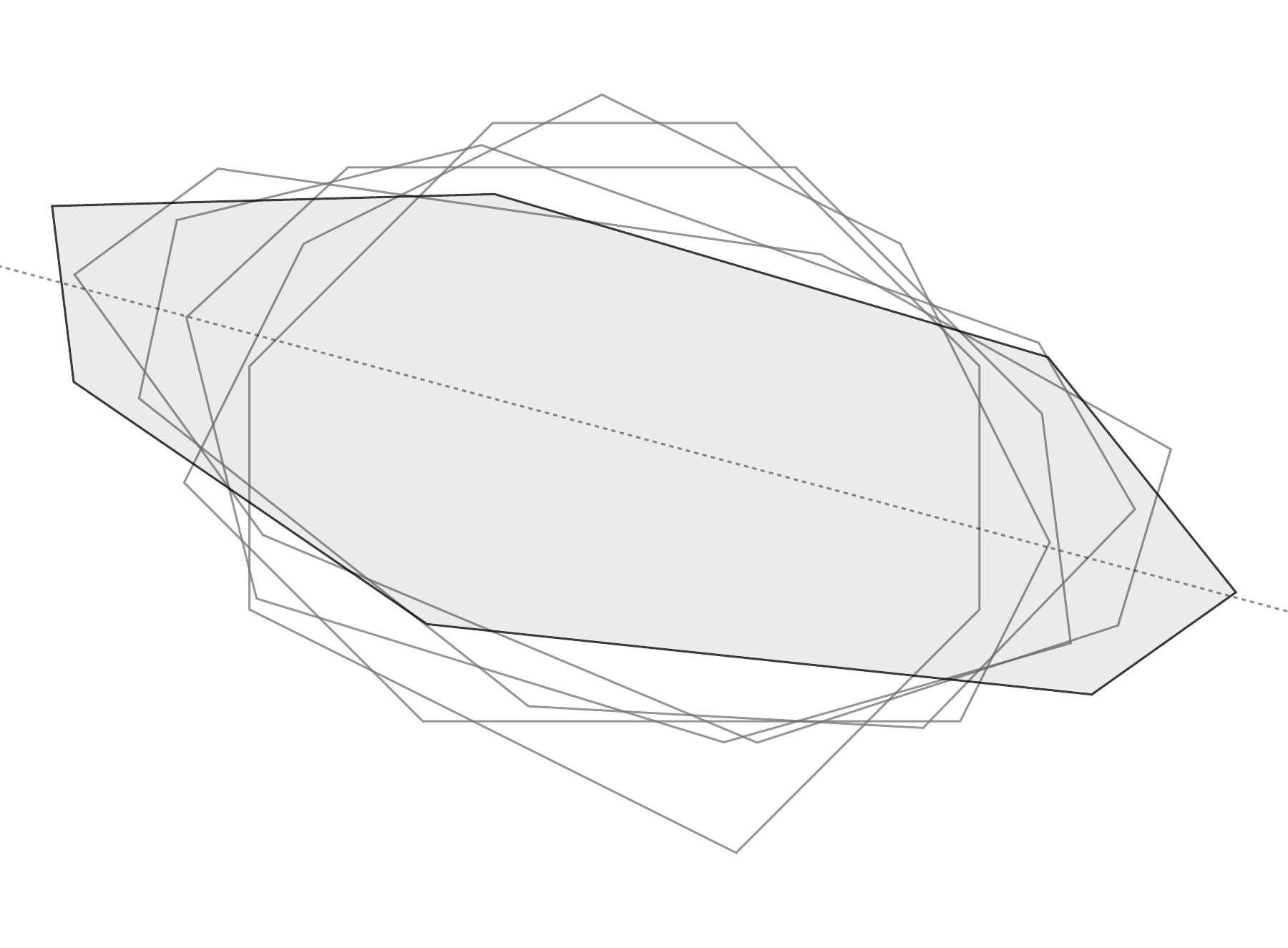}

    \vspace{0.2cm}

    \includegraphics[width=0.48\textwidth]{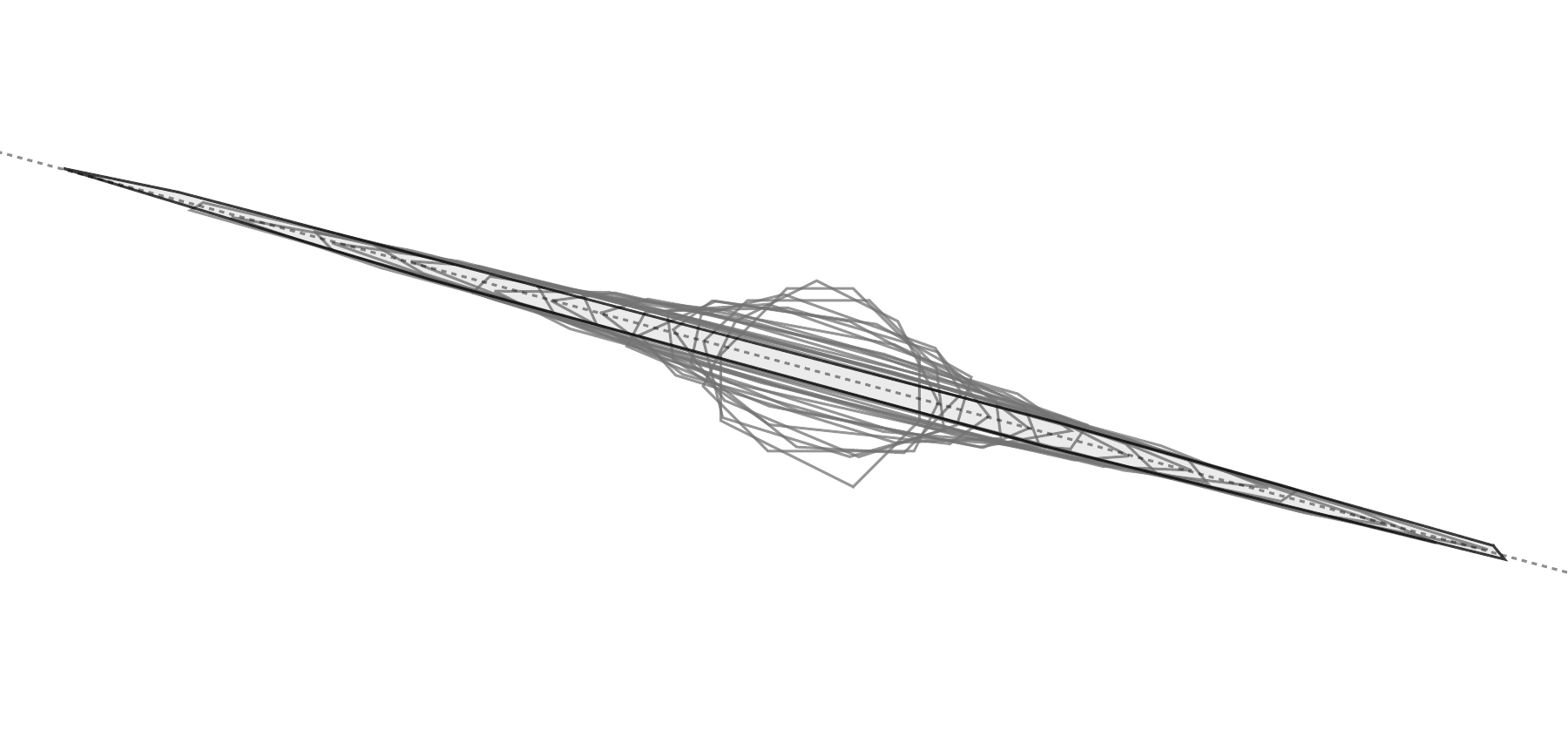}
    \hfill
    \includegraphics[width=0.48\textwidth]{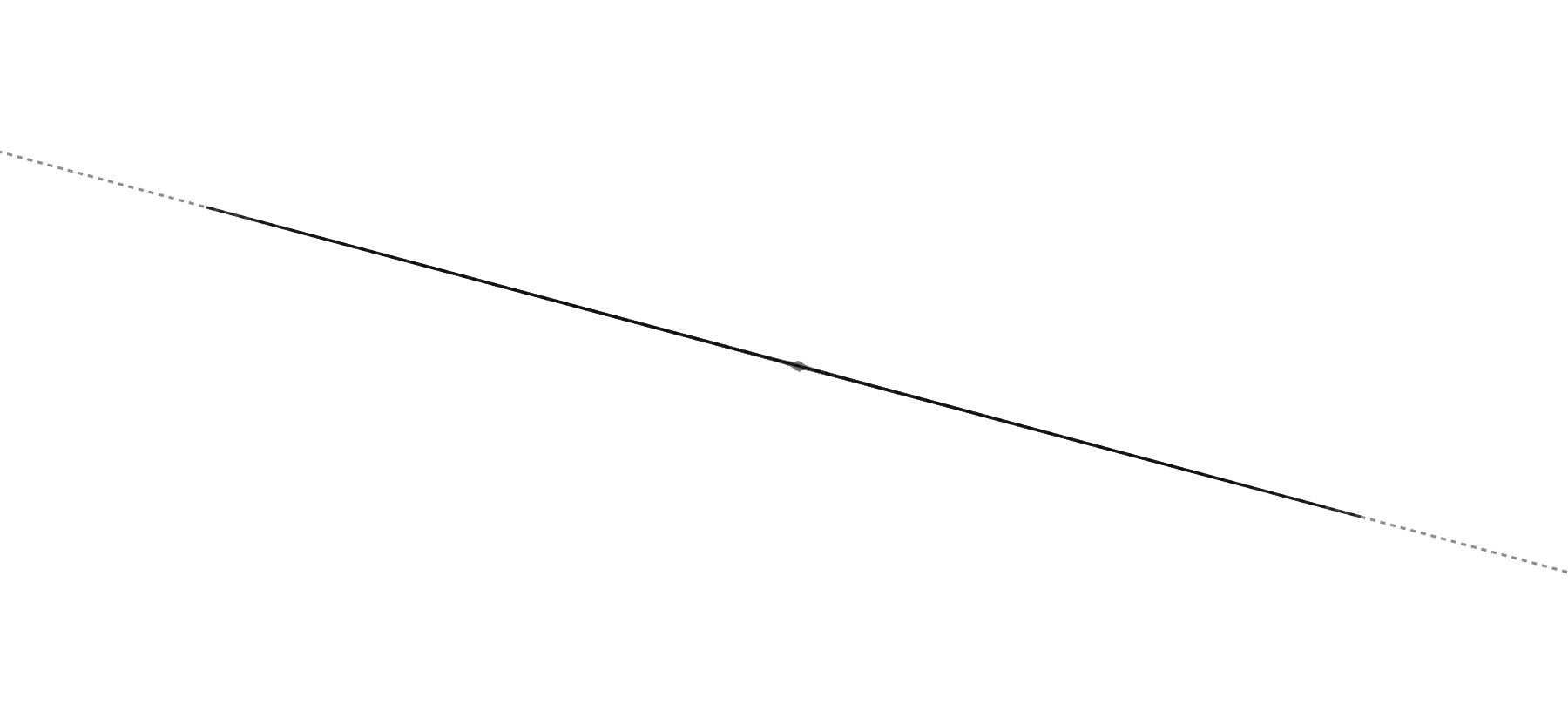}

    \caption{Area-normalized iterates of Glick's heptagon and the~
    spectral tangent-line candidate}
    \label{fig:GlicksHeptagon}
\end{figure}

For this polygon, Glick computes
$$
L_P=
\begin{pmatrix}
-6&-4&49\\
-1&-7&51\\
-1&-3&27
\end{pmatrix}.
$$
The~characteristic polynomial is
$\chi_{L_P}(\lambda)
=\lambda^3-14\lambda^2-111\lambda-116$.
Its roots, ordered by decreasing modulus, are approximately
$\lambda_1\approx 19.877720$,
$\lambda_2\approx -4.612545$, and
$\lambda_3\approx -1.265176$.
Solving
$(L_P-\lambda I)(x,y,1)^{\top}=0$
gives the~affine eigenpoint
$$
X(\lambda)=
\left(
\frac{49\lambda+139}{\lambda^2+13\lambda+38},
\frac{51\lambda+257}{\lambda^2+13\lambda+38}
\right).
$$
Thus
$$
\begin{aligned}
X_1&=X(\lambda_1)
      \approx (1.609477,\,1.837601),\\
X_2&=X(\lambda_2)
      \approx (126.564599,\,-31.650685),\\
X_3&=X(\lambda_3)
      \approx (3.325924,\,8.313084).
\end{aligned}
$$
The~eigenpoint $X_1$ is the~ordinary pentagram limit point computed by
Glick.  Since $n=7$, however, $L_P$ should not be interpreted as a~
projective generator of the~orbit.  Theorem~\ref{thm:spectral-flattening}
does not apply to this heptagon merely from the~spectral data of $L_P$.

Let us make this point explicit for the~present example.  The~heptagon is
cyclic: its vertices lie on the circle $x^2+y^2-3x-3y+2=0.$
It is not, however, a~Poncelet polygon in the~sense of
Definition~\ref{def:poncelet-polygon}.  Indeed, write the~homogeneous
side lines as $\ell_i=[a_i:b_i:c_i]$.  A~conic tangent to all sides
would be represented dually by a~nonzero symmetric matrix $Q^*$ with
$\ell_i^{\top}Q^*\ell_i=0$.  Thus its six independent coefficients
would lie in the~kernel of the~matrix whose $i$-th row is $(a_i^2,b_i^2,c_i^2,2a_ib_i,2a_ic_i,2b_ic_i).$
For the~first six sides of this heptagon, the~determinant of that
$6\times6$ matrix is $96$.  Hence $Q^*=0$ is the~only solution, so no
nondegenerate conic is tangent to all seven sides.

Separately, a~finite-time projectivity test finds no short projective
return.  For a~fixed pair $(m,s)$ one can look for a~projectivity $H$ satisfying
$$
H(p_i)=\T^m(P)_{i+s},
\qquad i=1,\ldots,7,
$$
by solving the~resulting homogeneous linear equations for the~entries of
a~$3\times3$ matrix $H$.  Exact rational rank computations for all
$1\leq m\leq100$ and all cyclic shifts $s=0,\ldots,6$ show that this
system has no invertible solution.  Thus, in the~tested range, no
relation of the~form
$\T^m(P)=\Sigma_s(G(P))$
is present.  Also, $\Sigma_s((L_P-3I)(P))$ is neither $\T(P)$ nor
$\T^2(P)$ for any cyclic shift $s$.  In particular, this heptagon should
not be treated as an~example of the~pentagon/hexagon type of projective
periodicity.

Instead, the~data suggest two natural tangent directions at the~
projective limit point $X_1$, namely the~directions of the~two lines
$X_1X_2$ and $X_1X_3$.
The~first of these has normalized direction
$u_{12}\approx
(0.965913,\,-0.258867)$,
while the~second has normalized direction
\linebreak $u_{13}\approx
(0.256220,\,0.966618)$.
The~barycenter of the~initial heptagon is $C(P)=\left(\frac{11}{7},\frac{12}{7}\right).$
Thus the~two affine tangent-line candidates visible in the~
area-normalized picture are
$$
C(P)+\R u_{12}
\qquad\text{and}\qquad
C(P)+\R u_{13}.
$$
In numerical experiments for general polygons with $n\geq7$, both types
of tangent line can occur, whereas the~complementary line $X_2X_3$ has
not appeared as a~flat limiting direction.  This motivates the~
conjectural tangent-line selection principle formulated in the~final
section.  For Glick's heptagon, the~computed iterates shown in
Figure~\ref{fig:GlicksHeptagon} align with $C(P)+\R u_{12}$.

\begin{remark}[Complex spectra outside the~projectively periodic case]
The~elliptic asymptotic proved in
Section~\ref{sec:elliptic-oscillatory} use an~actual projectivity $G$
which generates a~fixed iterate of the~orbit, possibly up to cyclic
relabelling.  Therefore they cannot be applied to the~conserved
operator $L_P$ alone when $n\geq7$.  The~distinct numerical behaviors
of the~two complex spectral configurations, and the~resulting
line-selection proposal, are stated in
Conjecture~\ref{conj:tangent-line-selection} and the~discussion that
follows it.
\end{remark}
\section{Conjectures}
\label{sec:conjectures}
\noindent
The~results proved above concern polygons whose orbits, after possibly
passing to an~iterate, are generated by a~single projectivity.  For
general polygons with $n\geq 7$, Glick's operator remains conserved but
does not generate the~pentagram orbit.  Its eigendata should therefore
be regarded as organizing data for the~observed asymptotics.

Numerical experiments suggest the~following two stable regimes.

\begin{conjecture}[Spectral-line selection for $n\geq 7$]
\label{conj:tangent-line-selection}
Let $P\in\mathcal U^\infty$ be a~labeled $n$-gon with $n\geq 7$.

\begin{enumerate}[label=\textup{(\roman*)}]

\item \textbf{\emph{Distinct real spectrum.}}
Assume that $L_P$ has three real eigenvalues
$\lambda_1,\lambda_2,\lambda_3$ satisfying
$|\lambda_1|>|\lambda_2|>|\lambda_3|$,
with corresponding eigenpoints $X_1,X_2,X_3$. Suppose that $X_1$ is
finite and that the~ordinary pentagram iterates collapse projectively
to $X_1$, in the~sense that every labeled vertex converges to $X_1$ in
$\RP^2$.

\begin{enumerate}[label=\textup{(\alph*)}]

\item
If $P$ is convex, then the~area-normalized iterates flatten to the~
affine line through $C(P)$ parallel to $X_1X_2$.

\item
If $P$ is nonconvex, then the~area-normalized iterates flatten to
exactly one of the~two affine lines through $C(P)$ parallel to
$X_1X_2$ or
$X_1X_3$.
\end{enumerate}

\item \textbf{\emph{Dominant non-real pair.}}
Assume that
$\Spec(L_P)=\{\lambda,\overline{\lambda},\mu\}$,
where $\lambda\notin\R$,
$|\lambda|>|\mu|$.
If $v$ is an~eigenvector corresponding to $\lambda$, set
$\ell_{\lambda}=
\mathbb P\left(
\Span_{\R}
\{\operatorname{Re}v,\operatorname{Im}v\}
\right)$.
Assume that $\ell_{\lambda}$ is not the~line at infinity, and denote
its affine direction by $u_{\lambda}$. Then the~area-normalized
iterates flatten to
$C(P)+\R u_{\lambda}$.
\end{enumerate}

In both cases, the~diameter tends to infinity and the~directions of the~
principal lines converge to the~asserted limiting direction.
\end{conjecture}

The~remaining generic spectral configuration should be treated
separately.  If $L_P$ has one dominant real eigenvalue and a~
subdominant non-real pair, our experiments for general $n\geq 7$ show
no stable universal pattern.  In particular, neither the~line
predictions nor the~elliptic formulae proved for projectively periodic
orbits appear to follow from the~spectrum of $L_P$ alone.

The~central open problem is to understand the~relative dynamics of the~
pentagram orbit with respect to the~conserved eigendata of $L_P$, and
in particular the~mechanism that selects between the~two spectral
tangent directions.

\bibliographystyle{pentagramdoi}
\bibliography{pentagram_map_references}

\end{document}